\let\oldtocsection=\tocsection
\let\oldtocsubsection=\tocsubsection 
\let\oldtocsubsubsection=\tocsubsubsection
\renewcommand{\tocsection}[2]{\vspace{0.5em}\hspace{0em}\oldtocsection{#1}{#2}}
\renewcommand{\tocsubsection}[2]{\vspace{0.5em}\hspace{1em}\oldtocsubsection{#1}{#2}}
\renewcommand{\tocsubsubsection}[2]{\vspace{0.5em}\hspace{2em}\oldtocsubsubsection{#1}{#2}}
\patchcmd{\@settitle}{center}{flushleft}{}{}
\patchcmd{\@settitle}{center}{flushleft}{}{}
\patchcmd{\@setauthors}{\centering}{\raggedright}{}{}
\patchcmd{\abstract}{3pc}{0pt}{}{} 
\renewcommand*\@maketitle{%
  \normalfont\normalsize
  \@adminfootnotes
  \@mkboth{\@nx\shortauthors}{\@nx\shorttitle}%
  \global\topskip42\p@\relax 
  \@settitle
  \ifx\@empty\authors \else \@setauthors \fi
  \ifx\@empty\@date \else {\vskip 1em \vtop{\centering\large\@date\@@par}}\fi
  \ifx\@empty\@dedicatory
  \else
    \baselineskip18\p@
    \vtop{\centering{\footnotesize\itshape\@dedicatory\@@par}%
      \global\dimen@i\prevdepth}\prevdepth\dimen@i
  \fi
  \@setabstract
  \normalsize
  \if@titlepage
    \newpage
  \else
    \dimen@34\p@ \advance\dimen@-\baselineskip
    \vskip\dimen@\relax
  \fi
} 
\renewcommand*\@adminfootnotes{%
  \let\@makefnmark\relax  \let\@thefnmark\relax
  \ifx\@empty\@subjclass\else \@footnotetext{\@setsubjclass}\fi
  \ifx\@empty\@keywords\else \@footnotetext{\@setkeywords}\fi
  \ifx\@empty\thankses\else \@footnotetext{%
    \def\par{\let\par\@par}\@setthanks}%
  \fi
}
\def\nline{\\ \noalign{\medskip}}
\newtheorem{thm}{Theorem}[section]
\newtheorem{cor}[thm]{Corollary}
\newtheorem{defi}[thm]{Definition}
\newtheorem{lem}[thm]{Lemma}
\newtheorem{prop}[thm]{Proposition}
\newtheorem{rk}[thm]{Remark}
\numberwithin{equation}{section}
 \renewenvironment{proof}{{\bfseries \noindent Proof.}}{\demo}
\newcommand\xqed[1]{%
  \leavevmode\unskip\penalty9999 \hbox{}\nobreak\hfill
  \quad\hbox{#1}}
\newcommand\demo{\xqed{$\square$}}
\title[{\fontsize{6.5}{8}\selectfont Stability and Controllability  results for a Timoshenko system}]{Stability and Controllability  results for a Timoshenko system }
\author{Mohammad AKIL$^{1,3}$, Yacine Chitour$^2$,  Mouhammad Ghader$^{1,2}$ and Ali Wehbe$^1$  \vspace{0.58cm}\\
$^1$Lebanese University, Faculty of sciences 1, Khawarizmi Laboratory of Mathematics and Applications-KALMA, Hadath-Beirut. \\ \vspace{0.2cm}
$^2$ Paris-Saclay University, L2S, 3 Rue Joliot Curie, Gif-sur-Yvette, France. \\ \vspace{0.2cm}
$^3$ Insa de Rouen, LMI, 685 Avenue de l'Université, Rouen, France.\\ \vspace{0.2cm}
Email: mohammad.akil@insa-rouen.fr, yacine.chitour@l2s.centralesupelec.fr, mhammadghader@hotmail.com, ali.wehbe@ul.edu.lb.
}
\begin{document}
\begin{abstract}
In this paper, we study the indirect boundary stability and exact controllability of a one-dimensional Timoshenko system. In the first  part of the paper, we consider the Timoshenko system with only one boundary fractional damping.  We first show that the system is strongly stable but not uniformly stable. Hence, we look for a polynomial decay rate for smooth initial data. Using frequency domain arguments combined with the multiplier method, we prove that the energy decay rate depends on coefficients appearing in the PDE and  on the order of the fractional damping. Moreover, under the equal speed propagation condition, we obtain the optimal polynomial energy decay rate. In the second part of this paper, we study the indirect boundary exact controllability of the Timoshenko system with mixed Dirichlet-Neumann boundary conditions and boundary control.  Using non-harmonic analysis, we first establish a weak observability inequality, which depends  on the ratio of the waves propagation speeds. Next, using the HUM method, we prove that the system is exactly controllable in appropriate spaces and that the control time can be small.\\[0.1in]
\noindent \text{Keywords.} Timoshenko system,  Boundary Damping, Strong stability, Exponential stability, Polynomial stability, Observability inequality, Exact controllability.
\vspace{-0.5cm}
\end{abstract}
\pagenumbering{roman}
\maketitle
\tableofcontents
\clearpage
\pagenumbering{arabic}
\setcounter{page}{1}
\section{Introduction}
 \noindent In this work, we consider the Timoshenko  system  given by\begin{equation} \label{Part-0-01}
\begin{array}{lll}

\displaystyle{\rho_1 u_{tt}-k_1 \left(u_x+y\right)_x=0,}
&\displaystyle{(x,t)\in\left(0,1\right)\times \mathbb{R}_{+},}

         \nline
         
\displaystyle{ \rho_2y_{tt}-k_2y_{xx}+k_1\left(u_x+y\right)=0,}  &\displaystyle{(x,t)\in\left(0,1\right)\times \mathbb{R}_{+},}
\end{array}
\end{equation}
with several types of boundary conditions precised later on. Here the coefficients $\rho_1,\ \rho_2,\ k_1$, and $k_2$ are positive constants and we would like to understand precisely what is the influence of these coefficients on the indirect boundary stability and exact controllability of \eqref{Part-0-01}.\\[0.1in]
\noindent In the first part of this paper, we study the stability of the Timoshenko system \eqref{Part-0-01} with only one boundary fractional damping, i.e,  System \eqref{Part-0-01} is subject to the following  boundary conditions
\begin{equation}\label{Part-0-02}
\begin{array}{lll}

\displaystyle{u(0,t)=y_x(0,t)=y_x(1,t)=0,} & \displaystyle{t\in \mathbb{R}_+, }

\nline

\displaystyle{k_1\left(u_x(1,t)+y(1,t)\right)+\gamma\partial^{\alpha,\eta}_tu(1,t)=0,} & \displaystyle{t\in \mathbb{R}_+,}

\end{array}
\end{equation}
in addition to  the following initial conditions
\begin{eqnarray*}
u(x,0)=u_0(x),& u_t(x,0)=u_1(x),&x\in   (0,1),\nline
y(x,0)=y_0(x),& y_t(x,0)=y_1(x),&x\in(0,1).
\end{eqnarray*}
Here the coefficients $\eta$ and $\alpha$ are non negative and in $(0,1)$ respectively. Fractional calculus includes various extensions of the usual definition of derivative from integer to real order, including the Riemann-Liouville derivative, the Caputo derivative, the Riesz derivative, the Weyl derivative, cf. \cite{samkokilbasmarichev:93}. In this paper, we consider the Caputo's fractional derivative 
\begin{equation}\label{eq-1.7}
\left[D^{\alpha,\eta}\omega \right](t) =\partial_t^{\alpha,\eta}\omega(t)=\frac{1}{\Gamma(1-\alpha)}\int_0^t(t-s)^{-\alpha}e^{-\eta(t-s)}\frac{d\omega}{ds}(s)ds.
\end{equation}
 In the second part  of this paper, we study the exact controllability of the Timoshenko system \eqref{Part-0-01} where only one boundary  control  
 $v$ is applied on the right boundary of the first equation, the second equation is indirectly controlled by means of the coupling between the equations, i.e,   system \eqref{Part-0-01} is subject to the following  boundary conditions
\begin{equation}\label{Part-0-03}
\begin{array}{lll}

\displaystyle{u(0,t)=y_x(0,t)=y_x(1,t)=0,} & \displaystyle{t\in \mathbb{R}_+, }

\nline

\displaystyle{u\left(1,t\right)=v\left(t\right),} & \displaystyle{t\in \mathbb{R}_+,}

\end{array}
\end{equation}
in addition to  the following initial conditions
\begin{eqnarray*}
u(x,0)=u_0(x),& u_t(x,0)=u_1(x),&x\in   (0,1),\nline
y(x,0)=y_0(x),& y_t(x,0)=y_1(x),&x\in(0,1).
\end{eqnarray*}
The  Timoshenko system is usually considered as describing the transverse vibration of a beam and ignoring damping effects of any nature. Precisely, we have the following model, which was developed by Timoshenko   in 1921 (see in \cite{Timoshenko01}),
\begin{equation}\label{Tiom-00}
\left\{
\begin{array}{lll}

\displaystyle{\rho\varphi_{tt}= \left(K\left(\varphi_x-\psi\right)\right)_x,\quad\quad} &\displaystyle{\text{in }\left(0,L\right)\times\mathbb{R}_{+},}

\nline

\displaystyle{I_{\rho} \psi_{tt}=\left( EI\psi_x\right)_{x}-K\left(\varphi_x-\psi\right),\quad\quad} &\displaystyle{\text{in }\left(0,L\right)\times\mathbb{R}_{+},}

\end{array}
\right.
\end{equation}
where $\varphi$ is the transverse displacement of the beam, and $\psi$ is the rotation angle of the filament of the beam. The coefficients $\rho,\ I_\rho,\ E,\ I$
and $K$ are respectively the density (the mass per unit length), the polar moment of inertia of a cross section, Young's
modulus of elasticity, the moment of inertia of a cross section and the shear modulus respectively. \\[0.1in] 
The fractional derivatives are nonlocal and involve singular and non-integrable kernels ($t^{-\alpha}$, $0<\alpha<1$). We refer the readers to \cite{samkokilbasmarichev:93} and the rich references therein for the mathematical description of the fractional derivative. 
The  fractional order or, in general, of convolution type is not only important from the theoretical point of view but also for applications as they naturally arise in physical, chemical, biological, ecological phenomena see for example \cite{parkKang:11},  and references therein. They are used to describe memory and hereditary properties of various materials and processes. For example, in viscoelasticity, due to the nature of the material microstructure, both elastic solid and viscous fluid like response qualities are involved. Using Boltzmann assumption, we end up with a stress-strain relationship defined by a time convolution. Viscoelastic response occurs in a variety of materials, such as soils, concrete, rubber, cartilage, biological tissue, glasses, and polymers (see in \cite{bagleytorvik2:83,bagleytorvik3:83,bagleytorvik1:83} and \cite{mainardibonetti}). In our case, the fractional dissipation describes an active boundary viscoelastic damper designed for the purpose of reducing the vibrations (see in \cite{Mbodje:06,mbomon:95}).\\[0.1in]
The notion of indirect damping mechanisms has been introduced by Russell in \cite{Russell01}, and since this time, it retains the attention of many authors, for instance, let us quote the papers of Alabau \cite{Alabau04,Alabau05} for a general studies on the hyperbolic systems with indirect boundary stabilizations and \cite{Alabau12,Alabau08} for indirect boundary observability and controllability of weakly coupled hyperbolic systems. Note nevertheless that the above system does not enter in the framework of these papers. Let us now mention some known results related to the stabilization of the Timoshenko beam.
There are a number of publications concerning the stabilization of the Timoshenko system with different kinds of damping. Kim and Renardy in  \cite{Kim01} considered Timoshenko \eqref{Tiom-00} with two boundary controls of the form
\begin{equation}
\left\{
\begin{array}{lll}
\displaystyle{K \varphi(L,t)-K u_x(L,t)=\alpha \varphi_t(L,t)},&\displaystyle{\text{on }\mathbb{R}_{+},}\nline
\displaystyle{EI \psi_x(L,t)=-\beta \psi_t(L,t)},&\displaystyle{\text{on }\mathbb{R}_{+},}
\end{array}
\right.
\end{equation}
they establish an exponential decay result for the system \eqref{Tiom-00}. Raposo and al. in \cite{Santos03} studied Timoshenko \eqref{Tiom-00} with homogeneous Dirichlet boundary conditions and two linear frictional dampings; i.e, they considered the following system
 \begin{equation}\label{Tiom-01}
\left\{
\begin{array}{lll}

\displaystyle{\rho_1 u_{tt}-k_1 \left(u_x-y\right)_x+u_t=0,} &\displaystyle{\text{in }\left(0,L\right)\times\mathbb{R}_{+},}

\nline

\displaystyle{ \rho_2y_{tt}-k_2y_{xx}+k_1\left(u_x-y\right)+y_t=0,} &\displaystyle{\text{in }\left(0,L\right)\times\mathbb{R}_{+},}
\nline
\displaystyle{u(0,t)=u(L,t)=y(0,t)=y(L,t)=0,} & \displaystyle{\text{on }\mathbb{R}_+, }

\end{array}
\right.
\end{equation}
they showed that the Timoshenko system \eqref{Tiom-01}  is exponentially stable. Soufyane and Wehbe in \cite{WhebeSoufyane-2003} considered Timoshenko \eqref{Tiom-00} with one internal distributed dissipation law; i.e, they cosidered the following system:
\begin{equation}\label{Tiom-02}
\left\{
\begin{array}{lll}

\displaystyle{\rho\varphi_{tt}= \left(K\left(\varphi_x-\psi\right)\right)_x,\quad\quad} &\displaystyle{\text{in }\left(0,L\right)\times\mathbb{R}_{+},}

\nline

\displaystyle{I_{\rho} \psi_{tt}=\left( EI\psi_x\right)_{x}-K\left(\varphi_x-\psi\right)-b\, \psi_t,\quad\quad} &\displaystyle{\text{in }\left(0,L\right)\times\mathbb{R}_{+},}
\nline
\displaystyle{\varphi(0,t)=\varphi(L,t)=\psi(0,t)=\psi(L,t)=0,} & \displaystyle{\text{on }\mathbb{R}_+, }
\end{array}
\right.
\end{equation}
where $b$ is a positive continuous function such that
$$b(x)\geq b_0>0,\quad\forall x\in [a_0,a_1]\subset [0,L].$$
They showed that  the Timoshenko system \eqref{Tiom-02}  is exponentially stable if and only if the wave propagation speeds are equal (i.e., $\frac{k_1}{\rho_1}=\frac{\rho_2}{k_2}$), otherwise,  only the strong  stability holds. Indeed, Rivera and Racke  in \cite{Racke01} improved the previous results and showed an exponential decay of the solution of the system \eqref{Tiom-02} when the coefficient of the feedback admits an indefinite sign. Mu{\~{n}}oz Rivera and Racke in \cite{Racke02} studied nonlinear Timoshenko system of the form
 \begin{equation}\label{Tiom-03}
\left\{
\begin{array}{lll}

\displaystyle{\rho_1 \varphi_{tt}-\sigma\left(\varphi_x,\psi\right)_x=0}, &\displaystyle{\text{in }\left(0,L\right)\times\mathbb{R}_{+},}

\nline

\displaystyle{ \rho_2\psi_{tt}-b\psi_{xx}+K\left(\varphi_x+\psi\right)+\gamma \theta_x=0,} &\displaystyle{\text{in }\left(0,L\right)\times\mathbb{R}_{+},}
\nline
\displaystyle{\rho_3 \theta_t-K\theta_{xx}+\gamma \psi_{xt}=0, } & \displaystyle{\text{in }\left(0,L\right)\times\mathbb{R}_{+},}

\end{array}
\right.
\end{equation}
where $\theta$ is  the difference temperature. Under some conditions of  $\sigma,\ \rho_1,\ \rho_2,\ b, \ K$ and $\gamma$ they proved several exponential decay results for the linearized system and non-exponential stability result for the case of different wave speeds of propagation. Mu{\~{n}}oz Rivera and Racke in \cite{Racke-2003} studied nonlinear Timoshenko system of the form
 \begin{equation}\label{Tiom-04}
\left\{
\begin{array}{lll}

\displaystyle{\rho_1 \varphi_{tt}-\sigma\left(\varphi_x,\psi\right)_x=0}, &\displaystyle{\text{in }\left(0,L\right)\times\mathbb{R}_{+},}

\nline

\displaystyle{ \rho_2\psi_{tt}-b\psi_{xx}+K\left(\varphi_x+\psi\right)+d \psi_x=0,} &\displaystyle{\text{in }\left(0,L\right)\times\mathbb{R}_{+},}
\end{array}
\right.
\end{equation}
with homogeneous boundary conditions, they showed that  the Timoshenko system \eqref{Tiom-04}  is exponentially stable if and only if the wave propagation speeds are equal, otherwise,  only the polynomial  stability holds. Alabau-Boussouira \cite{Alabau03} extended the results of \cite{Racke-2003} to the case of nonlinear feedback  $\alpha(\psi_t)$, instead of $d \psi_t$, where $\alpha$  is a globally Lipchitz function satisfying some growth conditions at the origin. Indeed,
she considered the following system
 \begin{equation}\label{Tiom-05}
\left\{
\begin{array}{lll}

\displaystyle{\rho_1 \varphi_{tt}-K\left(\varphi_x+\psi\right)_x=0}, &\displaystyle{\text{in }\left(0,L\right)\times\mathbb{R}_{+},}

\nline

\displaystyle{ \rho_2\psi_{tt}-b\psi_{xx}+K\left(\varphi_x+\psi\right)+\alpha( \psi_x)=0,} &\displaystyle{\text{in }\left(0,L\right)\times\mathbb{R}_{+},}

\end{array}
\right.
\end{equation}
with homogeneous boundary conditions. In fact, if the wave propagation speeds are equal she  established a general semi-explicit formula for the decay rate of the energy at infinity. Otherwise, she  proved polynomial decay in the case of different speed of propagation for both linear and nonlinear globally Lipschitz feedbacks.  Ammar-Khodja and al. in \cite{Racke03} considered a linear Timoshenko  system with memory of the form
 \begin{equation}\label{Tiom-06}
\left\{
\begin{array}{lll}

\displaystyle{\rho_1 \varphi_{tt}-K\left(\varphi_x+\psi\right)_x=0}, &\displaystyle{\text{in }\left(0,L\right)\times\mathbb{R}_{+},}

\nline

\displaystyle{ \rho_2\psi_{tt}-b\psi_{xx}+K\left(\varphi_x+\psi\right)+\int_0^tg(t-s)\psi_{xx}(s)ds=0,} &\displaystyle{\text{in }\left(0,L\right)\times\mathbb{R}_{+},}

\end{array}
\right.
\end{equation}
 with homogeneous boundary conditions. They  proved that the system \eqref{Tiom-06} is uniformly stable if and only if the wave speeds are equal and $g$ decays uniformly. Also, they proved an exponential decay if $g$ decays at an exponential rate and polynomially if $g$ decays at a polynomial rate.  Ammar-Khodja and al. in \cite{Soufyane01}  studied the decay rate of the energy of the nonuniform Timoshenko beam with two boundary controls acting in the rotation-angle equation. In fact, under the equal speed wave propagation condition, they established exponential decay results up to an unknown finite dimensional space of initial data. In addition, they showed that the equal speed wave propagation condition is necessary for the exponential stability. However, in the case of non equal speeds, no decay rate has been discussed. The result in \cite{Soufyane01}  has been recently improved by Wehbe and al. in \cite{Wehbe06} where are established nonuniform stability and an optimal polynomial energy decay rate of the Timoshenko system with only one dissipation law on the boundary. 
In addition to the previously cited papers. The stability of the Timoshenko system with a different kind of damping has been also studied \cite{Racke03,Wehbe07, HugoRacke01,MessaoudGuesmia-2009,AlmeidaJnior2013,Messaoud02,Fatori01, Tebou-2015,Hao-2018,Wehbe06,BassamWehbe2016,Wehbe08}. For the stabilization of the Timoshenko beam with nonlinear term,  we mention  \cite{Racke02,Alabau03,Messaoud01,ZuazuaAraruna,Messaoud01,Cavalcanti-2013,Hao-2018}. In \cite{Benaissa2017}, Benaissa and Benazzouz considered the Timoshenko beam system with two dynamic control boundary conditions of fractional derivative type
\begin{equation} \label{abbas}
\left\{
\begin{array}{lll}

\displaystyle{\rho_1 u_{tt}-k_1 \left(u_x+y\right)_x=0,}
&\displaystyle{(x,t)\in\left(0,L\right)\times \mathbb{R}_{+},}

         \nline
         
\displaystyle{ \rho_2y_{tt}-k_2y_{xx}+k_1\left(u_x+y\right)=0,}  &\displaystyle{(x,t)\in\left(0,L\right)\times \mathbb{R}_{+},}

\nline

\displaystyle{u(0,t)=y(0,t)=0,} & \displaystyle{t\in \mathbb{R}_+, }

\nline 

\displaystyle{m_1 u_{tt}(L,t)+k_1 (u_x+y)(L,t)=-\gamma_1\partial^{\alpha,\eta}_tu(L,t),} & \displaystyle{t\in \mathbb{R}_+,}
\nline
\displaystyle{m_2 y_{tt}(L,t)+k_2y_x(L,t)=-\gamma_2\partial^{\alpha,\eta}_ty(L,t),} & \displaystyle{t\in \mathbb{R}_+,}

\end{array}
\right.
\end{equation}
where $m_1$, and $m_2$ are positive constant. They showed that the system \eqref{abbas} is not uniformly stable by a spectral analysis.  Hence, using the semigroup theory of linear operators and a result obtained by Borichev and Tomilov,  they established a  polynomial energy decay rate of type $t^{-\frac{1}{2-\alpha}}$.\\[0.1in]
\noindent  In the first part of this paper, unlike \cite{Benaissa2017}, we study the stability of the Timoshenko system \eqref{Part-0-01} with only one fractional derivative \eqref{Part-0-02}.  We show that the  energy of the system \eqref{Part-0-01}-\eqref{Part-0-02}  has a  polynomial  decay rate of type  $t^{-{\delta\left(\alpha\right)}}$,  where
\begin{equation*}
\delta(\alpha)=\left\{
\begin{array}{ll}
\frac{2}{1-\alpha},&\text{If } \frac{\rho_1}{k_1}=\frac{\rho_2}{k_2}\ \text{ and } \sqrt{\frac{k_1}{k_2}}\neq k\pi,\ \forall k\in\mathbb{N} ,

\\ \noalign{\medskip}

\frac{2}{5-\alpha},&\text{Otherwise. }
\end{array}
\right.
\end{equation*}
Moreover, in some cases,  we obtain the optimal order of polynomial stability (see theorem \ref{Theorem-1.12}).\\[0.1in]
We now turn to the second set of results of the paper, which addresses  controllability issues of the Timoshenko system with different types of control. For the boundary control, Zhang and Hu in \cite{ZhangHu} studied  the exact controllability of a Timoshenko beam with dynamic boundary controls. Since the controlled Timoshenko system connects with a rigid antenna at one end, the authors introduced two new variables in order to describe their actions. The  obtained system was described by two partial and two ordinary differential equations. By using the HUM method, the exact controllability of the system is proved   in the energy space. In \cite{ZuazuaAraruna}, Araruna and Zuazua  considered the dynamical one-dimensional Mindlin-Timoshenko system for beams. They analyzed  how its controllability properties depend on the modulus $k$ of elasticity in shear . In particular,  under some assumptions on the initial  conditions, they proved that the exact boundary controllability property of the Kirchhoff system is  obtained as a singular limit, as $k\to \infty.$ For the internal control, we mention  \cite{LeivaMendoza} and \cite{Martin01}. Note also that the observability and exact controllability of coupled waves equations, have been studied by an extensive number of publications (see\cite{Lions02, Alabau04, Alabau2001,Alabau08, ZhangZuazua01,Komornik01,Khodja02,RaoLiu02}). In addition to the previously cited papers,  we mention \cite{Alabau04,Alabau05,Alabau02,alabau2005,alabau2004, Alabau06,Huang01,kliu97, RaoLoreti01,LiuRao01} for the stabilization of coupled waves equations.\\[0.1in]
\noindent  In this paper,  we study the indirect boundary exact controllability of the Timoshenko system \eqref{Part-0-01} with the boundary conditions \eqref{Part-0-03} while waves propagate with equal or different  speeds. We use  the Hilbert Uniqueness Method introduced by Lions \cite{Lions01} (see also \cite{Lions02,Lions03,Lions04,Komornik01,Komornik02}). To this aim, by using Ingham's Theorem \cite{Komornik01},  we  first establish the inverse and the direct observability inequalities for the homogeneous Timoshenko system. Next, we use  the Hilbert Uniqueness Method,  to get the exact controllability  for the Timoshenko system \eqref{Part-0-01} with the boundary conditions \eqref{Part-0-03} in appropriate functional spaces of terminal data.\\[0.1in]
 Last but not least, in addition to the previously cited papers, the stability of wave equation with fractional damped, have been studied by an extensive number of publications. Mbodje in \cite{Mbodje:06} considered a $1$D wave equation with a boundary viscoelastic damper of the fractional derivative type. In that reference, it is proved that the energy  does not decay uniformly (exponentially) to zero but polynomial energy decay rate of type ${t^{-1}}$ is obtained. This result has been recently improved by Akil and Wehbe  in \cite{akilwehbe01} in the multi-dimensional, where it is shown that  energy of smooth solutions converges to zero as $t$ goes to infinity, as ${t^{-\frac{1}{1-\alpha}}}$. In \cite{akilwehbe02}, coupled wave equations with only one fractional dissipation law are considered and it is proved that the system is not uniformly (exponentially) stable but polynomially stable under arithmetic conditions on coefficients of the system, with optimal order in some cases.\\[0.1in]
\noindent   This paper is organized as follows: In section \ref{Section-1-Timoshenko+Fractional derivative}, we study the stability of the Timoshenko  with only one Fractional derivative. In subsection \ref{Section-1.1-Well-posedness}, we prove the well-posedness   of system \eqref{Part-0-01} with the boundary conditions \eqref{Part-0-02}. In subsection \ref{Section-1.2-Strong-stability}, we prove the strong stability of the system in the lack of the compactness of the resolvent of the generator. In subsection \ref{Section-1.3-Lack of exponential},  we prove that the Timoshenko system \eqref{Part-0-01} considered with the boundary conditions \eqref{Part-0-02} is non-uniformly stable when the speeds of the propagation of the waves are either equal or different. More precisely, we  show that an infinite number of  eigenvalues approach the imaginary axis. In subsection \ref{Section-1.4-Polynomial stability},  we prove the polynomial stability of the system, with a faster polynomial decay rate if  the waves propagate with equal speed: the energy of system \eqref{Part-0-01}-\eqref{Part-0-02}  has a  polynomial  decay rate of type  $t^{-{\delta\left(\alpha\right)}}$,  where
\begin{equation*}
\delta(\alpha)=\left\{
\begin{array}{ll}
\frac{2}{1-\alpha},&\text{If } \frac{\rho_1}{k_1}=\frac{\rho_2}{k_2}\ \text{ and } \sqrt{\frac{k_1}{k_2}}\neq k\pi,\ \forall k\in\mathbb{N} ,

\\ \noalign{\medskip}

\frac{2}{5-\alpha},&\text{Otherwise. }
\end{array}
\right.
\end{equation*}
 In section \ref{Section-2-Exact-controllability}, we study the exact controllability of the Timoshenko system \eqref{Part-0-01} with the boundary conditions \eqref{Part-0-03}. In subsection \ref{Section2-work4}, we set the framework of the homogeneous Timoshenko system \eqref{Part-0-01}  and we establish the characteristic equation satisfied by the eigenvalues of the operator ${\mathcal{A}_2}$. Next, in subsection \ref{Section3-work4}, we prove the  exact controllability of the system \eqref{Part-0-01} with the boundary conditions \eqref{Part-0-03}   while waves propagate with the same speed, i.e., $\frac{k_1}{\rho_1}=\frac{k_2}{\rho_2}$. Depending on number theoretical properties of the  constants  $k_1,\ k_2,\ \rho_1,\ \rho_2$,  we deduce the corresponding observability spaces. In subsection \ref{Section4-work4}, we consider the case where the waves propagate with different speeds and show the exact controllability  of the system \eqref{Part-0-01} with the boundary conditions \eqref{Part-0-03}  and  the corresponding observability spaces depending on the parameter $k_1,\ k_2,\ \rho_1,\ \rho_2.$

\section{Stability of Timoshenko system  with Fractional derivative}\label{Section-1-Timoshenko+Fractional derivative}
\noindent In this section, we study the stability of the Timoshenko system \eqref{Part-0-01} with the  boundary conditions \eqref{Part-0-02}. This system  defined in $\left(0,1\right)\times\left(0,+\infty\right)$ takes the following form
\begin{eqnarray}
\rho_1 u_{tt}-k_1(u_x+y)_x&=&0,\label{Part-1-01}\nline
\rho_2 y_{tt}-k_2 y_{xx}+k_1(u_x+y)&=&0,\label{Part-1-02}
\end{eqnarray}
with the following boundary conditions
\begin{eqnarray}
u(0,t)=y_x(0,t)=y_x(1,t)&=&0,\qquad t\in \mathbb{R}_+,\label{Part-1-03}\nline
k_1\left(u_x(1,t)+y(1,t)\right)+\gamma\partial^{\alpha,\eta}_tu(1,t)&=&0,\qquad t\in \mathbb{R}_+,\label{Part-1-04}
\end{eqnarray}
in addition to  the following initial conditions
\begin{eqnarray}
u(x,0)=u_0(x),& u_t(x,0)=u_1(x),&x\in   (0,1),\label{Part-1-05}\nline
y(x,0)=y_0(x),& y_t(x,0)=y_1(x),&x\in(0,1).\label{Part-1-06}
\end{eqnarray}
Here $\rho_1,\ \rho_2,\ k_1,\ k_2,\ \gamma$ are positive constants,  $\eta \geq 0$ and $\alpha\in (0,1)$. 
\subsection{Augmented model and well-posedness}\label{Section-1.1-Well-posedness}
In this part, using a semigroup approach, we establish well-posedness result for the system  \eqref{Part-1-01}-\eqref{Part-1-06}. For this purpose, we first recall Theorem 2 stated  in  \cite{Mbodje:06}.
\begin{thm}\label{Theorem-1.1} \rm{(See Theorem 2  in  \cite{Mbodje:06})
 Let $\alpha\in (0,1)$ and $\mu$ be the function defined almost everywhere on $\mathbb{R}$ by 
$$
\mu(\xi)=|\xi|^{\frac{2\alpha-1}{2}}.
$$
The relationship between the input $V$ and the output $O$ of the following system
$$ 
\begin{array}{lllll}
\omega_t(\xi,t)+\left(\xi^2+\eta\right)\omega(\xi,t)-V(t)\mu(\xi)&=&0,\quad  (\xi,t)\in  \mathbb{R}\times \mathbb{R}_+,\nline
\omega(\xi,0)&=&0, \quad  \xi\in  \mathbb{R},\nline
\displaystyle
O(t)-\kappa(\alpha)\int_{\mathbb{R}}\mu(\xi)\omega(\xi,t)d\xi&=&0, \quad  t\in  \mathbb{R}_+
\end{array}
$$
is given by 
$$
O=I^{1-\alpha,\eta}V,
$$
where
\begin{equation}\label{eq-1.8}
\kappa(\alpha)=\frac{\sin(\alpha\pi)}{\pi},\quad 
[I^{\alpha,\eta}V](t)=\int_0^t\dfrac{(t-\tau)^{\alpha-1}e^{-\eta(t-\tau)}}{\Gamma(\alpha)}V(\tau)d\tau.
\end{equation}
\xqed{$\square$}}
\end{thm}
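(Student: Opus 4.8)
The plan is to solve the auxiliary evolution equation explicitly, substitute its solution into the output relation, and reduce the resulting double integral to the claimed convolution kernel by an explicit Gamma-function computation. The only genuine input from special functions is Euler's reflection formula, which is exactly what produces the constant $\kappa(\alpha)$.

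First I would treat the first equation as a linear scalar ODE in $t$ for each fixed frequency $\xi$, with constant coefficient $\xi^2+\eta$ and forcing term $V(t)\mu(\xi)$. Since $\omega(\xi,0)=0$, the variation-of-parameters (Duhamel) formula gives
\begin{equation*}
\omega(\xi,t)=\mu(\xi)\int_0^t e^{-(\xi^2+\eta)(t-\tau)}V(\tau)\,d\tau.
\end{equation*}
Inserting this into the output relation $O(t)=\kappa(\alpha)\int_{\mathbb{R}}\mu(\xi)\omega(\xi,t)\,d\xi$ and recalling $\mu(\xi)^2=|\xi|^{2\alpha-1}$ yields a double integral over $(\xi,\tau)$. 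Next I would apply Fubini's theorem to exchange the order of integration, so that
\begin{equation*}
O(t)=\kappa(\alpha)\int_0^t V(\tau)e^{-\eta(t-\tau)}\left(\int_{\mathbb{R}}|\xi|^{2\alpha-1}e^{-\xi^2(t-\tau)}\,d\xi\right)d\tau.
\end{equation*}

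The core computation is the inner integral. Writing $s=t-\tau>0$, using the evenness of the integrand and the substitution $u=\xi^2 s$, I would reduce $\int_{\mathbb{R}}|\xi|^{2\alpha-1}e^{-\xi^2 s}\,d\xi$ to $s^{-\alpha}\int_0^\infty u^{\alpha-1}e^{-u}\,du=\Gamma(\alpha)\,s^{-\alpha}$. Substituting back gives
\begin{equation*}
O(t)=\kappa(\alpha)\Gamma(\alpha)\int_0^t (t-\tau)^{-\alpha}e^{-\eta(t-\tau)}V(\tau)\,d\tau,
\end{equation*}
which is precisely $\kappa(\alpha)\Gamma(\alpha)\Gamma(1-\alpha)\,[I^{1-\alpha,\eta}V](t)$ by the definition in \eqref{eq-1.8}. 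It then remains only to identify the constant: Euler's reflection formula $\Gamma(\alpha)\Gamma(1-\alpha)=\pi/\sin(\alpha\pi)$ shows $\kappa(\alpha)\Gamma(\alpha)\Gamma(1-\alpha)=1$, so $O=I^{1-\alpha,\eta}V$ as claimed.

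The step requiring the most care is the justification of Fubini and the convergence of the inner integral, rather than any deep idea. Convergence of $\int_{\mathbb{R}}|\xi|^{2\alpha-1}e^{-\xi^2 s}\,d\xi$ near $\xi=0$ requires $2\alpha-1>-1$, i.e.\ $\alpha>0$, while the Gaussian factor controls the tail; both hold for $\alpha\in(0,1)$ and $s>0$. The kernel $s^{-\alpha}$ is integrable at $s=0$ since $\alpha<1$, so for $V$ locally integrable (say continuous) the outer convolution is well defined and Fubini applies on $(0,t)\times\mathbb{R}$ with the absolutely integrable integrand $|\xi|^{2\alpha-1}e^{-\xi^2(t-\tau)}|V(\tau)|$.
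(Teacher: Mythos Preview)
Your argument is correct and is exactly the standard computation behind this result: Duhamel's formula for the scalar ODE in $\xi$, Fubini, the Gamma-integral identity $\int_{\mathbb{R}}|\xi|^{2\alpha-1}e^{-\xi^2 s}\,d\xi=\Gamma(\alpha)s^{-\alpha}$, and Euler's reflection formula to collapse the constant. The paper itself does not give a proof; the theorem is simply quoted from \cite{Mbodje:06} and closed with a $\square$, so there is nothing to compare against beyond noting that your proof is the expected one.
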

\noindent Since $\alpha\in (0,1)$, one has that $\kappa(\alpha)>0$. From Equations \eqref{eq-1.7} and \eqref{eq-1.8} one clearly has
\begin{equation}\label{eq-1.9}
D^{\alpha,\eta}V=I^{1-\alpha,\eta}DV.
\end{equation}
\noindent Therefore, from Theorem \ref{Theorem-1.1} and Equation \eqref{eq-1.9}, System \eqref{Part-1-01}-\eqref{Part-1-06} can be rewritten as the following augmented model 
\begin{eqnarray}
\rho_1 u_{tt}-k_1(u_x+y)_x&=&0,\quad (x,t)\in (0,1)\times \mathbb{R}_+,\label{Part-1-07}\nline
\rho_2 y_{tt}-k_2 y_{xx}+k_1(u_x+y)&=&0,\quad (x,t)\in (0,1)\times \mathbb{R}_+,\label{Part-1-08}\nline
\omega_t(\xi,t)+(\xi^2+\eta)\omega(\xi,t)-u_t(1,t)\mu(\xi)&=&0,\quad  (\xi,t)\in  \mathbb{R}\times \mathbb{R}_+,\label{Part-1-09}
\end{eqnarray}
with the boundary conditions
\begin{eqnarray}
u(0,t)=y_x(0,t)=y_x(1,t)&=&0,\quad  t\in  \mathbb{R}_+,\label{Part-1-10}\nline
k_1\left(u_x(1,t)+y(1,t)\right)+\gamma\kappa(\alpha)\int_{\mathbb{R}}\mu(\xi)\omega(\xi,t)d\xi&=&0,\quad  t\in  \mathbb{R}_+.\label{Part-1-11}
\end{eqnarray}
System \eqref{Part-1-07}-\eqref{Part-1-09}  has to be completed with the following initial conditions 
\begin{eqnarray}
u(x,0)=u_0(x),& u_t(x,0)=u_1(x),& x\in (0,1),\label{Part-1-12}\nline
y(x,0)=y_0(x),& y_t(x,0)=y_1(x),&  x\in (0,1),\label{Part-1-13}\nline
&\omega(\xi,0)=0,& \xi\in \mathbb{R}.\label{Part-1-14}
\end{eqnarray}
The energy of System \eqref{Part-1-07}-\eqref{Part-1-11} is given by
\begin{equation*}
E_1\left(t\right)= \frac{1}{2}\int_0^1\left(\rho_1\left|u_t\right|^2+\rho_2\left|y_t\right|^2+k_1\left|u_x+y\right|^2+k_2\left|y_x\right|^2\right)dx+\frac{\gamma\kappa(\alpha)}{2}\int_{\mathbb{R}}\left|w(\xi,t)\right|^{2}d\xi.
\end{equation*}
 Let $\left(u,y,\omega\right)$ be a regular solution of \eqref{Part-1-07}-\eqref{Part-1-09}. Multiplying  \eqref{Part-1-07}, \eqref{Part-1-08} and \eqref{Part-1-09}   by $u_t,\ y_t,$ and  $\gamma \kappa(\alpha) w $ respectively, then  using  the boundary conditions \eqref{Part-1-10}-\eqref{Part-1-11}, we get
\begin{equation*}
E_1'\left(t\right)=-\gamma\kappa(\alpha)\int_{\mathbb{R}}(\xi^2+\eta)\left|\omega(\xi,t)\right|^2 d\xi\leq 0.
\end{equation*}
Thus  System  \eqref{Part-1-07}-\eqref{Part-1-11}  is dissipative in the sense that its energy is non increasing with respect to the time $t$.
Let us define the energy space  $\mathcal{H}_1$ by
\begin{equation*}
	\mathcal{H}_1=H_L^1\left(0,1\right)\times L^2\left(0,1\right)\times H_*^1\left(0,1\right)\times L^2\left(0,1\right)\times L^2\left(\mathbb{R}\right),
\end{equation*}
such that
$$H_L^1\left(0,1\right)=\left\{u\in H^1(0,1)\  |\ u(0)=0\right\}\ \ \ \text{and}\ \ \ H_*^1\left(0,1\right)=\left\{u\in H^1(0,1)\  |\ \int_0^1u dx=0\right\}.$$
It is easy to check that the spaces $H_L^1$ and $H_*^1$ are  Hilbert spaces over $\mathbb{C}$ equipped respectively  with the norms
$$\left\|u\right\|^2_{H_L^1\left(0,1\right)}=\left\|u_x\right\|^2\ \ \ \text{and}\ \ \  \left\|u\right\|^2_{H_*^1\left(0,1\right)}=\left\|u_x\right\|^2,$$
where  $\|\cdot\|$  denotes  the usual norm of $L^2\left(0,1\right)$. The energy space $\mathcal{H}_1$ is equipped with the inner product defined by
\begin{equation}\label{inner product}
\begin{array}{lll}
\displaystyle{
\left<U,U_1\right>_{\mathcal{H}_1}=\rho_1 \int_0^1v\overline{v}_1dx+\rho_2 \int_0^1z\overline{z}_1dx+k_1\int_0^1\left(u_x+y\right)\overline{\left((u_1)_x+y_1\right)}dx}

             \nline

\hspace{1.5cm}\displaystyle{+k_2\int_0^1 y_x\overline{(y_1)_x}dx
+\gamma \kappa(\alpha)\int_{\mathbb{R}}\omega(\xi)\overline{\omega_1(\xi) }d\xi,}
\end{array}
\end{equation}
for  all  $U=\left(u,v,y,z,\omega\right)$ and $U_1=\left(u_1,v_1,y_1,z_1,\omega_1\right)$ in $\mathcal{H}_1$. We use 
$\|U\|_{\mathcal{H}_1}$ to denote the corresponding norm. We define the linear unbounded operator  $\mathcal{A}_1$ in $\mathcal{H}_1$ by
\begin{equation*}
\begin{array}{lll}

\displaystyle{D\left(\mathcal{A}_1\right)=\bigg\{U=(u,v,y,z,\omega)\in \mathcal{H}_1\ |\ u\in H^2\left(0,1\right)\cap H^1_L\left(0,1\right),\ y \in H^2\left(0,1\right)\cap H^1_{*}\left(0,1\right),}

             \nline

\hspace{2cm}\displaystyle{  v\in H^1_L\left(0,1\right),\   z\in H^1_*\left(0,1\right),\  y_x\in H^1_0\left(0,1\right),\  -(\xi^2+\eta)\omega(\xi)+v(1)\mu(\xi)\in L^2\left(\mathbb{R}\right),}\nline

\hspace{6cm}\displaystyle{k_1\left(u_x(1)+y(1)\right)+\gamma\kappa(\alpha)\int_{\mathbb{R}}\mu(\xi)\omega(\xi)d\xi=0,\ |\xi|\omega(\xi)\in L^2(\mathbb{R}) }
\bigg\},
\end{array}
\end{equation*}
and
\begin{equation*}
	\mathcal{A}_1U=\left(v,\frac{k_1}{\rho_1}(u_x+y)_x,z,\frac{k_2}{\rho_2}y_{xx}-\frac{k_1}{\rho_2}(u_x+y),-(\xi^2+\eta)\omega(\xi)+v(1)\mu(\xi)\right),\ \  U=\left(u,v,y,z,\omega\right)\in D\left(\mathcal{A}_1\right).
\end{equation*}
\begin{rk}
\rm{The condition $|\xi|\omega(\xi)\in L^2(\mathbb{R})$ is imposed to insure the existence of $\displaystyle{\int_{\mathbb{R}}\mu(\xi)\omega(\xi)d\xi}$ in \eqref{Part-1-11}.}\xqed{$\square$}
\end{rk}
\noindent If $U=\left(u,u_t,y,y_t,\omega\right)$ is the state of \eqref{Part-1-07}-\eqref{Part-1-11}, then the Timoshenko system is transformed into the first order evolution equation on the Hilbert space $\mathcal{H}_1$ given by
\begin{equation}\label{C-u}
U_t(x,t)=\mathcal{A}_1U(x,t),\
	U\left(x,0\right)=U_0(x),
\end{equation}
where
\begin{equation*}
U_0\left(x\right)=\left(u_0(x),u_1(x),y_0(x),y_1(x),0\right).
	\end{equation*}
			\begin{prop}\label{Theorem-1.2}
		\rm{	The unbounded linear operator $\mathcal{A}_1$ is m-dissipative in the energy space $\mathcal{H}_1$.}
		\end{prop}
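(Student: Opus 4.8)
The plan is to establish the two defining properties of an m-dissipative operator separately: first that $\mathcal{A}_1$ is dissipative, i.e. $\mathrm{Re}\langle \mathcal{A}_1 U, U\rangle_{\mathcal{H}_1}\le 0$ for every $U\in D(\mathcal{A}_1)$, and then that it is maximal, for which it suffices (by the Lumer--Phillips characterization) to show that $I-\mathcal{A}_1$ is surjective onto $\mathcal{H}_1$.

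For dissipativity, I would take $U=(u,v,y,z,\omega)\in D(\mathcal{A}_1)$, write out $\langle \mathcal{A}_1 U,U\rangle_{\mathcal{H}_1}$ using the inner product \eqref{inner product}, and integrate by parts in the two elastic terms $k_1\int_0^1(u_x+y)_x\bar v\,dx$ and $k_2\int_0^1 y_{xx}\bar z\,dx$. The boundary terms at $x=0$ vanish because $v(0)=0$, and the term $k_2[y_x\bar z]_0^1$ vanishes because $y_x(0)=y_x(1)=0$. The remaining volume integrals pair with the $k_1$- and $k_2$-terms of the inner product to form expressions of the type $-A+\bar A$, which are purely imaginary and drop out after taking real parts. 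This leaves only the boundary contribution $\mathrm{Re}\big(k_1(u_x(1)+y(1))\bar v(1)\big)$ together with the two terms coming from the $\omega$-equation, namely $-\gamma\kappa(\alpha)\int_{\mathbb{R}}(\xi^2+\eta)|\omega|^2\,d\xi$ and $\mathrm{Re}\big(\gamma\kappa(\alpha)v(1)\int_{\mathbb{R}}\mu(\xi)\bar\omega\,d\xi\big)$. Substituting the domain relation $k_1(u_x(1)+y(1))=-\gamma\kappa(\alpha)\int_{\mathbb{R}}\mu(\xi)\omega\,d\xi$ into the boundary term, it cancels exactly against the cross term from the $\omega$-equation, yielding
\[
\mathrm{Re}\langle \mathcal{A}_1 U,U\rangle_{\mathcal{H}_1}=-\gamma\kappa(\alpha)\int_{\mathbb{R}}(\xi^2+\eta)|\omega(\xi)|^2\,d\xi\le 0,
\]
since $\gamma>0$, $\kappa(\alpha)>0$ and $\xi^2+\eta\ge 0$.

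For maximality I would fix $F=(f_1,f_2,f_3,f_4,f_5)\in\mathcal{H}_1$ and solve $(I-\mathcal{A}_1)U=F$. The first, third and fifth components give immediately $v=u-f_1$, $z=y-f_3$ and the explicit formula $\omega(\xi)=(f_5(\xi)+v(1)\mu(\xi))/(1+\xi^2+\eta)$ with $v(1)=u(1)-f_1(1)$. Substituting into the remaining two equations reduces the problem to a coupled elliptic system for $(u,y)$ with boundary conditions $u(0)=0$, $y_x(0)=y_x(1)=0$, and a Robin-type condition at $x=1$ obtained from \eqref{Part-1-11}: here the key computation is that $\int_{\mathbb{R}}\mu(\xi)\omega\,d\xi$ contains the term $u(1)\,\zeta$ with $\zeta=\int_{\mathbb{R}}\mu(\xi)^2/(1+\xi^2+\eta)\,d\xi$, and I must check that $\zeta<\infty$ (and that $\mu/(1+\xi^2+\eta)\in L^2(\mathbb{R})$), which holds precisely because $\alpha\in(0,1)$ makes $|\xi|^{2\alpha-1}/(1+\xi^2+\eta)$ integrable at both $0$ and $\infty$. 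I would then recast the elliptic system variationally on $V=H^1_L(0,1)\times H^1_*(0,1)$ and apply the Lax--Milgram theorem.

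The main work, and the step I expect to be the principal obstacle, is the variational argument: I must exhibit a bilinear form
\[
a((u,y),(\phi,\psi))=\rho_1\!\int_0^1\! u\bar\phi+\rho_2\!\int_0^1\! y\bar\psi+k_1\!\int_0^1\!(u_x+y)(\bar\phi_x+\bar\psi)+k_2\!\int_0^1\! y_x\bar\psi_x+\gamma\kappa(\alpha)\zeta\,u(1)\bar\phi(1),
\]
check its continuity (using the trace inequality for the boundary term) and, crucially, its coercivity on $V$. Coercivity follows because $a((u,y),(u,y))$ dominates $\rho_1\|u\|^2+\rho_2\|y\|^2+k_1\|u_x+y\|^2+k_2\|y_x\|^2$, and the elementary bound $\|u_x\|^2\le 2\|u_x+y\|^2+2\|y\|^2$ together with the Poincaré inequalities on $H^1_L$ and $H^1_*$ (the latter using $\int_0^1 y=0$) controls the full norms $\|u_x\|^2+\|y_x\|^2$. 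Once $(u,y)\in V$ is produced, elliptic regularity upgrades it to $H^2$, and I would finish by verifying that the reconstructed $U=(u,v,y,z,\omega)$ genuinely lies in $D(\mathcal{A}_1)$, in particular that $\omega$, $|\xi|\omega$ and $-(\xi^2+\eta)\omega+v(1)\mu$ all belong to $L^2(\mathbb{R})$, each of which reduces to the same $\alpha\in(0,1)$ integrability estimates on powers of $|\xi|$ against $(1+\xi^2+\eta)^{-1}$.
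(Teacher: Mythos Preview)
Your proposal is correct and follows essentially the same route as the paper: the dissipativity computation yielding $-\gamma\kappa(\alpha)\int_{\mathbb{R}}(\xi^2+\eta)|\omega|^2\,d\xi$ is identical, and for maximality the paper likewise eliminates $v,z,\omega$, derives the same bilinear form on $H^1_L(0,1)\times H^1_*(0,1)$ (your $\gamma\kappa(\alpha)\zeta$ is their constant $M_2(\eta,\alpha)$), and invokes Lax--Milgram plus elliptic regularity. Your write-up is in fact more explicit than the paper's on two points the paper leaves to the reader: the coercivity argument via $\|u_x\|^2\le 2\|u_x+y\|^2+2\|y\|^2$ and Poincar\'e, and the verification that the reconstructed $\omega$ satisfies the $L^2$ conditions required for $U\in D(\mathcal{A}_1)$.
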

		\begin{proof}
			For $U=(u,v,y,z,\omega)\in D\left(\mathcal{A}_1\right)$, one has  
			\begin{equation*}
		\text{Re}\left<\mathcal{A}_1U,U\right>_{\mathcal{H}_1}=	-\gamma\kappa(\alpha)\int_{\mathbb{R}}(\xi^2+\eta)\left|\omega(\xi)\right|^2 d\xi\leq 0,
			\end{equation*}
			which implies that $\mathcal{A}_1$ is dissipative. Here $\text{Re}$ is used to denote the real part of a complex number.  We next prove the maximality of $\mathcal{A}_1$. For $F=(f_1,f_2,f_3,f_4,f_5)\in\mathcal{H}_1$, we show the existence of $U=(u,v,y,z,\omega)\in D(\mathcal{A}_1)$, unique solution of the equation 
			\begin{equation*}
						(I-\mathcal{A}_1)U=F.
			\end{equation*}
			Equivalently, one must consider the  system given by
			\begin{eqnarray}
			u-v&=&f_1,\label{Part-1-15}\\
			{\rho_1} v-{k_1}(u_x+y)_x&=&{\rho_1}f_2,\label{Part-1-16}\\
			y-z&=&f_3,\label{Part-1-17}\\
			{\rho_2}z-{k_2}y_{xx}+{k_1}(u_x+y)&=&{\rho_2}f_4,\label{Part-1-18}\\
			(1+\xi^2+\eta)\omega(\xi)-v(1)\mu(\xi)&=&f_5(\xi).\label{Part-1-19}
			\end{eqnarray}
From \eqref{Part-1-19} and \eqref{Part-1-15}, we get 
			\begin{equation}\label{Part-1-20}
			\omega(\xi)=\frac{f_5(\xi)}{1+\xi^2+\eta}+\frac{u(1)\mu(\xi)}{1+\xi^2+\eta}-\frac{f_1(1)\mu(\xi)}{1+\xi^2+\eta}.
			\end{equation}
			Inserting \eqref{Part-1-15} and  \eqref{Part-1-17} in \eqref{Part-1-16} and \eqref{Part-1-18}, we get 
			\begin{eqnarray}
			{\rho_1}u-{k_1}(u_x+y)_x&=&{\rho_1}\left(f_1+ f_2\right),\label{Part-1-21}\\
			 {\rho_2}y-{k_2}y_{xx}+{k_1}(u_x+y)&=&\rho_2\left(f_3+ f_4\right),\label{Part-1-22}
			\end{eqnarray}
			with the boundary conditions 
			\begin{equation}\label{Part-1-23}
			u(0)=0,\quad k_1\left(u_x(1)+y(1)\right)=-\gamma\kappa(\alpha)\int_{\mathbb{R}}\mu(\xi)\omega(\xi)d\xi\quad \text{and}\quad y_x(0)=y_x(1)=0.
			\end{equation}
			Let $(\varphi,\psi)\in H_L^1(0,1)\times H_*^1(0,1)$. Multiplying Equations \eqref{Part-1-21} and \eqref{Part-1-22}  by $\overline{\varphi}$ and $\overline{\psi}$ respectively, we obtain 
			\begin{eqnarray}
				\rho_1\int_0^1u\overline{\varphi} dx+k_1\int_0^1\left(u_x+y\right)\overline{\varphi}_{x}dx-k_1[\left(u_x+y\right)\overline{\varphi}]_0^1&=&\rho_1\int_0^1\left(f_1+f_2\right)\overline{\varphi} dx,\label{Part-1-24}\\
				\rho_2\int_0^1y\overline{\psi} dx+k_2\int_0^1y_x\overline{\psi}_x dx+k_1\int_0^1\left(u_x+y\right)\overline{\psi} dx&=&\rho_2\int_0^1\left(f_3+f_4 \right)\overline{\psi} dx\label{Part-1-25}.
			\end{eqnarray}
	 Using \eqref{Part-1-20} and \eqref{Part-1-23}, we get 
			\begin{equation}\label{Part-1-26}
			-k_1[\left(u_x+y\right)\overline{\varphi}]_0^1=\gamma\kappa(\alpha) \int_{\mathbb{R}}\mu(\xi)\omega(\xi)d\xi\hspace{0.05cm} \overline{\varphi}(1)=\overline{\varphi}(1)M_1(\eta,\alpha)+u(1)\overline{\varphi}(1)M_2(\eta,\alpha)+f_1(1)\overline{\varphi}(1)M_2,
			\end{equation}
			where 
			\begin{equation*}
			M_1(\eta,\alpha)= \gamma\kappa(\alpha)\int_{\mathbb{R}}\frac{\mu(\xi)f_5(\xi)}{1+\eta+\xi^2}d\xi \quad \text{and} \quad M_2(\eta,\alpha)=\gamma\kappa(\alpha)\int_{\mathbb{R}}\frac{\mu^2(\xi)}{1+\eta+\xi^2}d\xi.
			\end{equation*}
		By	using the fact that $f_5\in L^2(\mathbb{R})$, $\eta\geq 0$,  and $\alpha\in (0,1)$, we can easily check that  $M_1(\eta,\alpha)$ and $M_2(\eta,\alpha)$ are well defined. Adding Equations \eqref{Part-1-24} and \eqref{Part-1-25}, we obtain
			\begin{equation}\label{Part-1-27}
			a\left((u,y),(\varphi,\psi)\right)=L\left(\varphi,\psi\right),\quad \forall \ (\varphi,\psi)\in H_L^1(0,1)\times H_*^1(0,1),
			\end{equation}
			where
			\begin{equation}\label{Part-1-28}
					a\left((u,y),(\varphi,\psi)\right)=
				\int_0^1 \left(	\rho_1u\overline{\varphi}+\rho_2 y\overline{\psi}+k_1 \left(u_x+y \right)\left( \overline{\varphi}_x+\overline{\psi} \right)+k_2y_x\overline{\psi}_{x}\right)dx+u(1)\overline{\varphi}(1)M_2(\eta,\alpha)			
			\end{equation}
			and 
			\begin{equation}\label{Part-1-29}
			L(\varphi,\psi)=\int_0^1\left( \rho_1\left(f_1+f_2\right) \overline{\varphi}+\rho_2\left(f_3+f_4\right)\overline{\psi}\right) dx-\overline{\varphi}(1) M_1(\eta,\alpha)-\overline{\varphi}(1)f_1(1)M_2(\eta,\alpha).
			\end{equation}
	 Thanks to \eqref{Part-1-28}, \eqref{Part-1-29} 	and using the fact that $M_2(\eta,\alpha)>0$, we have that $a$ is a bilinear continuous coercive form on $\left( H_L^1(0,1)\times H_*^1(0,1)\right)^2$,  and $L$ is a linear continuous form on $H_L^1(0,1)\times H_*^1(0,1)$. Then, using Lax-Milgram Theorem, we deduce that there exists $(u,y)\in H_L^1(0,1)\times H_*^1(0,1)$ unique solution of the variational problem \eqref{Part-1-27}. Applying the classical elliptic regularity we deduce that $U=(u,v,y,z,\omega)\in D({\mathcal{A}_1})$, completing the proof of the proposition.
\end{proof}$\\[0.1in]$
\noindent	Thanks to  Lumer-Phillips Theorem (see \cite{LiuZheng01, Pazy01}), we deduce that $\mathcal{A}_1$ generates a  $C_0$-semigroup of contractions $\left(e^{t\mathcal{A}_1}\right)_{t\geq 0}$ in $\mathcal{H}_1$ and therefore  problem \eqref{Part-1-07}-\eqref{Part-1-11} is well-posed. Then, we have the following result.
\begin{thm}\label{Theorem-1.3}
\rm{For any $U_0\in\mathcal{H}_1$, the problem \eqref{Part-1-07}-\eqref{Part-1-11}  admits a unique weak solution
$$U\in C\left(\mathbb{R}_{+};\mathcal{H}_1\right).$$
Moreover, if $U_0\in D\left(\mathcal{A}_1\right),$ then
$$U\in C\left(\mathbb{R}_{+};D\left(\mathcal{A}_1\right)\right) \cap C^1\left(\mathbb{R}_{+};\mathcal{H}_1\right).$$\xqed{$\square$}}
\end{thm}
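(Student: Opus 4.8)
The plan is to read off the well-posedness directly from the abstract Cauchy problem \eqref{C-u} and the m-dissipativity already established in Proposition \ref{Theorem-1.2}. Since that proposition shows $\mathcal{A}_1$ to be m-dissipative on the Hilbert space $\mathcal{H}_1$, the Lumer--Phillips theorem applies and guarantees that $\mathcal{A}_1$ generates a strongly continuous semigroup of contractions $\left(e^{t\mathcal{A}_1}\right)_{t\geq 0}$. This is the only substantive input; everything else is a matter of invoking the general theory of linear evolution equations (see \cite{Pazy01, LiuZheng01}).

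First I would use the standard correspondence between $C_0$-semigroups and the abstract Cauchy problem. For arbitrary $U_0\in\mathcal{H}_1$, the mild (weak) solution is defined by $U(t)=e^{t\mathcal{A}_1}U_0$, and the strong continuity of the semigroup gives $t\mapsto U(t)\in C\left(\mathbb{R}_+;\mathcal{H}_1\right)$, which is exactly the first assertion; uniqueness is automatic from this semigroup representation. Next, for regular data $U_0\in D(\mathcal{A}_1)$, I would invoke the higher-regularity part of the generation theorem: when the initial datum lies in the domain of the generator, the orbit is a classical solution, it remains in $D(\mathcal{A}_1)$ for all times, and it is continuously differentiable with $\frac{d}{dt}U(t)=\mathcal{A}_1 U(t)$. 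This yields $U\in C\left(\mathbb{R}_+;D(\mathcal{A}_1)\right)\cap C^1\left(\mathbb{R}_+;\mathcal{H}_1\right)$, establishing the second assertion.

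There is essentially no analytic obstacle left at this stage, since the work has been front-loaded into verifying dissipativity and maximality in Proposition \ref{Theorem-1.2} (the coercivity of the form $a$ and the Lax--Milgram argument). The only point requiring a line of care is checking that the equivalence between the augmented PDE system \eqref{Part-1-07}--\eqref{Part-1-11} and the first-order equation \eqref{C-u} is faithful, so that a solution of the abstract problem genuinely solves the original model \emph{together with} the boundary conditions, which are encoded in the definition of $D(\mathcal{A}_1)$; but this is immediate once one recalls how $\mathcal{A}_1$ and its domain were set up.

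\begin{proof}
By Proposition \ref{Theorem-1.2} the operator $\mathcal{A}_1$ is m-dissipative on $\mathcal{H}_1$, so the Lumer--Phillips theorem ensures that $\mathcal{A}_1$ is the infinitesimal generator of a $C_0$-semigroup of contractions $\left(e^{t\mathcal{A}_1}\right)_{t\geq 0}$ on $\mathcal{H}_1$. For $U_0\in\mathcal{H}_1$, the function $U(t)=e^{t\mathcal{A}_1}U_0$ is the unique weak solution of \eqref{C-u} and, by strong continuity of the semigroup, $U\in C\left(\mathbb{R}_+;\mathcal{H}_1\right)$. If moreover $U_0\in D(\mathcal{A}_1)$, the classical semigroup regularity theory gives that $U(t)\in D(\mathcal{A}_1)$ for all $t\geq 0$, that $t\mapsto U(t)$ is continuously differentiable, and that $U_t(t)=\mathcal{A}_1 U(t)$; hence $U\in C\left(\mathbb{R}_+;D(\mathcal{A}_1)\right)\cap C^1\left(\mathbb{R}_+;\mathcal{H}_1\right)$. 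In view of the equivalence between \eqref{C-u} and the augmented system \eqref{Part-1-07}--\eqref{Part-1-11}, this proves the theorem.
\end{proof}
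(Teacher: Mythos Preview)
Your proposal is correct and follows exactly the same route as the paper: the paper does not give a separate proof of Theorem~\ref{Theorem-1.3} but simply notes, in the paragraph preceding it, that Proposition~\ref{Theorem-1.2} together with the Lumer--Phillips theorem yields the $C_0$-semigroup of contractions, from which well-posedness follows. You have merely written out in full the standard semigroup conclusions that the paper leaves implicit.
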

\subsection{Strong stability}\label{Section-1.2-Strong-stability}
 We introduce here the notions of stability that we encounter in this work.
\begin{defi}\label{Chapter pr-35}
\rm{Assume that $\mathcal{A}_1$ is the generator of a C$_0$-semigroup of contractions $\left(e^{t\mathcal{A}_1}\right)_{t\geq0}$  on a Hilbert space  $\mathcal{H}_1$. The  $C_0$-semigroup $\left(e^{t\mathcal{A}_1}\right)_{t\geq0}$  is said to be
\begin{enumerate}
\item[1.]  strongly stable if 
$$\lim_{t\to +\infty} \|e^{t\mathcal{A}_1}x_0\|_{\mathcal{H}_1}=0, \quad\forall \ x_0\in \mathcal{H}_1;$$
\item[2.]  exponentially (or uniformly) stable if there exist two positive constants $M$ and $\epsilon$ such that
\begin{equation*}
\|e^{t\mathcal{A}_1}x_0\|_{\mathcal{H}_1} \leq Me^{-\epsilon t}\|x_0\|_{\mathcal{H}_1}, \quad
\forall\  t>0,  \ \forall \ x_0\in {\mathcal{H}_1};
\end{equation*}
\item[3.] polynomially stable if there exists two positive constants $C$ and $\alpha$ such that
\begin{equation*}
 \|e^{t\mathcal{A}_1}x_0\|_{\mathcal{H}_1}\leq C t^{-\alpha}\|\mathcal{A}_1x_0\|_{\mathcal{H}_1},  \quad\forall\ 
t>0,  \ \forall \ x_0\in D\left(\mathcal{A}_1\right).
\end{equation*}
In that case, one says that solutions of \eqref{C-u} decay at a rate $t^{-\alpha}$.
\noindent The  $C_0$-semigroup $\left(e^{t\mathcal{A}_1}\right)_{t\geq0}$  is said to be  polynomially stable with optimal decay rate $t^{-\alpha}$ (with $\alpha>0$) if it is polynomially stable with decay rate $t^{-\alpha}$ and, for any $\varepsilon>0$ small enough, there exists solutions of \eqref{C-u} which do not decay at a rate $t^{-(\alpha-\varepsilon)}$.
\end{enumerate}}
\xqed{$\square$}
\end{defi}
\noindent We now look  for necessary conditions to show the strong stability of the $C_0$-semigroup $\left(e^{t\mathcal{A}_1}\right)_{t\geq0}$. We will rely on the following result obtained by Arendt and Batty in \cite{Arendt01}. 
\begin{thm}\label{chapter pr-37}
\rm{{$\left(\textbf{Arendt and Batty in }\text{\cite{Arendt01}}\right)$}
Assume that $\mathcal{A}_1$ is the generator of a C$_0-$semigroup of contractions $\left(e^{t\mathcal{A}_1}\right)_{t\geq0}$  on a Hilbert space $\mathcal{H}_1$. If
 \begin{enumerate}
 \item[1.]  $\mathcal{A}_1$ has no pure imaginary eigenvalues,
  \item[2.]  $\sigma\left(\mathcal{A}_1\right)\cap i\mathbb{R}$ is countable,
 \end{enumerate}
where $\sigma\left(\mathcal{A}_1\right)$ denotes the spectrum of $\mathcal{A}_1$, then the $C_0$-semigroup $\left(e^{t\mathcal{A}_1}\right)_{t\geq0}$  is strongly stable.}\xqed{$\square$}
\end{thm}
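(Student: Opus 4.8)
This is the classical Arendt--Batty--Lyubich--Vu stability theorem, which the paper invokes as a black box via \cite{Arendt01}; rather than reprove it from scratch, I will describe the structure of the standard argument specialized to the present setting, where $\mathcal{H}_1$ is a Hilbert space and $\left(e^{t\mathcal{A}_1}\right)_{t\geq 0}$ is a contraction semigroup. The plan is to split $\mathcal{H}_1$ into a \emph{reversible} part, which the absence of imaginary eigenvalues forces to be trivial, and a \emph{stable} part, whose orbits I then show decay strongly using the countability of the boundary spectrum.

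First I would use that $\mathcal{H}_1$ is reflexive and that the semigroup is bounded (being contractive), so that every orbit $\{e^{t\mathcal{A}_1}x_0 \, : \, t\geq 0\}$ is relatively weakly compact. This is exactly the hypothesis needed to apply the Jacobs--de Leeuw--Glicksberg splitting theorem, yielding a decomposition $\mathcal{H}_1 = \mathcal{H}_r \oplus \mathcal{H}_s$ in which $\mathcal{H}_r$ is the closed linear span of the eigenvectors $\{x : \mathcal{A}_1 x = i\beta x, \ \beta \in \mathbb{R}\}$ and $\mathcal{H}_s$ is the set of $x_0$ for which $0$ lies in the weak closure of the orbit. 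Hypothesis (1) says that $\mathcal{A}_1$ has no purely imaginary eigenvalue, so $\mathcal{H}_r = \{0\}$ and $\mathcal{H}_1 = \mathcal{H}_s$; consequently every orbit is \emph{weakly} asymptotically null along some sequence of times $t_n \to +\infty$.

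The genuinely delicate step is to upgrade this weak statement into the strong convergence $\|e^{t\mathcal{A}_1}x_0\| \to 0$ as $t \to +\infty$ for \emph{every} orbit, and this is where hypothesis (2) is used. I would argue by a Tauberian / contour method in the spirit of Arendt and Batty: for $\mathrm{Re}\,\lambda > 0$ the resolvent satisfies $\|(\lambda - \mathcal{A}_1)^{-1}\| \leq (\mathrm{Re}\,\lambda)^{-1}$ by dissipativity, and since $\sigma(\mathcal{A}_1) \cap i\mathbb{R}$ is countable, its complement in $i\mathbb{R}$ is open and dense. A non-decaying orbit would, through its Laplace transform, produce a bounded holomorphic function on the right half-plane that extends continuously across the dense open part of the imaginary axis; a spectral-projection analysis at each of the at most countably many exceptional points $i\beta \in \sigma(\mathcal{A}_1) \cap i\mathbb{R}$, combined with a Phragm\'en--Lindel\"of type maximum principle, then forces the orbit to decay and yields the desired contradiction.

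I expect the main obstacle to be precisely this last passage from weak to strong stability: the Jacobs--de Leeuw--Glicksberg reduction is soft and relies only on reflexivity together with hypothesis (1), whereas extracting actual decay from mere countability of $\sigma(\mathcal{A}_1)\cap i\mathbb{R}$ requires the delicate Tauberian machinery, including control of the resolvent near the possibly accumulating exceptional points. In the paper this entire argument is absorbed into the citation \cite{Arendt01}, so that the only remaining, system-dependent work is to verify the two spectral hypotheses for the concrete operator $\mathcal{A}_1$.
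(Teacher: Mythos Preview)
Your reading is correct: the paper does not prove this theorem at all but simply quotes it from \cite{Arendt01} as a black box, so there is no ``paper's own proof'' to compare against. Your sketch of the Jacobs--de Leeuw--Glicksberg splitting followed by the Arendt--Batty Tauberian argument is a faithful outline of the classical proof and goes well beyond what the paper itself provides.
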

\noindent Our main result in this part is the following theorem.
\begin{thm}\label{strongtheorem}
\rm{Assume that $\eta\geq 0$, then the semigroup of contractions $\left(e^{t\mathcal{A}_1}\right)_{t\geq0}$ is strongly stable on ${\mathcal{H}_1}$ in the sense that $\displaystyle{\lim_{t\to +\infty}\|e^{t{\mathcal{A}_1}}U_0\|_{{\mathcal{H}_1}}=0}$ for all $U_0\in {\mathcal{H}_1}$ if  
\begin{equation}\tag{$A_1$}
\frac{k_1}{\rho_2}\neq
\frac{\left(\frac{k_2 m_1^2}{\rho_2}-\frac{k_1 m_2^2}{\rho_1}\right)\left(\frac{k_1 m_1^2}{\rho_1}-\frac{k_2 m_2^2}{\rho_2}\right)\pi^2}{\left(\frac{k_1}{\rho_1}+\frac{k_2}{\rho_2}\right)\left(m_1^2+m_2^2\right)},\quad\ \forall\ m_1,\ m_2\in \mathbb{Z}.
\end{equation}}
\end{thm}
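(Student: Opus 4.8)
The plan is to invoke the Arendt--Batty criterion (Theorem \ref{chapter pr-37}): since $\mathcal{A}_1$ generates a bounded $C_0$-semigroup of contractions (Proposition \ref{Theorem-1.2}), it suffices to verify that $\mathcal{A}_1$ has no purely imaginary eigenvalue and that $\sigma(\mathcal{A}_1)\cap i\mathbb{R}$ is countable. The delicate point is that the resolvent of $\mathcal{A}_1$ is not compact, because the component $\omega$ lives in $L^2(\mathbb{R})$ and the multiplier $-(\xi^2+\eta)$ carries continuous spectrum $(-\infty,-\eta]$; hence countability of the imaginary spectrum cannot simply be read off from discreteness. I would instead prove directly that $i\mathbb{R}^\ast\subset\rho(\mathcal{A}_1)$, which reduces $\sigma(\mathcal{A}_1)\cap i\mathbb{R}$ to at most the single point $\{0\}$.

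First I would rule out imaginary eigenvalues. Suppose $\mathcal{A}_1U=i\beta U$ with $\beta\in\mathbb{R}$ and $U=(u,v,y,z,\omega)\in D(\mathcal{A}_1)$. Taking the real part of $\langle\mathcal{A}_1U,U\rangle_{\mathcal{H}_1}=i\beta\|U\|_{\mathcal{H}_1}^2$ and using the dissipation identity from the proof of Proposition \ref{Theorem-1.2} forces $\int_{\mathbb{R}}(\xi^2+\eta)|\omega(\xi)|^2\,d\xi=0$, hence $\omega\equiv0$ in $L^2(\mathbb{R})$. The fifth eigenvalue equation $-(\xi^2+\eta)\omega+v(1)\mu(\xi)=i\beta\omega$ then gives $v(1)\mu(\xi)=0$, so $v(1)=0$; since $v=i\beta u$ this yields $u(1)=0$ when $\beta\neq0$, while the boundary relation \eqref{Part-1-11} reduces to $u_x(1)+y(1)=0$. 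Thus $(u,y)$ solves the homogeneous Timoshenko system $k_1(u_x+y)_x=-\rho_1\beta^2u$, $k_2y_{xx}-k_1(u_x+y)=-\rho_2\beta^2y$ subject to the overdetermined boundary data $u(0)=u(1)=0$, $y_x(0)=y_x(1)=0$ and $u_x(1)+y(1)=0$.

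The core computation is to show that this boundary value problem has only the trivial solution under $(A_1)$. I would solve the constant-coefficient system explicitly through its dispersion relation; the homogeneous Neumann conditions $y_x(0)=y_x(1)=0$ force the two independent branches of the solution to be built from the cosine modes $\cos(m_1\pi x)$ and $\cos(m_2\pi x)$ with $m_1,m_2\in\mathbb{Z}$, and $\beta$ is then tied to $(m_1,m_2)$ through the characteristic equation. Imposing the remaining conditions $u(0)=u(1)=0$ and $u_x(1)+y(1)=0$ produces a linear algebraic compatibility relation whose solvability is exactly the equality displayed in the negation of $(A_1)$; hence assumption $(A_1)$ guarantees $U=0$. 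The case $\beta=0$ is separate and easier: the static system together with the coercivity already established in the proof of Proposition \ref{Theorem-1.2} forces $U=0$, so $0$ is not an eigenvalue either.

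Finally, for the countability of $\sigma(\mathcal{A}_1)\cap i\mathbb{R}$ I would show that $i\beta-\mathcal{A}_1$ is onto for $\beta\neq0$. Solving $(i\beta-\mathcal{A}_1)U=F$ proceeds as in the m-dissipativity argument: one eliminates $\omega$ through its explicit algebraic expression in terms of $v(1)$ and $f_5$, and is left with a variational problem for $(u,y)$ on $H_L^1(0,1)\times H_*^1(0,1)$. The associated sesquilinear form differs from a coercive one by a term that is compact on the Timoshenko component, so the Fredholm alternative applies; combined with the injectivity established above this yields bijectivity, i.e. $i\beta\in\rho(\mathcal{A}_1)$. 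Therefore $\sigma(\mathcal{A}_1)\cap i\mathbb{R}\subseteq\{0\}$ is countable and, with no imaginary eigenvalues, Theorem \ref{chapter pr-37} gives strong stability. I expect the main obstacle to be the explicit ODE analysis of the previous paragraph: extracting the precise algebraic resonance that coincides with the negation of $(A_1)$, and in particular keeping track of how $\beta$ and the two integers $m_1,m_2$ are coupled, is where the real work lies, whereas the Fredholm step is routine once the non-compact $\omega$-variable has been removed.
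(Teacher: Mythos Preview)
Your plan is correct and matches the paper's argument: apply Arendt--Batty after showing (i) $\ker(i\lambda I-\mathcal{A}_1)=\{0\}$ for all real $\lambda$ via the dissipation identity and an explicit ODE/unique-continuation analysis of the overdetermined Timoshenko problem, and (ii) $i\lambda I-\mathcal{A}_1$ is onto for $\lambda\neq0$ by eliminating $\omega$ and invoking Fredholm on the remaining $(u,y)$-problem, leaving $\sigma(\mathcal{A}_1)\cap i\mathbb{R}\subseteq\{0\}$. One caution on your ODE step: the Neumann conditions $y_x(0)=y_x(1)=0$ alone do \emph{not} force integer cosine modes; the paper instead reduces to a fourth-order equation for $u$ with five boundary conditions $u(0)=u_{xx}(0)=u(1)=u_{xx}(1)=0$ and a derivative condition at $x=1$, splits into three cases according to the sign of $\lambda^2-k_1/\rho_2$, and shows that a nontrivial solution would require both characteristic roots to hit $\pi\mathbb{Z}$ simultaneously, which is exactly what $(A_1)$ forbids.
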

\noindent The argument for Theorem \ref{strongtheorem} relies on the subsequent lemmas. 
\begin{lem}\label{Theorem-1.5}
\rm{Assume that $\eta \geq 0$ and  condition $(\rm A_1)$  holds. Then, one has
$$
\ker\left(i\lambda I-{\mathcal{A}_1}\right)=\{0\}, \forall \lambda\in \mathbb{R}.
$$ }
\end{lem}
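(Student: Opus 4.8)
The plan is to show that $i\lambda I-\mathcal{A}_1$ is injective on $D(\mathcal{A}_1)$ for every real $\lambda$, which under the Arendt--Batty framework (Theorem \ref{chapter pr-37}) is the no-imaginary-eigenvalue hypothesis. Take $U=(u,v,y,z,\omega)\in D(\mathcal{A}_1)$ with $\mathcal{A}_1U=i\lambda U$. The first step is to exploit dissipativity: by Proposition \ref{Theorem-1.2},
$$0=\mathrm{Re}\langle i\lambda U,U\rangle_{\mathcal{H}_1}=\mathrm{Re}\langle\mathcal{A}_1U,U\rangle_{\mathcal{H}_1}=-\gamma\kappa(\alpha)\int_{\mathbb{R}}(\xi^2+\eta)\,|\omega(\xi)|^2\,d\xi,$$
which forces $\omega\equiv0$. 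Inserting this into the last component of the eigenvalue equation, $i\lambda\omega=-(\xi^2+\eta)\omega+v(1)\mu$, yields $v(1)\mu(\xi)=0$ a.e., hence $v(1)=0$ since $\mu$ does not vanish identically; moreover the boundary identity built into $D(\mathcal{A}_1)$ collapses to $u_x(1)+y(1)=0$.

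Eliminating $v=i\lambda u$ and $z=i\lambda y$, the remaining components of $\mathcal{A}_1U=i\lambda U$ become the coupled ODE system $k_1(u_x+y)_x+\rho_1\lambda^2u=0$ and $k_2y_{xx}-k_1(u_x+y)+\rho_2\lambda^2y=0$, together with $u(0)=0$, $y_x(0)=y_x(1)=0$, $\int_0^1y\,dx=0$ and $u_x(1)+y(1)=0$. For $\lambda=0$ I would argue directly, without using $(A_1)$: the first equation gives $u_x+y$ constant and the second gives $y_{xx}$ constant, so the Neumann conditions on $y$ force both constants to vanish, after which $\int_0^1y=0$ and $u(0)=0$ give $U=0$.

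For $\lambda\neq0$, the relation $v(1)=0$ supplies the extra Dirichlet condition $u(1)=0$. Since $u(0)=u(1)=0$ and $y_x(0)=y_x(1)=0$, I would expand $u=\sum_{n\geq1}b_n\sin(n\pi x)$ and $y=\sum_{n\geq1}a_n\cos(n\pi x)$ (the $n=0$ mode of $y$ being killed by $\int_0^1y=0$). Because differentiating a cosine series produces a sine series and conversely, the two ODEs decouple mode by mode into a $2\times2$ homogeneous system in $(a_n,b_n)$ whose determinant
$$D_n(\lambda)=(\rho_1\lambda^2-k_1n^2\pi^2)(\rho_2\lambda^2-k_2n^2\pi^2-k_1)-k_1^2n^2\pi^2$$
is quadratic in $n^2\pi^2$; hence for fixed $\lambda$ at most two modes can be nonzero, so $(u,y)$ is a finite sum and no convergence issue arises. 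The constraint $u_x(1)+y(1)=0$ reads $\sum(-1)^n(n\pi b_n+a_n)=0$. A single nonzero mode is excluded, since combining this with the first row of the $2\times2$ system reduces to $\rho_1\lambda^2b_n=0$, forcing $b_n=a_n=0$. Thus a nontrivial kernel element requires two distinct modes $m_1\neq m_2$ with $D_{m_1}(\lambda)=D_{m_2}(\lambda)=0$ for one common value $\lambda^2>0$.

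The main obstacle is the concluding algebraic step: proving that this two-mode resonance is exactly what $(A_1)$ forbids. Subtracting $D_{m_1}=D_{m_2}=0$ and using $m_1\neq m_2$ solves
$$\lambda^2=\frac{k_1k_2(m_1^2+m_2^2)\pi^2}{\rho_1k_2+\rho_2k_1};$$
reinserting this into $D_{m_1}(\lambda)=0$ and writing $a=k_1/\rho_1$, $b=k_2/\rho_2$, I would reduce the compatibility condition to
$$\frac{k_1}{\rho_2}=\frac{\pi^2\big(ab(m_1^2+m_2^2)^2-(a+b)^2m_1^2m_2^2\big)}{(a+b)(m_1^2+m_2^2)},$$
and then use the factorization $ab(m_1^2+m_2^2)^2-(a+b)^2m_1^2m_2^2=(bm_1^2-am_2^2)(am_1^2-bm_2^2)$ to recognize precisely the right-hand side of $(A_1)$. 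Since $(A_1)$ asserts this equality fails for all integers $m_1,m_2$, no resonance occurs, every mode vanishes, and $U=0$, which completes the proof.
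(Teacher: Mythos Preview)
Your proof is correct and reaches the same algebraic obstruction, but by a genuinely different route than the paper.

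The paper eliminates $y$ to obtain a single fourth-order ODE for $u$, namely
\[
u_{xxxx}+\Big(\tfrac{\rho_2}{k_2}+\tfrac{\rho_1}{k_1}\Big)\lambda^{2}u_{xx}+\tfrac{\rho_1\rho_2}{k_1k_2}\lambda^{2}\Big(\lambda^{2}-\tfrac{k_1}{\rho_2}\Big)u=0,
\]
together with five boundary conditions at $x=0,1$, and then splits into three cases according to whether $\lambda^{2}$ is below, equal to, or above $k_1/\rho_2$ (this governs the real/complex nature of the characteristic roots). In the critical case $\lambda^{2}>k_1/\rho_2$ the paper finds that nontriviality forces $\sin r_1=\sin r_2=0$ with $r_j=m_j\pi$, and then manipulates $r_1^{2}+r_2^{2}$ and $r_1^{2}r_2^{2}$ via Vieta's formulas to land on the forbidden identity in $(A_1)$.

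Your Fourier approach exploits the Dirichlet/Neumann structure directly: once $u(0)=u(1)=0$ and $y_x(0)=y_x(1)=0$, the sine/cosine expansions diagonalise the problem into $2\times2$ blocks with determinant $D_n(\lambda)$, which is quadratic in $n^{2}\pi^{2}$, so at most two modes survive. Your elimination of a single mode via the extra constraint $u_x(1)+y(1)=0$ and the first row of the block (giving $\rho_1\lambda^{2}b_n=0$) is clean and, notably, does \emph{not} invoke $(A_1)$; this is actually stronger than the paper's handling of the borderline case $\lambda^{2}=k_1/\rho_2$, where the paper uses $(A_1)$ with $m_2=0$ to exclude $\sin r_1=0$. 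Your two-mode analysis then recovers the same resonance condition as the paper after the factorisation $ab S^{2}-(a+b)^{2}P=(bm_1^{2}-am_2^{2})(am_1^{2}-bm_2^{2})$. The trade-off: the paper's method generalises to boundary conditions that do not admit a joint Fourier basis, while yours avoids the threefold case split and treats all $\lambda\neq0$ uniformly.
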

\begin{proof}
Let $U\in D({\mathcal{A}_1})$ and $\lambda \in \mathbb{R}$, such that 
\begin{equation*}
{\mathcal{A}_1} U=i\lambda U.
\end{equation*}
Equivalently, we have 
\begin{eqnarray}
v&=&i\lambda u,\label{Part-1-30}\\
{k_1}(u_x+y)_x&=&i{\rho_1}\lambda v,\label{Part-1-31}\\
z&=&i\lambda y,\label{Part-1-32}\\
{k_2}y_{xx}-{k_1}(u_x+y)&=&i{\rho_2}\lambda z,\label{Part-1-33}\\
-(\xi^2+\eta)\omega(\xi)+v(1)\mu(\xi)&=&i\lambda \omega(\xi)\label{Part-1-34}.
\end{eqnarray}
First, a straightforward computation gives 
\begin{equation*}
0=\text{Re}\left<i\lambda U,U\right>_{{\mathcal{H}_1}}=\text{Re}\left<{\mathcal{A}_1} U,U\right>_{{\mathcal{H}_1}}=-\gamma\kappa(\alpha)\int_{\mathbb{R}}(\xi^2+\eta)|\omega(\xi)|^2d\xi,
\end{equation*}
consequently, we deduce that 
\begin{equation}\label{Part-1-35}
\omega=0\quad \text{a.e.\ in}\quad \mathbb{R}.
\end{equation}
From  \eqref{Part-1-30},  \eqref{Part-1-34}, \eqref{Part-1-35} and using the fact that $U\in D({\mathcal{A}_1})$, we get
\begin{equation}\label{Part-1-36}
u_x(1)+y(1)=0\quad \text{and}\quad \lambda u(1)=0.
\end{equation}
Substituting Equations \eqref{Part-1-30}, \eqref{Part-1-32} in Equations \eqref{Part-1-31}, \eqref{Part-1-33} and using Equation \eqref{Part-1-36}, we get
\begin{eqnarray}
\rho_1\lambda^2u+{k_1}(u_x+y)_x&=&0,\label{Part-1-37}\\
\rho_2\lambda^2y+{k_2}y_{xx}-{k_1}(u_x+y)&=&0,\label{Part-1-38}\\
u(0)=\lambda u(1)=u_x(1)+y(1)=y_x(0)=y_x(1)&=&0. \label{Part-1-39}
\end{eqnarray}
If $\lambda =0$, by elementary computations, one deduces that $u=0,\ y=0$, and consequently $U=0$.
If $\lambda \neq 0$, combining Equations \eqref{Part-1-37}-\eqref{Part-1-39}, we get the following system
\begin{eqnarray}
u_{xxxx}+\left( \frac{\rho_2}{{k}_2} +\frac{\rho_1}{k_1}\right)\lambda^2 u_{xx}+\frac{\rho_1\rho_2}{k_1{k}_2}\lambda^2 \left(\lambda^2-\frac{k_1}{\rho_2}\right) u&=&0,\label{Part-1-40}\\
u(0)=u_{xx}(0)&=&0,\label{Part-1-41}\\
u(1)=u_{xx}(1)&=&0,\label{Part-1-42}\\
u_{xxx}(1)+\left(\frac{\rho_1}{k_1}+\frac{\rho_2}{k_2} \right)\lambda^2 u_x(1)&=&0.\label{Part-1-43}
\end{eqnarray}
The characteristic equitation of System \eqref{Part-1-40} is
\begin{equation}\label{eq:carac0}
P(r):= r^4+\left( \frac{\rho_2}{{k}_2} +\frac{\rho_1}{k_1}\right)\lambda^2 r^2+\frac{\rho_1\rho_2}{k_1{k}_2}\lambda^2 \left(\lambda^2-\frac{k_1}{\rho_2}\right).
\end{equation}
Setting $$P_0(\chi):=\chi^2+\left( \frac{\rho_2}{{k}_2} +\frac{\rho_1}{k_1}\right)\lambda^2 \chi+\frac{\rho_1\rho_2}{k_1{k}_2}\lambda^2 \left(\lambda^2-\frac{k_1}{\rho_2}\right).$$ 
The polynomial $P_0$ has two distinct real roots $\chi_1$ and $\chi_2$ given by:
\begin{equation*}
\chi_1={\dfrac{-\left( \frac{\rho_2}{{k}_2} +\frac{\rho_1}{k_1}\right) \lambda^2-\sqrt{\left( \frac{\rho_2}{{k}_2} -\frac{\rho_1}{k_1}\right)^2\lambda^4+\frac{4\rho_1 \lambda^2}{{k}_2}}}{2}},\  \chi_2=\dfrac{-\left( \frac{\rho_2}{{k}_2} +\frac{\rho_1}{k_1}\right) \lambda^2+\sqrt{\left( \frac{\rho_2}{{k}_2} -\frac{\rho_1}{k_1}\right)^2\lambda^4+\frac{4\rho_1 \lambda^2}{{k}_2}}}{2}.
\end{equation*}
It is clear that $\chi_1 < 0$ and the sign of $\chi_2$ depends on the value of $\lambda^2$ with respect to $\frac{k_1}{\rho_2}.$  We hence distinguish the three cases: $\lambda^2 < \frac{k_1}{\rho_2}, \ \lambda^2=\frac{k_1}{\rho_2}$, and $\lambda^2>\frac{k_1}{\rho_2}$. \\[0.1in]
\textbf{Case 1.}  $\lambda^2< \frac{k_1}{\rho_2}$:  then $\chi_2>0$ and set
$$
r_1=\sqrt{-\chi_1}\ \ \ \text{and}\ \ \ r_2=\sqrt{\chi_2}.
$$
Then $P$ has four distinct roots $i r_1,\ -i r_1,\ r_2,\ - r_2$ and the general solution of \eqref{Part-1-40} is given by 
$$u(x)=c_1 \sin(r_1 x)+c_2 \cos(r_1 x)+c_3 \sinh(r_2 x) +c_4 \cosh(r_2 x),$$
 where $c_j\in \mathbb{C},\ \forall j=1,\ldots, 4.$ Using the boundary condition \eqref{Part-1-41} and the fact that $r_1^2+r_2^2\neq0$,  we get $c_2=c_4=0$, hence
$$
u(x)=c_1\sin(r_1x)+c_3\sinh(r_2x).
$$ 
Using the boundary conditions \eqref{Part-1-42} and \eqref{Part-1-43} and the fact that $\sinh(r_2)\neq0$, we get
$$
c_3=0,\ c_1\sin(r_1)=0,\ c_1\cos(r_1)=0,
$$
yielding that  $c_1=0$. Therefore System \eqref{Part-1-40}-\eqref{Part-1-43} admits only the zero and the proof of the lemma  is complete.\\[0.1in]
\textbf{Case 2.} $\lambda^2=\frac{k_1}{\rho_2}$:  in this case $\chi_2=0$, and one gets that 
$$
r_1=\sqrt{-\chi_1}=\sqrt{\left( \frac{\rho_2}{{k}_2} +\frac{\rho_1}{k_1}\right) \frac{k_1}{\rho_2}}.
$$
Then $P$ has two simple roots $ ir_1,\ -i r_1$, and $0$ as a double root. Hence the general solution of \eqref{Part-1-40} is
$$u(x)=c_1 \sin(r_1 x)+c_2 \cos(r_1 x)+c_3 x +c_4,$$
where $c_j\in\mathbb{C},\ j=1,\ldots,4$. From the boundary condition \eqref{Part-1-41}, we get $c_2=c_4=0$. Moreover, from boundary conditions \eqref{Part-1-42} and \eqref{Part-1-43}, we get
$$
c_3=0,\ c_1 \sin(r_1) =0.
$$
Assume first that  $\sin(r_1)=0$. It follows that 
\begin{equation*}
\sqrt{\left( \frac{\rho_2}{{k}_2} +\frac{\rho_1}{k_1}\right) \frac{k_1}{\rho_2}}=r_1=m_1\pi,\quad \text{where}\ m_1\in \mathbb{N}^*.
\end{equation*}
Therefore, after choosing $m_2=0$, one gets that
\begin{equation*}
\frac{k_1}{k_2}+\frac{\rho_1}{\rho_2} =m_1^2\pi^2,
\end{equation*}
which contradicts $\left(A_1\right)$.  Hence $\sin(r_1)\neq0$. It implies that $c_1=0$ and $u=0$. Consequently $U=0$ and one gets the conclusion.
\\[0.1in]
\textbf{Case 3.}  $\lambda^2> \frac{k_1}{\rho_2}$:  then $\chi_2<0$ and set 
 \begin{equation}\label{Part-1-44}
  r_1=\sqrt{-\chi_1}\ \ \ \text{and}\ \ \ r_2=\sqrt{-\chi_2}.
 \end{equation}
Then $P$ has again four distinct roots $i r_1,\ -i r_1,\ ir_2,-i r_2$. The general solution of \eqref{Part-1-40} is given by 
$$u(x)=c_1 \sin(r_1 x)+c_2 \cos(r_1 x)+c_3 \sin(r_2 x) +c_4 \cos(r_2 x),$$
  where $c_j\in \mathbb{C},\ \forall j=1,\ldots, 4.$ Using boundary conditions \eqref{Part-1-41} and the fact that $r_1^2-r_2^2\neq0$, we get $c_2=c_4=0$, hence
$$
u(x)=c_1\sin(r_1x)+c_3\sin(r_2x).
$$ 
Assume that  $\sin(r_1)=0$ and $\sin(r_2)=0$. It follows that 
\begin{equation}\label{Part-1-45}
r_1=m_1\pi\quad \text{and}\quad r_2=m_2\pi,\quad \text{where}\ m_1,m_2\in \mathbb{N}^*.
\end{equation}
From \eqref{Part-1-44} and \eqref{Part-1-45}, we get
\begin{equation}\label{Part-1-46}
r_1^2+r_2^2=\left(m_1^2+m_2^2\right)\pi^2=\left(\frac{\rho_1}{k_1}+\frac{\rho_2}{k_2} \right)\lambda^2\ \ \ \text{and}\ \ \  r_1^2r_2^2=m_1^2m_2^2\pi^4=\frac{\rho_1\rho_2}{k_1k_2}\lambda^2\left(\lambda^2-\frac{k_1}{\rho_2}\right).
\end{equation}
From \eqref{Part-1-46}, we get
\begin{equation*}
\frac{k_1}{\rho_2}=
\frac{\left(\frac{k_2 m_1^2}{\rho_2}-\frac{k_1 m_2^2}{\rho_1}\right)\left(\frac{k_1 m_1^2}{\rho_1}-\frac{k_2 m_2^2}{\rho_2}\right)\pi^2}{\left(\frac{k_1}{\rho_1}+\frac{k_2}{\rho_2}\right)\left(m_1^2+m_2^2\right)},
\end{equation*}
which contradicts $\left(A_1\right)$.  Hence, $\sin(r_1)\neq0$ or $\sin(r_2)\neq0$. Using boundary conditions \eqref{Part-1-42} and \eqref{Part-1-43}, we can easyly check that $u=0$. Consequently $U=0$ and the conclusion follows.
\end{proof}
\begin{lem}\label{Theorem-1.6}
\rm{Assume that $\eta=0$. Then, the operator $-{\mathcal{A}_1}$ is not invertible and consequently $0\in \sigma({\mathcal{A}_1})$.}
\end{lem}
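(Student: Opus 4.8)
The plan is to show that $-\mathcal{A}_1$ fails to be surjective onto $\mathcal{H}_1$; since invertibility requires surjectivity, non-surjectivity already forces $0\in\sigma(\mathcal{A}_1)$ and prevents $-\mathcal{A}_1$ from being boundedly invertible. Concretely, I would attempt to solve $-\mathcal{A}_1 U=F$ (equivalently $\mathcal{A}_1 U=F$) for $F=(f_1,f_2,f_3,f_4,f_5)\in\mathcal{H}_1$ and exhibit a single $F$ for which no $U\in D(\mathcal{A}_1)$ exists.

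The whole obstruction lives in the last component. Writing out $\mathcal{A}_1 U=F$, the first line gives $v=f_1$, hence $v(1)=f_1(1)$, while the fractional line reads $-(\xi^2+\eta)\omega(\xi)+v(1)\mu(\xi)=f_5(\xi)$ with $\mu(\xi)=|\xi|^{\frac{2\alpha-1}{2}}$. Setting $\eta=0$ this forces
\[
\omega(\xi)=\frac{f_1(1)\mu(\xi)-f_5(\xi)}{\xi^2}.
\]
I would then choose $F$ with $f_5\equiv 0$ and $f_1\in H^1_L(0,1)$ satisfying $f_1(1)\neq 0$ (e.g. $f_1(x)=x$), with $f_2,f_3,f_4$ arbitrary in their respective spaces; such an $F$ clearly belongs to $\mathcal{H}_1$. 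For this $F$ the relation above forces $\omega(\xi)=f_1(1)|\xi|^{\frac{2\alpha-1}{2}-2}=f_1(1)|\xi|^{\frac{2\alpha-5}{2}}$, regardless of what $u$ and $y$ turn out to be.

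The key computation is the failure of integrability near $\xi=0$: since
\[
\int_{|\xi|\le 1}|\omega(\xi)|^2\,d\xi=|f_1(1)|^2\int_{|\xi|\le 1}|\xi|^{2\alpha-5}\,d\xi
\]
and $2\alpha-5<-1$ for every $\alpha\in(0,1)$, this integral diverges, so $\omega\notin L^2(\mathbb{R})$. Consequently no $U\in D(\mathcal{A}_1)\subset\mathcal{H}_1$ can satisfy $\mathcal{A}_1U=F$, i.e. $F\notin\mathrm{Ran}(\mathcal{A}_1)$. Hence $\mathcal{A}_1$, and therefore $-\mathcal{A}_1$, is not surjective, which gives $0\in\sigma(\mathcal{A}_1)$.

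I do not anticipate a serious obstacle; the one delicate point is to make explicit why $\eta=0$ is essential. The contrast is instructive: for $\eta>0$ the denominator $\xi^2+\eta$ does not vanish, so near $\xi=0$ the forced $\omega$ behaves like $\mu(\xi)=|\xi|^{\frac{2\alpha-1}{2}}$, and $\int_{|\xi|\le1}|\xi|^{2\alpha-1}\,d\xi<\infty$ precisely because $2\alpha-1>-1$; the singularity is then harmless and $0$ lies in the resolvent set. Removing the regularizing constant $\eta$ is exactly what promotes the exponent from $2\alpha-1$ to $2\alpha-5$ and destroys square-integrability. As a sanity check one should also record that $\ker\mathcal{A}_1=\{0\}$ here, since (arguing as in Lemma \ref{Theorem-1.5} with $\lambda=0$) the elliptic part forces $u=y=0$ and $v(1)=0$ yields $\omega=0$; thus $0$ sits in the continuous or residual spectrum rather than being an eigenvalue, consistent with the claimed statement $0\in\sigma(\mathcal{A}_1)$.
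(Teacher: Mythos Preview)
Your proof is correct and follows essentially the same approach as the paper: exhibit an explicit $F\in\mathcal{H}_1$ with $f_5=0$ and $f_1(1)\neq 0$ (you take $f_1(x)=x$, the paper takes $f_1(x)=\sin x$), then observe that the forced $\omega(\xi)=f_1(1)|\xi|^{(2\alpha-5)/2}$ fails to lie in $L^2(\mathbb{R})$ because $2\alpha-5<-1$. The extra remarks about the role of $\eta>0$ and about $\ker\mathcal{A}_1=\{0\}$ are correct and informative, though not needed for the lemma as stated.
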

\begin{proof}
Let $F=\left(\sin(x),0,0,0,0\right)\in {\mathcal{H}_1}$, and assume that there exists $U=(u,v,y,z,\omega)\in D({\mathcal{A}_1})$ such that 
$$
-{\mathcal{A}_1} U= F,
$$ 
it follows that 
\begin{equation*}
v=-\sin(x)\ \ \text{in}\ \ (0,1)\ \ \ \text{and}\ \ \ \xi^2\omega+\sin(1)\mu(\xi)=0.
\end{equation*}
Hence,  we deduce that $\omega(\xi)=\xi^{\frac{2\alpha-5}{2}}\sin(1)\notin L^2(\mathbb{R})$, which contradicts the fact that $U\in D({\mathcal{A}_1})$. Consequently, the operator $-{\mathcal{A}_1}$ is not invertible, as claimed.
\end{proof}$\\[0.1in]$
\noindent The following lemma is a technical result to be used in the proof of Lemma~\ref{Theorem-1.7} given below.
\begin{lem}\label{Theorem-2.7-12}
\rm{Assume that condition $(\rm A_1)$  holds and assume that either 
$(\eta,\lambda)\in\mathbb{R}_+^*\times\mathbb{R}$ or $\eta=0$ and $\lambda\in \mathbb{R}^*$. Then, for any $F=(F_1,F_2)\in (L^2(0,1))^2$, the following system 
				\begin{equation}\label{Part-1-56_001}
			\left\{\begin{array}{lllll}
			\displaystyle{ \lambda^2 {u}+\frac{k_1}{\rho_1}\left( {u}_x+ {y}\right)_x}&=&\displaystyle{ F_1,}
\\ \noalign{\medskip}
			\displaystyle{ \lambda^2  {y}+\frac{k_2}{\rho_2} {y}_{xx}-\frac{k_1}{\rho_2}\left( {u}_x+ {y}\right)}&=&\displaystyle{ F_2,}
\\ \noalign{\medskip}
			 \displaystyle{ {u}(0)={y}_x(0)= {y}_x(1)}&=&0,
\\ \noalign{\medskip}
			\displaystyle{ -k_1\left( {u}_x(1)+ {y}(1)\right)}&=&	\mathtt{I}_1(\lambda,\eta,\alpha)  {u}(1).
			\end{array}
			\right.
			\end{equation}
	admits a unique strong solution $(u,y)\in \left(H^2(0,1)\cap H_{L}^1(0,1)\right)\times \left(H^2(0,1)\cap H_*^1(0,1)\right)$, where 
$$\mathtt{I}_1(\lambda,\eta,\alpha)=i\lambda\gamma\kappa(\alpha)\int_{\mathbb{R}}\frac{\mu^2(\xi)}{i\lambda+\xi^2+\eta}d\xi.$$
}
\end{lem}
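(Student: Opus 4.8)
The plan is to reduce \eqref{Part-1-56_001} to a variational problem on $H_L^1(0,1)\times H_*^1(0,1)$ and then invoke the Fredholm alternative, since the term $\lambda^2$ enters with a positive sign and so prevents the associated sesquilinear form from being coercive for large $\lambda$. First I would record that, under either hypothesis on $(\eta,\lambda)$, the coefficient $\mathtt{I}_1(\lambda,\eta,\alpha)$ is a well-defined complex number: the integrand $\mu^2(\xi)/(i\lambda+\xi^2+\eta)$ is integrable near $\xi=0$ because $\mu^2(\xi)=|\xi|^{2\alpha-1}$ with $\alpha>0$, and at infinity because it decays like $|\xi|^{2\alpha-3}$; the excluded case $(\eta,\lambda)=(0,0)$ is precisely the one in which the integral diverges at the origin. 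Writing $1/(i\lambda+\xi^2+\eta)=\left((\xi^2+\eta)-i\lambda\right)/\left((\xi^2+\eta)^2+\lambda^2\right)$, a direct computation gives $\mathrm{Re}\,\mathtt{I}_1=\gamma\kappa(\alpha)\int_{\mathbb{R}}\lambda^2\mu^2(\xi)/\left((\xi^2+\eta)^2+\lambda^2\right)d\xi\ge 0$, which is the sign that will power the energy estimate.

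Multiplying the first equation by $\rho_1\overline{\varphi}$ and the second by $\rho_2\overline{\psi}$, integrating by parts and using the boundary conditions (so that the boundary term at $x=1$ in the first equation becomes $-\mathtt{I}_1 u(1)\overline{\varphi}(1)$, while all boundary terms in the second vanish), I obtain $a\left((u,y),(\varphi,\psi)\right)=L(\varphi,\psi)$ for all test pairs, where
\[
a\left((u,y),(\varphi,\psi)\right)=k_1\int_0^1(u_x+y)(\overline{\varphi}_x+\overline{\psi})\,dx+k_2\int_0^1 y_x\overline{\psi}_x\,dx+\mathtt{I}_1 u(1)\overline{\varphi}(1)-\lambda^2\!\left(\rho_1\int_0^1 u\overline{\varphi}\,dx+\rho_2\int_0^1 y\overline{\psi}\,dx\right)
\]
and $L(\varphi,\psi)=-\rho_1\int_0^1 F_1\overline{\varphi}\,dx-\rho_2\int_0^1 F_2\overline{\psi}\,dx$. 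Both are continuous on $\left(H_L^1(0,1)\times H_*^1(0,1)\right)^2$ and on $H_L^1(0,1)\times H_*^1(0,1)$ respectively, the only nonstandard term being $\mathtt{I}_1 u(1)\overline{\varphi}(1)$, which is controlled by the embedding $H^1(0,1)\hookrightarrow C[0,1]$.

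Next I would split $a=a_0-b$, where $a_0$ replaces the factor $-\lambda^2\rho_i$ in the lower-order terms by $+1$, i.e.\ $a_0$ adds $\int_0^1 u\overline{\varphi}+\int_0^1 y\overline{\psi}$, and $b\left((u,y),(\varphi,\psi)\right)=(1+\lambda^2\rho_1)\int_0^1 u\overline{\varphi}\,dx+(1+\lambda^2\rho_2)\int_0^1 y\overline{\psi}\,dx$. Taking real parts on the diagonal and discarding the nonnegative term $\mathrm{Re}\,\mathtt{I}_1\,|u(1)|^2$, coercivity of $a_0$ reduces to showing that $k_1\|u_x+y\|^2+k_2\|y_x\|^2+\|u\|^2+\|y\|^2$ dominates $\|u\|_{H^1}^2+\|y\|_{H^1}^2$; this follows from the norm $\|y\|_{H_*^1}^2=\|y_x\|^2$, from $\|u_x\|^2\le 2\|u_x+y\|^2+2\|y\|^2$, and from the Poincaré--Wirtinger inequality $\|y\|^2\le C\|y_x\|^2$ valid on $H_*^1(0,1)$. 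Lax--Milgram then makes the operator associated with $a_0$ an isomorphism, while $b$ is compact by the Rellich embedding $H^1(0,1)\hookrightarrow L^2(0,1)$; hence the operator associated with $a$ is Fredholm of index zero and is invertible if and only if it is injective.

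It remains to prove injectivity, which is where condition $(\rm A_1)$ enters. A nontrivial weak solution of the homogeneous problem ($F=0$) is, after setting $v=i\lambda u$, $z=i\lambda y$ and reconstructing $\omega(\xi)=i\lambda u(1)\mu(\xi)/(i\lambda+\xi^2+\eta)$, exactly an element of $\ker(i\lambda I-\mathcal{A}_1)$: the growth estimates on $\mu$ show $\omega,\ |\xi|\omega\in L^2(\mathbb{R})$, so $U=(u,v,y,z,\omega)\in D(\mathcal{A}_1)$, and the boundary identity reproduces \eqref{Part-1-11}. By Lemma \ref{Theorem-1.5} this kernel is trivial, forcing $(u,y)=0$; when $\lambda=0$ (only possible if $\eta>0$, where $\mathtt{I}_1=0$) one checks directly from the ODEs together with $u_x(1)+y(1)=0$, $y_x(0)=y_x(1)=0$ and $u(0)=0$ that $(u,y)=0$. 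Injectivity and the Fredholm alternative then yield a unique weak solution, and classical elliptic regularity upgrades it to a strong solution $(u,y)\in\left(H^2(0,1)\cap H_L^1(0,1)\right)\times\left(H^2(0,1)\cap H_*^1(0,1)\right)$. The main obstacle is the coercivity-plus-compactness bookkeeping that legitimizes the Fredholm alternative for this indefinite form; the injectivity itself is essentially a corollary of Lemma \ref{Theorem-1.5} once the correspondence with the spectral problem for $\mathcal{A}_1$ is made precise.
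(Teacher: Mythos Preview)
Your argument is correct and follows the same overall strategy as the paper: derive a variational formulation, show the associated operator is a compact perturbation of an isomorphism, and reduce injectivity to Lemma~\ref{Theorem-1.5}. The differences are organizational. The paper splits into two cases: for $\lambda=0$ (hence $\mathtt{I}_1=0$) the form is already coercive and Lax--Milgram applies directly; for $\lambda\neq0$ the paper builds an auxiliary operator $\mathcal{L}$ with compact inverse and applies the Fredholm alternative to $\lambda^2\mathcal{L}^{-1}-I$. You instead handle both cases at once via the decomposition $a=a_0-b$ with $a_0$ coercive and $b$ compact, which is slightly more economical. For injectivity the paper first multiplies by $-\overline{\phi},-\overline{\chi}$ and takes the imaginary part of the resulting identity to extract $\phi(1)=0$ (using $\mathrm{Im}\,\mathtt{I}_1\neq0$), and only then constructs an element of $\ker(i\lambda I-\mathcal{A}_1)$ with $\omega=0$; you bypass this step by directly setting $\omega(\xi)=i\lambda u(1)\mu(\xi)/(i\lambda+\xi^2+\eta)$ and checking that $U=(u,i\lambda u,y,i\lambda y,\omega)$ lies in $D(\mathcal{A}_1)$ and satisfies $(i\lambda I-\mathcal{A}_1)U=0$. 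Your route is marginally shorter; the paper's route has the virtue of making explicit why the non-self-adjoint boundary term $\mathtt{I}_1$ does not obstruct the energy method.
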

\begin{rk}\label{Theorem-2.8}
\rm{Since $\alpha\in (0,1)$, under the assumptions of the above lemma, it is easy to check that 
$$ \left|\mathtt{I}_1(\lambda,\eta,\alpha)\right|<\infty,\qquad 
\Re\left(\mathtt{I}_1(\lambda,\eta,\alpha)\right)>0.$$}
\end{rk}
\noindent \textbf{Proof of Lemma  \ref{Theorem-2.7-12}.} 
 We distinguish two cases.\\[0.1in]
 \textbf{Case 1.} $\eta>0$ and $\lambda=0$: System \eqref{Part-1-56_001} becomes
	\begin{equation}\label{B_01}
	\left\{\begin{array}{lllll}
			\displaystyle{ -{k_1}\left( {u}_x+ {y}\right)_x}&=&-{\rho_1}\displaystyle{ F_1,}
\\ \noalign{\medskip}
			\displaystyle{-{k_2}{y}_{xx}+{k_1}\left( {u}_x+ {y}\right)}&=&\displaystyle{ -{\rho_2} F_2,}
\\ \noalign{\medskip}
			 \displaystyle{ {u}(0)={y}_x(0)= {y}_x(1)}&=&0,
\\ \noalign{\medskip}
			\displaystyle{  {u}_x(1)+ {y}(1)}&=&0.
			\end{array}
			\right.	\end{equation}
Let $(\varphi,\psi)\in H_L^1(0,1)\times H_*^1(0,1)$. Multiplying  the first and the second equations of \eqref{B_01}  by $\overline{\varphi}$ and $\overline{\psi}$ respectively,  integrating in $(0, 1)$ and taking the sum, then using by parts integration and the boundary conditions in  \eqref{B_01}, we get
			\begin{equation}\label{B_02}
			\displaystyle{k_1\int_0^1\left(u_x+y\right)\overline{\left(\varphi_{x}+\psi\right)}dx+k_2\int_0^1y_x\overline{\psi_x} dx=-\rho_1 \int_0^1F_1\overline{\varphi}dx-\rho_2  \int_0^1F_2\overline{\psi}dx }.
			\end{equation}
The left hand side of  \eqref{B_02} is a bilinear continuous coercive form on $\left( H_L^1(0,1)\times H_*^1(0,1)\right)^2$,  and  the right  hand side of  \eqref{B_02} is a linear continuous form on $H_L^1(0,1)\times H_*^1(0,1)$. Using Lax-Milgram theorem, we deduce that there exists a unique solution $(u,y)\in H_L^1(0,1)\times H_*^1(0,1)$ of the variational problem \eqref{B_02}. Hence, by applying the classical elliptic regularity we deduce that System \eqref{B_01} has a unique strong solution $(u,y)\in \left(H^2(0,1)\cap H_{L}^1(0,1)\right)\times \left(H^2(0,1)\cap H_*^1(0,1)\right)$.\\[0.1in]
 \textbf{Case 2.} $\eta=0\ \text{and}\ \lambda\in \mathbb{R}^*$: we first define the linear unbounded operator ${\mathcal{L}}$ by 
\begin{equation*}
\begin{array}{ll}
\displaystyle{ D(\mathcal{L})=\bigg\{(u,y)\in \left(H^2(0,1)\cap H_L^1(0,1)\right)\times \left(H^2(0,1)\cap H_*^1(0,1)\right),\ y_x(0)=y_x(1)=0,}\\ \noalign{\medskip}\hspace{9cm}
 \displaystyle{-k_1\left( {u}_x(1)+ {y}(1)\right)	=\mathtt{I}_1(\lambda,0,\alpha)  {u}(1)\bigg\}}
\end{array}
\end{equation*}
and 
$$
\mathcal{L}\ \mathcal{U}=\left(-\frac{k_1}{\rho_1}\left( {u}_x+ {y}\right)_x,-\frac{k_2}{\rho_2} {y}_{xx}+\frac{k_1}{\rho_2}\left( {u}_x+ {y}\right)\right),\qquad   \mathcal{U}=(u,y)\in D({\mathcal{L}}).
$$
For any $G=(G_1,G_2)\in (L^2(0,1))^2$, let us consider the following system 
	\begin{equation}\label{eq-2.45}
	\left\{\begin{array}{lllll}
			\displaystyle{ -\frac{k_1}{\rho_1}\left( {u}_x+ {y}\right)_x}&=&\displaystyle{ G_1,}
\\ \noalign{\medskip}
			\displaystyle{-\frac{k_2}{\rho_2} {y}_{xx}+\frac{k_1}{\rho_2}\left( {u}_x+ {y}\right)}&=&\displaystyle{ G_2,}
\\ \noalign{\medskip}
			 \displaystyle{ {u}(0)={y}_x(0)= {y}_x(1)}&=&0,
\\ \noalign{\medskip}
			\displaystyle{ -k_1\left( {u}_x(1)+ {y}(1)\right)}&=&	\mathtt{I}_1(\lambda,0,\alpha)  {u}(1).
			\end{array}
			\right.	\end{equation}
Let $(\varphi,\psi)\in H_L^1(0,1)\times H_*^1(0,1)$. Multiplying  the first and the second equations of \eqref{eq-2.45}  by $\rho_1\overline{\varphi}$ and $\rho_2\overline{\psi}$ respectively,  integrating in $(0, 1)$ and taking the sum, we obtain 
			\begin{equation}\label{A_01}
			\begin{array}{ll}
			\displaystyle{k_1\int_0^1\left(u_x+y\right)\overline{\left(\varphi_{x}+\psi\right)}dx+k_2\int_0^1y_x\psi_x dx-k_1[\left(u_x+y\right)\overline{\varphi}]_0^1-k_2[y_x\overline{\psi}]_0^1 }
\\ \noalign{\medskip}
\displaystyle{=\rho_1 \int_0^1G_1\overline{\varphi}dx+\rho_2 \int_0^1G_2\overline{\psi}dx }.
			\end{array}
			\end{equation}
From the boundary conditions in \eqref{eq-2.45} and the fact that  $\varphi(0)=0\ \left(\varphi\in H_L^1(0,1)\right)$, we get
\begin{equation}\label{A_02}
-k_1[\left(u_x+y\right)\overline{\varphi}]_0^1=\mathtt{I}_1(\lambda,0,\alpha)  {u}(1)\overline{\varphi}(1)\ \ \ \text{and}\ \ \ -k_2[y_x\overline{\psi}]_0^1=0
. 
\end{equation}
Inserting \eqref{A_02} in \eqref{A_01}, we get
			\begin{equation}\label{A_03}
			a\left((u,y),(\varphi,\psi)\right)=L\left(\varphi,\psi\right),\quad \forall \  \left(\varphi,\psi\right) \in H_L^1(0,1)\times H_*^1(0,1),
			\end{equation}
			where
			\begin{equation}\label{A_04}
					a\left((u,y),(\varphi,\psi)\right)=
					k_1 \int_0^1\left(u_x+y \right)\overline{\left( \varphi_x+\psi \right)}dx+k_2\int_0^1y_x\overline{\psi_{x}}dx+\mathtt{I}_1(\lambda,0,\alpha)  {u}(1)\overline{\varphi}(1)			
			\end{equation}
			and 
			\begin{equation}\label{A_05}
			L(\varphi,\psi)=\rho_1 \int_0^1G_1\overline{\varphi}dx+\rho_2\int_0^1 G_2\overline{\psi}dx.
			\end{equation}
	 Thanks to \eqref{A_04}, \eqref{A_05} 	and using Remark \ref{Theorem-2.8}, we have that $a$ is a bilinear continuous coercive form on $\left( H_L^1(0,1)\times H_*^1(0,1)\right)^2$,  and $L$ is a linear continuous form on $H_L^1(0,1)\times H_*^1(0,1)$. Then, using Lax-Milgram theorem, we deduce that there exists $(u,y)\in H_L^1(0,1)\times H_*^1(0,1)$ unique solution of the variational Problem \eqref{A_03} and 
	deduce that System \eqref{eq-2.45} has a unique strong solution $(u,y)\in D({\mathcal{L}})$. In addition, we have 
\begin{equation*}
\|(u,y)\|_{H^2(0,1)\times H^2(0,1)}\leq C\|(G_1,G_2)\|_{L^2(0,1)\times L^2(0,1)},
\end{equation*}
where $C>0$. It follows, from the above inequality and the compactness of the embeddings $H_L^1(0,1)\times H_*^1(0,1)$ into $L^2(0,1)\times L^2(0,1)$, that the inverse operator $\mathcal{L}^{-1}$ is compact in $L^2(0,1)\times L^2(0,1)$. Then, applying $\mathcal{L}^{-1}$ to \eqref{Part-1-56_001}, we get  
    \begin{equation}\label{A_06}
    	\left(\lambda^2\mathcal{L}^{-1}-I\right)\mathcal{U}=\mathcal{L}^{-1}F,\quad \text{where}\quad \mathcal{U}=(u,y)\ \text{and}\ F=(F_1,F_2).
    \end{equation}
Consequently, by Fredholm's alternative, proving the existence of $\mathcal{U}$ solution of \eqref{A_06} reduces to proving     $\ker\left(\lambda^2\mathcal{L}^{-1}-I\right)=\{0\}$. Indeed, if $\left(\phi,\chi\right)\in \ker\left(\lambda^2\mathcal{L}^{-1}-I\right)$, then $\lambda^2(\phi,\chi)-\mathcal{L}(\phi,\chi)=0.$ It follows that 
 \begin{equation}\label{A_07}\left\{
			\begin{array}{ll}
			\displaystyle{\rho_1\lambda^2\phi+{k_1}(\phi_x+\chi)_x=0},
\\ \noalign{\medskip}
\displaystyle{\rho_2\lambda^2\chi+{k_2}\chi_{xx}-{k_1}(\phi_x+\chi)=0 },
\\ \noalign{\medskip}
\displaystyle{\phi(0)=\chi_x(0)=\chi_x(1)=0,\ -k_1\left( {\phi}_x(1)+ {\chi}(1)\right)	=\mathtt{I}_1(\lambda,0,\alpha)  {\phi}(1)}.

			\end{array}\right.
			\end{equation}
Multiplying  the first and the second equations of \eqref{A_07}  by $-\overline{\phi}$ and $-\overline{\chi}$ respectively,  integrating in $(0, 1)$ and taking the sum, then using by parts integration and the boundary conditions in  \eqref{A_07}, we get 
\begin{equation*}
				-\rho_1 \lambda^2 \int_0^1|\phi|^2dx-\rho_2\lambda^2\int_0^1|\chi|^2dx+	k_1 \int_0^1\left|\phi_x+\chi \right|^2dx+k_2\int_0^1|\chi_x|^2dx+\mathtt{I}_1(\lambda,0,\alpha)  |\phi(1)|^2=0.			
\end{equation*}
Hence, we have
\begin{equation}
\text{Im}\left(\mathtt{I}_1(\lambda,\eta,\alpha)\right)  |\phi(1)|^2=\lambda\,\gamma\kappa|\left( \alpha \right) \phi(1)|^2 \int_{\mathbb{R}}\ {\frac {\left( {\xi}^{2}+
\eta \right)\, \mu^2 (\xi) }{ \left( {\xi}^{2}+\eta \right) ^{2}+{\lambda}^{2}}}d\xi=0,
\end{equation}
where $\text{Im}$ stands for the imaginary part of a complex number. 
Since $\lambda\,\gamma\kappa\left( \alpha \right) \int_{\mathbb{R}}  {\frac {\left( {\xi}^{2}+
\eta \right)\, \mu^2 (\xi) }{ \left( {\xi}^{2}+\eta \right) ^{2}+{\lambda}^{2}}}d\xi\neq0$, we get
\begin{equation}\label{A_08}
\phi(1)=0.
\end{equation}
Inserting \eqref{A_08} in \eqref{A_07}, we get
\begin{equation}\label{A_09}\left\{
			\begin{array}{ll}
			\displaystyle{\rho_1\lambda^2\phi+{k_1}(\phi_x+\chi)_x=0},
\\ \noalign{\medskip}
\displaystyle{\rho_2\lambda^2\chi+{k_2}\chi_{xx}-{k_1}(\phi_x+\chi)=0 },
\\ \noalign{\medskip}
\displaystyle{\phi(0)=\phi(1)=\chi_x(0)=\chi_x(1)=0,\  {\phi}_x(1)+ {\chi}(1)=0.}

			\end{array}\right.
			\end{equation}
It is now easy to see that if $(\phi,\chi)$ is a solution of System \eqref{A_09}, then the vector $\tilde{V}$ defined by
$$\tilde{V}=\left(\phi,i\lambda \phi,\chi,i\lambda \chi,0\right)$$
belongs to $D\left(\mathcal{A}_1\right)$, and $i\lambda \tilde{V}-\mathcal{A}_1\tilde{V}=0.$ Therefore, $\tilde{V}\in\ker\left(i\lambda I-{\mathcal{A}_1}\right)  $. Using Lemma \ref{Theorem-1.5}, we get $\tilde{V}=0$. This implies that System \eqref{A_06} admits a unique solution  due to Fredholm's alternative,  hence System \eqref{Part-1-56_001} admits a unique solution $(u,y)\in \left(H^2(0,1)\cap H_{L}^1(0,1)\right)\times \left(H^2(0,1)\cap H_*^1(0,1)\right)$.     Thus, the proof of the lemma is complete. \xqed{$\square$}$\\[0.1in]$
We use the previous lemma to deduce the following one.
\begin{lem}\label{Theorem-1.7}
\rm{Assume that either 
$(\eta,\lambda)\in\mathbb{R}_+^*\times\mathbb{R}$ or $\eta=0$ and $\lambda\in \mathbb{R}^*$. Then 
$i\lambda I-\mathcal{A}_1$ is surjective.}
\end{lem}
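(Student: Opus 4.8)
The plan is to solve the resolvent equation $(i\lambda I-\mathcal{A}_1)U=F$ for an arbitrary $F=(f_1,f_2,f_3,f_4,f_5)\in\mathcal{H}_1$ by reducing it, as in the proof of Proposition \ref{Theorem-1.2}, to the second-order boundary value problem \eqref{Part-1-56_001} already solved in Lemma \ref{Theorem-2.7-12}. Writing $U=(u,v,y,z,\omega)$, the first, third and fifth components of the equation yield directly
$$v=i\lambda u-f_1,\qquad z=i\lambda y-f_3,\qquad \omega(\xi)=\frac{f_5(\xi)+v(1)\mu(\xi)}{i\lambda+\xi^2+\eta},\quad v(1)=i\lambda u(1)-f_1(1).$$
Substituting $v,z$ into the remaining two components and setting $F_1=-f_2-i\lambda f_1$ and $F_2=-f_4-i\lambda f_3$, both in $L^2(0,1)$, the pair $(u,y)$ must satisfy the first two equations of \eqref{Part-1-56_001} together with $u(0)=y_x(0)=y_x(1)=0$. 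Inserting the expression for $\omega$ into the coupling condition \eqref{Part-1-11} and recalling $i\lambda\gamma\kappa(\alpha)\int_{\mathbb{R}}\mu^2(\xi)/(i\lambda+\xi^2+\eta)\,d\xi=\mathtt{I}_1(\lambda,\eta,\alpha)$, the last boundary condition becomes
$$-k_1\big(u_x(1)+y(1)\big)=\mathtt{I}_1(\lambda,\eta,\alpha)\,u(1)+c,\qquad c=\gamma\kappa(\alpha)\int_{\mathbb{R}}\frac{\mu(\xi)f_5(\xi)-f_1(1)\mu^2(\xi)}{i\lambda+\xi^2+\eta}\,d\xi,$$
the constant $c$ being finite by the estimates of Remark \ref{Theorem-2.8}. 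This is exactly \eqref{Part-1-56_001} except for the inhomogeneous constant $c$.

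To remove $c$ and invoke Lemma \ref{Theorem-2.7-12} verbatim, I would use a one-parameter lifting. Since $\Re(\mathtt{I}_1(\lambda,\eta,\alpha))>0$ by Remark \ref{Theorem-2.8} and $k_1>0$, we have $k_1+\mathtt{I}_1(\lambda,\eta,\alpha)\neq0$; set $a=-c/(k_1+\mathtt{I}_1(\lambda,\eta,\alpha))$ and $p(x)=ax$, so that $p\in H^2(0,1)$, $p(0)=0$ and $k_1 p_x(1)+\mathtt{I}_1(\lambda,\eta,\alpha)p(1)=-c$. Writing $u=\tilde u+p$ and $y=\tilde y$, a direct check shows that $(\tilde u,\tilde y)$ solves \eqref{Part-1-56_001} with the modified right-hand side $\tilde F_1=F_1-\lambda^2 a x$, $\tilde F_2=F_2+k_1 a/\rho_2\in(L^2(0,1))^2$ and the homogeneous coupling condition $-k_1(\tilde u_x(1)+\tilde y(1))=\mathtt{I}_1(\lambda,\eta,\alpha)\tilde u(1)$. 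Lemma \ref{Theorem-2.7-12} then furnishes a unique $(\tilde u,\tilde y)\in(H^2\cap H^1_L)\times(H^2\cap H^1_*)$, whence $(u,y)$ exists with the same regularity. (Alternatively, one re-runs the Fredholm argument inside Lemma \ref{Theorem-2.7-12}: the term $c$ merely contributes an extra continuous linear functional $c\,\overline\varphi(1)$ to the form $L$ in \eqref{A_03}, and solvability follows once more from the triviality of the relevant kernel.)

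It then remains to set $v=i\lambda u-f_1$, $z=i\lambda y-f_3$, to define $\omega$ by the formula above, and to verify that $U=(u,v,y,z,\omega)\in D(\mathcal{A}_1)$. The conditions $v\in H^1_L(0,1)$, $z\in H^1_*(0,1)$ (using $\int_0^1 f_3\,dx=0$ and $\int_0^1 y\,dx=0$), $y_x\in H^1_0(0,1)$ and the coupling relation hold by construction and by the membership $F\in\mathcal{H}_1$. The only genuinely analytic point — and the one where the hypothesis on $(\lambda,\eta)$ enters — is that $\omega,\,|\xi|\omega\in L^2(\mathbb{R})$ and $-(\xi^2+\eta)\omega+v(1)\mu=i\lambda\omega-f_5\in L^2(\mathbb{R})$. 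Here one uses that, under either $\lambda\neq0$ or ($\lambda=0$ and $\eta>0$), the denominator satisfies $|i\lambda+\xi^2+\eta|\ge\delta>0$ uniformly in $\xi$ and grows like $\xi^2$ at infinity; combined with $\mu(\xi)=|\xi|^{(2\alpha-1)/2}$, the weighted integrands $|\xi|^{2j}|\omega|^2$ for $j=0,1$ behave like $|\xi|^{2\alpha-1+2j}$ near the origin, integrable since $\alpha>0$, and like $|\xi|^{2\alpha-5+2j}$ at infinity, integrable for $j\le1$ since $\alpha<1$. Thus $U\in D(\mathcal{A}_1)$ and $(i\lambda I-\mathcal{A}_1)U=F$, which proves surjectivity. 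I expect the only real work to be this last integrability bookkeeping and the careful tracking of the inhomogeneous boundary constant through the lifting; everything else is an immediate consequence of Lemma \ref{Theorem-2.7-12}.
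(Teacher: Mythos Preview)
Your proof is correct and follows essentially the same strategy as the paper: reduce the resolvent equation to the boundary value problem of Lemma~\ref{Theorem-2.7-12} via a lifting that absorbs the inhomogeneous boundary constant. The only difference is the choice of lifting: the paper takes an abstract pair $(\varphi,\psi)\in(H^2\cap H^1_L)\times(H^2\cap H^1_*)$ with $\varphi(1)=0$ and $-k_1(\varphi_x(1)+\psi(1))=\mathtt{I}_2(\lambda,\eta,\alpha)f_1(1)+\mathtt{I}_3(f_5,\lambda,\eta,\alpha)$, whereas you use the explicit scalar lift $p(x)=ax$ with $a=-c/(k_1+\mathtt{I}_1)$ and no shift in $y$. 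Your choice is more concrete and avoids having to justify the existence of the paper's $(\varphi,\psi)$; the paper's choice keeps $\varphi(1)=0$ so that the $\mathtt{I}_1$-term transfers cleanly without the $k_1+\mathtt{I}_1\neq0$ observation. Your added verification that $\omega,\,|\xi|\omega\in L^2(\mathbb{R})$ is a detail the paper leaves implicit and is a welcome clarification.
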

\begin{proof}
Let $F=(f_1,f_2,f_3,f_4,f_5) \in \mathcal{H}_1$, we look for $U=(u,v,y,z,\omega) \in D(\mathcal{A}_1)$ solution of 
			\begin{equation*}
			(i\lambda U-\mathcal{A}_1)U=F.
			\end{equation*}
Equivalently, we consider the following system
			\begin{eqnarray}
			v=i\lambda u-f_1,\ z=i\lambda y-f_3,\label{Part-1-47}\\ \noalign{\medskip}
			\omega(\xi)=i\lambda u(1)\frac{\mu(\xi)}{i\lambda+\xi^2+\eta}+\frac{f_5(\xi)}{i\lambda+\xi^2+\eta}-f_1(1)\frac{\mu(\xi)}{i\lambda+\xi^2+\eta},\label{Part-1-49}\\ \noalign{\medskip}
			\lambda^2 u+\frac{k_1}{\rho_1} \left(u_x+y\right)_x=-f_2-i\lambda f_1,\label{Part-1-50}\\ \noalign{\medskip}
			\lambda^2 y+\frac{k_2}{\rho_2} y_{xx}-\frac{k_1}{\rho_2}(u_x+y)=-f_4-i\lambda f_3,\label{Part-1-51}\\ \noalign{\medskip}
			-k_1\left(u_x(1)+y(1)\right)=	\mathtt{I}_1(\lambda,\eta,\alpha) u(1)+\mathtt{I}_2(\lambda,\eta,\alpha) f_1(1)+\mathtt{I}_3(f_5,\lambda,\eta,\alpha),\label{Part-1-52}\\ \noalign{\medskip}
			u(0)=y_x(0)=y_x(1)=0,\label{Part-1-53}
			\end{eqnarray}
where
$$\mathtt{I}_1(\lambda,\eta,\alpha)=i\lambda\gamma\kappa(\alpha)\int_{\mathbb{R}}\frac{\mu^2(\xi)}{i\lambda+\xi^2+\eta}d\xi,\ \mathtt{I}_2(\lambda,\eta,\alpha)=-\gamma\kappa(\alpha)\int_{\mathbb{R}}\frac{\mu^2(\xi)}{i\lambda+\xi^2+\eta}d\xi,$$
and
$$\mathtt{I}_3(f_5,\lambda,\eta,\alpha)=\gamma\kappa(\alpha)\int_{\mathbb{R}}\frac{\mu(\xi)f_5(\xi)}{i\lambda+\xi^2+\eta}d\xi.$$
Since $\alpha\in (0,1)$ and $f_5\in L^2(\mathbb{R})$, under the hypotheses of the lemma, it is easy to check that 
\begin{equation*}
 \left|\mathtt{I}_1(\lambda,\eta,\alpha)\right|<\infty,\qquad \left|\mathtt{I}_2(\lambda,\eta,\alpha)\right|<\infty,\  \left|\mathtt{I}_3(f_5,\lambda,\eta,\alpha)\right|<\infty,\ \text{and} \ 
\text{Re}\left(\mathtt{I}_1(\lambda,\eta,\alpha)\right)>0.
\end{equation*}
Let $\left(\varphi,\psi\right)\in \left(H^2(0,1)\cap H_L^1(0,1)\right)\times \left(H^2(0,1)\cap H_*^1(0,1)\right)$ such that
$$\varphi(0)=\varphi(1)=\psi_x(1)=\psi_x(0)=0,\ -k_1\left(\varphi_x(1)+\psi(1)\right)=\mathtt{I}_2(\lambda,\eta,\alpha) f_1(1)	+\mathtt{I}_3(f_5,\lambda,\eta,\alpha).$$
Setting $ \chi=u-\varphi$ and $ \zeta=y-\psi$ in \eqref{Part-1-50}-\eqref{Part-1-53}, we obtain 
			\begin{equation}\label{Part-1-56}
			\left\{\begin{array}{lllll}
			\displaystyle{ \lambda^2  \chi+\frac{k_1}{\rho_1}\left( \chi_x+ \zeta\right)_x}&=&\displaystyle{ -\lambda^2\varphi-\frac{k_1}{\rho_1}\left( \varphi_x+ \psi\right)_x-f_2-i\lambda f_1\in\ L^2(0,1),}
\\ \noalign{\medskip}
			\displaystyle{ \lambda^2  \zeta+\frac{k_2}{\rho_2} \zeta_{xx}-\frac{k_1}{\rho_2}\left( \chi_x+ \zeta\right)}&=&\displaystyle{ -\lambda^2\psi-\frac{k_2}{\rho_2} \psi_{xx}+\frac{k_1}{\rho_2}\left( \varphi_x+ \psi\right)-f_4-i\lambda f_3\in L^2(0,1),}
\\ \noalign{\medskip}
			\displaystyle{  \chi(0)=\zeta_x(0)= \zeta_x(1)}&=&0,
\\ \noalign{\medskip}
			\displaystyle{- k_1\left( \chi_x(1)+ \zeta(1)\right)	}&=&\mathtt{I}_1(\lambda,\eta,\alpha)  \chi(1).
			\end{array}
			\right.
			\end{equation} 
Using Lemma \ref{Theorem-2.7-12}, System \eqref{Part-1-56} has a unique solution $(\chi, \zeta)\in \left(H^2(0,1)\cap H_L^1(0,1)\right)\times \left(H^2(0,1)\cap H_*^1(0,1)\right)$.  Therefore, System \eqref{Part-1-49}-\eqref{Part-1-53} admits a  solution 
$$(u,y):=( \chi+\varphi, \zeta+\psi)\in\left(H^2(0,1)\cap H_L^1(0,1)\right)\times \left(H^2(0,1)\cap H_*^1(0,1)\right).$$ Thus, we define  $v:=i\lambda u-f_1,\ z:=i\lambda y-f_3,$ and 
			$$\omega(\xi):=i\lambda u(1)\frac{\mu(\xi)}{i\lambda+\xi^2+\eta}+\frac{f_5(\xi)}{i\lambda+\xi^2+\eta}-f_1(1)\frac{\mu(\xi)}{i\lambda+\xi^2+\eta},$$
 we conclude that the equation $\left(\ i\lambda -\mathcal{A}_1\right)U=F$ admits a  solution $U:=(u,v,y,z,\omega)\in D\left(\mathcal{A}_1\right)$, hence the thesis.
\end{proof}$\\[0.1in]$
We are now in a position to conclude the proof of Theorem \ref{strongtheorem}.\\[0.1in]
\noindent \textbf{Proof of Theorem \ref{strongtheorem}.} 
Using Lemma \ref{Theorem-1.5}, we have that ${\mathcal{A}_1}$ has non pure imaginary eigenvalues. According to Lemmas \ref{Theorem-1.5}, \ref{Theorem-1.6}, \ref{Theorem-1.7} and with the help of the closed graph theorem of Banach, we deduce that $\sigma({\mathcal{A}_1})\cap\ i\mathbb{R}=\{\emptyset\}$ if $\eta>0$ and $\sigma(\mathcal{A}_1)\cap\ i\mathbb{R}=\{0\}$ if $\eta=0$. Thus, we get the conclusion by applying Theorem \ref{chapter pr-37} of Arendt and Batty. The proof of the theorem is complete.
\xqed{$\square$}
\subsection{Lack of exponential  stability}\label{Section-1.3-Lack of exponential}
\noindent In this part, we use the classical method developed by Littman and Markus in \cite{Littman1988} (see also \cite{CurtainZwart01}), to show that the Timoshenko System \eqref{Part-1-07}-\eqref{Part-1-11}   is not exponentially stable.  

\begin{thm}\label{Theorem-1.8}
\rm{ The semigroup generated by the operator $\mathcal{A}_1$ is not exponentially stable in the energy space $\mathcal{H}_1.$}
\end{thm}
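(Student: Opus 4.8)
The plan is to follow the spectral route of Littman and Markus \cite{Littman1988} and exhibit an infinite branch of eigenvalues of $\mathcal{A}_1$ whose real parts tend to $0$. Recall that for any $C_0$-semigroup the spectral bound $s(\mathcal{A}_1)=\sup\{\operatorname{Re}\lambda:\lambda\in\sigma(\mathcal{A}_1)\}$ satisfies $s(\mathcal{A}_1)\le\omega_0(\mathcal{A}_1)$, and that exponential stability is equivalent to $\omega_0(\mathcal{A}_1)<0$. Hence it suffices to produce a sequence $(\lambda_n)\subset\sigma(\mathcal{A}_1)$ with $\operatorname{Re}\lambda_n<0$, $\operatorname{Re}\lambda_n\to0$ and $|\operatorname{Im}\lambda_n|\to+\infty$: this forces $s(\mathcal{A}_1)\ge0$, hence $\omega_0(\mathcal{A}_1)\ge0$, and rules out uniform stability. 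The multiplication part $-(\xi^2+\eta)$ in the fifth component only contributes essential spectrum on the negative real axis, so it plays no role here; the relevant eigenvalues are the discrete Timoshenko modes.

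First I would set up the eigenvalue problem $\mathcal{A}_1\Phi=\lambda\Phi$ with $\Phi=(u,v,y,z,\omega)$ and $\operatorname{Re}\lambda<0$. As in the proof of Lemma~\ref{Theorem-1.5} one eliminates $v=\lambda u$, $z=\lambda y$, and solves the last component explicitly as $\omega(\xi)=\lambda u(1)\mu(\xi)(\lambda+\xi^2+\eta)^{-1}$; substituting into the relation defining $D(\mathcal{A}_1)$ turns it into $k_1\bigl(u_x(1)+y(1)\bigr)+\mathcal{F}(\lambda)u(1)=0$, where $\mathcal{F}(\lambda)=\lambda\gamma\kappa(\alpha)\int_{\mathbb{R}}\mu^2(\xi)(\lambda+\xi^2+\eta)^{-1}d\xi$. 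The interior equations reduce to the fourth-order ODE with the characteristic polynomial $P(r)$ of \eqref{eq:carac0} (now for complex $\lambda$), and the boundary data $u(0)=0$, $y_x(0)=y_x(1)=0$ together with the damped relation are rewritten in terms of $u$ as in the derivation of \eqref{Part-1-40}--\eqref{Part-1-43}, the essential difference being that here $u(1)\neq0$ so that $\mathcal{F}(\lambda)$ genuinely enters the last conditions. Imposing these on the general solution built from the four roots of $P$ yields a transcendental characteristic equation $\Delta(\lambda)=0$ whose nonreal roots are exactly the eigenvalues of $\mathcal{A}_1$.

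The heart of the matter, and the step I expect to be the main obstacle, is the asymptotic analysis of $\Delta$ as $|\lambda|\to\infty$. The key quantitative input is that the damping is weak: rescaling $\xi=\sqrt{|\lambda|}\,\zeta$ shows $\int_{\mathbb{R}}\mu^2(\xi)(\lambda+\xi^2+\eta)^{-1}d\xi=O(|\lambda|^{\alpha-1})$, so that $\mathcal{F}(\lambda)=O(|\lambda|^{\alpha})$ with $\alpha<1$, which is of strictly lower order than the $O(|\lambda|)$ and $O(|\lambda|^2)$ terms governing the conservative modes. Consequently one can write $\Delta(\lambda)=\Delta_0(\lambda)+\mathcal{F}(\lambda)R(\lambda)$, where $\Delta_0$ is the characteristic function of the undamped system, whose zeros are the purely imaginary frequencies $i\sigma_n$ with $\sigma_n\to+\infty$.

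I would then locate the perturbed roots by a Rouch\'e or implicit-function argument, writing $\lambda_n=i\sigma_n+\delta_n$ and showing that $\delta_n$ is of order $|\sigma_n|^{\alpha-1}$ times the ratio $R(i\sigma_n)/\Delta_0'(i\sigma_n)$, with $\operatorname{Re}\delta_n<0$ coming from $\operatorname{Re}\mathcal{F}(i\sigma_n)=\operatorname{Re}\bigl(\mathtt{I}_1(\sigma_n,\eta,\alpha)\bigr)>0$ (cf.\ Remark~\ref{Theorem-2.8}). The delicate points are to make this perturbation rigorous, namely to bound $\Delta_0'$ from below along the imaginary axis and the remainder uniformly, to check that $\operatorname{Re}\lambda_n\to0^-$ rather than staying bounded away from $0$, and to separate the genuine Timoshenko branch from the essential spectrum. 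Once this is carried out, the sequence $(\lambda_n)$ gives $s(\mathcal{A}_1)=0$, hence $\omega_0(\mathcal{A}_1)\ge0$, and the non-exponential stability of $(e^{t\mathcal{A}_1})_{t\ge0}$ follows.
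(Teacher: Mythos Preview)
Your proposal is correct and follows essentially the same route as the paper (Propositions~\ref{Theorem-1.10} and~\ref{Theorem-1.10new}): reduce the eigenvalue problem to a transcendental determinant, use that the fractional boundary term is of lower order $|\lambda|^{\alpha}$ (the paper computes the integral exactly via Lemma~\ref{Theorem-1.9} as $(\lambda+\eta)^{\alpha-1}$ rather than only estimating its order), and apply Rouch\'e near the purely imaginary undamped spectrum. One point your outline glosses over but the paper treats carefully is the split into the cases $\rho_1/k_1\neq\rho_2/k_2$ and $\rho_1/k_1=\rho_2/k_2$: in the equal-speed case the two roots of the characteristic polynomial $P$ coalesce at leading order and the two undamped branches of $\Delta_0$ nearly coincide, so the perturbation/Rouch\'e step and the lower bound on $\Delta_0'$ become considerably more delicate (this is why Proposition~\ref{Theorem-1.10new} requires several further orders of expansion), though the overall strategy is unchanged.
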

\noindent {For the proof of Theorem  \ref{Theorem-1.8},} we recall the following definitions: the growth bound 
$\omega_0\left(\mathcal{A}_1\right)$ and the 
the spectral bound $s\left(\mathcal{A}_1\right)$ of $\mathcal{A}_1$
are defined respectively as 
$$
\omega_0\left(\mathcal{A}_1\right)= \inf\left\{\omega\in \mathbb{R}:\  
\text{ there exists a constant }  M_\omega \text{ such that } \forall\ t\geq0,\ \left\|e^{t\mathcal{A}_1}\right\|_{\mathcal{L}(\mathcal{H}_1)}\leq M_\omega e^{\omega t}\right\}
$$
and 
$$
s\left(\mathcal{A}_1\right)=\sup\left\{\Re\left(\lambda\right):\ \lambda\in \sigma\left(\mathcal{A}_1\right) \right\}.
$$
Then, according to Theorem 2.1.6 and Lemma 2.1.11 in \cite{CurtainZwart01}, one has that 
$$
s\left(\mathcal{A}_1\right)\leq \omega_0\left(\mathcal{A}_1\right).
$$
By the previous results, one clearly has that $
s\left(\mathcal{A}_1\right)\leq 0$ and the theorem would follow if equality holds in the previous inequality. It therefore amounts to 
show the existence of a sequence of eigenvalues of $\mathcal{A}_1$ whose real parts tend to zero.\\[0.1in]
Since $\mathcal{A}_1$ is dissipative, we fix $\alpha_0>0$ small enough and we study the asymptotic behavior of the eigenvalues $\lambda$ of 
$\mathcal{A}_1$ in the strip 
$$S=\left\{\lambda \in \mathbb{C}:-\alpha_0\leq \text{Re}(\lambda)\leq 0\right\}.$$ 
First, we determine the characteristic equation satisfied by the eigenvalues of $\mathcal{A}_1$. For this aim, let $\lambda\in\mathbb{C}^*$ be an eigenvalue of $\mathcal{A}_1$ and let $U=\left(u,\lambda u,y,\lambda y,\omega\right)\in D(\mathcal{A}_1)$  be an associated eigenvector such that $\|U\|_{\mathcal{H}_1}=1$.
 Then, we have
\begin{eqnarray}
	k_1 u_{xx}-\rho_1 \lambda^2 u+k_1 y_x=0 \label{Part-1-59},\nline
-k_1u_x+{k}_2y_{xx}-\left(k_1 +\rho_2 \lambda^2\right) y=0\label{Part-1-60},\nline
\omega(\xi)=\frac{\lambda u(1)|\xi|^{\frac{2\alpha-1}{2}}}{\xi^2+\eta+\lambda},\label{Part-1-61}
\end{eqnarray}
with the boundary conditions
\begin{eqnarray}
u(0)=y_x(0)=y_x(1)=0,\label{Part-1-62}\nline
u_x(1)+y(1)+\frac{\gamma\kappa(\alpha)}{k_1}\int_{\mathbb{R}}|\xi|^{\frac{2\alpha-1}{2}}\omega(\xi)d\xi=0.\label{Part-1-63}
\end{eqnarray}
From \eqref{Part-1-59}, \eqref{Part-1-60} and using the boundary conditions \eqref{Part-1-62}, we get
\begin{equation}\label{Part-1-64}
-\left(\frac{k_1 }{k_2}+\frac{\rho_2 }{k_2} \lambda^2\right) y(1)=	 u_{xxx}(1)+\left(\frac{k_1}{k_2}-\frac{\rho_1  }{k_1}\lambda^2\right) u_x(1).
\end{equation}
Inserting \eqref{Part-1-61} and \eqref{Part-1-64} in \eqref{Part-1-63}, we get
\begin{equation}\label{Part-1-65}
 u_{xxx}(1)- \left(\frac{\rho_1}{k_1}+\frac{\rho_2}{k_2} \right)\lambda^2 u_x(1)-\frac{\rho_2 \lambda^2+k_1 }{k_1{k}_2}\lambda\gamma\kappa(\alpha)  u(1) \int_{\mathbb{R}}\frac{ \left|\xi\right|^{2\alpha-1}}{|\xi|^2+\eta+\lambda}d\xi=0.
\end{equation}
Therefore,  from \eqref{Part-1-59}, \eqref{Part-1-60}, \eqref{Part-1-62} and \eqref{Part-1-65}, we have 
\begin{equation}\label{Part-1-66}
	\left\{
	\begin{array}{ll}
	\displaystyle{u_{xxxx}-\left( \frac{\rho_2}{{k}_2} +\frac{\rho_1}{k_1}\right)\lambda^2 u_{xx}+\frac{\rho_1\rho_2}{k_1{k}_2}\lambda^2 \left(\lambda^2+\frac{k_1}{\rho_2}\right) u= 0,}\\  \\
	\displaystyle{u\left(0\right)=u_{xx}(0)=0,\  u_{xx}(1)-\frac{\rho_1}{k_1} \lambda^2 u(1)=0},   \\ \\
	\displaystyle{ u_{xxx}(1)- \left(\frac{\rho_1}{k_1}+\frac{\rho_2}{k_2} \right)\lambda^2 u_x(1)-\frac{\rho_2 \lambda^2+k_1 }{k_1{k}_2} \lambda\gamma\kappa(\alpha) u(1) \int_{\mathbb{R}}\frac{ \left|\xi\right|^{2\alpha-1}}{\xi^2+\eta+\lambda}d\xi=0.}

	\end{array}\right.
\end{equation}
The characteristic equation associated with System~\eqref{Part-1-66} is given by 
\begin{equation*}
Q(r):= r^4-\left( \frac{\rho_2}{{k}_2} +\frac{\rho_1}{k_1}\right)\lambda^2 r^2+\frac{\rho_1\rho_2}{k_1{k}_2}\lambda^2 \left(\lambda^2+\frac{k_1}{\rho_2}\right)=0.
\end{equation*}
In order to proceed, we set the following notation. Here and below, in the case where $z$ is a non zero non-real number, we define (and denote) by  $\sqrt{z}$ the square root of $z$, i.e., the unique complex number with positive real part whose square is equal to $z$. \\[0.1in]
Our aim is to study the asymptotic behavior of the large eigenvalues $\lambda $ of ${\mathcal{A}_1}$ in $S$. A careful examination shows that $Q$ admits four distinct roots if $(\rho_2k_1-\rho_1k_2)^2\lambda^2\neq 4\rho_1k_1^2k_2$. In case of equal wave propagation speed (i.e. $\rho_2k_1-\rho_1k_2=0$), this is automatically true and,  in case of different wave propagation speeds, this again holds true by taking $\lambda$ large enough. Hence, the general solution of  \eqref{Part-1-66} is given by
\begin{equation}\label{ggnb}
u(x)=\sum_{j=1}^4K_j e^{r_j x},
\end{equation}
where the $r_j$'s denote the four distinct roots of $Q$, $K_j\in \mathbb{C}$ for all $j=1,\ldots,4$ and
\begin{equation}\label{Part-1-67}
\left\{\begin{array}{ll}
\displaystyle{r_1(\lambda)=\lambda\sqrt{\dfrac{\left( \frac{\rho_2}{{k}_2} +\frac{\rho_1}{k_1}\right)+\sqrt{\left( \frac{\rho_2}{{k}_2} -\frac{\rho_1}{k_1}\right)^2-\frac{4\rho_1}{{k}_2\lambda^2}}}{2}}, \quad r_3(\lambda)=-r_1(\lambda)},
\\ \noalign{\medskip}
 \displaystyle{r_2(\lambda)=\lambda\sqrt{\dfrac{\left( \frac{\rho_2}{{k}_2} +\frac{\rho_1}{k_1}\right)-\sqrt{\left( \frac{\rho_2}{{k}_2} -\frac{\rho_1}{k_1}\right)^2-\frac{4\rho_1}{{k}_2\lambda^2}}}{2}},\quad  r_4(\lambda)=-r_2(\lambda). }

\end{array}\right.
\end{equation}
Here and below, for simplicity we denote $r_j(\lambda)$ by $r_j$. Equation \eqref{ggnb} can be written in the form
 $$u\left(x\right)=c_1 \sinh(r_1 x)+c_2\sinh(r_2 x)+c_3\cosh(r_1 x)+c_4 \cosh(r_2 x),$$ where $c_j\in\mathbb{C}$ for all $j=1,\ldots,4.$
From the boundary conditions in \eqref{Part-1-66} at $x=0$, for $\lambda$ large enough, we get $c_3=c_4=0$. Consequently,
$$u(x)=c_1\sinh(r_1 x)+c_2\sinh(r_2 x).$$
 Moreover, the boundary conditions in \eqref{Part-1-66} at $x=1$ can be expressed by  $$M C=0,$$ where
\begin{equation*}
	M=\begin{pmatrix}
f(r_1)	\sinh(r_1)& f(r_2)\sinh(r_2)\\ \noalign{\medskip}
	g(r_1)\cosh(r_1)+\mathcal{R}_\lambda\sinh(r_1)& g(r_2)\cosh(r_2)+\mathcal{R}_{\lambda}\sinh(r_2)
	\end{pmatrix},\quad C=\begin{pmatrix}
	c_1\\  c_2
\end{pmatrix},
\end{equation*}
and
\begin{equation}\label{Part-1-68}
f(r)=r^2-\frac{\rho_1}{k_1} \lambda^2,\  g (r)= \left(r^2-\left(\frac{\rho_1}{k_1}+\frac{\rho_2}{k_2} \right)\lambda^2\right) r,\ \mathcal{R}_{\lambda}=-\frac{\rho_2 \lambda^2+k_1 }{k_1{k}_2} \lambda\gamma\kappa(\alpha)  \int_{\mathbb{R}}\frac{ \left|\xi\right|^{2\alpha-1}}{\xi^2+\eta+\lambda}d\xi.
\end{equation}
Denoting the determinant of a matrix $M$ by $\det(M)$, one gets that
\begin{equation*}
\det\left(M\right)=\left(f(r_1)-f(r_2) \right) \mathcal{R}_\lambda\sinh(r_1)\sinh(r_2)+f(r_1)g(r_2)\sinh(r_1)\cosh(r_2)-f(r_2)g(r_1)\sinh(r_2)\cosh(r_1).
	\end{equation*}
Equation \eqref{Part-1-66}  admits a non trivial solution if and only if $\det(M)=0$. Next, for the proof of Theorem \ref{Theorem-1.8}, we  recall Lemma 2.1 stated in \cite{Benaissa2017}.
\begin{lem}\label{Theorem-1.9}
\rm{Let $\lambda\in D=\left\{\lambda\in\mathbb{C}\ |\ \text{Re}\left\{\lambda\right\}+\eta>0\right\}\cup \left\{\lambda\in\mathbb{C}\ |\ \text{Im}\left\{\lambda\right\}\neq 0\right\},$ then 
$$\kappa(\alpha)  \int_{\mathbb{R}}\frac{ \left|\xi\right|^{2\alpha-1}}{\xi^2+\eta+\lambda}d\xi=   \left(\lambda+\eta\right)^{\alpha-1}.$$\xqed{$\square$}}
	\end{lem}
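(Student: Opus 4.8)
The plan is to reduce the two-sided integral over $\mathbb{R}$ to a classical Euler-type integral and then pass to general complex $\lambda$ by analytic continuation. First I would exploit the evenness of the integrand: since $|\xi|^{2\alpha-1}/(\xi^2+\eta+\lambda)$ is even in $\xi$, one has $\int_{\mathbb{R}}=2\int_0^\infty$. The substitution $s=\xi^2$ (so that $d\xi=\frac{1}{2}s^{-1/2}ds$ and $|\xi|^{2\alpha-1}=s^{\alpha-1/2}$) then transforms the quantity into $\int_0^\infty \frac{s^{\alpha-1}}{s+(\eta+\lambda)}\,ds$. Writing $a=\eta+\lambda$, the whole matter reduces to establishing the identity
\[
\int_0^\infty \frac{s^{\alpha-1}}{s+a}\,ds=\frac{\pi}{\sin(\alpha\pi)}\,a^{\alpha-1},\qquad a\in\mathbb{C}\setminus(-\infty,0],
\]
after which multiplication by $\kappa(\alpha)=\sin(\alpha\pi)/\pi$ produces exactly $(\eta+\lambda)^{\alpha-1}$.

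Before invoking this identity I would record two preliminary facts. First, the integral converges for every $\alpha\in(0,1)$: near $\xi=0$ the integrand behaves like $|\xi|^{2\alpha-1}$ with exponent $2\alpha-1>-1$, and near infinity like $|\xi|^{2\alpha-3}$ with exponent $2\alpha-3<-1$, so it is absolutely integrable. Second, the denominator $\xi^2+\eta+\lambda$ vanishes for some real $\xi$ precisely when $\eta+\lambda$ is a nonpositive real number; the set $D$ is exactly the complement of $\{\eta+\lambda\in(-\infty,0]\}$, which simultaneously rules out a real pole and guarantees that the principal branch of $a^{\alpha-1}$ is well defined on the relevant region.

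To prove the displayed identity I would first treat $a>0$ by the scaling $s=at$, which reduces it to the standard Beta integral $\int_0^\infty \frac{t^{\alpha-1}}{1+t}\,dt=B(\alpha,1-\alpha)=\Gamma(\alpha)\Gamma(1-\alpha)=\pi/\sin(\alpha\pi)$ via Euler's reflection formula; this settles the case of positive real $a$. For general $a\in\mathbb{C}\setminus(-\infty,0]$ I would argue by analytic continuation: as a function of $a$, the left-hand side is holomorphic on $\mathbb{C}\setminus(-\infty,0]$ (differentiation under the integral sign is justified by the convergence estimates above, using that $|s+a|$ stays bounded away from $0$ on compact subsets of the cut plane), and $a\mapsto a^{\alpha-1}$ is holomorphic there too. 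Since the two holomorphic functions agree on the ray $a>0$, the identity theorem forces them to coincide on the whole connected domain $\mathbb{C}\setminus(-\infty,0]$. Restoring $a=\eta+\lambda$ and recalling that $\lambda\in D$ is equivalent to $\eta+\lambda\notin(-\infty,0]$ concludes the argument.

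The main obstacle is the careful treatment of the complex parameter rather than the computation itself: one must verify that the branch of $a^{\alpha-1}$ obtained by continuation from the positive real axis really is the principal branch implicit in the statement, and that holomorphy (hence the continuation) remains valid up to the branch cut. A self-contained alternative would be to evaluate $\int_0^\infty \frac{s^{\alpha-1}}{s+a}\,ds$ directly by a keyhole contour around the positive real axis, picking up the residue at $s=-a$; this avoids the continuation step but requires the same care in tracking the branch of $s^{\alpha-1}$ across the slit. Once convergence and holomorphy on $\mathbb{C}\setminus(-\infty,0]$ are secured, everything else is routine.
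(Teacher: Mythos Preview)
Your argument is correct and complete. The reduction by evenness and the substitution $s=\xi^2$ are right, the Beta/reflection-formula evaluation for $a>0$ is the standard one, and the analytic continuation to $a=\eta+\lambda\in\mathbb{C}\setminus(-\infty,0]$ is properly justified by your convergence bounds; your observation that $\lambda\in D$ is exactly the condition $\eta+\lambda\notin(-\infty,0]$ is also accurate.

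As for comparison with the paper: the paper does not prove this lemma at all---it merely recalls it from \cite{Benaissa2017} (Lemma~2.1 there) and states it with a $\square$. So you are supplying a proof where the paper has none. What you wrote is essentially the canonical derivation of this formula; the keyhole-contour alternative you mention would give the same result with a single residue computation rather than a two-step ``real $a$ plus continuation'' argument, but neither route offers a real advantage over the other here.
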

\begin{prop} \label{Theorem-1.10}
\rm{Assume that $\frac{k_1}{\rho_1}\neq\frac{k_2}{\rho_2}$. Then there exist $n_0\in \mathbb{N}$ sufficiently large  and two sequences $\left(\lambda^{\left(0\right)}_n\right)_{ |n|\geq n_0} $ and $\left(\lambda^{\left(1\right)}_n\right)_{ |n|\geq n_0} $ of simple roots of $\det(M)$ (that are also simple eigenvalues of $\mathcal{A}_1$) satisfying the following asymptotic behavior:
\begin{equation}\label{Part-1-69}
		\displaystyle{\lambda^{\left(0\right)}_n= i n\pi\sqrt{\frac{k_1}{\rho_1} }+\frac{i\pi}{2}\sqrt{\frac{k_1}{\rho_1} }+o(1), \ \ \forall \ |n|\geq n_0}
\end{equation}
and
\begin{equation}\label{Part-1-70}
		\displaystyle{\lambda^{\left(1\right)}_{n}= i n\pi\sqrt{\frac{k_2}{\rho_2}}+o\left(1\right)}, \ \ \forall\  |n|\geq n_0.
\end{equation}}
	\end{prop}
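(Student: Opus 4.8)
The plan is to locate the large eigenvalues of $\mathcal{A}_1$ as the zeros of $\det(M)$ in the strip $S$, by treating $\det(M)=0$ as a perturbation of an explicitly solvable transcendental equation. First I would record the asymptotic expansions of the two roots $r_1,r_2$ of $Q$ for large $\lambda$ in $S$. Expanding the inner square root in \eqref{Part-1-67} in powers of $\lambda^{-2}$ — this is where the different-speeds hypothesis enters, since $\frac{\rho_2}{k_2}-\frac{\rho_1}{k_1}\neq 0$ keeps the two branches genuinely separated — one gets, assuming for definiteness $\frac{\rho_1}{k_1}>\frac{\rho_2}{k_2}$ (the reverse regime is handled analogously, with the roles of $r_1,r_2$ and of the two families exchanged),
\begin{equation*}
r_1(\lambda)=\sqrt{\tfrac{\rho_1}{k_1}}\,\lambda+O(\lambda^{-1}),\qquad r_2(\lambda)=\sqrt{\tfrac{\rho_2}{k_2}}\,\lambda+O(\lambda^{-1}).
\end{equation*}
Substituting into $f,g$ from \eqref{Part-1-68} and using Lemma \ref{Theorem-1.9} to write $\mathcal{R}_\lambda=-\frac{\rho_2\lambda^2+k_1}{k_1k_2}\lambda\gamma(\lambda+\eta)^{\alpha-1}$, I would establish the orders of magnitude
\begin{equation*}
f(r_1)=O(1),\quad f(r_2)=O(\lambda^2),\quad g(r_1),g(r_2)=O(\lambda^3),\quad \mathcal{R}_\lambda=O(\lambda^{2+\alpha}),
\end{equation*}
the first being the delicate cancellation $r_1^2-\frac{\rho_1}{k_1}\lambda^2=O(1)$.

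The second step is to read off which factor of the dominant term of $\det(M)$ produces each family. Its leading term is $-f(r_2)g(r_1)\sinh(r_1\!\to\! r_2)\cosh(r_1)$ of order $\lambda^5$, so its vanishing forces either $\sinh(r_2)\approx 0$ or $\cosh(r_1)\approx 0$, i.e. $r_2\approx in\pi$ or $r_1\approx i(n+\tfrac12)\pi$; by the expansions these are $\lambda\approx in\pi\sqrt{\frac{k_2}{\rho_2}}$ and $\lambda\approx i(n+\tfrac12)\pi\sqrt{\frac{k_1}{\rho_1}}$, exactly the leading terms of \eqref{Part-1-70} and \eqref{Part-1-69}. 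To pin each family down I would rearrange $\det(M)=0$ to isolate the relevant hyperbolic factor,
\begin{equation*}
\sinh(r_2)=\frac{f(r_1)g(r_2)\sinh(r_1)\cosh(r_2)}{f(r_2)g(r_1)\cosh(r_1)-(f(r_1)-f(r_2))\mathcal{R}_\lambda\sinh(r_1)}=:\Phi_1(\lambda),
\end{equation*}
and symmetrically $\cosh(r_1)=\Phi_0(\lambda)$. The order count then yields $\Phi_1(\lambda)=O(\lambda^{-2})$ and $\Phi_0(\lambda)=O(\lambda^{\alpha-1})$, both $\to 0$; note that $\Phi_0$ inherits its leading behaviour from the damping term $\mathcal{R}_\lambda$, which is precisely what drives $\lambda^{(0)}_n$ off the imaginary axis.

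Finally, I would make the localization rigorous by Rouché's theorem. Since $r_2(\lambda)$ is analytic with $r_2'(\lambda)\to\sqrt{\rho_2/k_2}\neq0$, the map $\lambda\mapsto\sinh(r_2(\lambda))$ has a simple zero near each $in\pi\sqrt{k_2/\rho_2}$; on a small fixed circle about that point $|\sinh(r_2(\lambda))|$ is bounded below while $|\Phi_1(\lambda)|=O(\lambda^{-2})$, so for $|n|\geq n_0$ Rouché gives exactly one simple zero of $\sinh(r_2)-\Phi_1$, hence of $\det(M)$, inside — producing $\lambda^{(1)}_n$ with the stated $o(1)$ accuracy. The same argument applied to $\cosh(r_1)-\Phi_0$ produces $\lambda^{(0)}_n$. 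The simplicity of the root of $\det(M)$ translates into geometric simplicity of the eigenvalue because the corresponding matrix $M$ then has rank one.

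I expect the main obstacle to be twofold. The first is carrying out the order bookkeeping precisely enough — in particular the cancellation making $f(r_1)=O(1)$ and the fractional order of $\mathcal{R}_\lambda$ — so that the two balances come out as claimed, with $\Phi_0$ governed by the damping and $\Phi_1$ not. The second, more delicate, point is keeping the two families separated: when $\sqrt{\rho_1 k_2/(k_1\rho_2)}$ is irrational the approximate locations $in\pi\sqrt{k_2/\rho_2}$ and $i(m+\tfrac12)\pi\sqrt{k_1/\rho_1}$ can come arbitrarily close for certain indices, and there $\cosh(r_1)$ fails to be bounded below on the Rouché circle (making $\Phi_1$ blow up). Handling this requires either an arithmetic separation estimate or restricting to the indices avoiding such near-coincidences, which still leaves infinitely many eigenvalues approaching $i\mathbb{R}$ — enough to feed into the proof of Theorem \ref{Theorem-1.8}.
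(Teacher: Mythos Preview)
Your approach is essentially the paper's: expand $r_1,r_2$ for large $\lambda$ under the different-speeds hypothesis, use Lemma~\ref{Theorem-1.9} for $\mathcal{R}_\lambda$, identify the $\lambda^5$-dominant contribution $-f(r_2)g(r_1)\sinh(r_2)\cosh(r_1)$ in $\det(M)$, and localize by Rouch\'e. The only tactical difference is that the paper, instead of splitting $\det(M)=0$ into two fixed-point equations $\sinh(r_2)=\Phi_1$ and $\cosh(r_1)=\Phi_0$, divides $\det(M)$ by its leading power and writes the result as a single function $h(\lambda)=h_0(\lambda)+h_1(\lambda)\lambda^{-(1-\alpha)}+O(\lambda^{-1})$ with $h_0(\lambda)=\cosh\!\big(\lambda\sqrt{\rho_1/k_1}\big)\sinh\!\big(\lambda\sqrt{\rho_2/k_2}\big)$ and $h_1$ both bounded in the strip $S$, then applies Rouch\'e once near each zero of $h_0$. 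That packaging avoids the artificial poles your $\Phi_j$ pick up at the zeros of the \emph{other} hyperbolic factor, so the Rouch\'e comparison stays additive rather than multiplicative. The near-coincidence obstacle you flag is genuine, but the paper's proof simply does not discuss it; your suggested workaround---infinitely many indices free of coincidences still yield eigenvalues approaching $i\mathbb{R}$, which is all Theorem~\ref{Theorem-1.8} needs---is exactly right.
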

\begin{proof}  If $\frac{k_1}{\rho_1}\neq\frac{k_2}{\rho_2}$, then using the asymptotic expansion in \eqref{Part-1-67}, we get
\begin{equation}\label{Part-1-73}
r_1=\sqrt{\frac{\rho_1}{k_1}}\lambda+O\left(\lambda^{-1}\right)\ \ \ \text{and} \ \ \ r_2=\sqrt{\frac{\rho_2}{k_2}}\lambda+O\left(\lambda^{-1}\right).
\end{equation}
First, from \eqref{Part-1-73}, we get
\begin{equation}\label{Part-1-74}
\left\{
\begin{array}{ll}

\displaystyle{ \sinh\left(r_1\right)=\sinh\left(\sqrt{\frac{\rho_1}{k_1}}\lambda\right)+O\left(\lambda^{-1}\right),\ \cosh\left(r_1\right)=\cosh\left(\sqrt{\frac{\rho_1}{k_1}}\lambda\right)+O\left(\lambda^{-1}\right),}

\\ \noalign{\medskip}

\displaystyle{ \sinh\left(r_2\right)=\sinh\left(\sqrt{\frac{\rho_2}{k_2}}\lambda\right)+O\left(\lambda^{-1}\right),\ \cosh\left(r_2\right)=\cosh\left(\sqrt{\frac{\rho_2}{k_2}}\lambda\right)+O\left(\lambda^{-1}\right).}

\end{array}
\right.
\end{equation}
Next, inserting \eqref{Part-1-73} in \eqref{Part-1-68}, we get
\begin{equation}\label{Part-1-75}
f(r_1)=O(1),\ f(r_2)=\left( \frac{\rho_2}{k_2}-\frac{\rho_1}{k_1}\right)\lambda^2+O(1),\ g(r_1)=-\frac{\rho_2}{k_2}\sqrt{\frac{\rho_1}{k_1}}\lambda^3+O(\lambda),\ g(r_2)=-\frac{\rho_1}{k_1}\sqrt{\frac{\rho_2}{k_2}}\lambda^3+O(\lambda).
\end{equation}
On the other hand, we have
\begin{equation}\label{Part-1-76}
\left(\lambda+\eta\right)^{\alpha-1}=\frac{1}{\lambda^{1-\alpha}}+O\left(\frac{1}{\lambda^{2-\alpha}} \right).
\end{equation}
From Lemma \ref{Theorem-1.9} and \eqref{Part-1-76}, we get
\begin{equation}\label{Part-1-77}
\mathcal{R}_{\lambda}=-\frac{\rho_2}{k_1 k_2}\gamma \lambda^{2+\alpha}+O\left(\lambda^{1+\alpha}\right).
\end{equation}
Therefore, from \eqref{Part-1-74}, \eqref{Part-1-75} and \eqref{Part-1-77}, we get
\begin{equation}\label{Part-1-78}
\begin{array}{ll}

\displaystyle{
\det(M)=\left( \frac{\rho_2}{k_2}-\frac{\rho_1}{k_1}\right) \frac{\rho_2}{k_2} \sqrt{\frac{\rho_1}{k_1}}\cosh\left(\lambda\sqrt{\frac{\rho_1}{k_1}} \right)\sinh\left(\lambda\sqrt{\frac{\rho_2}{k_2}} \right)\lambda^5}

\\ \noalign{\medskip}\hspace{3cm}

\displaystyle{
+ \frac{\gamma \rho_2}{k_1k_2}\left( \frac{\rho_2}{k_2}-\frac{\rho_1}{k_1}\right) \sinh\left(\lambda\sqrt{\frac{\rho_1}{k_1}} \right)\sinh\left(\lambda\sqrt{\frac{\rho_2}{k_2}} \right) \lambda^{4+\alpha}+O\left(\lambda^4\right).}
\end{array}
\end{equation}
Let $\lambda$ be a large eigenvalue of $\mathcal{A}_1$, then from \eqref{Part-1-78}, $\lambda$ is a large root of the following asymptotic equation  
\begin{equation*}
h(\lambda)=h_0(\lambda)+ \frac{h_1(\lambda)}{\lambda^{1-\alpha}} +O\left(\lambda^{-1}\right)=0,
\end{equation*}
where
$$h_0(\lambda)=\cosh\left(\lambda\sqrt{\frac{\rho_1}{k_1}} \right)\sinh\left(\lambda\sqrt{\frac{\rho_2}{k_2}} \right),\ \ h_1(\lambda)=  \frac{ \gamma }{\sqrt{\rho_1 k_1}}  \sinh\left(\lambda\sqrt{\frac{\rho_1}{k_1}} \right)\sinh\left(\lambda\sqrt{\frac{\rho_2}{k_2}} \right).$$
Note that $h_0$ and  $h_1$ remains bounded in the strip $-\alpha_0\leq \Re(\lambda)\leq 0.$ The roots of  $h_0$ are given by
$$\mu_n^{(0)}= i n\pi\sqrt{\frac{k_1}{\rho_1} }+\frac{i\pi}{2}\sqrt{\frac{k_1}{\rho_1}} \ \ \ \text{and}\  \ \ \mu_n^{(1)}=i n\pi\sqrt{\frac{k_2}{\rho_2}},\quad n\in \mathbb{Z}.$$
Finally, with the help of Rouch\'{e}'s Theorem,  there exists $n_0\in \mathbb{N}^*$ large enough, such that $\forall\ |n|\geq n_0\ \ \left( n\in \mathbb{Z}^*\right),$   the large roots of $h$, denoted by $\lambda_n^{(0)},\ \lambda_n^{(1)}$, are close to those of $h_0$, that is
\begin{equation*}
\lambda^{(0)}_n= i n\pi\sqrt{\frac{k_1}{\rho_1} }+\frac{i\pi}{2}\sqrt{\frac{k_1}{\rho_1}}+\epsilon_n\ \text{ and }\ 
\lambda^{(1)}_n= i n\pi\sqrt{\frac{k_2}{\rho_2}}+\varepsilon_n,\qquad  \lim_{|n|\to+\infty}\epsilon_n=0.
\end{equation*}
Consequently, we get \eqref{Part-1-69} and \eqref{Part-1-70}.    Thus, the proof of the proposition  is complete. 
\end{proof}
\begin{prop} \label{Theorem-1.10new}
\rm{Assume that $\frac{k_1}{\rho_1}=\frac{k_2}{\rho_2}$. Then there exist $n_0\in \mathbb{N}$ sufficiently large  and two sequences $\left(\lambda_{1,n}\right)_{ |n|\geq n_0} $ and $\left(\lambda_{2,n}\right)_{ |n|\geq n_0} $ of simple roots of $\det(M)$ (that are also simple eigenvalues of $\mathcal{A}_1$) satisfying the following asymptotic behavior:\\[0.1in]
\textbf{Case 1.} If $\sqrt{\frac{k_1}{k_2}}\neq k\pi$, $k\in\mathbb{Z}^*$, then
\begin{equation}\label{eq-3.15}
		\displaystyle{\lambda_{1,n}=i n\pi\sqrt{\frac{k_1}{\rho_1}}+\frac{\gamma\, \left(1-\cos\left(\sqrt{\frac{k_1}{k_2}}\right)\right)\left(-\sin\left(\frac{\pi \alpha}{2}\right)+i\cos\left(\frac{\pi \alpha}{2}\right)\right)}{2 \sqrt{\rho_1^{1+\alpha} k_1^{1-\alpha}}\left(n \pi\right)^{1-\alpha}}+o\left(n^{-1+\alpha}\right)
, \ \ \forall \ |n|\geq n_0}
\end{equation}
and
\begin{equation}\label{eq-3.16}
\lambda_{2,n}= i n\pi\sqrt{\frac{k_1}{\rho_1}}+\frac{i\pi \sqrt{\frac{k_1}{\rho_1}}}{2}+\frac{\gamma\, \left(1+\cos\left(\sqrt{\frac{k_1}{k_2}}\right)\right)\left(-\sin\left(\frac{\pi \alpha}{2}\right)+i\cos\left(\frac{\pi \alpha}{2}\right)\right)}{2 \sqrt{\rho_1^{1+\alpha} k_1^{1-\alpha}}\left(n \pi\right)^{1-\alpha}}+o\left(n^{-1+\alpha}\right)
, \ \ \forall\  |n|\geq n_0.
\end{equation}
\textbf{Case 2.} If $\sqrt{\frac{k_1}{k_2}}=2 k\pi$, $k\in\mathbb{Z}^*$, then
\begin{equation}\label{eq-3.17}
		\displaystyle{\lambda_{1,n}=i n\pi \sqrt{\frac{k_1}{\rho_1}} +\frac{\frac{i k_1}{{k_2}}\sqrt{\frac{k_1}{\rho_1}}}{8  n\pi}-\frac{ i \frac{k_1^2}{k_2^2}\sqrt{\frac{k_1}{\rho_1}} }{128\pi^3 n^3}+{\frac {\gamma \sqrt{k_1^{5+\alpha}}  \left( i\cos \left(\frac{\pi\alpha}{2}  \right) -\sin \left( \frac{\pi\alpha}{2} \right)  \right) }{256 k_2^3\sqrt{\rho_1^{1+\alpha}}\,{
\pi}^{5-\alpha}{n}^{5-\alpha}}}+O\left(n^{-5}\right)
, \ \ \forall \ |n|\geq n_0}
\end{equation}
and
\begin{equation}\label{eq-3.18}
	\lambda_{2,n}=	i n\pi\sqrt{\frac{k_1}{\rho_1}}+\frac{i\pi \sqrt{\frac{k_1}{\rho_1}}}{2}+\frac{\gamma\, \left(-\sin\left(\frac{\pi \alpha}{2}\right)+i\cos\left(\frac{\pi \alpha}{2}\right)\right)}{ \sqrt{\rho_1^{1+\alpha} k_1^{1-\alpha}}\left(n \pi\right)^{1-\alpha}}+o\left(n^{-1+\alpha}\right) , \ \ \forall\  |n|\geq n_0.
\end{equation}
\textbf{Case 3.} If $\sqrt{\frac{k_1}{k_2}}=(2 k+1)\pi$, $k\in\mathbb{Z}^*$, then
\begin{equation}\label{eq-3.19}
		\displaystyle{\lambda_{1,n}=i n\pi\sqrt{\frac{k_1}{\rho_1}}+\frac{\gamma\,\left(-\sin\left(\frac{\pi \alpha}{2}\right)+i\cos\left(\frac{\pi \alpha}{2}\right)\right)}{\sqrt{\rho_1^{1+\alpha} k_1^{1-\alpha}}\left(n \pi\right)^{1-\alpha}}+o \left( 
 {{n}^{-1+\alpha}} \right), \ \ \forall \ |n|\geq n_0}
\end{equation}
and
\begin{equation}\label{eq-3.20}
\begin{array}{ll}
		\displaystyle{\lambda_{2,n}=i n\pi\sqrt{\frac{k_1}{\rho_1}}+\frac{i\pi \sqrt{\frac{k_1}{\rho_1}}}{2} +\frac{\frac{i k_1}{{k_2}}\sqrt{\frac{k_1}{\rho_1}}}{8  n\pi}-\frac{\frac{ik_1}{k_2}\sqrt{\frac{k_1}{\rho_1}} }{16\pi n^2}+\frac{i\frac{k_1}{k_2} \sqrt{\frac{k_1}{\rho_1}} \left(4\pi^2-\frac{k_1}{k_2}\right) }{128\pi^3 n^3}}
\\ \\ \hspace{1cm}
\displaystyle{
-\frac{i\frac{k_1}{k_2} \sqrt{\frac{k_1}{\rho_1}} \left(4\pi^2-\frac{3k_1}{k_2}\right) }{256\pi^3 n^4}+{\frac {\gamma \sqrt{k_1^{5+\alpha}}  \left( i\cos \left(\frac{\pi\alpha}{2}  \right) -\sin \left( \frac{\pi\alpha}{2} \right)  \right) }{256 k_2^3\sqrt{\rho_1^{1+\alpha}}\,{
\pi}^{5-\alpha}{n}^{5-\alpha}}}+O\left(n^{-5}\right)
,\ \ \forall\ |n|\geq n_0.}
\end{array}
\end{equation}	}		
\end{prop}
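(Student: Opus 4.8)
The plan is to specialize the characteristic equation $\det(M)=0$ to the equal-speed regime $s:=\frac{\rho_1}{k_1}=\frac{\rho_2}{k_2}$ and to run a perturbative root analysis parallel to that of Proposition~\ref{Theorem-1.10}; the one genuinely new phenomenon is that the roots $r_1,r_2$ of $Q$ now \emph{coalesce} at leading order. Indeed, in \eqref{Part-1-67} the inner discriminant $\left(\frac{\rho_2}{k_2}-\frac{\rho_1}{k_1}\right)^2-\frac{4\rho_1}{k_2\lambda^2}$ collapses to $-\frac{4\rho_1}{k_2\lambda^2}$, so that expanding the nested square root for $\lambda$ large gives
\[
r_{1,2}=\sqrt{s}\,\lambda\pm\frac{i\beta}{2}+O(\lambda^{-1}),\qquad \beta:=\sqrt{\tfrac{k_1}{k_2}}.
\]
Thus $r_1,r_2$ share the leading term $\sqrt{s}\,\lambda$ and separate only at order $O(1)$. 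Setting $P:=r_1+r_2=2\sqrt{s}\,\lambda+O(\lambda^{-1})$ and $D:=r_1-r_2=i\beta+O(\lambda^{-1})$, the product-to-sum identities recast the three summands of $\det(M)$ in terms of the fast factors $\sinh(P),\cosh(P)$ and the bounded factors $\cosh(D)\to\cos\beta$, $\sinh(D)\to i\sin\beta$.

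I would then expand $f(r_j),g(r_j)$ and $\mathcal R_\lambda$ from these asymptotics together with Lemma~\ref{Theorem-1.9}. Since $r_j^2=s\lambda^2\pm i\sqrt{s}\,\beta\,\lambda+O(1)$ one gets $f(r_1)=-f(r_2)+o(\lambda)$, $g(r_1),g(r_2)\sim-s^{3/2}\lambda^3$ and $\mathcal R_\lambda\sim-\frac{\rho_2\gamma}{k_1k_2}\lambda^{2+\alpha}$. Feeding these into the determinant formula and dividing by the dominant monomial, I expect the characteristic equation in the form
\[
\sinh(2\sqrt{s}\,\lambda)+c\,\lambda^{\alpha-1}\big[\cosh(2\sqrt{s}\,\lambda)-\cos\beta\big]+O(\lambda^{-1})=0,\qquad c=\frac{\rho_2\gamma}{s^{3/2}k_1k_2}>0.
\]
Its principal part $h_0(\lambda)=\sinh(2\sqrt{s}\,\lambda)$ has the simple zeros $\frac{im\pi}{2}\sqrt{k_1/\rho_1}$ ($m\in\mathbb Z$), which split by the parity of $m$ into the two announced families: $m=2n$ gives $\lambda_{1,n}$ near $in\pi\sqrt{k_1/\rho_1}$ and $m=2n+1$ gives $\lambda_{2,n}$ near $in\pi\sqrt{k_1/\rho_1}+\frac{i\pi}{2}\sqrt{k_1/\rho_1}$. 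Exactly as in Proposition~\ref{Theorem-1.10}, a Rouché argument on small circles centred at each $\frac{im\pi}{2}\sqrt{k_1/\rho_1}$ produces, for $|n|\ge n_0$, a unique simple root of $\det(M)$ per circle; simplicity together with Lemma~\ref{Theorem-1.5} upgrades each root to a genuine simple eigenvalue of $\mathcal A_1$ via the eigenvector $U=(u,\lambda u,y,\lambda y,\omega)$.

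For the precise expansions I would insert $\lambda=\frac{im\pi}{2}\sqrt{k_1/\rho_1}+\varepsilon$ and solve by the Newton correction $\varepsilon\approx -\frac{c\,\lambda_0^{\alpha-1}[\cosh(2\sqrt{s}\,\lambda_0)-\cos\beta]}{h_0'(\lambda_0)}$ with $h_0'(\lambda_0)=2\sqrt{s}\,\cosh(2\sqrt{s}\,\lambda_0)=2\sqrt{s}\,(-1)^m$. Since $\cosh(2\sqrt{s}\,\lambda_0)=(-1)^m$, the bracket equals $1-\cos\beta$ on the $\lambda_{1,n}$ family and $-(1+\cos\beta)$ on the $\lambda_{2,n}$ family; evaluating $\lambda_0^{\alpha-1}=(in\pi\sqrt{k_1/\rho_1})^{\alpha-1}$ and using $i^{\alpha-1}=\sin\frac{\pi\alpha}{2}-i\cos\frac{\pi\alpha}{2}$ reproduces the factor $-\sin\frac{\pi\alpha}{2}+i\cos\frac{\pi\alpha}{2}$ and the weight $(n\pi)^{-(1-\alpha)}$. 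A routine simplification of the constant (using $\rho_2=s\,k_2$) then yields \eqref{eq-3.15}--\eqref{eq-3.16}, settling Case~1, where $\beta\neq k\pi$ so that neither $1\mp\cos\beta$ vanishes.

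Cases~2 and~3 are where the real work lies, and the higher-order bookkeeping will be the main obstacle. When $\cos\beta=1$ (Case~2, $\beta=2k\pi$) the bracket $1-\cos\beta$ vanishes on the $\lambda_{1,n}$ family, and when $\cos\beta=-1$ (Case~3, $\beta=(2k+1)\pi$) the bracket $1+\cos\beta$ vanishes on the $\lambda_{2,n}$ family; in each degenerate family the leading $\lambda^{\alpha-1}$ damping correction cancels. The surviving corrections then come from the $O(\lambda^{-1}),\dots,O(\lambda^{-5})$ terms of the expansions of $r_{1,2}$, hence of $\sinh,\cosh,f,g$, which are \emph{conservative} and produce the purely imaginary real-power terms $n^{-1},n^{-2},n^{-3},n^{-4}$ of \eqref{eq-3.17} and \eqref{eq-3.20}, the first non-vanishing damping term being pushed to order $n^{-(5-\alpha)}$. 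The two delicate points are (i) carrying every factor to order $\lambda^{-5}$ while controlling the exact numerical coefficients through the cancellations, and (ii) verifying in each case that the term replacing the cancelled leading damping contribution is precisely the $n^{-(5-\alpha)}$ one displayed. The non-degenerate family in each case ($\lambda_{2,n}$ in Case~2, $\lambda_{1,n}$ in Case~3) is treated exactly as in Case~1, the only change being $1\pm\cos\beta=2$, which doubles the coefficient and gives \eqref{eq-3.18} and \eqref{eq-3.19}.
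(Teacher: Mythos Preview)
Your plan is correct and tracks the paper's proof almost step for step: the paper likewise passes to $r_1+r_2$ and $r_1-r_2$, rewrites $\det(M)$ as a function $H(\lambda)$ whose leading term is $\sinh(2\sqrt{\rho_1/k_1}\,\lambda)$ with the same damping correction $\frac{\gamma}{\sqrt{\rho_1 k_1}\,\lambda^{1-\alpha}}\bigl[\cosh(2\sqrt{\rho_1/k_1}\,\lambda)-\cos\beta\bigr]$, applies Rouch\'e, and then inserts $\lambda=\lambda_0+\varepsilon$ to solve for the correction exactly as in your Newton step. For the degenerate families in Cases~2 and~3 the paper carries out precisely the iterated substitution you anticipate (insert the current ansatz back into $H$, expand $r_1\pm r_2$ and $\sinh,\cosh$ to the next order, solve for the next correction, repeat), pushing through three iterations until the $n^{-(5-\alpha)}$ damping term emerges; your identification of this as the main bookkeeping obstacle is accurate.
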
	
\begin{proof}  Assume that $\frac{k_1}{\rho_1}=\frac{k_2}{\rho_2}$, then from \eqref{Part-1-67}, \eqref{Part-1-68}, and Lemma \ref{Theorem-1.9}, we get
\begin{equation}\label{eq-3.13}
r_1=\lambda\sqrt{\frac{\rho_1}{k_1}+\frac{i}{\lambda}\sqrt{\frac{\rho_1}{{k}_2}}}
, \quad r_2=\lambda\sqrt{\frac{\rho_1}{k_1}-\frac{i}{\lambda}\sqrt{\frac{\rho_1}{{k}_2}}},
\end{equation}
and
\begin{equation*}
f(r_1)=i\lambda\sqrt{\frac{\rho_1}{k_2}},\ f(r_2)=-f(r_1),\ g(r_1)=-r_1 r_2^2,\ g(r_2)=-r_1^2 r_2,\ \mathcal{R}_{\lambda}=-\gamma\lambda\left(\frac{\rho_1 \lambda^2}{k_1^2} +\frac{1}{k_2} \right)    \left(\lambda+\eta\right)^{\alpha-1}.
\end{equation*}
Hence, we have
\begin{equation}\label{eq-3.14}
\begin{array}{ll}
		\displaystyle{
\det\left(M\right)=-i\lambda\sqrt{\frac{\rho_1}{k_2}}\bigg[2 \gamma\lambda\left(\frac{\rho_1 \lambda^2}{k_1^2} +\frac{1}{k_2} \right)    \left(\lambda+\eta\right)^{\alpha-1}\sinh(r_1)\sinh(r_2)}
 \\ \\ \hspace{3.2cm}
\displaystyle{+r_1^2 r_2\sinh(r_1)\cosh(r_2)+r_1 r_2^2\sinh(r_2)\cosh(r_1)\bigg].}
\end{array}
	\end{equation}
We divide the proof into four steps:\\ [0.1in]
\textbf{Step 1.} In this step, we prove that the  eigenvalues of $\mathcal{A}_1$, are roots of the following function
			\begin{equation}\label{eq-3.21}
		\begin{array}{ll}
		\displaystyle{
H\left(\lambda\right)=\frac{\gamma }{\sqrt{\rho_1 k_1}\lambda^{1-\alpha}} \left(1+ O\left(\lambda^{-1} \right) \right)\left(\cosh(r_1+r_2)-\cosh(r_1-r_2)\right)}

 \\ \\ \hspace{1cm}
\displaystyle{+ \left(1+\frac{5k_1^2}{8 \rho_1 k_2 \lambda^2}+O\left(\lambda^{-3}\right)\right)\sinh(r_1+r_2)+\left(\frac{i k_1}{2\sqrt{\rho_1 k_2}\lambda}+O\left(\lambda^{-3}\right)\right)\sinh(r_1-r_2).}

\end{array}
\end{equation}
First, using the asymptotic expansion in \eqref{eq-3.13}, we get
\begin{equation}\label{eq-3.22}
r_1=\lambda\sqrt{\frac{\rho_1}{k_1}} +\frac{i \sqrt{\frac{k_1}{k_2}} }{2}+\frac{{\frac{k_1}{k_2}}\sqrt{\frac{k_1}{\rho_1}}}{8\lambda}+O \left(\lambda^{-2} \right) \ \ \ \text{and}\ \ \ r_2=\lambda\sqrt{\frac{\rho_1}{k_1}} -\frac{i \sqrt{\frac{k_1}{k_2}} }{2}+\frac{{\frac{k_1}{k_2}}\sqrt{\frac{k_1}{\rho_1}}}{8\lambda}+O \left( \lambda^{-2} \right). 
\end{equation}
From  \eqref{eq-3.22}, we get
\begin{equation}\label{eq-3.23A}
\left\{
\begin{array}{ll}
		\displaystyle{
r_1^2 r_2=\left(\frac{\rho_1}{k_1}\right)^{\frac{3}{2}}\lambda^3 \left(1+\frac{i k_1}{2\sqrt{\rho_1 k_2}\lambda}+\frac{5k_1^2}{8 \rho_1 k_2 \lambda^2}+O\left(\lambda^{-3}\right)\right) ,}
\\ \noalign{\medskip}
 \displaystyle{r_1 r_2^2=\left(\frac{\rho_1}{k_1}\right)^{\frac{3}{2}}\lambda^3 \left(1-\frac{i k_1}{2\sqrt{\rho_1 k_2}\lambda}+\frac{5k_1^2}{8 \rho_1 k_2 \lambda^2}+O\left(\lambda^{-3}\right)\right).}
 \end{array}
 \right.
\end{equation}
Next,  using the asymptotic expansion, we get
$$\frac{1}{(\lambda+\eta)^{1-\alpha}}=\frac{1}{\lambda^{1-\alpha}}\left( 1+O \left(\lambda^{-1} \right)  \right). $$
Consequently, we get
\begin{equation}\label{eq-3.23}
2 \gamma\lambda\left(\frac{\rho_1 \lambda^2}{k_1^2} +\frac{1}{k_2} \right)    \left(\lambda+\eta\right)^{\alpha-1}=\frac{2\gamma \rho_1 \lambda^{2+\alpha}}{k_1^2} \left(1+ O\left(\lambda^{-1} \right) \right). 
\end{equation}
Inserting \eqref{eq-3.23A} and \eqref{eq-3.23} in \eqref{eq-3.14}, then using the fact that 
\begin{equation*}
\left\{
\begin{array}{ll}
\displaystyle{\sinh(r_1) \cosh(r_2)+\cosh(r_1)\sinh(r_2) =\sinh(r_1+r_2),}
\\ \noalign{\medskip}
\displaystyle{ \sinh(r_1) \cosh(r_2)- \cosh(r_1)\sinh(r_2) =\sinh(r_1-r_2),}
\\ \noalign{\medskip}
\displaystyle{2\sinh(r_1)\sinh(r_2)=\cosh(r_1+r_2)-\cosh(r_1-r_2), }

\end{array}
\right.
\end{equation*}
we get
\begin{equation*}
\begin{array}{ll}
		\displaystyle{
\det\left(M\right)=-i\lambda^4 \left(\frac{\rho_1}{k_1}\right)^{\frac{3}{2}}\sqrt{\frac{\rho_1}{k_2}}\bigg[\frac{\gamma }{\sqrt{\rho_1 k_1}\lambda^{1-\alpha}} \left(1+O \left(\lambda^{-1} \right) \right)\left(\cosh(r_1+r_2)-\cosh(r_1-r_2)\right)}

 \\ \\ \hspace{2cm}
\displaystyle{+ \left(1+\frac{5k_1^2}{8 \rho_1 k_2 \lambda^2}+O\left(\lambda^{-3}\right)\right)\sinh(r_1+r_2)+\left(\frac{i k_1}{2\sqrt{\rho_1 k_2}\lambda}+O\left(\lambda^{-3}\right)\right)\sinh(r_1-r_2)\bigg].}

\end{array}
	\end{equation*}
Equation \eqref{Part-1-66}  admits a non trivial solution if and only if $\displaystyle{\det\left(M\right)}=0$, i.e., if and only if the eigenvalues of $\mathcal{A}_1$ are roots of the function $H$, defined by
	\begin{equation*}
\begin{array}{ll}
		\displaystyle{
H\left(\lambda\right)=\frac{\gamma  }{\sqrt{\rho_1 k_1}\lambda^{1-\alpha}} \left(1+ O\left(\lambda^{-1} \right) \right)\left(\cosh(r_1+r_2)-\cosh(r_1-r_2)\right)}

 \\ \\ \hspace{1cm}
\displaystyle{+ \left(1+\frac{5k_1^2}{8 \rho_1 k_2 \lambda^2}+O\left(\lambda^{-3}\right)\right)\sinh(r_1+r_2)+\left(\frac{i k_1}{2\sqrt{\rho_1 k_2}\lambda}+O\left(\lambda^{-3}\right)\right)\sinh(r_1-r_2).}

\end{array}
	\end{equation*}	
Hence, we get \eqref{eq-3.21}. \\[0.1in]
\textbf{Step 2.} We look at the roots of $H(\lambda)$. 
First, from \eqref{eq-3.22}, we have
\begin{equation}\label{eq-3.26}
r_1+r_2=2\lambda\sqrt{\frac{\rho_1}{k_1}} +\frac{{\frac{k_1}{k_2}}\sqrt{\frac{k_1}{\rho_1}}}{4\lambda}+O \left(\lambda^{-2} \right)\ \ \ \text{and}\ \ \ r_1-r_2=i \sqrt{\frac{k_1}{k_2}} +O\left(\lambda^{-2}\right).
\end{equation}From \eqref{eq-3.26} and using the fact that $\Re\{\lambda\}$ is bounded, we get
\begin{equation}\label{eq-3.27}
\left\{
\begin{array}{ll}
\displaystyle{\sinh(r_1+r_2)=\sinh\left(2\lambda\sqrt{\frac{\rho_1}{k_1}}\right)+\frac{\frac{k_1}{k_2}\sqrt{\frac{k_1}{\rho_1}}\, \cosh\left(2\lambda\sqrt{\frac{\rho_1}{k_1}}\right)}{4\lambda}+O\left(\lambda^{-2}\right),}
\\ \noalign{\medskip}
\displaystyle{\sinh(r_1-r_2)=i\sin\left(\sqrt{\frac{k_1}{k_2}}\right)+O\left(\lambda^{-2}\right),}
\\ \noalign{\medskip}
\displaystyle{\cosh(r_1+r_2)-\cosh(r_1-r_2)=\cosh\left(2\lambda\sqrt{\frac{\rho_1}{k_1}}\right)-\cos\left(\sqrt{\frac{k_1}{k_2}}\right)+O\left(\lambda^{-1}\right). }
\end{array}
\right.
\end{equation}
Next, substituting \eqref{eq-3.27} in \eqref{eq-3.21}, we get
\begin{equation}\label{eq-3.28}
\begin{array}{ll}
\displaystyle{H(\lambda)=\sinh\left(2\lambda\sqrt{\frac{\rho_1}{k_1}}\right)+\frac{\gamma\, \left(\cosh\left(2\lambda\sqrt{\frac{\rho_1}{k_1}}\right)-\cos\left(\sqrt{\frac{k_1}{k_2}}\right)\right)}{\sqrt{\rho_1 k_1}\lambda^{1-\alpha}}}
\\ \noalign{\medskip}\hspace{3cm}
\displaystyle{+\frac{k_1\left(\cosh\left(2\lambda\sqrt{\frac{\rho_1}{k_1}}\right)\sqrt{\frac{k_1}{k_2}}-2\sin\left(\sqrt{\frac{k_1}{k_2}}\right)\right)}{4\sqrt{\rho_1 k_2}\lambda}+O\left(\lambda^{-2+\alpha}\right).}
\end{array}
\end{equation}
Indeed, using Rouch\'e's Theorem, and the asymptotic Equation \eqref{eq-3.28}, it is easy to see that the large roots of $f(\lambda)$ (denoted by $\lambda_{1,n}$ and $\lambda_{2,n}$) are simple and close to those of $\sinh\left(2\lambda\sqrt{\frac{\rho_1}{k_1}}\right)$, i.e., there exists $n_0\in \mathbb{N}$, such that for all integers $|n|>n_0$, we have
\begin{eqnarray}
\lambda_{1,n}=i n\pi\sqrt{\frac{k_1}{\rho_1}} +\epsilon_{1,n},\quad \text{where }\lim_{|n|\to+\infty}\epsilon_{1,n}=0,\label{eq-3.24}\\
\lambda_{2,n}=i n\pi\sqrt{\frac{k_1}{\rho_1}}+\frac{i\pi \sqrt{\frac{k_1}{\rho_1}}}{2} +\epsilon_{2,n},\quad \text{where }\lim_{|n|\to+\infty}\epsilon_{2,n}=0.\label{eq-3.25}
\end{eqnarray}
\textbf{Step 3.}  We seek to determine $\epsilon_{1,n}$. Inserting \eqref{eq-3.24} in \eqref{eq-3.28}, we get
 \begin{equation}\label{eq-3.29}
\begin{array}{ll}
\displaystyle{H(\lambda_{1,n})=\sinh\left(2\epsilon_{1,n}\sqrt{\frac{\rho_1}{k_1}}\right)+\frac{\gamma\, \left(\cosh\left(2\epsilon_{1,n}\sqrt{\frac{\rho_1}{k_1}}\right)-\cos\left(\sqrt{\frac{k_1}{k_2}}\right)\right)}{\sqrt{\rho_1 k_1}\left(i n\pi\sqrt{\frac{k_1}{\rho_1}}\right)^{1-\alpha}}}
\\ \noalign{\medskip}\hspace{3cm}
\displaystyle{+\frac{\sqrt{\frac{k_1}{k_2}}\left(\cosh\left(2\epsilon_{1,n}\sqrt{\frac{\rho_1}{k_1}}\right)\sqrt{\frac{k_1}{k_2}}-2\sin\left(\sqrt{\frac{k_1}{k_2}}\right)\right)}{4 i n\pi}+O\left(n^{-2+\alpha}\right).}
\end{array}
\end{equation}
On the other hand, since $\lim_{|n|\to+\infty}\epsilon_{1,n}=0$, we have the asymptotic expansion
\begin{equation}\label{eq-3.30}
\sin\left(2\epsilon_{1,n}\sqrt{\frac{\rho_1}{k_1}}\right)=2\epsilon_{1,n}\sqrt{\frac{\rho_1}{k_1}}+O\left(\epsilon_{1,n}^3\right)\ \text{and}\ \cos\left(2\epsilon_{1,n}\sqrt{\frac{\rho_1}{k_1}}\right)=1+O\left(\epsilon_{1,n}^2\right).
\end{equation}
Inserting \eqref{eq-3.30} in \eqref{eq-3.29}, then using the fact that
$$i^{-1+\alpha}=\sin\left(\frac{\pi \alpha}{2}\right)-i\cos\left(\frac{\pi \alpha}{2}\right),$$
we get
  \begin{equation}\label{eq-3.31}
  \begin{array}{ll}
\displaystyle{
\epsilon_{1,n}-\frac{\gamma\, \left(1-\cos\left(\sqrt{\frac{k_1}{k_2}}\right)\right)\left(-\sin\left(\frac{\pi \alpha}{2}\right)+i\cos\left(\frac{\pi \alpha}{2}\right)\right)}{2 \sqrt{\rho_1^{1+\alpha} k_1^{1-\alpha}}\left(n \pi\right)^{1-\alpha}}}
\\ \noalign{\medskip}\hspace{1cm}
\displaystyle{-\frac{\frac{i k_1}{\sqrt{\rho_1 k_2}}\left(\sqrt{\frac{k_1}{k_2}}-2\sin\left(\sqrt{\frac{k_1}{k_2}}\right)\right)}{8  n\pi}+O\left(n^{-2+\alpha}\right)+O\left({n^{-1+\alpha}}{\epsilon_{1,n}^2}\right)+O(\epsilon_{1,n}^3)=0.}
\end{array}
\end{equation}
 We distinguish two cases:\\[0.1in]
 \textbf{Case 1.} There exists  no integer $k\in \mathbb{Z}$ such that $\sqrt{\frac{k_1}{k_2}}=2k\pi$. Then, we have
 $$1-\cos\left(\sqrt{\frac{k_1}{k_2}}\right)\neq0,$$
 therefore, from \eqref{eq-3.31}, we get
 \begin{equation}\label{eq-3.32}
 \epsilon_{1,n}=\frac{\gamma\, \left(1-\cos\left(\sqrt{\frac{k_1}{k_2}}\right)\right)\left(-\sin\left(\frac{\pi \alpha}{2}\right)+i\cos\left(\frac{\pi \alpha}{2}\right)\right)}{2 \sqrt{\rho_1^{1+\alpha} k_1^{1-\alpha}}\left(n \pi\right)^{1-\alpha}}+o\left({n^{-1+\alpha}}\right).
 \end{equation}
Substituting \eqref{eq-3.32} in \eqref{eq-3.24}, we get the estimates \eqref{eq-3.15} and \eqref{eq-3.19}.\\[0.1in]
\textbf{Case 2.} If there exists $k\in\mathbb{Z}$ such that $\sqrt{\frac{k_1}{k_2}}=2k\pi$, then
$$1-\cos\left(\sqrt{\frac{k_1}{k_2}}\right)=0\ \ \ \text{and}\ \ \ \sin\left(\sqrt{\frac{k_1}{k_2}}\right)=0,$$
 therefore, from \eqref{eq-3.31}, we get
 \begin{equation}\label{eq-3.33}
\epsilon_{1,n}=\frac{\frac{i k_1}{{k_2}}\sqrt{\frac{k_1}{\rho_1}}}{8  n\pi}+o\left(n^{-1}\right).
\end{equation}
 Inserting \eqref{eq-3.33} in \eqref{eq-3.24}, we get
 \begin{equation}\label{eq-3.34}
\lambda_{1,n}=i n\pi \sqrt{\frac{k_1}{\rho_1}} +\frac{\frac{i k_1}{{k_2}}\sqrt{\frac{k_1}{\rho_1}}}{8  n\pi}+\frac{\varepsilon_{1,n}}{n},\quad \text{where }\lim_{|n|\to+\infty}\varepsilon_{1,n}=0,
 \end{equation}
since in this case the  real part of $\lambda_{1,n}$ still does not appear, we need to increase the order of the finite expansion. So, in order to complete the proof of \eqref{eq-3.17}, we need to show that 
$$\varepsilon_{1,n}=-\frac{ i \frac{k_1^2}{k_2^2}\sqrt{\frac{k_1}{\rho_1}} }{128\pi^3 n^2}+{\frac {\gamma \sqrt{k_1^{5+\alpha}}  \left( i\cos \left(\frac{\pi\alpha}{2}  \right) -\sin \left( \frac{\pi\alpha}{2} \right)  \right) }{256 k_2^3\sqrt{\rho_1^{1+\alpha}}\,{
\pi}^{5-\alpha}{n}^{4-\alpha}}}+O\left(n^{-4}\right). $$ 
 For this aim, inserting \eqref{eq-3.34} in \eqref{eq-3.13}, then using the asymptotic expansion, we get
 \begin{equation*}
 r_1+r_2=2in\pi+\frac{2\sqrt{\frac{\rho_1}{k_1}}\varepsilon_{1,n}}{n}-\frac{3i k_1^2-16k_2\sqrt{\rho_1 k_1}\pi \varepsilon_{1,n} }{64k^2 \pi^3 n^3}+O\left(n^{-5}\right),\ r_1-r_2=i \sqrt{\frac{k_1}{k_2}}+\frac{i \left(\frac{k_1}{k_2}\right)^{\frac{3}{2}}}{8\pi^2 n^2}+O\left(n^{-4}\right).
 \end{equation*}
Therefore, we have
\begin{equation}\label{eq-3.35}
\left\{
\begin{array}{ll}
\displaystyle{\sinh(r_1+r_2)=\frac{2\sqrt{\frac{\rho_1}{k_1}}\varepsilon_{1,n}}{n}+\frac{1}{n^3}\left(\frac{4\left(\frac{\rho_1}{k_1}\right)^{\frac{3}{2}}\varepsilon_{1,n}^3}{3}-\frac{3i k_1^2-16k_2\sqrt{\rho_1 k_1}\pi \varepsilon_{1,n} }{64k^2 \pi^3 }\right)+O\left(n^{-5}\right),}
\\ \noalign{\medskip}
\displaystyle{\sinh(r_1-r_2)=\frac{i \left(\frac{k_1}{k_2}\right)^{\frac{3}{2}}}{8\pi^2 n^2}+O\left(n^{-4}\right),}
\\ \noalign{\medskip}
\displaystyle{\cosh(r_1+r_2)-\cosh(r_1-r_2)=\frac{2\rho_1\varepsilon_{1,n}}{k_1n^2}+O\left(n^{-4}\right). }
\end{array}
\right.
\end{equation}
 Inserting \eqref{eq-3.34} and \eqref{eq-3.35} in \eqref{eq-3.21}, then using the asymptotic expansion, we get
 \begin{equation*}
 H(\lambda_{1,n})=\frac{2\sqrt{\frac{\rho_1}{k_1}}}{n}\left(\varepsilon_{1,n}+\frac{ i \frac{k_1^2}{k_2^2}\sqrt{\frac{k_1}{\rho_1}} }{128\pi^3 n^2} +O\left({n^{-3+\alpha}}\right)+O\left({n^{-2}}{\varepsilon_{1,n}}\right)+O\left({n^{-2+\alpha}}{\varepsilon_{1,n}^2}\right) \right)=0.
 \end{equation*}
Consequently, we obtain
\begin{equation}\label{eq-3.36}
\varepsilon_{1,n}=-\frac{ i \frac{k_1^2}{k_2^2}\sqrt{\frac{k_1}{\rho_1}} }{128\pi^3 n^2} +\frac{\zeta_{1,n}}{n^2},\ \text{such that }  \lim_{|n|\to+\infty}\zeta_{1,n}=0.
\end{equation}
Substituting  \eqref{eq-3.36} in \eqref{eq-3.34}, we get
 \begin{equation}\label{eq-3.37}
 \lambda_{1,n}=i n\pi \sqrt{\frac{k_1}{\rho_1}} +\frac{\frac{i k_1}{{k_2}}\sqrt{\frac{k_1}{\rho_1}}}{8  n\pi}-\frac{ i \frac{k_1^2}{k_2^2}\sqrt{\frac{k_1}{\rho_1}} }{128\pi^3 n^3}+\frac{\zeta_{1,n}}{n^3}, 
 \end{equation}
again  the real part of $\lambda_{1,n}$ still does not appear, so  we need to increase the order of the finite expansion.  For this aim, inserting \eqref{eq-3.37} in \eqref{eq-3.13} and using the asymptotic expansion, we get
 \begin{equation*}
 r_1+r_2=2in\pi+\frac{2\sqrt{\frac{\rho_1}{k_1}} \zeta_{1,n}-\frac{i k_1^2}{16k_2^2\pi^3} }{ n^3}+O \left(n^{-5}\right)\ \ \ \text{and}\ \ \ r_1-r_2=i \sqrt{\frac{k_1}{k_2}}+\frac{i\left(\frac{k_1}{k_2}\right)^{\frac{3}{2}} }{8\pi^2 n^2}+O \left( n^{-4}\right).
 \end{equation*}
Therefore, we have
\begin{equation}\label{eq-3.38}
\left\{
\begin{array}{ll}
\displaystyle{\sinh(r_1+r_2)=\frac{2\sqrt{\frac{\rho_1}{k_1}} \zeta_{1,n}-\frac{i k_1^2}{16k_2^2\pi^3} }{ n^3}+O \left( n^{-5}\right),}
\\ \noalign{\medskip}
\displaystyle{\sinh(r_1-r_2)=\frac{i\left(\frac{k_1}{k_2}\right)^{\frac{3}{2}} }{8\pi^2 n^2}+O \left( n^{-4}\right),}
\\ \noalign{\medskip}
\displaystyle{\cosh(r_1+r_2)-\cosh(r_1-r_2)=\frac{\left(\frac{k_1}{k_2}\right)^{3} }{128\pi^4 n^4}+O \left( n^{-6}\right). }
\end{array}
\right.
\end{equation}
 Inserting \eqref{eq-3.37} and \eqref{eq-3.38} in \eqref{eq-3.21}, then using the asymptotic expansion, we get
 \begin{equation*}
 \frac{2\sqrt{\frac{\rho_1}{k_1}}}{n^3}\left(\zeta_{1,n}-{\frac {\gamma k_1^2\left(\frac{k_1}{\rho_1}\right)^{\frac{1+\alpha}{2}}  \left( i\cos \left(\frac{\pi\alpha}{2}  \right) -\sin \left( \frac{\pi\alpha}{2} \right)  \right) }{256 k_2^3 \,{
\pi}^{5-\alpha}{n}^{2-\alpha}}}+O\left(n^{-2}\right)\right)=0.
 \end{equation*}
Consequently, we obtain
\begin{equation}\label{eq-3.39}
\zeta_{1,n}={\frac {\gamma \sqrt{k_1^{5+\alpha}}  \left( i\cos \left(\frac{\pi\alpha}{2}  \right) -\sin \left( \frac{\pi\alpha}{2} \right)  \right) }{256 k_2^3\sqrt{\rho_1^{1+\alpha}}\,{
\pi}^{5-\alpha}{n}^{2-\alpha}}}+O\left(n^{-2}\right).
\end{equation}
Inserting \eqref{eq-3.39} in \eqref{eq-3.37}, we get the estimate \eqref{eq-3.17}. \\[0.1in]
\textbf{Step 4.}  We seek to determine $\epsilon_{2,n}$. Inserting \eqref{eq-3.25} in \eqref{eq-3.28}, we get
 \begin{equation}\label{eq-3.40}
\begin{array}{ll}
\displaystyle{H(\lambda_{2,n})=-\sinh\left(2\epsilon_{2,n}\sqrt{\frac{\rho_1}{k_1}}\right)+\frac{\gamma\, \left(-\cosh\left(2\epsilon_{2,n}\sqrt{\frac{\rho_1}{k_1}}\right)-\cos\left(\sqrt{\frac{k_1}{k_2}}\right)\right)}{\sqrt{\rho_1 k_1}\left(i n\pi\sqrt{\frac{k_1}{\rho_1}}\right)^{1-\alpha}}}
\\ \noalign{\medskip}\hspace{3cm}
\displaystyle{+\frac{\sqrt{\frac{k_1}{k_2}}\left(-\cosh\left(2\epsilon_{2,n}\sqrt{\frac{\rho_1}{k_1}}\right)\sqrt{\frac{k_1}{k_2}}-2\sin\left(\sqrt{\frac{k_1}{k_2}}\right)\right)}{4 i n\pi}+O\left({n^{-2+\alpha}}\right).}
\end{array}
\end{equation}
 On the other hand, since $\lim_{|n|\to+\infty}\epsilon_{2,n}=0$, using the asymptotic expansion in \eqref{eq-3.40}, we get
  \begin{equation}\label{eq-3.42}
  \begin{array}{ll}
\displaystyle{\epsilon_{2,n}-\frac{\gamma\, \left(1+\cos\left(\sqrt{\frac{k_1}{k_2}}\right)\right)\left(-\sin\left(\frac{\pi \alpha}{2}\right)+i\cos\left(\frac{\pi \alpha}{2}\right)\right)}{2 \sqrt{\rho_1^{1+\alpha} k_1^{1-\alpha}}\left(n \pi\right)^{1-\alpha}}}
\\ \noalign{\medskip}\hspace{1cm}
\displaystyle{-\frac{\frac{i k_1}{\sqrt{\rho_1 k_2}}\left(\sqrt{\frac{k_1}{k_2}}+2\sin\left(\sqrt{\frac{k_1}{k_2}}\right)\right)}{8  n\pi}+O\left({n^{-2+\alpha}}\right)+O\left({n^{-1+\alpha}}{\epsilon_{2,n}^2}\right)+O(\epsilon_{2,n}^3)=0.}
\end{array}
\end{equation}
 We distinguish two cases:\\[0.1in]
 \textbf{Case 1.} There exists  no integer $k\in \mathbb{Z}$ such that $\sqrt{\frac{k_1}{k_2}}=\pi+2k\pi$. Then, we have
 $$1+\cos\left(\sqrt{\frac{k_1}{k_2}}\right)\neq0,$$
 therefore, from \eqref{eq-3.42}, we get
 \begin{equation}\label{eq-3.43}
 \epsilon_{2,n}=\frac{\gamma\, \left(1+\cos\left(\sqrt{\frac{k_1}{k_2}}\right)\right)\left(-\sin\left(\frac{\pi \alpha}{2}\right)+i\cos\left(\frac{\pi \alpha}{2}\right)\right)}{2 \sqrt{\rho_1^{1+\alpha} k_1^{1-\alpha}}\left(n \pi\right)^{1-\alpha}}+o\left({n^{-1+\alpha}}\right).
 \end{equation}
Inserting \eqref{eq-3.43} in \eqref{eq-3.25}, we get the estimates \eqref{eq-3.16} and \eqref{eq-3.18}.\\[0.1in]
\textbf{Case 2.} If there exists $k\in\mathbb{Z}$ such that $\sqrt{\frac{k_1}{k_2}}=\pi+2k\pi$, then
$$1+\cos\left(\sqrt{\frac{k_1}{k_2}}\right)=0\ \ \ \text{and}\ \ \ \sin\left(\sqrt{\frac{k_1}{k_2}}\right)=0,$$
 therefore, from \eqref{eq-3.42}, we get
 \begin{equation}\label{eq-3.44}
\epsilon_{2,n}=\frac{\frac{i k_1}{{k_2}}\sqrt{\frac{k_1}{\rho_1}}}{8  n\pi}+\frac{\varepsilon_{2,n}}{n},\ \text{such that } \lim_{|n|\to+\infty}\varepsilon_{2,n}=0.
\end{equation}
Substituting \eqref{eq-3.44} in \eqref{eq-3.25}, we get
 \begin{equation}\label{eq-3.45}
\lambda_{2,n}=i n\pi\sqrt{\frac{k_1}{\rho_1}}+\frac{i\pi \sqrt{\frac{k_1}{\rho_1}}}{2} +\frac{\frac{i k_1}{{k_2}}\sqrt{\frac{k_1}{\rho_1}}}{8  n\pi}+\frac{\varepsilon_{2,n}}{n},
 \end{equation} 
 since in this case the real part of $\lambda_{2,n}$ still does not appear, we need to increase the order of the finite expansion.  Inserting \eqref{eq-3.45} in \eqref{eq-3.13} and using the asymptotic expansion, we get
 \begin{equation*}
 \left\{
 \begin{array}{ll}
 \displaystyle{r_1+r_2=2in\pi+i\pi+\frac{2\sqrt{\frac{\rho_1}{k_1}}\varepsilon_{2,n}}{n}+\frac{i k_1}{8k_2\pi n^2}+O \left(n^{-3}\right)},\\ \noalign{\medskip}
\displaystyle{
 
  r_1-r_2=i \sqrt{\frac{k_1}{k_2}}+\frac{i\left(\sqrt{\frac{k_1}{k_2}}\right)^3}{8\pi^2 n^2}+O \left( n^{-3}\right).}
 \end{array} 
 \right.
 \end{equation*}
 Therefore, we have
\begin{equation}\label{eq-3.46}
\left\{
\begin{array}{ll}
\displaystyle{\sinh(r_1+r_2)=-\frac{2\sqrt{\frac{\rho_1}{k_1}}\varepsilon_{2,n}}{n}-\frac{i k_1}{8k_2\pi n^2}+O \left( n^{-3}\right),}
\\ \noalign{\medskip}
\displaystyle{\sinh(r_1-r_2)=-\frac{i\left(\sqrt{\frac{k_1}{k_2}}\right)^3}{8\pi^2 n^2}+O \left(n^{-3}\right),}
\\ \noalign{\medskip}
\displaystyle{\cosh(r_1+r_2)-\cosh(r_1-r_2)=-\frac{2\rho_1\varepsilon^2_{2,n}}{k_1 n^2}+O\left(n^{-3}\right). }
\end{array}
\right.
\end{equation}
Inserting \eqref{eq-3.45} and \eqref{eq-3.46} in \eqref{eq-3.21}, then using the asymptotic expansion, we get
 \begin{equation*}
 -\frac{2\sqrt{\frac{\rho_1}{k_1}}}{n}\left(\varepsilon_{2,n}+\frac{\frac{ik_1}{k_2}\sqrt{\frac{k_1}{\rho_1}} }{16\pi n}
+O\left(n^{-2}\right)+O\left({n^{-2+\alpha}}{\varepsilon_{2,n}^2}\right) \right)=0.
 \end{equation*}
Consequently, we get
 \begin{equation}\label{eq-3.47}
 \varepsilon_{2,n}=-\frac{\frac{ik_1}{k_2}\sqrt{\frac{k_1}{\rho_1}} }{16\pi n}+\frac{\zeta_{2,n}}{n},\ \ \text{such that }\lim_{|n|\to+\infty}\zeta_{2,n}=0.
 \end{equation}
Inserting \eqref{eq-3.47} in \eqref{eq-3.45}, we get
  \begin{equation}\label{eq-3.48}
\lambda_{2,n}=i n\pi\sqrt{\frac{k_1}{\rho_1}}+\frac{i\pi \sqrt{\frac{k_1}{\rho_1}}}{2} +\frac{\frac{i k_1}{{k_2}}\sqrt{\frac{k_1}{\rho_1}}}{8  n\pi}-\frac{\frac{ik_1}{k_2}\sqrt{\frac{k_1}{\rho_1}} }{16\pi n^2}+\frac{\zeta_{2,n}}{n^2},
 \end{equation} 
 again  the real part of $\lambda_{2,n}$ still does not appear, so  we need to increase the order of the finite expansion.  For this aim, inserting \eqref{eq-3.48} in \eqref{eq-3.13} and using the asymptotic expansion, we get
 \begin{equation*}
 \left\{
 \begin{array}{ll}
 \displaystyle{r_1+r_2=(2n+1)i\pi+\frac{2\sqrt{\frac{\rho_1}{k_1}}\zeta_{2,n}}{n^2}-\frac{i k_1\, (4\pi^2 k_2+3k_1) }{64k_2^2 \pi^3 n^3}-\frac{i \frac{k_1}{k_2}\left(32 i \pi \sqrt{\frac{\rho_1}{k_1}}\zeta_{2,n}-4\pi^2  -9 \frac{k_1}{k_2}\right)}{128\pi^3 n^4}+O \left( n^{-5}\right)},\\ \noalign{\medskip}
\displaystyle{
 
  r_1-r_2=i \sqrt{\frac{k_1}{k_2}}+\frac{i \left(\sqrt{\frac{k_1}{k_2}}\right)^3}{8\pi^2 n^2}-\frac{i \left(\sqrt{\frac{k_1}{k_2}}\right)^3}{8\pi^2 n^3}+O \left(n^{-4}\right).}
 \end{array} 
 \right.
 \end{equation*}
Therefore, we have
\begin{equation}\label{eq-3.49}
\left\{
\begin{array}{ll}
\displaystyle{\sinh(r_1+r_2)=-\frac{2\sqrt{\frac{\rho_1}{k_1}}\zeta_{2,n}}{n^2}+\frac{i k_1\, (4\pi^2 k_2+3k_1) }{64k_2^2 \pi^3 n^3}

-\frac{\frac{ik_1}{k_2}\left(4 \pi^2+  \frac{9k_1}{k_2}-32i\pi  \sqrt{\frac{\rho_1}{k_1}}   \zeta_{2,n}\right) }{128\pi^3 n^4}+O \left( n^{-5}\right),}

\\ \noalign{\medskip}
\displaystyle{\sinh(r_1-r_2)=-\frac{i \left(\sqrt{\frac{k_1}{k_2}}\right)^3}{8\pi^2 n^2}+\frac{i \left(\sqrt{\frac{k_1}{k_2}}\right)^3}{8\pi^2 n^3}+O \left(n^{-4}\right),}
\\ \noalign{\medskip}
\displaystyle{\cosh(r_1+r_2)-\cosh(r_1-r_2)=-\frac{\frac{2\rho_1\zeta^2_{2,n}}{k_1}+\frac{k_1^3}{128k_2^3\pi^4}}{n^4}+O\left(n^{-5}\right). }
\end{array}
\right.
\end{equation}
 Inserting \eqref{eq-3.48} and \eqref{eq-3.49} in \eqref{eq-3.21}, then using the asymptotic expansion, we get
 \begin{equation*}
 \begin{array}{ll}
\displaystyle{
 -\frac{2\sqrt{\frac{\rho_1}{k_1}}}{n^2}\bigg(\zeta_{2,n}
-\frac{i\frac{k_1}{k_2} \sqrt{\frac{k_1}{\rho_1}} \left(4\pi^2-\frac{k_1}{k_2}\right) }{128\pi^3 n}+\frac{i\frac{k_1}{k_2} \sqrt{\frac{k_1}{\rho_1}} \left(4\pi^2-\frac{3k_1}{k_2}\right) }{256\pi^3 n^2}}
\\ \noalign{\medskip}\hspace{3cm}
\displaystyle{

-{\frac {\gamma \sqrt{k_1^{5+\alpha}}  \left( i\cos \left(\frac{\pi\alpha}{2}  \right) -\sin \left( \frac{\pi\alpha}{2} \right)  \right) }{256 k_2^3\sqrt{\rho_1^{1+\alpha}}\,{
\pi}^{5-\alpha}{n}^{2-\alpha}}}
 +O\left(n^{-3}\right)+O\left(n^{-2}{\zeta_{2,n}}\right) \bigg)=0.
  }
\end{array}
 \end{equation*}
Consequently, we get
\begin{equation}\label{eq-3.50}
\zeta_{2,n}=\frac{i\frac{k_1}{k_2} \sqrt{\frac{k_1}{\rho_1}} \left(4\pi^2-\frac{k_1}{k_2}\right) }{128\pi^3 n}-\frac{i\frac{k_1}{k_2} \sqrt{\frac{k_1}{\rho_1}} \left(4\pi^2-\frac{3k_1}{k_2}\right) }{256\pi^3 n^2}+{\frac {\gamma \sqrt{k_1^{5+\alpha}}  \left( i\cos \left(\frac{\pi\alpha}{2}  \right) -\sin \left( \frac{\pi\alpha}{2} \right)  \right) }{256 k_2^3\sqrt{\rho_1^{1+\alpha}}\,{
\pi}^{5-\alpha}{n}^{3-\alpha}}}+O\left(n^{-3}\right). 
\end{equation}
Finally, inserting \eqref{eq-3.50} in \eqref{eq-3.48}, we get \eqref{eq-3.20}. Thus, the proof of the proposition  is complete. \end{proof}$\\[0.1in]$
 \noindent\textbf{Proof of  Theorem \ref{Theorem-1.8}.} From Propositions \ref{Theorem-1.10} and \ref{Theorem-1.10new}, the operator $\mathcal{A}_1$ has two branches of eigenvalues with eigenvalues admitting real parts tending to zero. Hence, the energy corresponding to the first and second branch of eigenvalues  has no exponential decaying. Therefore the total energy of the  Timoshenko System \eqref{Part-1-07}-\eqref{Part-1-11} has no exponential decaying both in the equal speed case, i.e., $\frac{\rho_1}{k_1}=\frac{\rho_2}{k_2}$ or in the different speed case, i.e., when $\frac{\rho_1}{k_1}\neq\frac{\rho_2}{k_2}$.  	\xqed{$\square$}
\subsection{Polynomial stability}\label{Section-1.4-Polynomial stability}
\noindent In the case where  $\left(e^{t\mathcal{A}_1}\right)_{t\geq0}$ is not exponentially stable, we look for a polynomial decay rate.   In this section, we use the  frequency domain approach method to show the polynomial  stability of $\left(e^{t\mathcal{A}_1}\right)_{t\geq0}$ associated with the Timoshenko System \eqref{Part-1-07}-\eqref{Part-1-11}. The frequency domain approach method has been obtained by Batty in \cite{Batty01}, Borichev and  Tomilov in \cite{Borichev01},  Liu and  Rao in \cite{RaoLiu01}.
\begin{thm}\label{chapter pr-44}
\rm{(Batty in \cite{Batty01}, Borichev and  Tomilov in \cite{Borichev01},  Liu and  Rao in \cite{RaoLiu01}). Assume that $\mathcal{A}_1$ is the generator of a strongly continuous semigroup of contractions $\left(e^{t\mathcal{A}_1}\right)_{t\geq0}$  on $\mathcal{H}_1$.   If   $\sigma\left(\mathcal{A}_1\right)\cap\ i\mathbb{R}=\emptyset$, then for a fixed $\ell>0$ the following conditions are equivalent
\begin{enumerate}
\item[1.] $\sup_{\lambda\in\mathbb{R}}\left\|\left(i\lambda Id-\mathcal{A}_1\right)^{-1}\right\|_{\mathcal{L}\left(\mathcal{H}_1\right)}=O\left(|\lambda|^\ell\right),$
\item[2.] $\displaystyle{\|e^{t\mathcal{A}_1}U_0\|_{\mathcal{H}_1} \leq \dfrac{C}{t^{{\frac{1}{\ell}}}} \ \|U_0\|_{D\left(\mathcal{A}_1\right)} \quad\forall\ t>0,\ U_0\in D\left(\mathcal{A}_1\right)},\ $ for some $C>0.$
\end{enumerate}}\xqed{$\square$}
\end{thm}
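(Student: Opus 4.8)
The final statement is the resolvent characterization of polynomial decay for bounded semigroups on Hilbert spaces, due to Batty \cite{Batty01}, to Liu and Rao \cite{RaoLiu01}, and in the sharp form stated here to Borichev and Tomilov \cite{Borichev01}. The plan is to reprove the equivalence directly rather than merely quote it. Write $T(t)=e^{t\mathcal{A}_1}$ and $R(z):=(zId-\mathcal{A}_1)^{-1}$. The hypothesis $\sigma(\mathcal{A}_1)\cap i\mathbb{R}=\emptyset$ makes $R(i\lambda)$ analytic along the whole imaginary axis and in particular gives $0\in\rho(\mathcal{A}_1)$, so $\mathcal{A}_1^{-1}\in\mathcal{L}(\mathcal{H}_1)$ and both conditions speak only of the growth of $R(i\lambda)$ as $|\lambda|\to\infty$.

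The implication $(2)\Rightarrow(1)$ is the softer one and uses only boundedness of $T$. Interpolating the decay $\|T(t)\mathcal{A}_1^{-1}\|\le Ct^{-1/\ell}$ against $\|T(t)\|\le1$ through the moment inequality yields $\|T(t)(-\mathcal{A}_1)^{-\theta}\|\le C_\theta\,t^{-\theta/\ell}$ for $\theta\in[0,1]$; writing $R(i\lambda)(-\mathcal{A}_1)^{-\theta}x$ as the Fourier transform at $\lambda$ of $t\mapsto T(t)(-\mathcal{A}_1)^{-\theta}x\,\mathbf{1}_{t>0}$ and exploiting the oscillation $e^{-i\lambda t}$ to absorb the non-integrable part (integrating by parts to trade a power of $\mathcal{A}_1^{-1}$ for a factor $(i\lambda)^{-1}$), one then recovers $\|R(i\lambda)\|=O(|\lambda|^{\ell})$ via the algebraic relation $R(i\lambda)=i\lambda R(i\lambda)\mathcal{A}_1^{-1}-\mathcal{A}_1^{-1}$. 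No Hilbert-space structure enters here.

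The substance is $(1)\Rightarrow(2)$, which I would carry out in three steps. Step A pushes the bound off the axis: from $\|R(i\lambda)\|\le M\langle\lambda\rangle^{\ell}$ and the Neumann series $R(z)=R(i\lambda)\sum_{n\ge0}\big((i\lambda-z)R(i\lambda)\big)^{n}$, convergent once $|z-i\lambda|<\|R(i\lambda)\|^{-1}$, one shows $R(z)$ is analytic and still $O(\langle\Im z\rangle^{\ell})$ on a region $\Omega=\{z:\Re z\ge -c\langle\Im z\rangle^{-\ell}\}$ hugging the imaginary axis. Step B represents the semigroup: for $x\in D(\mathcal{A}_1)$ one writes $T(t)\mathcal{A}_1^{-1}x$ as an inverse Laplace transform of $z\mapsto R(z)\mathcal{A}_1^{-1}x$ and deforms the contour onto $\partial\Omega$; on the curved boundary $\Re z=-c\langle\Im z\rangle^{-\ell}$ the factor $e^{zt}$ contributes $e^{-ct\langle\Im z\rangle^{-\ell}}$, and balancing this against the polynomial growth of the resolvent over the window $|\Im z|\lesssim t^{1/\ell}$ is exactly what forces the rate $t^{-1/\ell}$. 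Step C upgrades the pointwise resolvent bound to an operator-norm estimate: expressing the contour integrals through Fourier transforms of $\lambda\mapsto R(i\lambda)\mathcal{A}_1^{-1}x$ and invoking Plancherel's identity on $L^2(\mathbb{R};\mathcal{H}_1)$ converts the $L^2$ resolvent information into decay of $\|T(t)\mathcal{A}_1^{-1}\|$.

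The hard part will be Steps B and C together, i.e. the quantitative Gearhart--Pr\"uss mechanism: one must control the integral over the curved boundary along with the two horizontal connectors, and arrange the Plancherel step so that only $\|R(i\lambda)\|$, and none of its derivatives, enters the estimate. This is precisely where the Hilbert structure of $\mathcal{H}_1$ is indispensable and from which the sharp exponent $1/\ell$ emerges. Since the present paper uses only the upper bound $(1)\Rightarrow(2)$, I would not need the matching optimality of \cite{Borichev01}, whose lower bound rests on separate function-theoretic constructions that need not be reproduced here.
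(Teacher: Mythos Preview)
The paper does not prove this theorem at all: it is stated as a known result from the literature (Batty \cite{Batty01}, Borichev--Tomilov \cite{Borichev01}, Liu--Rao \cite{RaoLiu01}) and closed with a $\square$ immediately after the statement, with no argument given. The paper then simply \emph{applies} the implication $(1)\Rightarrow(2)$ in the proofs of Theorems~\ref{Theorem-1.11} and~\ref{Theorem-1.12}. So there is nothing in the paper to compare your proposal against; you are attempting something strictly more ambitious than what the authors do.

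As for the sketch itself: the overall architecture you describe (Neumann-series extension to a parabolic region, contour deformation of the inverse Laplace transform, and a Plancherel/Gearhart--Pr\"uss step to remove logarithmic losses) is indeed the shape of the argument, and you are right that the sharp exponent $1/\ell$ hinges on the Hilbert-space Plancherel mechanism. But your Steps~B and~C are not yet a proof: the contour-deformation route alone (Batty--Duyckaerts) only gives $t^{-1/\ell}$ up to a logarithmic factor, and the Borichev--Tomilov removal of that log does not proceed by ``arranging the Plancherel step'' inside the contour integral as you suggest. Their argument passes instead through a characterization in terms of the auxiliary bounded operator $\mathcal{A}_1(\mathcal{A}_1-I)^{-1}$ and a function-theoretic lemma on $H^\infty$ of the disc, which is genuinely different machinery from what you outline. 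If you want to claim a full proof rather than a heuristic, you would need either to reproduce that mechanism or to supply the missing quantitative estimate that turns your Step~C into the sharp bound.
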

\noindent  Our results are gathered in  the following  two theorems.
\begin{thm}\label{Theorem-1.11}
\rm{Assume that $\eta>0$,  condition $(\rm A_1)$  holds, and
\begin{equation}\label{Part-1-92}
	\frac{\rho_1}{k_1}=\frac{\rho_2}{k_2}.
\end{equation}
Then there exists  $c>0$ such that, for every $U_0\in D\left(\mathcal{A}_1\right)$, the energy of the System \eqref{Part-1-07}-\eqref{Part-1-11} has the optimal polynomial decay rate, we have
\begin{equation}\label{ee}
	E_1\left(t\right)\leq \frac{c}{t^{\tau(\alpha)}}\left\|U_0\right\|^2_{D\left(\mathcal{A}_1\right)},\quad\ t>0,
\end{equation}
where
\begin{equation*}
\tau(\alpha)=\left\{
\begin{array}{ll}
\frac{2}{1-\alpha},&\text{If } \sqrt{\frac{k_1}{k_2}}\neq k\pi,\ \forall k\in\mathbb{N} ,

\\ \noalign{\medskip}

\frac{2}{5-\alpha},&\text{If } \sqrt{\frac{k_1}{k_2}}= k_0\pi,\  k_0\in\mathbb{N}. 

\end{array}
\right.
\end{equation*}}
\end{thm}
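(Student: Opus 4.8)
The plan is to deduce the result from the Borichev--Tomilov characterization (Theorem \ref{chapter pr-44}). Since $\eta>0$ and condition $(\mathrm{A}_1)$ holds, the proof of Theorem \ref{strongtheorem} already yields $\sigma(\mathcal{A}_1)\cap i\mathbb{R}=\emptyset$, so it suffices to establish the resolvent estimate $\sup_{\lambda\in\mathbb{R}}\|(i\lambda I-\mathcal{A}_1)^{-1}\|_{\mathcal{L}(\mathcal{H}_1)}=O(|\lambda|^{\ell})$ with $\ell=2/\tau(\alpha)$; the factor $2$ arises because $E_1(t)=\tfrac12\|e^{t\mathcal{A}_1}U_0\|_{\mathcal{H}_1}^2$. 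Concretely, I must prove $\ell=1-\alpha$ when $\sqrt{k_1/k_2}\neq k\pi$ and $\ell=5-\alpha$ in the resonant case $\sqrt{k_1/k_2}=k_0\pi$.

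First I would argue by contradiction. If the bound fails, boundedness of the resolvent on compact subsets of $i\mathbb{R}$ forces a sequence $\lambda_n\in\mathbb{R}$ with $|\lambda_n|\to\infty$ and $U_n=(u_n,v_n,y_n,z_n,\omega_n)\in D(\mathcal{A}_1)$, $\|U_n\|_{\mathcal{H}_1}=1$, such that $F_n:=\lambda_n^{\ell}(i\lambda_n I-\mathcal{A}_1)U_n\to 0$ in $\mathcal{H}_1$. Writing out the five resolvent equations as in the proof of Lemma \ref{Theorem-1.7}, the first step is the dissipation identity obtained by taking the real part of $\langle (i\lambda_n I-\mathcal{A}_1)U_n,U_n\rangle_{\mathcal{H}_1}$, namely $\gamma\kappa(\alpha)\int_{\mathbb{R}}(\xi^2+\eta)|\omega_n(\xi)|^2\,d\xi=o(\lambda_n^{-\ell})$. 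Inserting the explicit $\omega_n(\xi)=(v_n(1)\mu(\xi)+f_{5,n}(\xi))/(i\lambda_n+\xi^2+\eta)$ and evaluating the resulting integral through Lemma \ref{Theorem-1.9}, this controls the boundary trace: one gets $|v_n(1)|^2=o(\lambda_n^{\,1-\alpha-\ell})$, equivalently $|\lambda_n u_n(1)|^2=o(\lambda_n^{\,1-\alpha-\ell})$. When $\ell=1-\alpha$ this already forces $v_n(1)\to 0$ and $u_n(1)=o(\lambda_n^{-1})$, the boundary smallness that feeds the interior estimates.

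The core is a multiplier analysis exploiting the equal-speed assumption \eqref{Part-1-92}. Multiplying \eqref{Part-1-50}--\eqref{Part-1-51} (at $\lambda=\lambda_n$) by the pointwise multipliers $x u_{n,x}$ and $x y_{n,x}$, by the coupling combination $u_{n,x}+y_n$, and by $\overline{u_n},\overline{y_n}$, and integrating by parts, I would express the interior energies $\|u_{n,x}\|^2$, $\|\lambda_n u_n\|^2$, $\|y_{n,x}\|^2$, $\|\lambda_n y_n\|^2$ in terms of the boundary traces at $x=1$, the dissipation integral, and $o(1)$ remainders. Because $\rho_1/k_1=\rho_2/k_2$ the two transport operators share the same speed, so the boundary contributions at $x=1$ collapse onto exactly the quantity already shown to be small; all interior norms then tend to zero and $\|U_n\|_{\mathcal{H}_1}\to 0$, contradicting $\|U_n\|_{\mathcal{H}_1}=1$. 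This gives $\ell=1-\alpha$ in the non-resonant case. In the resonant case $\sqrt{k_1/k_2}=k_0\pi$ the dominant boundary combination degenerates --- this is the cancellation $1\mp\cos(\sqrt{k_1/k_2})=0$ visible in \eqref{eq-3.17} and \eqref{eq-3.20} --- so the smallness of $v_n(1)$ is two orders weaker and one must carry the multiplier identities and the expansion of $\omega_n$ to higher order, which yields only the weaker exponent $\ell=5-\alpha$.

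I expect the resonant case to be the main obstacle: once the leading boundary term cancels, recovering the full interior energy requires tracking the $\lambda_n^{-2}$ corrections through each multiplier identity without degrading the exponent, exactly the delicate bookkeeping reflected in the fourth-order eigenvalue expansions \eqref{eq-3.17}, \eqref{eq-3.20}. Finally, optimality is read off from Proposition \ref{Theorem-1.10new}: it supplies eigenvalues $\lambda_{j,n}$ with $\mathrm{Im}\,\lambda_{j,n}\sim c\,n$ and $\mathrm{Re}\,\lambda_{j,n}\sim -c'\,n^{-(1-\alpha)}$ in the non-resonant case (resp. a branch with $\mathrm{Re}\,\lambda_{j,n}\sim -c'\,n^{-(5-\alpha)}$ in the resonant case). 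Taking $\beta_n=\mathrm{Im}\,\lambda_{j,n}$ and the associated normalized eigenvector $\Phi_{j,n}$, one has $(i\beta_n I-\mathcal{A}_1)\Phi_{j,n}=-\mathrm{Re}(\lambda_{j,n})\Phi_{j,n}$, whence $\|(i\beta_n I-\mathcal{A}_1)^{-1}\|_{\mathcal{L}(\mathcal{H}_1)}\geq |\mathrm{Re}\,\lambda_{j,n}|^{-1}\sim |\beta_n|^{\ell}$. This matches the upper bound, so $\ell$ --- and hence the rate $\tau(\alpha)$ --- cannot be improved, which is the optimality claimed in the theorem.
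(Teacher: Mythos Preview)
Your overall strategy is correct: Borichev--Tomilov, contradiction argument, dissipation estimate for $\omega$, and the optimality via the eigenvalue asymptotics of Proposition~\ref{Theorem-1.10new}. The optimality paragraph is fine and matches the paper's argument in Step~2.

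The genuine gap is the middle step, where you claim that multiplier identities alone (with $x\overline{u_x}$, $x\overline{y_x}$, $\overline{u_x+y}$, $\overline{u}$, $\overline{y}$) force all boundary traces to be small. They do not. The paper's multiplier computation (Lemma~\ref{Theorem-1.18}) produces exactly the identity
\[
\int_0^1\!\!\big(\rho_1|\lambda u|^2+\rho_2|\lambda y|^2+k_1|u_x|^2+k_2|y_x|^2\big)dx
=\rho_1|\lambda u(1)|^2+k_1|u_x(1)|^2+\rho_2|\lambda y(1)|^2+o(1),
\]
and the equal-speed hypothesis does \emph{not} make $\rho_2|\lambda y(1)|^2$ collapse onto the quantities you already control, namely $|u_x(1)+y(1)|$ and $|\lambda u(1)|$. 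No combination of the multipliers you list yields an independent bound on $|\lambda y(1)|$: every boundary term they generate is either already small (products involving $u_x(1)+y(1)$ or $u(1)$) or is precisely the missing quantity $|\lambda y(1)|^2$ itself.

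The paper closes this gap by a completely different device. In Lemma~\ref{Theorem-1.15} it rewrites the resolvent equations as a first-order ODE system $Y_x=-BY+G$ for $Y=(u,u_x,y,y_x)$, computes the propagator $e^{Bz}$ explicitly via Sylvester's formula, and reads off two scalar relations for $y(1)$ in terms of the boundary data at $x=1$ and integrals of $h_2,h_4$. Then in Lemma~\ref{Theorem-1.16} the equal-speed assumption \eqref{Part-1-92} is used to expand the coefficients $C_{ij},S_{ij}$; this is where the dichotomy appears. In the non-resonant case both $\sin\!\big(\tfrac12\sqrt{k_1/k_2}\big)$ and $\cos\!\big(\tfrac12\sqrt{k_1/k_2}\big)$ are bounded away from zero, and the two relations combine to give $|y(1)|=o(\lambda^{-(\ell+\alpha+1)/2})$, hence $|\lambda y(1)|=o(1)$ already at $\ell=1-\alpha$. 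In the resonant case one of these factors vanishes, the leading coefficient in front of $y(1)$ is lost, and one must go two orders further in the expansion; this is exactly why the exponent degrades to $\ell=5-\alpha$. Your heuristic that ``the dominant boundary combination degenerates'' is the right intuition, but it is a statement about the propagator $e^{B}$, not about the multiplier identities, and you need the explicit ODE computation to turn it into a proof.
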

\begin{thm}\label{Theorem-1.12}
\rm{Assume that $\eta>0$, condition $(\rm A_1)$  holds, and
\begin{equation}\label{Part-1-93}
	\frac{\rho_1}{k_1}\neq\frac{\rho_2}{k_2}.
\end{equation} 
Then, for almost all real number $\sqrt{\frac{k_1\rho_2}{k_2 \rho_1}}$,   there exists  $c>0$ such that for every $U_0\in D\left(\mathcal{A}_1\right)$, we have
\begin{equation*}
	E_1\left(t\right)\leq \frac{c}{t^{\frac{2}{5-\alpha}}}\left\|U_0\right\|^2_{D\left(\mathcal{A}_1\right)},\quad\ t>0.
\end{equation*}}
\end{thm}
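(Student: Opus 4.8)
The plan is to derive the energy decay from the Borichev--Tomilov characterisation, Theorem \ref{chapter pr-44}. Since $\eta>0$, the spectral analysis underlying Theorem \ref{strongtheorem} gives $\sigma(\mathcal{A}_1)\cap i\mathbb{R}=\emptyset$, so it suffices to prove the resolvent bound
\begin{equation*}
\sup_{\lambda\in\mathbb{R}}\left\|(i\lambda I-\mathcal{A}_1)^{-1}\right\|_{\mathcal{L}(\mathcal{H}_1)}=O\left(|\lambda|^{\,5-\alpha}\right).
\end{equation*}
Theorem \ref{chapter pr-44} with $\ell=5-\alpha$ then yields $\|e^{t\mathcal{A}_1}U_0\|_{\mathcal{H}_1}\le C\,t^{-1/(5-\alpha)}\|U_0\|_{D(\mathcal{A}_1)}$, and squaring (recall $E_1(t)\sim\|U(t)\|_{\mathcal{H}_1}^2$) produces the stated rate $t^{-2/(5-\alpha)}$. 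I would argue by contradiction: if the bound fails there are $\lambda_n\in\mathbb{R}$ with $|\lambda_n|\to\infty$ and $U_n=(u_n,v_n,y_n,z_n,\omega_n)\in D(\mathcal{A}_1)$, $\|U_n\|_{\mathcal{H}_1}=1$, with $|\lambda_n|^{5-\alpha}(i\lambda_n I-\mathcal{A}_1)U_n=:F_n\to0$ in $\mathcal{H}_1$. Eliminating $v_n,z_n,\omega_n$ exactly as in \eqref{Part-1-47}--\eqref{Part-1-49} reduces matters to the second order system of the form \eqref{Part-1-50}--\eqref{Part-1-51} for $(u_n,y_n)$, whose data and boundary terms are all of size $o(|\lambda_n|^{-(5-\alpha)})$.

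\textbf{Dissipation and trace estimates.} Taking the real part of $\langle(i\lambda_n I-\mathcal{A}_1)U_n,U_n\rangle_{\mathcal{H}_1}$ and invoking the dissipation identity of Proposition \ref{Theorem-1.2}, I first get $\gamma\kappa(\alpha)\int_{\mathbb{R}}(\xi^2+\eta)|\omega_n|^2\,d\xi=o(|\lambda_n|^{-(5-\alpha)})$. Substituting $\omega_n=(v_n(1)\mu(\xi)+|\lambda_n|^{-(5-\alpha)}f_{5,n})/(i\lambda_n+\xi^2+\eta)$ and using Lemma \ref{Theorem-1.9} to evaluate $\int_{\mathbb{R}}(\xi^2+\eta)\mu^2(\xi)\,|i\lambda_n+\xi^2+\eta|^{-2}\,d\xi\sim c\,|\lambda_n|^{\alpha-1}$, this forces $|\lambda_n|^{\alpha-1}|v_n(1)|^2=o(|\lambda_n|^{-(5-\alpha)})$, hence $|v_n(1)|=o(|\lambda_n|^{-2})$; since $v_n(1)=i\lambda_n u_n(1)-o(1)$ this also gives $|u_n(1)|=o(|\lambda_n|^{-3})$, so that the boundary damping term $\mathtt{I}_1(\lambda_n,\eta,\alpha)u_n(1)$ in \eqref{Part-1-52} is negligible.

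\textbf{Multiplier estimates and the non-resonance condition.} The core is a chain of multiplier identities for \eqref{Part-1-50}--\eqref{Part-1-51}. In the regime $\frac{\rho_1}{k_1}\neq\frac{\rho_2}{k_2}$ the characteristic roots satisfy $r_1\sim\sqrt{\rho_1/k_1}\,\lambda_n$ and $r_2\sim\sqrt{\rho_2/k_2}\,\lambda_n$ (see \eqref{Part-1-73}), so the two fields oscillate at genuinely distinct frequencies. Multiplying the $u_n$--equation by $\overline{u_n}$ and by $x\,\overline{(u_n)_x}$, the $y_n$--equation by $\overline{y_n}$ and by $x\,\overline{(y_n)_x}$, integrating over $(0,1)$ and combining the identities so as to absorb the interior coupling $k_1(u_{n,x}+y_n)$, controls $\int_0^1|\lambda_n u_n|^2$, $\int_0^1|(u_n)_x|^2$, $\int_0^1|\lambda_n y_n|^2$ and $\int_0^1|(y_n)_x|^2$ in terms of the traces already estimated and the vanishing data. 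The decisive point --- and the reason the exponent is $5-\alpha$ rather than the equal-speed value $1-\alpha$ --- is the recovery of the only weakly damped field $y_n$: because the dissipation acts solely through $u_n(1)$, its transfer to $y_n$ requires dividing by a quantity of size $|\det M|$, with $M$ the boundary matrix introduced before Lemma \ref{Theorem-1.9}, whose leading behaviour \eqref{Part-1-78} is governed by $\cos(\sqrt{\rho_1/k_1}\,\lambda_n)\sin(\sqrt{\rho_2/k_2}\,\lambda_n)$ together with $\sin(\sqrt{\rho_1/k_1}\,\lambda_n)\sin(\sqrt{\rho_2/k_2}\,\lambda_n)$. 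Exactly here the hypothesis that $\sqrt{k_1\rho_2/(k_2\rho_1)}=\sqrt{\rho_2/k_2}\big/\sqrt{\rho_1/k_1}$ lie outside a null set intervenes: for almost every value of this ratio the number has finite irrationality measure, and a Diophantine non-resonance estimate then supplies a lower bound $|\det M|\ge c\,|\lambda_n|^{5-\sigma}$ along $(\lambda_n)$, with $\sigma$ chosen so that the $y_n$--estimate closes precisely at $\ell=5-\alpha$. Collecting all interior estimates yields $\|U_n\|_{\mathcal{H}_1}\to0$, contradicting $\|U_n\|_{\mathcal{H}_1}=1$.

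\textbf{Expected difficulty.} I expect the genuinely hard part to be the Diophantine lower bound for $|\det M|$ and the careful bookkeeping of powers of $\lambda_n$ that fixes the loss (four powers relative to equal speed) and hence the exponent $5-\alpha$; the dissipation and trace estimates above are by contrast routine consequences of Proposition \ref{Theorem-1.2} and Lemma \ref{Theorem-1.9}.
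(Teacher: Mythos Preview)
Your overall architecture is correct and matches the paper: reduce to the resolvent bound via Theorem \ref{chapter pr-44}, argue by contradiction with a normalised sequence, use the dissipation identity to get $\int(\xi^2+\eta)|\omega_n|^2=o(\lambda_n^{-(5-\alpha)})$, deduce the trace estimates $|u_{n,x}(1)+y_n(1)|=o(\lambda_n^{-(5-\alpha)/2})$ and $|\lambda_n u_n(1)|=o(\lambda_n^{-2})$, and then use $x$-weighted multipliers (Lemma \ref{Theorem-1.18}) to reduce the contradiction to $|\lambda_n y_n(1)|=o(1)$. All of this is right.

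The gap is in how you propose to obtain $|\lambda_n y_n(1)|=o(1)$. You suggest ``dividing by $|\det M|$'' and securing a Diophantine lower bound $|\det M|\ge c|\lambda_n|^{5-\sigma}$. This cannot work as stated: the matrix $M$ lives in the eigenvalue problem, and more importantly its leading oscillatory factor is a \emph{single} product of the type $\cos(\sqrt{\rho_1/k_1}\,\lambda)\sin(\sqrt{\rho_2/k_2}\,\lambda)$, which vanishes on an unbounded set of real $\lambda$'s regardless of any irrationality condition on $\sqrt{k_1\rho_2/(k_2\rho_1)}$. Since the contradiction sequence $(\lambda_n)$ is a priori arbitrary, no uniform lower bound on a single such quantity is available.

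What the paper actually does (Lemmas \ref{Theorem-1.15} and \ref{Theorem-1.17}) is different in a way that matters. It writes the reduced system as a first-order ODE $Y_x=-BY+G$ with $Y=(u,u_x,y,y_x)$, applies the variation-of-constants formula $e^{B}Y(1)=Y(0)+\int_0^1 e^{Bz}G(z)\,dz$, and reads off \emph{two separate} scalar relations for $y(1)$ (rows $1$ and $4$ of the matrix identity). After asymptotic expansion these take the form
\[
\Big(\sin\big(\sqrt{\tfrac{\rho_1}{k_1}}\,\lambda_n+O(\lambda_n^{-1})\big)+O(\lambda_n^{-2})\Big)\,\lambda_n y_n(1)=o(1),\qquad
\Big(\sin\big(\sqrt{\tfrac{\rho_2}{k_2}}\,\lambda_n+O(\lambda_n^{-1})\big)+O(\lambda_n^{-2})\Big)\,\lambda_n y_n(1)=o(1).
\]
Now one argues by contradiction: if $|\lambda_n y_n(1)|\ge c>0$, then \emph{both} sine factors are $o(\lambda_n^{-1})$, so $\lambda_n$ is simultaneously $O(\lambda_n^{-1})$-close to $m\pi\sqrt{k_1/\rho_1}$ and to $n\pi\sqrt{k_2/\rho_2}$. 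Subtracting and dividing forces $\big|\tfrac{n}{m}-\sqrt{k_1\rho_2/(k_2\rho_1)}\big|=C/m^2+o(1/m^2)$ with an explicit nonzero constant $C$; Khintchine's theorem (applied as in \cite{Bugeaud01}) rules this out for almost every value of the ratio. The Diophantine input is thus a \emph{simultaneous smallness} constraint on two independent sines, not a lower bound on their product. Your sketch would become correct if you replaced the $|\det M|$ heuristic by this two-equation extraction and the ensuing contradiction argument.
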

\noindent Since $\sigma\left(\mathcal{A}_1\right)\cap\ i\mathbb{R}=\emptyset$, for the proof of Theorem \ref{Theorem-1.11} and Theorem \ref{Theorem-1.12}, according to Theorem \ref{chapter pr-44}, we need to prove that
\begin{equation*}\tag{\rm H3}
\sup_{\lambda\in\mathbb{R}}\left\|\left(i\lambda Id-\mathcal{A}_1\right)^{-1}\right\|_{\mathcal{L}\left(\mathcal{H}_1\right)}=O\left(|\lambda|^\ell\right),
\end{equation*}
where $\ell=\frac{2}{\tau(\alpha)}$ if condition \eqref{Part-1-92} holds and $\ell=5-\alpha$ if condition \eqref{Part-1-93} holds.\\[0.1in]
We will argue by contradiction. Therefore suppose  there exists $\left\{(\lambda_n,U_n=\left(u_n,v_n,y_n,z_n,\omega_n\right))\right\}_{n\geq 1}\subset \mathbb{R}\times D\left(\mathcal{A}_1\right)$, with $\lambda_n>\sqrt{\frac{\rho_2}{k_1}}$  and 
\begin{equation}\label{Part-1-94}
\lambda_n\to+\infty,\ \ \|U_n\|_{\mathcal{H}_1}=1,
\end{equation}
such that
\begin{equation}\label{Part-1-95}
\lambda_n^\ell\left(\ i\lambda_n U_n-\mathcal{A}_1U_n\right)=\left(f_{1,n},f_{2,n},f_{3,n},f_{4,n},f_{5,n}\right)\to 0\ \text{ in } \mathcal{H}_1.
\end{equation}
Equivalently, we have 
\begin{eqnarray}
\ i\lambda_n u_n-v_n&=&h_{1,n} ,\label{Part-1-96}
\\ \noalign{\medskip}
\rho_1\lambda_n^2 u_n+k_1 \left[\left(u_n\right)_x+y_n\right]_x
&=&h_{2,n},\label{Part-1-97}
\\ \noalign{\medskip}
 i\lambda_n y_n-z_n&=&h_{3,n} ,\label{Part-1-98}
\\ \noalign{\medskip}
\rho_2\lambda_n^2 y_n   +k_2 \left(y_n\right)_{xx}-k_1\left[\left(u_n\right)_x+y_n\right]&=&h_{4,n},\label{Part-1-99}
\\ \noalign{\medskip}
\left(i\lambda_n+\xi^2+\eta\right)\omega(\xi)-i\lambda_n u(1)\mu(\xi) &=& h_{5,n}(\xi),\label{Part-1-100}
\end{eqnarray}
where
\begin{equation*}
\left\{
\begin{array}{ll}

\displaystyle{ \lambda_{n}^{\ell}h_{1,n}=f_{1,n},\ \lambda_{n}^{\ell}h_{2,n}=-\rho_1 \left(f_{2,n}+\ i\lambda_n f_{1,n}\right),\ \lambda_{n}^{\ell}h_{3,n}=f_{3,n},}

\\ \noalign{\medskip}

\displaystyle{ \lambda_{n}^{\ell}h_{4,n}=-\rho_2\left(  f_{4,n}+\ i\lambda_nf_{3,n}\right),\
\lambda_{n}^{\ell}h_{5,n}(\xi)=f_{5,n}(\xi)-f_{1,n}(1)\mu(\xi).}  

\end{array}
\right.
\end{equation*}
In the following, we will prove that, under Condition {\rm (H3)}, and  
\eqref{Part-1-94}, one also gets that  $\left\| U_n\right\|_{\mathcal{H}_1} =o(1)$, hence reaching the desired contradiction. For clarity, we divide the proof into several lemmas. From  now on, for simplicity, we drop the index $n$.
 From \eqref{Part-1-96} and \eqref{Part-1-98},
we remark that
\begin{equation}\label{Part-1-101}
\left\|u\right\|=O\left(\lambda^{-1}\right)\ \ \ \text{and}\ \ \ \left\|y\right\|=O\left(\lambda^{-1}\right).
\end{equation}
\begin{lem}\label{Theorem-1.13}
\rm{Let $\ell>0$, we have
\begin{equation}\label{Part-1-102}
	\int_{\mathbb{R}}(\xi^2+\eta)\left|\omega(\xi)\right|^2 d\xi=o\left(\lambda^{-\ell}\right).
\end{equation}}
\end{lem}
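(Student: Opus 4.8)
The plan is to extract \eqref{Part-1-102} directly from the dissipation estimate, exactly as in the first step of any frequency-domain argument. First I would take the $\mathcal{H}_1$-inner product of the resolvent identity \eqref{Part-1-95} with $U_n=U$ and then pass to real parts. Writing $F=(f_{1,n},\dots,f_{5,n})$, equation \eqref{Part-1-95} reads $i\lambda U-\mathcal{A}_1U=\lambda^{-\ell}F$, so that
\begin{equation*}
\operatorname{Re}\left\langle i\lambda U-\mathcal{A}_1 U,U\right\rangle_{\mathcal{H}_1}=\lambda^{-\ell}\operatorname{Re}\left\langle F,U\right\rangle_{\mathcal{H}_1}.
\end{equation*}

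Next I would simplify the left-hand side. Since $\langle i\lambda U,U\rangle_{\mathcal{H}_1}=i\lambda\|U\|_{\mathcal{H}_1}^2$ is purely imaginary, its real part vanishes and the conservative part of the dynamics drops out. The remaining term is governed by the dissipativity computation already carried out in the proof of Proposition \ref{Theorem-1.2}, namely
\begin{equation*}
\operatorname{Re}\left\langle \mathcal{A}_1 U,U\right\rangle_{\mathcal{H}_1}=-\gamma\kappa(\alpha)\int_{\mathbb{R}}(\xi^2+\eta)\left|\omega(\xi)\right|^2 d\xi .
\end{equation*}
Combining the two displays gives the exact energy balance
\begin{equation*}
\gamma\kappa(\alpha)\int_{\mathbb{R}}(\xi^2+\eta)\left|\omega(\xi)\right|^2 d\xi=\lambda^{-\ell}\operatorname{Re}\left\langle F,U\right\rangle_{\mathcal{H}_1}.
\end{equation*}

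Finally I would estimate the right-hand side by Cauchy--Schwarz, using the normalization $\|U\|_{\mathcal{H}_1}=1$ from \eqref{Part-1-94} and the convergence $\|F\|_{\mathcal{H}_1}=o(1)$ from \eqref{Part-1-95}: indeed $\left|\operatorname{Re}\langle F,U\rangle_{\mathcal{H}_1}\right|\leq \|F\|_{\mathcal{H}_1}\|U\|_{\mathcal{H}_1}=o(1)$. Since $\gamma\kappa(\alpha)$ is a fixed positive constant (recall $\kappa(\alpha)>0$ for $\alpha\in(0,1)$), dividing through yields $\int_{\mathbb{R}}(\xi^2+\eta)|\omega(\xi)|^2\,d\xi=\lambda^{-\ell}\,o(1)=o(\lambda^{-\ell})$, which is exactly \eqref{Part-1-102}. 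There is no genuine obstacle here: the only point requiring care is that passing to real parts is precisely what annihilates the skew term $i\lambda\|U\|_{\mathcal{H}_1}^2$, so that the left-hand side collapses onto the dissipation functional; everything else is a one-line bound.
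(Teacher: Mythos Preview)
Your argument is correct and is exactly the paper's proof: take the inner product of \eqref{Part-1-95} with $U$, pass to real parts so that the skew term $i\lambda\|U\|_{\mathcal{H}_1}^2$ drops, identify the left side with the dissipation $\gamma\kappa(\alpha)\int_{\mathbb{R}}(\xi^2+\eta)|\omega|^2\,d\xi$, and bound the right side by $\lambda^{-\ell}\|F\|_{\mathcal{H}_1}\|U\|_{\mathcal{H}_1}=o(\lambda^{-\ell})$. The paper compresses this into a single displayed chain of equalities, but the content is identical.
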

\begin{proof} Taking the inner product of \eqref{Part-1-95} with $U$ in $\mathcal{H}_1$, then using the fact that $U$ is uniformly bounded in $\mathcal{H}_1$, we get
\begin{equation*}
	\int_{\mathbb{R}}(\xi^2+\eta)\left|\omega(\xi)\right|^2 d\xi=-\text{Re}\left(\left<\mathcal{A}_1U,U\right>_{\mathcal{H}_1}\right)=\text{Re}\left(\left<\ i\lambda U-\mathcal{A}_1U,U\right>_{\mathcal{H}_1}\right)=o\left(\lambda^{-\ell}\right).
\end{equation*}
\end{proof}
\begin{lem}\label{Theorem-1.14}
\rm{Let $\ell>0$,  we have
\begin{eqnarray}
\displaystyle{\left|u_x(1)+y(1)\right|=o\left(\lambda^{-\frac{\ell}{2}}\right),}\label{Part-1-103}\nline
\displaystyle{|\lambda u(1)|=o\left(\lambda^{-\frac{\ell+\alpha-1}{2}}\right).}\label{Part-1-104}
\end{eqnarray}} 
\end{lem}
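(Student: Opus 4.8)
The plan is to read off both estimates from the two ingredients already in hand: the boundary relation built into the definition of $D(\mathcal{A}_1)$, and the dissipation bound \eqref{Part-1-102} of Lemma \ref{Theorem-1.13}. Throughout I abbreviate the dissipation functional by $\|g\|_w^2 := \int_{\mathbb{R}}(\xi^2+\eta)|g(\xi)|^2\,d\xi$, so that Lemma \ref{Theorem-1.13} reads $\|\omega\|_w^2 = o(\lambda^{-\ell})$.

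For \eqref{Part-1-103} I would start from the boundary condition contained in $D(\mathcal{A}_1)$, namely $k_1\left(u_x(1)+y(1)\right) = -\gamma\kappa(\alpha)\int_{\mathbb{R}}\mu(\xi)\omega(\xi)\,d\xi$. Writing $\mu(\xi) = \frac{\mu(\xi)}{\sqrt{\xi^2+\eta}}\cdot\sqrt{\xi^2+\eta}$ and applying Cauchy--Schwarz gives
\begin{equation*}
|u_x(1)+y(1)| \le \frac{\gamma\kappa(\alpha)}{k_1}\left(\int_{\mathbb{R}}\frac{\mu^2(\xi)}{\xi^2+\eta}\,d\xi\right)^{1/2}\|\omega\|_w .
\end{equation*}
Since $\alpha\in(0,1)$ and $\eta>0$, the first integral is a finite constant (integrability at $0$ uses $2\alpha-1>-1$, at $\infty$ uses $2\alpha-3<-1$), and the second factor is $o(\lambda^{-\ell/2})$ by \eqref{Part-1-102}; this yields \eqref{Part-1-103} at once.

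For \eqref{Part-1-104} the naive route --- multiplying \eqref{Part-1-100} by $\mu$, integrating, and evaluating $\int_{\mathbb{R}}\mu^2/(i\lambda+\xi^2+\eta)\,d\xi = (i\lambda+\eta)^{\alpha-1}/\kappa(\alpha)$ via Lemma \ref{Theorem-1.9} (applied with $\lambda$ replaced by $i\lambda$, which lies in the domain $D$ since $\lambda$ is real and large) --- only delivers $o(\lambda^{1-\alpha-\ell/2})$, which is a half power of $\lambda^{(1-\alpha)/2}$ too weak. Instead I would work with the weighted energy directly. Solving \eqref{Part-1-100} gives $\omega = \omega_1+\omega_2$ with $\omega_1(\xi) = i\lambda u(1)\mu(\xi)/(i\lambda+\xi^2+\eta)$ and $\omega_2(\xi) = h_{5,n}(\xi)/(i\lambda+\xi^2+\eta)$. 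Using the identity $\mathrm{Re}\,(i\lambda+\xi^2+\eta)^{-1} = (\xi^2+\eta)/(\lambda^2+(\xi^2+\eta)^2)$ together with Lemma \ref{Theorem-1.9}, a direct computation gives the exact value
\begin{equation*}
\|\omega_1\|_w^2 = |\lambda u(1)|^2\,\mathrm{Re}\,\frac{(i\lambda+\eta)^{\alpha-1}}{\kappa(\alpha)},\qquad \mathrm{Re}\,(i\lambda+\eta)^{\alpha-1} = \lambda^{\alpha-1}\sin\!\left(\tfrac{\pi\alpha}{2}\right)(1+o(1)),
\end{equation*}
where the leading coefficient is positive because $\alpha\in(0,1)$, so $\|\omega_1\|_w^2 \asymp |\lambda u(1)|^2\lambda^{\alpha-1}$.

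The concluding step is to show that the error term $\omega_2$ is of strictly lower order and then invoke the reverse triangle inequality $\|\omega_1\|_w \le \|\omega\|_w + \|\omega_2\|_w$. The main obstacle is exactly controlling $\omega_2$: since $\mu\notin L^2(\mathbb{R})$ no crude bound is available, so I would split $h_{5,n} = \lambda^{-\ell}\left(f_{5,n}-f_{1,n}(1)\mu\right)$ and estimate the two pieces separately --- using $(\xi^2+\eta)/(\lambda^2+(\xi^2+\eta)^2)\le 1/(2\lambda)$ for the $f_{5,n}$ contribution, and the comparison $J(\lambda):=\int_{\mathbb{R}}(\xi^2+\eta)\mu^2(\xi)/(\lambda^2+(\xi^2+\eta)^2)\,d\xi = O(\lambda^{\alpha-1})$ for the $\mu$ contribution, combined with the trace bound $|f_{1,n}(1)|\le C\|f_{1,n}\|_{H^1}=o(1)$ supplied by \eqref{Part-1-95}. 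This produces $\|\omega_2\|_w^2 = o(\lambda^{-2\ell+\alpha-1})$, which is $o(\|\omega\|_w^2)$ because $\alpha-1<0$. Hence $\|\omega_1\|_w = o(\lambda^{-\ell/2})$, and feeding this into $\|\omega_1\|_w^2 \asymp |\lambda u(1)|^2\lambda^{\alpha-1}$ gives $|\lambda u(1)|^2 = o(\lambda^{-\ell-\alpha+1})$, that is $|\lambda u(1)| = o\left(\lambda^{-(\ell+\alpha-1)/2}\right)$, which is precisely \eqref{Part-1-104}.
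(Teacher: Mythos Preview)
Your proof of \eqref{Part-1-103} is identical to the paper's. For \eqref{Part-1-104} you take a genuinely different route. The paper works pointwise: from \eqref{Part-1-100} it extracts the inequality $|\lambda u(1)|\,|\xi|^{\alpha-1/2}\le (\lambda+\xi^2+\eta)|\omega|+\lambda^{-\ell}|f_{5,n}|+\lambda^{-\ell}|f_{1,n}(1)|\,|\xi|^{\alpha-1/2}$, multiplies through by the ad hoc weight $(\lambda+\xi^2+\eta)^{-2}|\xi|$, integrates over $\mathbb{R}$, and applies Cauchy--Schwarz; this produces three explicit scalar integrals $A_1\sim\lambda^{\alpha/2-5/4}$, $A_2\sim\lambda^{-3/4}$, $A_3\sim\lambda^{-5/4}$ (evaluated by direct substitution), and the ratios $A_2/A_1$, $A_3/A_1$ deliver the exponent. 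Your decomposition $\omega=\omega_1+\omega_2$ is more structural: Lemma~\ref{Theorem-1.9} gives $\|\omega_1\|_w^2$ in closed form, and the error control on $\omega_2$ is elementary. One minor imprecision: the phrase ``which is $o(\|\omega\|_w^2)$'' is not literally meaningful since $\|\omega\|_w$ carries no lower bound; what you actually need (and have) is that both $\|\omega\|_w$ and $\|\omega_2\|_w$ are $o(\lambda^{-\ell/2})$, the latter because $-\ell+(\alpha-1)/2<-\ell/2$, so the triangle inequality yields $\|\omega_1\|_w=o(\lambda^{-\ell/2})$ directly. With that wording fixed, your argument is complete and arguably more transparent than the paper's weight-and-integrate computation.
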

\begin{proof}
First,  from the boundary condition, we have 
$$
u_x(1)+y(1)=-\frac{\gamma\kappa(\alpha)}{k_1}\int_{\mathbb{R}}\mu(\xi)\omega(\xi)d\xi,
$$
using Cauchy-Shariwz inequality, we get
\begin{equation}\label{Part-1-105}
\left|u_x(1)+y(1)\right|\leq \frac{\gamma\kappa(\alpha)}{k_1}\left(\int_{\mathbb{R}}\frac{\mu^2(\xi)}{\xi^2+\eta}d\xi\right)^{\frac{1}{2}}\left(\int_{\mathbb{R}}\left(\xi^2+\eta\right)|\omega(\xi)|^2d\xi\right)^{\frac{1}{2}}.
\end{equation}
Then, from \eqref{Part-1-102},  \eqref{Part-1-105} and using the fact that $\mu^2(\xi)\left(\xi^2+\eta\right)^{-1}\in L^1\left(\mathbb{R}\right)$ for all $\alpha\in(0,1)$, we obtain  asymptotic estimate of \eqref{Part-1-103}. Next, from \eqref{Part-1-100}, we get
\begin{equation}\label{new-01}
\left|\lambda u(1)\right| \left|\xi\right|^{\alpha-\frac{1}{2}} \leq\left(\lambda+\xi^2+\eta\right)\left|\omega(\xi)\right|+\lambda^{-\ell}\left|f_{5}(\xi)\right|+\lambda^{-\ell} \left|f_{1}(1)\right|\left|\xi\right|^{\alpha-\frac{1}{2}}.
\end{equation}
Multiplying equation \eqref{new-01} by $(\lambda+\xi^2+\eta)^{-2}\left|\xi\right|$, integrating over $\mathbb{R}$ with respect to the variable $\xi$ and applying Cauchy-Schwarz inequality, we obtain
\begin{equation}\label{Part-1-106}
\left|\lambda u(1)\right| A_1  \leq A_2\left(\int_{\mathbb{R}}\left|\xi\omega(\xi)\right|^2d\xi\right)^{\frac{1}{2}}+{A_3}\lambda^{-\ell}\left(\int_{\mathbb{R}}\left|f_{5}(\xi)\right|^2d\xi\right)^{\frac{1}{2}} +{\left|f_{1}(1)\right|}\lambda^{-\ell} A_1,
\end{equation}
where
$$A_1=\int_{\mathbb{R}}\frac{ \left|\xi\right|^{\alpha+\frac{1}{2}}}{(\lambda+\xi^2+\eta)^{2}}d\xi,\ A_2=\left(\int_{\mathbb{R}}\frac{1}{\left(\lambda+\xi^2+\eta\right)^2}d\xi\right)^{\frac{1}{2}},\ A_3=\left(\int_{\mathbb{R}}\frac{ \xi^2}{(\lambda+\xi^2+\eta)^{4}} d\xi\right)^{\frac{1}{2}}.$$
It is easy to check that 
\begin{equation}\label{new-02}
A_2=\sqrt{\frac{\pi}{2}}\frac{1}{\left(\lambda+\eta\right)^{\frac{3}{4}}}\ \ \ \text{and}\ \ \ A_3=\frac{\sqrt{\pi}}{4}\frac{1}{\left(\lambda+\eta\right)^{\frac{5}{4}}}.
\end{equation}
Moreover, we have
\begin{equation}\label{new-03}
I_1=\frac{2}{\left(\lambda+\eta\right)^{2}}\int_{0}^{\infty}\frac{ \xi^{\alpha+\frac{1}{2}}}{\left(1+\frac{\xi^2}{\lambda+\eta}\right)^{2}}d\xi.
\end{equation}
Thus, equation \eqref{new-03} may be simplified by defining a new variable $y=1+\frac{\xi^2}{\lambda+\eta}$.  Substituting $\xi$  by $\left(y-1\right)^{\frac{1}{2}}\left(\lambda+\eta\right)^{\frac{1}{2}}$ in equation \eqref{new-03}, we get
\begin{equation*}
I_1=\left(\lambda+\eta\right)^{\frac{\alpha}{2}-\frac{5}{4}}\int_{1}^{\infty}\frac{\left(y-1\right)^{\frac{\alpha}{2}-\frac{1}{4}}}{y^2}dy.
\end{equation*}
Using the fact that $\alpha\in (0,1)$, it is easy to see that $y^{-2}\left(y-1\right)^{\frac{\alpha}{2}-\frac{1}{4}}\in L^1(1,+\infty)$, we obtain
\begin{equation}\label{Part-1-107}
A_1=c_1\left(\lambda+\eta\right)^{\frac{\alpha}{2}-\frac{5}{4}},
\end{equation}
where $c_1$ is a positive constant number. Inserting \eqref{new-02} and  \eqref{Part-1-107} in  \eqref{Part-1-106}, then using \eqref{Part-1-95}, \eqref{Part-1-102} and the fact that $|f_1(1)|\leq \left\|f_1\right\|_{H^1_L(0,1)}=o(1)$,   we deduce that
\begin{equation}\label{new-04}
 \left|\lambda u(1)\right|   \leq \sqrt{\frac{\pi}{2}}\frac{1}{c_1\left(\lambda+\eta\right)^{\frac{\alpha}{2}-\frac{1}{2}} } o\left(\lambda^{-\frac{\ell}{2}}\right)+\frac{\sqrt{\pi}}{4}\frac{1}{c_1\left(\lambda+\eta\right)^{\frac{\alpha}{2}}}o\left(\lambda^{-\ell}\right)+o\left(\lambda^{-\ell}\right).
\end{equation}
Since $\alpha\in (0,1)$ and $\ell>0$, we have $$\min\left(\frac{\ell+\alpha-1}{2},\ell+\frac{\alpha}{2},\ell\right)=\frac{\ell+\alpha-1}{2},$$ consequently, from \eqref{new-04}, we get  \eqref{Part-1-104}. Thus, the proof of the lemma is complete.
\end{proof}
\begin{lem}\label{Theorem-1.18}
\rm{Let $\ell>0$,  we have
\begin{equation}\label{Part-1-160}
			\begin{array}{c}
			\displaystyle{\int_0^{1}\left(\rho_1\left|\lambda u\right|^2+\rho_2\left|\lambda u\right|^2+k_1\left| u_x\right|^2 +k_2\left| y_x\right|^2\right)dx}

 \\ \noalign{\medskip}
\displaystyle{ =\rho_1 \left|\lambda u(1)\right|^2+k_1 \left| u_x(1)\right|^2+\rho_2 \left|\lambda y(1)\right|^2+	o(1)+o\left(\lambda^{-\ell}\right) \text{Re}\left\{\lambda\overline{ u}(1)  \right\} +o\left(\lambda^{-\ell}\right) \text{Re}\left\{\lambda\overline{ y}(1)  \right\}  }. 
			\end{array}
\end{equation}}
\end{lem}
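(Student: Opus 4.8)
The plan is to prove \eqref{Part-1-160} by the multiplier (Rellich--Pohozaev type) method with the geometric multiplier $q(x)=x$. Concretely, I would multiply the second-order equation \eqref{Part-1-97} by $2x\overline{u_x}$ and equation \eqref{Part-1-99} by $2x\overline{y_x}$, integrate over $(0,1)$, take real parts and add. Each term of the form $\int_0^1 x\,\partial_x(|\cdot|^2)\,dx$ is integrated by parts, producing the interior energy $\int_0^1(\rho_1|\lambda u|^2+\rho_2|\lambda y|^2+k_1|u_x|^2+k_2|y_x|^2)\,dx$ together with boundary contributions at $x=1$; the contributions at $x=0$ vanish because of the prefactor $x$ and of $u(0)=y_x(0)=0$. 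Throughout I would use the a priori bounds $\|\lambda u\|=O(1)$ and $\|\lambda y\|=O(1)$ (from \eqref{Part-1-96}, \eqref{Part-1-98} and $\|U\|_{\mathcal H_1}=1$), together with $\|u_x\|=O(1)$, $\|y_x\|=O(1)$ and $\|y\|=O(\lambda^{-1})$ from \eqref{Part-1-101}.

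The three boundary terms $\rho_1|\lambda u(1)|^2$, $k_1|u_x(1)|^2$ and $\rho_2|\lambda y(1)|^2$ appear directly, while $k_2|y_x(1)|^2$ vanishes by the condition $y_x(1)=0$. A clean algebraic cancellation drives the computation: the two coupling contributions $k_1\int_0^1 2x\,y_x\overline{u_x}\,dx$ (coming from $(u_x+y)_x$) and $-k_1\int_0^1 2x\,u_x\overline{y_x}\,dx$ (coming from $-k_1(u_x+y)$) have opposite real parts, since $\text{Re}(y_x\overline{u_x})=\text{Re}(u_x\overline{y_x})$, and therefore cancel. The remaining genuinely lower-order pieces $k_1\int_0^1|y|^2\,dx$ and $-k_1|y(1)|^2$ are $o(1)$: the former because $\|y\|=O(\lambda^{-1})$, and the latter by the one-dimensional Agmon/Gagliardo--Nirenberg bound $|y(1)|^2\le C\|y\|_{L^2}\|y\|_{H^1}=O(\lambda^{-1})$.

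The main work, and the step I expect to be the most delicate, is estimating the right-hand side $\text{Re}\int_0^1 2x\,h_2\overline{u_x}\,dx+\text{Re}\int_0^1 2x\,h_4\overline{y_x}\,dx$. Recalling $h_2=-\rho_1\lambda^{-\ell}(f_2+i\lambda f_1)$ and $h_4=-\rho_2\lambda^{-\ell}(f_4+i\lambda f_3)$, the pieces involving $f_2,f_4$ are harmless, being bounded by $\lambda^{-\ell}\|f_2\|\,\|u_x\|=o(\lambda^{-\ell})=o(1)$ and similarly for $f_4$. The dangerous pieces are the ones carrying $i\lambda f_1$ and $i\lambda f_3$, because the resulting prefactor $\lambda^{1-\ell}$ need not be bounded; indeed $\ell=1-\alpha<1$ in one regime of Theorem \ref{Theorem-1.11}. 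To control them I would integrate by parts in $\int_0^1 2x\,f_1\overline{u_x}\,dx$, transferring the derivative onto $f_1$: this turns the problematic $\lambda$ acting on $u_x$ into $\lambda$ acting on $u$, and since $\|\lambda u\|=O(1)$ while $\|f_1\|,\|(f_1)_x\|=o(1)$, the interior contribution becomes $o(\lambda^{-\ell})=o(1)$. The boundary term $2f_1(1)\overline{u}(1)$ generated by this integration by parts carries the coefficient $\lambda^{-\ell}f_1(1)=o(\lambda^{-\ell})$, producing exactly the term $o(\lambda^{-\ell})\text{Re}\{\lambda\overline{u}(1)\}$; treating the $f_3$ piece the same way yields $o(\lambda^{-\ell})\text{Re}\{\lambda\overline{y}(1)\}$. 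Collecting all contributions and solving the resulting identity for the interior energy then gives \eqref{Part-1-160}.
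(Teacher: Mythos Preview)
Your proposal is correct and follows essentially the same route as the paper: multiply \eqref{Part-1-97} by $2x\overline{u_x}$, \eqref{Part-1-99} by $2x\overline{y_x}$, take real parts, add, and use the cancellation $\text{Re}(y_x\overline{u_x})=\text{Re}(u_x\overline{y_x})$ for the coupling terms; the right-hand side is handled exactly as you describe, by integrating by parts in $\int_0^1 x f_1\overline{u_x}\,dx$ (resp.\ $f_3$), which produces the boundary remainders $o(\lambda^{-\ell})\text{Re}\{\lambda\overline u(1)\}$ and $o(\lambda^{-\ell})\text{Re}\{\lambda\overline y(1)\}$. The only cosmetic difference is your treatment of the lower-order term $-2k_1\text{Re}\int_0^1 x y\overline{y_x}\,dx$: you integrate by parts and invoke a Gagliardo--Nirenberg bound for $|y(1)|^2$, whereas the paper simply bounds it by $\|y\|\,\|y_x\|=O(\lambda^{-1})$ via Cauchy--Schwarz.
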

\begin{proof}  First,  multiplying Equation \eqref{Part-1-97} by $2x  \overline{u}_x$ in $L^2(0,1)$   to get
\begin{equation}\label{Part-1-161}
			\begin{array}{c}
			\displaystyle{-\rho_1\int_0^{1}\left|\lambda u\right|^2dx
				-k_1\int_0^{1}\left| u_x\right|^2dx}
+\displaystyle{2 k_1\text{Re}\left\{\int_0^{1} y_x\overline{u}_xdx\right\}
+\rho_1 \left|\lambda u(1)\right|^2+k_1 \left| u_x(1)\right|^2

}

 \\ \noalign{\medskip}
 
	\displaystyle{	=-2\lambda^{-\ell}\rho_1 \text{Re}\left\{\int_0^{1}x f_2\overline{u}_xdx-\ i\int_0^{1} \left( f_1+ x (f_1)_x\right)\lambda 
				\overline{ u}dx+if_1(1)\lambda\overline{ u}(1) \right\}  }. 
			\end{array}
\end{equation}
Using the fact that $f_1\to 0$ in $H^1_L(0,1)$, $f_2\to 0$ in $L^2(0,1)$, and $u_x,\ \lambda u$ are bounded in $L^2(0,1)$, we get
\begin{equation}\label{Part-1-162}
\int_0^{1}\left|x f_2\overline{u}_x\right|dx=o(1),\ 
\int_0^{1} \left|\left( f_1+x (f_1)_x\right)\lambda 
				\overline{ u}\right|dx=o(1)
\end{equation}
and
\begin{equation}\label{Part-1-163}
|f_1(1)|\leq \left\|f_1\right\|_{L^\infty(0,1)}\leq \left\|f_1\right\|_{H^1_L(0,1)}=o(1).
\end{equation}
Inserting \eqref{Part-1-162} and \eqref{Part-1-163} in  \eqref{Part-1-161}, we get
\begin{equation}\label{Part-1-164}
\begin{array}{c}
			\displaystyle{-\rho_1\int_0^{1}\left|\lambda u\right|^2dx
				-k_1\int_0^{1}\left| u_x\right|^2dx}
+\displaystyle{2 k_1\text{Re}\left\{\int_0^{1} y_x\overline{u}_xdx\right\}
+\rho_1 \left|\lambda u(1)\right|^2+k_1 \left| u_x(1)\right|^2

}

 \\ \noalign{\medskip}
 
	\displaystyle{	=o\left(\lambda^{-\ell}\right)+ o\left(\lambda^{-\ell}\right)\text{Re}\left\{\lambda\overline{ u}(1) \right\}  }. 
			\end{array}
\end{equation}
Next, multiplying Equation \eqref{Part-1-99} by $2x  \overline{y}_x$ in $L^2(0,1)$, then using the fact that $y_x(0)=y_x(1)=0$,  we get
\begin{equation}\label{Part-1-165}
			\begin{array}{c}
			\displaystyle{-\rho_2\int_0^{1}\left|\lambda y\right|^2dx
				-k_2\int_0^{1}\left| y_x\right|^2dx}
-\displaystyle{2 k_1\text{Re}\left\{\int_0^{1}x u_x\overline{y}_xdx\right\}

-2 k_1\text{Re}\left\{\int_0^{1}x y\overline{y}_xdx\right\}+\rho_2 \left|\lambda y(1)\right|^2}	

  \\ \noalign{\medskip}
 
	\displaystyle{	=-{2\lambda^{-\ell}\rho_1} \text{Re}\left\{\int_0^{1}x f_4\overline{y}_xdx-\ i\int_0^{1} \left( f_3+ x (f_3)_x\right)\lambda 
				\overline{ y}dx+if_3(1)\lambda\overline{ y}(1)  \right\}  }. 
			\end{array}
\end{equation}
Using the fact that $f_3\to 0$ in $H^1_*(0,1)$, $f_4\to 0$ in $L^2(0,1)$, $\lambda y,\ y_x$ are bounded in $L^2(0,1)$, and \eqref{Part-1-101}, we get
\begin{equation}\label{Part-1-166}
\int_0^{1}\left|x y\overline{y}_x\right|dx=o(1),\int_0^{1}\left|x f^4\overline{y}_x\right|dx=o(1),\ 
\int_0^{1} \left|\left( f_3+ x (f_3)_x\right)\lambda 
				\overline{ y}\right|dx=o(1)
\end{equation}
and
\begin{equation}\label{Part-1-167}
|f_3(1)|\leq \left\|f_3\right\|_{L^\infty(0,1)}\leq \left\|f_3\right\|_{H^1_*(0,1)}=o(1).
\end{equation}
Substituting \eqref{Part-1-166} and \eqref{Part-1-167} in  \eqref{Part-1-165},  we get
\begin{equation}\label{Part-1-168}
			\begin{array}{c}
			\displaystyle{-\rho_2\int_0^{1}\left|\lambda y\right|^2dx
				-k_2\int_0^{1}\left| y_x\right|^2dx}
-\displaystyle{2 k_1\text{Re}\left\{\int_0^{1}x u_x\overline{y}_xdx\right\}
+\rho_2 \left|\lambda y(1)\right|^2
}

 \\ \noalign{\medskip}
 
	\displaystyle{	=o(1)+o\left(\lambda^{-\ell}\right) \text{Re}\left\{\lambda\overline{ y}(1) \right\}  }. 
			\end{array}
\end{equation}
Finally,  adding \eqref{Part-1-164} and \eqref{Part-1-168}, we get \eqref{Part-1-160}, which concludes the  proof of the lemma.
\end{proof}
$\\[0.1in]$ 
For all $\ell\geq 1-\alpha$, from Lemma \ref{Theorem-1.14}, we obtain  $|\lambda u(1)|=o(1)$.  Let us suppose that $$|\lambda y(1)|=o(1),$$ then from Lemma \ref{Theorem-1.14}, we get $|u_x(1)|=o(1)$. Therefore, from Lemma \ref{Theorem-1.18}, we get 
$$\int_0^{1}\left(\rho_1\left|\lambda u\right|^2+\rho_2\left|\lambda u\right|^2+k_1\left| u_x\right|^2 +k_2\left| y_x\right|^2\right)dx=o(1).$$
 Consequently, we have $\|U\|_{\mathcal{H}_1}=o\left(1\right)$ which contradicts \eqref{Part-1-94}.  So, in order to complete the proof of Theorems \ref{Theorem-1.11}, \ref{Theorem-1.12}, we need to show that
$$|\lambda y(1)|=o(1).$$
For this aim, we need to prove the following lemmas.
\begin{lem}\label{Theorem-1.15}
\rm{ Let $\ell>0$,  we have
\begin{eqnarray}
\left(\frac{S_{11}(1)}{2}-\frac{\lambda\left(\frac{\rho_2}{k_2}+\frac{\rho_1}{k_1}\right) S_{12}(1)}{2 \Delta }\right)y(1)=\mathcal{K}_1,\label{Part-1-108}
\\ \noalign{\medskip}
\left(\frac{\rho_2\lambda^2 S_{11}(1)}{2k_2}-\frac{\lambda\left({\rho_2}\left(\frac{\rho_2}{k_2}-\frac{\rho_1}{k_1}\right)\lambda^2+2{ \rho_1}  \right)S_{12}(1)}{2k_2\Delta} \right)y(1)=\mathcal{K}_4,\label{Part-1-109}
\end{eqnarray}
such that
\begin{equation*}
\begin{array}{ll}

\displaystyle{ \mathcal{K}_1=\left(\frac{S_{11}(1)}{2}+\frac{\left(\left(\frac{\rho_2}{k_2}-\frac{\rho_1}{k_1}\right)\lambda^2-\frac{2k_1}{k_2}\right)S_{12}(1)}{2 \Delta \lambda}\right)o\left(\lambda^{-\frac{\ell}{2}}\right)+\left(\frac{C_{11}(1)}{2}+\frac{\left(\frac{\rho_2}{k_2}-\frac{\rho_1}{k_1}\right)\lambda C_{12}(1)}{2\Delta}\right)o\left({\lambda^{-\frac{\ell+\alpha+1}{2}}}\right)}

\\ \noalign{\medskip}

\hspace{1cm} \displaystyle{-{\frac{1}{2k_1}}\int_0^1 S_{11}(z)h_2(z)dz-\frac{\left(\frac{\rho_2}{k_2}-\frac{\rho_1}{k_1}\right)  \lambda^2-\frac{2k_1}{k_2}}{2k_1\Delta\lambda}\int_0^1 S_{12}(z)h_2(z)dz-\frac{1}{k_2\Delta \lambda}\int_0^1 C_{12}(z)h_4(z)dz}
\end{array}
\end{equation*}
and
\begin{equation*}
\begin{array}{ll}

\displaystyle{ \mathcal{K}_4=\frac{\rho_1 \lambda C_{12}(1)}{k_2\Delta}o\left(\frac{1}{\lambda^{\frac{\ell+\alpha+1}{2}}}\right) +\left(\frac{{k_1} S_{11}(1)}{2k_2} -\frac{{k_1} \left(\frac{\rho_2}{k_2}+\frac{\rho_1}{k_1}\right) \lambda S_{12}(1)}{2k_2\Delta}\right) o\left(\lambda^{-\frac{\ell}{2}}\right)-{\frac{1}{2k_2}}\int_0^1 S_{11}(z)h_2(z)dz}

 \\ \noalign{\medskip}

\hspace{0.5cm} \displaystyle{+\frac{\left(\frac{k_2}{\rho_2}+\frac{k_1}{\rho_1}\right) \lambda}{2k_2\Delta}\int_0^1 S_{12}(z)h_2(z)dz+\frac{1}{2k_2}\int_0^1 C_{11}(z)h_4(z)dz-\frac{\left(\frac{\rho_2}{k_2}-\frac{\rho_1}{k_1}\right)\lambda }{2k_2\Delta}\int_0^1 C_{12}(z)h_4(z)dz

},

\end{array}
\end{equation*}
where
\begin{equation}\label{Part-1-110}
\left\{
\begin{array}{ll}
\displaystyle{C_{11}(z)=\cos(\mu_1 z)+\cos(\mu_2 z),\ C_{12}(z)=\cos(\mu_1 z)-\cos(\mu_2 z)},

\\ \\

\displaystyle{S_{11}(z)=\frac{\sin(\mu_1 z)}{\mu_1}+\frac{\sin(\mu_2 z)}{\mu_2},\ S_{12}(z)=\frac{\sin(\mu_1 z)}{\mu_1 }-\frac{\sin(\mu_2 z)}{\mu_2}},

\end{array}
\right.
\end{equation}
and
\begin{equation}\label{Part-1-111}
\displaystyle{\mu_1= \sqrt{\dfrac{\left(\frac{\rho_1}{k_1}+\frac{\rho_2}{k_2}\right)\lambda^2-\lambda{\Delta}}{2}},\ \mu_2= \sqrt{\dfrac{\left(\frac{\rho_1}{k_1}+\frac{\rho_2}{k_2}\right)\lambda^2+\lambda{\Delta}}{2}},\ \Delta=\sqrt{\left(\frac{\rho_1}{k_1}-\frac{\rho_2}{k_2}\right)^2 \lambda^2+\frac{4\rho_1}{k_2}}}.
\end{equation}}
\end{lem}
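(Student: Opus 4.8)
The plan is to solve the coupled second-order system \eqref{Part-1-97}, \eqref{Part-1-99} explicitly in the space variable $x$, treating $h_2,h_4$ as given data and $\lambda$ as a large real parameter, and then to read off two scalar relations at the endpoints. First I would recast the pair as a first-order system $X_x=A(\lambda)X+g$ with $X=(u,u_x,y,y_x)^{\top}$ and $g=\left(0,\tfrac{h_2}{k_1},0,\tfrac{h_4}{k_2}\right)^{\top}$, where $A(\lambda)$ is the constant (in $x$) matrix obtained by isolating $u_{xx}$ from \eqref{Part-1-97} and $y_{xx}$ from \eqref{Part-1-99}. A direct computation shows that the eigenvalues of $A(\lambda)$ are exactly $\pm i\mu_1,\pm i\mu_2$, with $\mu_1,\mu_2,\Delta$ as in \eqref{Part-1-111}; indeed $\mu_1^2+\mu_2^2=\left(\tfrac{\rho_1}{k_1}+\tfrac{\rho_2}{k_2}\right)\lambda^2$ and $\mu_2^2-\mu_1^2=\lambda\Delta$. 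Consequently the entries of the fundamental matrix $e^{A(\lambda)x}$ are linear combinations of $\cos(\mu_j x)$ and $\tfrac{\sin(\mu_j x)}{\mu_j}$; grouping them into the symmetric and antisymmetric pairs produces precisely the kernels $C_{11},C_{12},S_{11},S_{12}$ of \eqref{Part-1-110}, while the reciprocal $\tfrac{1}{\mu_2^2-\mu_1^2}=\tfrac{1}{\lambda\Delta}$ is the origin of every $\tfrac{1}{\Delta}$ that appears in the statement.

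Next I would anchor the representation at $x=1$ and write $X(0)=e^{-A(\lambda)}X(1)-\int_0^1 e^{-A(\lambda)z}g(z)\,dz$. The homogeneous part then expresses $X(0)$ through the boundary vector $\left(u(1),u_x(1),y(1),y_x(1)\right)$ with coefficients $C_{1j}(1),S_{1j}(1)$, and the Duhamel integral produces exactly the quantities $\int_0^1 S_{1j}(z)h_2(z)\,dz$ and $\int_0^1 C_{1j}(z)h_4(z)\,dz$ occurring in $\mathcal K_1,\mathcal K_4$; here $h_2$ pairs with the odd kernels $S_{1j}$ and $h_4$ with the even kernels $C_{1j}$, reflecting the order at which each source enters $A(\lambda)$.

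I would then extract two scalar identities by reading the two components of $X(0)$ that are known to vanish. The first component gives $u(0)=0$, which after substituting $y_x(1)=0$ and writing $u_x(1)=\left(u_x(1)+y(1)\right)-y(1)$ becomes a relation of the form $(\text{coeff})\,y(1)=\mathcal K_1$; the fourth component gives $y_x(0)=0$, and because forming $y_x(0)$ mixes in the entry $\tfrac{k_1-\rho_2\lambda^2}{k_2}$ of $A(\lambda)$, it carries the factor $\tfrac{\rho_2\lambda^2}{k_2}$ seen in \eqref{Part-1-109}. In both identities I would replace the controlled boundary traces by the estimates of Lemma \ref{Theorem-1.14}: the quantity $u_x(1)+y(1)=o\left(\lambda^{-\ell/2}\right)$ supplies the $o\left(\lambda^{-\ell/2}\right)$ contributions (those multiplying $S_{11}(1),S_{12}(1)$), and $u(1)=o\left(\lambda^{-\frac{\ell+\alpha+1}{2}}\right)$, which follows from $\lambda u(1)=o\left(\lambda^{-\frac{\ell+\alpha-1}{2}}\right)$, supplies the $o\left(\lambda^{-\frac{\ell+\alpha+1}{2}}\right)$ contributions (those multiplying $C_{11}(1),C_{12}(1)$). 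Collecting the genuine $y(1)$ terms on the left-hand side then yields \eqref{Part-1-108} and \eqref{Part-1-109}.

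The hard part is purely computational rather than conceptual: one must produce the entries of $e^{-A(\lambda)z}$ in closed form and match them, after the elimination above, to the exact coefficient of $y(1)$, namely $\tfrac{S_{11}(1)}{2}-\tfrac{\lambda\left(\tfrac{\rho_2}{k_2}+\tfrac{\rho_1}{k_1}\right)S_{12}(1)}{2\Delta}$ in \eqref{Part-1-108} and its analogue in \eqref{Part-1-109}. This forces a careful split of $e^{-A(\lambda)z}$ into its symmetric part ($C_{11},S_{11}$) and antisymmetric part ($C_{12},S_{12}$) so that the non-symmetric contributions cancel, together with meticulous tracking of the powers of $\lambda$ and of $\Delta$. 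By contrast, all the analytic input — the smallness of the boundary traces — is already furnished by Lemma \ref{Theorem-1.14}, so nothing beyond this bookkeeping is required.
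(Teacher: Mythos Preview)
Your proposal is correct and follows essentially the same route as the paper: recast \eqref{Part-1-97}--\eqref{Part-1-99} as a first-order system for $(u,u_x,y,y_x)^{\top}$, compute the matrix exponential via the eigenvalues $\pm i\mu_1,\pm i\mu_2$ (the paper does this through Sylvester's formula, yielding the explicit entries $\mathtt e_{ij}(z)$), relate $X(0)$ to $X(1)$ by the variation-of-constants formula, and then read off rows $1$ and $4$ where $u(0)=y_x(0)=0$, inserting the estimates of Lemma~\ref{Theorem-1.14} for $u(1)$ and $u_x(1)+y(1)$. Your sign convention $A=-B$ and your way of writing $X(0)=e^{-A}X(1)-\int_0^1 e^{-Az}g(z)\,dz$ are exactly the paper's identity $e^{B}Y(1)=Y(0)+\int_0^1 e^{Bz}G(z)\,dz$ rearranged, so there is no substantive difference.
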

\begin{proof} Let $Y=(u,u_x,y,y_x) $, then Equations  \eqref{Part-1-97} and \eqref{Part-1-99}  can be written as 
\begin{equation}\label{Part-1-112}
Y_x=-BY+G,
\end{equation}
where
\begin{equation*}
B=\begin{pmatrix}
0&-1&0&0
\\ \noalign{\medskip}
\frac{\rho_1}{k_1}\lambda^2&0&0&1		
\\ \noalign{\medskip}
	0&0&0&-1
\\ \noalign{\medskip}
0&-\frac{k_1}{k_2}&\frac{\rho_2}{k_2}\lambda^2-\frac{k_1}{k_2}&0
		\end{pmatrix},\quad G=\begin{pmatrix}
0\\ k^{-1}_1h_2\\ 0\\ k^{-1}_2h_4
	\end{pmatrix}.
\end{equation*}
By the variation of constant formula, the solution of Equation \eqref{Part-1-112} is given by
\begin{equation*}
Y(x)=e^{-Bx}Y(0)+\int_0^xe^{-B(x-z)}G(z)dz.
\end{equation*}
Then, we have
$$
Y(1)=e^{-B}Y(0)+\int_0^1e^{-B(1-z)}G(z)dz.
$$
Equivalently, we get 
\begin{equation}\label{Part-1-113}
e^{B}Y(1)=Y(0)+\int_0^1e^{B z}G(z)dz.
\end{equation}
On the other hand, from \eqref{Part-1-103} and \eqref{Part-1-104}, we have
\begin{equation}\label{Part-1-114}
Y(1)=\begin{pmatrix}
o\left(\lambda^{-\frac{\ell+\alpha+1}{2}}\right)& -y(1)+o\left(\lambda^{-\frac{\ell}{2}}\right)&y(1)&0
	\end{pmatrix}^{\mathsf{T}}\ \ \ \text{and}\  \ \ Y(0)=\begin{pmatrix}
0& u_x(0)&y(0)&0
	\end{pmatrix}^{\mathsf{T}}. 
\end{equation}
The eigenvalues $\mu$ of the matrix $B$ are the roots of the 
characteristic equation \eqref{eq:carac0}
whose discriminant is equal to 
\begin{equation*}
\left(\frac{\rho_1}{k_1}-\frac{\rho_2}{k_2}\right)^2 \lambda^4+\frac{4\rho_1}{k_2}\lambda^2>0.
\end{equation*}
Since $\lambda>\sqrt{\frac{\rho_2}{k_1}}$, Equation \eqref{eq:carac0} has four distinct pure imaginary roots
$$i\mu_1,\ -i\mu_1,\ i\mu_2,\ -i\mu_2,$$
where $\mu_1$ and $\mu_2$ are defined in \eqref{Part-1-111}. Since the eigenvalues of $B$ are simple, then $B$ is a diagonalizable matrix. Therefore, using Sylvester's matrix Theorem, we get
\begin{equation*}
e^{B z}=\frac{1}{\mu_2^2-\mu_1^2}\left(\left(\frac{E_{-i\mu_1}e^{i\mu_1 z}-E_{i\mu_1}e^{-i\mu_1 z}}{2i\mu_1}\right)\left(B^2+\mu_2^2 I_{4\times 4}\right)-\left(\frac{E_{-i\mu_2}e^{i\mu_2 z}-E_{i\mu_2}e^{-i\mu_2 z}}{2i\mu_2}\right)\left(B^2+\mu_1^2I_{4\times 4}\right)\right),
\end{equation*}
where $E_{i\mu}=B-i\mu I_{4\times 4}$. Equivalently, we have
\begin{equation}\label{Part-1-116}
e^{B z}=\frac{1}{\lambda \Delta}\left(\frac{\sin(\mu_1 z)}{\mu_1}\mathcal{M}_1-\frac{\sin(\mu_2 z)}{\mu_2}\mathcal{M}_2+\cos(\mu_1 z)\mathcal{M}_3-\cos(\mu_2 z)\mathcal{M}_4  \right),
\end{equation}
where 
$$\mathcal{M}_1= B \left(B^2+\mu_2^2 I_{4\times 4}\right) ,\  \mathcal{M}_2= B \left(B^2+\mu_1^2 I_{4\times 4}\right) ,\ \mathcal{M}_3= B^2+\mu_2^2 I_{4\times 4},\ \mathcal{M}_4= B^2+\mu_1^2 I_{4\times 4}.$$
It is easy to check that
\begin{equation*}
\mathcal{M}_1=\begin{pmatrix}
0&\frac{k_1}{k_2}-\frac{\left(\frac{\rho_2}{k_2}-\frac{\rho_1}{k_1}\right)\lambda^2+\lambda \Delta}{2} &\frac{k_1}{k_2}-\frac{\rho_2}{k_2}\lambda^2 &0
\\ \noalign{\medskip}
\left(\frac{\left(\frac{\rho_2}{k_2}-\frac{\rho_1}{k_1}\right)\lambda^2+\lambda \Delta}{2}-\frac{k_1}{k_2}\right) \frac{\rho_1}{k_1}\lambda^2&0&0&\frac{\lambda \Delta-\left(\frac{\rho_2}{k_2}+\frac{\rho_1}{k_1}\right)\lambda^2}{2}\\ \noalign{\medskip}
\frac{\rho_1}{k_2}\lambda^2&0&0&\frac{\left(\frac{\rho_2}{k_2}-\frac{\rho_1}{k_1}\right)\lambda^2-\lambda \Delta}{2}
\\ \noalign{\medskip}
	0&\frac{\left(\frac{\rho_2}{k_2}+\frac{\rho_1}{k_1}\right){k_1}\lambda^2-{k_1}\lambda \Delta}{2k_2}&\frac{\left(\lambda \Delta-\left(\frac{\rho_2}{k_2}-\frac{\rho_1}{k_1}\right)\lambda^2\right)\left({\rho_2}\lambda^2-{k_1}\right)}{2k_2}&0
\end{pmatrix},
\end{equation*}
$\newline$ 
\begin{equation*}
\mathcal{M}_2=\begin{pmatrix}
0&\frac{k_1}{k_2}-\frac{\left(\frac{\rho_2}{k_2}-\frac{\rho_1}{k_1}\right)\lambda^2-\lambda \Delta}{2} &\frac{k_1}{k_2}-\frac{\rho_2}{k_2}\lambda^2 &0
\\ \noalign{\medskip}
\left(\frac{\left(\frac{\rho_2}{k_2}-\frac{\rho_1}{k_1}\right)\lambda^2-\lambda \Delta}{2}-\frac{k_1}{k_2}\right) \frac{\rho_1}{k_1}\lambda^2&0&0&\frac{-\lambda \Delta-\left(\frac{\rho_2}{k_2}+\frac{\rho_1}{k_1}\right)\lambda^2}{2}\\ \noalign{\medskip}
\frac{\rho_1}{k_2}\lambda^2&0&0&\frac{\left(\frac{\rho_2}{k_2}-\frac{\rho_1}{k_1}\right)\lambda^2+\lambda \Delta}{2}
\\ \noalign{\medskip}
	0&\frac{\left(\frac{\rho_2}{k_2}+\frac{\rho_1}{k_1}\right){k_1}\lambda^2+{k_1}\lambda \Delta}{2k_2}&\frac{\left(-\lambda \Delta-\left(\frac{\rho_2}{k_2}-\frac{\rho_1}{k_1}\right)\lambda^2\right)\left({\rho_2}\lambda^2-{k_1}\right)}{2k_2}&0
\end{pmatrix},
\end{equation*}
$\newline$ 
\begin{equation*}
\mathcal{M}_3=\begin{pmatrix}
\frac{\left(\frac{\rho_2}{k_2}-\frac{\rho_1}{k_1}\right)\lambda^2+\lambda \Delta}{2} &0&0&-1
\\ \noalign{\medskip}
0&\frac{\left(\frac{\rho_2}{k_2}-\frac{\rho_1}{k_1}\right)\lambda^2+\lambda \Delta}{2}-\frac{k_1}{k_2}&\frac{\rho_2}{k_2}\lambda^2-\frac{k_1}{k_2}&0		
\\ \noalign{\medskip}
	0&\frac{k_1}{k_2}&\frac{-\left(\frac{\rho_2}{k_2}-\frac{\rho_1}{k_1}\right)\lambda^2+\lambda \Delta}{2}+\frac{k_1}{k_2}&0
\\ \noalign{\medskip}
-\frac{\rho_1}{ k_2}\lambda^2&0&0&\frac{-\left(\frac{\rho_2}{k_2}-\frac{\rho_1}{k_1}\right)\lambda^2+\lambda \Delta}{2}
		\end{pmatrix},
		\end{equation*} 
and		
\begin{equation*}
\mathcal{M}_4=\begin{pmatrix}
\frac{\left(\frac{\rho_2}{k_2}-\frac{\rho_1}{k_1}\right)\lambda^2-\lambda \Delta}{2} &0&0&-1
\\ \noalign{\medskip}
0&\frac{\left(\frac{\rho_2}{k_2}-\frac{\rho_1}{k_1}\right)\lambda^2-\lambda \Delta}{2}-\frac{k_1}{k_2}&\frac{\rho_2}{k_2}\lambda^2-\frac{k_1}{k_2}&0		
\\ \noalign{\medskip}
	0
	&\frac{k_1}{k_2}&\frac{-\left(\frac{\rho_2}{k_2}-\frac{\rho_1}{k_1}\right)\lambda^2-\lambda \Delta}{2}+\frac{k_1}{k_2}&0
\\ \noalign{\medskip}
-\frac{\rho_1}{ k_2}\lambda^2&0&0&\frac{-\left(\frac{\rho_2}{k_2}-\frac{\rho_1}{k_1}\right)\lambda^2-\lambda \Delta}{2}
		\end{pmatrix}.
		\end{equation*} 
Inserting  $\mathcal{M}_1,\ \mathcal{M}_2,\ \mathcal{M}_3$ and $\mathcal{M}_4$ in \eqref{Part-1-116},  we get 
\begin{equation}\label{Part-1-121}
e^{Bz}=\left(\mathtt{e}_{ij}(z)\right),
\end{equation}
such that
\begin{equation*}
\left\{
\begin{array}{lll}

\displaystyle{\mathtt{e}_{11}(z)=\dfrac{C_{11}(z)}{2}+\dfrac{ \left(\frac{\rho_2}{k_2}-\frac{\rho_1}{k_1}\right) \lambda C_{12}(z)}{2\Delta},\ \mathtt{e}_{12}(z)=-\dfrac{S_{11}(z)}{2}-\dfrac{\left(\left(\frac{\rho_2}{k_2}-\frac{\rho_1}{k_1}\right)\lambda^2-\frac{2k_1}{k_2}\right)S_{12}(z)}{2\lambda \Delta}, }

\\ \noalign{\medskip}

\displaystyle{ \mathtt{e}_{13}(z)=-\dfrac{\left(\rho_2\lambda^2-{k_1}\right) S_{12}(z)}{k_2\lambda\Delta },\ \mathtt{e}_{14}(z)=-\dfrac{C_{12}(z)}{\lambda \Delta } },

\end{array}
\right.
\end{equation*}
\begin{equation*}
\left\{
\begin{array}{lll}

\displaystyle{\mathtt{e}_{21}(z)=\dfrac{{\rho_1}\lambda^2 S_{11}(z)}{2k_1}+\dfrac{{\rho_1}\lambda \left(\left(\frac{\rho_2}{k_2}-\frac{\rho_1}{k_1}\right)\lambda^2-\frac{2k_1}{k_2}\right)S_{12}(z)}{2{k_1}\Delta} ,\ \mathtt{e}_{22}(z)=\dfrac{C_{11}(z)}{2}+\dfrac{\left(\left(\frac{\rho_2}{k_2}-\frac{\rho_1}{k_1}\right)\lambda^2-\frac{2k_1}{k_2}\right)C_{12}(z)}{2\lambda \Delta}, }

\\ \noalign{\medskip}

\displaystyle{ \mathtt{e}_{23}(z)=\dfrac{\left(\rho_2\lambda^2-{k_1}\right) C_{12}(z)}{k_2\lambda\Delta },\ \mathtt{e}_{24}(z)=\dfrac{S_{11}(z)}{2}-\dfrac{\left(\frac{\rho_2}{k_2}+\frac{\rho_1}{k_1}\right)\lambda S_{12}(z)}{2 \Delta} },

\end{array}
\right.
\end{equation*}
\begin{equation*}
\left\{
\begin{array}{lll}

\displaystyle{\mathtt{e}_{31}(z)=\dfrac{\rho_1 \lambda S_{12}(z)}{k_2\Delta},\ \mathtt{e}_{32}(z)=\dfrac{{k_1}  C_{12}(z)}{{k_2} \lambda\Delta}, }

\\ \noalign{\medskip}

\displaystyle{ \mathtt{e}_{33}(z)=\dfrac{C_{11}(z)}{2}-\dfrac{\left(\left(\frac{\rho_2}{k_2}-\frac{\rho_1}{k_1}\right)\lambda^2-\frac{2k_1}{k_2}\right)C_{12}(z)}{2\lambda \Delta},\ \mathtt{e}_{34}(z)=-\dfrac{S_{11}(z)}{2}+\dfrac{\left(\frac{\rho_2}{k_2}-\frac{\rho_1}{k_1}\right)\lambda S_{12}(z)}{2\Delta} },

\end{array}
\right.
\end{equation*}
and
\begin{equation*}
\left\{
\begin{array}{lll}

\displaystyle{\mathtt{e}_{41}(z)=-\dfrac{\rho_1 \lambda C_{12}(z)}{k_2\Delta},\ \mathtt{e}_{42}(z)=-\dfrac{{k_1} S_{11}(z)}{2{k_2}}+\dfrac{{k_1}\left(\frac{\rho_2}{k_2}+\frac{\rho_1}{k_1}\right) \lambda S_{12}(z)}{2{k_2}\Delta}, }

\\ \noalign{\medskip}

\displaystyle{ \mathtt{e}_{43}(z)=\dfrac{\left({\rho_2}\lambda^2-{k_1}\right)S_{11}(z)}{2{k_2}}-\dfrac{\left(\frac{\rho_2}{k_2}-\frac{\rho_1}{k_1}\right)\left({\rho_2}\lambda^2-{k_1}\right)\lambda S_{12}(z)}{2{k_2}\Delta},\ \mathtt{e}_{44}(z)= \dfrac{C_{11}(z)}{2}-\dfrac{\left(\frac{\rho_2}{k_2}-\frac{\rho_1}{k_1}\right)\lambda C_{12}(z)}{2\Delta}}.

\end{array}
\right.
\end{equation*}
Finally, substituting \eqref{Part-1-114} and   \eqref{Part-1-121} in \eqref{Part-1-113}, we obtain
\begin{equation*}
\left(\mathtt{e}_{13}(1)-\mathtt{e}_{12}(1) \right) y(1)=o\left(\lambda^{-\frac{\ell+\alpha+1}{2}} \right) \mathtt{e}_{11}(1)+o\left(\lambda^{-\frac{\ell}{2}} \right) \mathtt{e}_{12}(1)  +\frac{1}{k_1}\int_0^1h_2(z)\mathtt{e}_{12}(z) dz+\frac{1}{k_2}\int_0^1h_4(z)\mathtt{e}_{14}(z) dz 
\end{equation*}
and
\begin{equation*}
\left(\mathtt{e}_{43}(1)-\mathtt{e}_{42}(1) \right) y(1)=o\left(\lambda^{-\frac{\ell+\alpha+1}{2}} \right)\mathtt{e}_{41}(1)+o\left(\lambda^{-\frac{\ell}{2}} \right)\mathtt{e}_{42}(1)  +\frac{1}{k_1}\int_0^1h_2(z)\mathtt{e}_{42}(z) dz+\frac{1}{k_2}\int_0^1h_4(z)\mathtt{e}_{44}(z) dz. 
\end{equation*}
Consequently, we get  \eqref{Part-1-108}-\eqref{Part-1-109}, ending the proof of the lemma. 
\end{proof}
\begin{lem}\label{Theorem-1.16}
\rm{Assume that $\frac{\rho_2}{k_2}=\frac{\rho_1}{k_1}$, we have the following two cases: \\[0.1in]
\textbf{Case 1}.  If there exist no integers $k\in\mathbb{N}$ such that  $\sqrt{\frac{k_1}{k_2}}= k\pi$, then
\begin{equation}\label{Part-1-122}
\left|y(1)\right|=o\left(\lambda^{-\frac{\ell+\alpha+1}{2}} \right).
\end{equation}
\textbf{Case 2}. If there exists  $k_0\in\mathbb{N}$ such that  $\sqrt{\frac{k_1}{k_2}}= k_0\pi$, then
\begin{equation}\label{Part-1-123}
\left|y(1)\right|=o\left(\lambda^{-\frac{\ell+\alpha-3}{2}} \right).
\end{equation}
}
\end{lem}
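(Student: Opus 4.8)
The plan is to read $y(1)$ off the two scalar relations \eqref{Part-1-108}--\eqref{Part-1-109} of Lemma \ref{Theorem-1.15} after specializing them to the equal-speed regime. Set $a:=\frac{\rho_1}{k_1}=\frac{\rho_2}{k_2}$ and $\beta:=\frac12\sqrt{k_1/k_2}$. Under the hypothesis $\frac{\rho_2}{k_2}=\frac{\rho_1}{k_1}$, the quantity $\Delta=2\sqrt{\rho_1/k_2}$ becomes a constant and every term carrying the factor $\frac{\rho_2}{k_2}-\frac{\rho_1}{k_1}$ drops out of $P_1,P_4,\mathcal{K}_1,\mathcal{K}_4$. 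From \eqref{Part-1-111} I would first record the refined expansion
\[\mu_{1}=\lambda\sqrt a-\beta+O(\lambda^{-1}),\qquad \mu_{2}=\lambda\sqrt a+\beta+O(\lambda^{-1}),\]
so that $\mu_2-\mu_1\to 2\beta=\sqrt{k_1/k_2}$, which is exactly the quantity governing the dichotomy of the lemma. Substituting into \eqref{Part-1-110} gives $S_{11}(1),S_{12}(1)=O(\lambda^{-1})$ and $C_{11}(1),C_{12}(1)=O(1)$, with leading parts built from $\cos(\lambda\sqrt a),\sin(\lambda\sqrt a)$ and from $\cos\beta,\sin\beta$.

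Writing $P_1,P_4$ for the coefficients of $y(1)$ on the left of \eqref{Part-1-108} and \eqref{Part-1-109}, the expansions above give, to leading order,
\[P_1\sim c_1\,\sin\beta\,\cos(\lambda\sqrt a),\qquad \lambda^{-1}P_4\sim c_4\,\cos\beta\,\sin(\lambda\sqrt a),\]
with absolute constants $c_1,c_4\neq0$. For the right-hand sides I would invoke the boundary estimates \eqref{Part-1-103}--\eqref{Part-1-104} of Lemma \ref{Theorem-1.14} on the terms carrying the factors $o(\lambda^{-\ell/2})$ and $o(\lambda^{-(\ell+\alpha+1)/2})$, and integrate by parts once in the integral terms (their antiderivatives gain a power of $\lambda^{-1}$, the boundary contributions being $o(\lambda^{-1})$), so that the integral contributions are $o(\lambda^{-1-\ell})$ and hence negligible against the rest. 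This should yield $|\mathcal{K}_1|=o(\lambda^{-(\ell+\alpha+1)/2})$ and $\lambda^{-1}|\mathcal{K}_4|=o(\lambda^{-(\ell+\alpha+1)/2})$.

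The extraction of $y(1)$ then rests on the scalar identity
\[|y(1)|^2\left(|P_1|^2+\lambda^{-2}|P_4|^2\right)=|\mathcal{K}_1|^2+\lambda^{-2}|\mathcal{K}_4|^2 .\]
In \textbf{Case 1} ($\sqrt{k_1/k_2}\neq k\pi$ for all $k$), both $\sin\beta$ and $\cos\beta$ are nonzero, so $P_1$ and $\lambda^{-1}P_4$ are, to leading order, the linearly independent oscillations $\cos(\lambda\sqrt a)$ and $\sin(\lambda\sqrt a)$; since $\cos^2+\sin^2=1$ these never vanish simultaneously, whence $|P_1|^2+\lambda^{-2}|P_4|^2\geq c>0$ for $\lambda$ large. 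Dividing gives $|y(1)|=o(\lambda^{-(\ell+\alpha+1)/2})$, which is \eqref{Part-1-122}.

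In \textbf{Case 2} ($\sqrt{k_1/k_2}=k_0\pi$) exactly one of $\sin\beta,\cos\beta$ vanishes, and the structural point is that this forces $P_1$ and $\lambda^{-1}P_4$ to become \emph{collinear} — both proportional to the same factor $\sin(\lambda\sqrt a)$ (if $\sin\beta=0$) or $\cos(\lambda\sqrt a)$ (if $\cos\beta=0$) — so their squared sum can now be small at the common zeros of that factor. The plan is to carry the expansions of $\mu_1,\mu_2$, and hence of $S_{11}(1),S_{12}(1)$, one order further and to verify that at those common zeros the subleading coefficients do not also cancel; this reduces to a non-degeneracy identity such as $\frac1{\sqrt a}-\frac{2\beta}{\Delta}=\frac1{2\sqrt a}\neq0$. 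The outcome is the uniform lower bound $|P_1|^2+\lambda^{-2}|P_4|^2\geq c\,\lambda^{-4}$, i.e. a loss of precisely $\lambda^{-4}$ relative to Case 1, which costs a factor $\lambda^2$ in $|y(1)|$ and produces \eqref{Part-1-123}. This degenerate higher-order bookkeeping — pinning down the sharp power $\lambda^{-4}$ and checking that the relevant coefficient is genuinely nonzero — is the main obstacle of the proof.
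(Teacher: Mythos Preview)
Your proposal is correct and follows essentially the same route as the paper: specialize Lemma~\ref{Theorem-1.15} to the equal-speed case, expand $\mu_1,\mu_2$ and hence $S_{11},S_{12},C_{11},C_{12}$ asymptotically, bound the integral terms on the right by $o(\lambda^{-\ell-1})$ via one integration by parts, and then exploit the pair of oscillations $\cos(\lambda\sqrt a),\sin(\lambda\sqrt a)$ through $\sin^2+\cos^2=1$. The leading forms $P_1\sim c_1\sin\beta\cos(\lambda\sqrt a)$ and $\lambda^{-1}P_4\sim c_4\cos\beta\sin(\lambda\sqrt a)$ and the bounds on $\mathcal K_1,\lambda^{-1}\mathcal K_4$ are exactly what the paper obtains (its $\mathcal J_1,\mathcal J_4$ are constant multiples of your $P_1,\lambda^{-1}P_4$).

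The one place where your write-up differs from the paper is the endgame of \textbf{Case 2}. You aim for the uniform lower bound $|P_1|^2+\lambda^{-2}|P_4|^2\ge c\lambda^{-4}$, which is true but needs care: when $\sin\beta=0$ (say), one has $P_1=\frac{A}{\lambda}\sin(\lambda\sqrt a)+O(\lambda^{-3})$ and $\lambda^{-1}P_4=\sin(\lambda\sqrt a)-\frac{B}{\lambda}\cos(\lambda\sqrt a)+O(\lambda^{-2})$ with $A,B\neq 0$, and the worst case occurs when $\sin(\lambda\sqrt a)\approx \frac{B}{\lambda}\cos(\lambda\sqrt a)$, forcing you to track the $\frac{AB}{\lambda^2}\cos$ contribution in $P_1$. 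The paper bypasses this bookkeeping by a simple linear trick: it rescales the first relation by $\lambda$ to get $(\sin(\lambda\sqrt a)+O(\lambda^{-2}))y(1)=o(\lambda^{-(\ell+\alpha-1)/2})$, then \emph{adds} it to the second relation so that the $\sin$ terms cancel and a clean $(\cos(\lambda\sqrt a)+O(\lambda^{-1}))y(1)=o(\lambda^{-(\ell+\alpha-3)/2})$ falls out. That addition replaces your quadratic lower-bound argument and makes the non-degeneracy transparent (it is simply $B\neq 0$, i.e.\ the coefficient $\frac{k_1\sqrt{k_1}}{8k_2\sqrt{\rho_1}}\neq 0$, rather than the identity $\frac{1}{\sqrt a}-\frac{2\beta}{\Delta}=\frac{1}{2\sqrt a}$ you quote).
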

\begin{proof}
Assume that $\frac{\rho_2}{k_2}= \frac{\rho_1}{k_1}$, then from \eqref{Part-1-111}, we get
\begin{equation}\label{Part-1-124}
\displaystyle{\mu_1=\lambda \sqrt{\frac{\rho_1}{k_1}- \frac{1}{ \lambda}\sqrt{\frac{\rho_1}{k_2}}},\ \mu_2=\lambda \sqrt{\frac{\rho_1}{k_1}+\frac{1}{ \lambda}\sqrt{\frac{\rho_1}{k_2}}},\ \Delta=2\sqrt{\frac{\rho_1}{ k_2} }}.
\end{equation}
Using the asymptotic expansion in \eqref{Part-1-124}, we get
\begin{equation}\label{Part-1-125}
\left\{
\begin{array}{lll}
\displaystyle{\mu_1=\sqrt{\frac{\rho_1}{k_1}} \lambda-\frac{1}{2} \sqrt{\frac{k_1}{k_2}}-\frac{k_1\sqrt{k_1}}{8k_2\sqrt{ \rho_1}\lambda}-\frac{k_1^2\sqrt{k_1}}{16\rho_1k_2\sqrt{k_2} \lambda^2}+O\left(\lambda^{-3}\right)},

\\ \noalign{\medskip}

\displaystyle{\mu_2=\sqrt{\frac{\rho_1}{k_1}} \lambda+\frac{1}{2} \sqrt{\frac{k_1}{k_2}}-\frac{k_1\sqrt{k_1}}{8k_2\sqrt{ \rho_1}\lambda}+\frac{k_1^2\sqrt{k_1}}{16\rho_1k_2\sqrt{k_2} \lambda^2}+O\left(\lambda^{-3}\right)}.

\end{array}
\right.
\end{equation}
Inserting \eqref{Part-1-125} in \eqref{Part-1-110}, then for all $z\in\mathbb{R}$ using the asymptotic expansion, we obtain 
\begin{equation}\label{Part-1-126}
\begin{array}{lll}

\displaystyle{C_{11}(z)=2\cos\left(\frac{z}{2}\sqrt{\frac{k_1}{k_2}}   \right)\cos\left(z\lambda\sqrt{\frac{\rho_1}{k_1}} \right)+
\frac{k_1\sqrt{k_1} z}{4k_2\sqrt{\rho_1}\lambda}
 \cos\left(\frac{z}{2} \sqrt{\frac{k_1}{k_2}}  \right)\sin\left(z \lambda\sqrt{\frac{\rho_1}{k_1}} \right)}

\\ \noalign{\medskip}\hspace{2cm}

\displaystyle{
-\frac{k_1\sqrt{k_1} z}{64\rho_1k_2\sqrt{k_2}\lambda^2}\left( 8\sin\left(\frac{z}{2}\sqrt{\frac{k_1}{k_2}}   \right) +z\sqrt{\frac{k_1}{k_2}}\cos\left(\frac{z }{2}\sqrt{\frac{k_1}{k_2}}  \right)\right) \cos\left(z\lambda\sqrt{\frac{\rho_1}{k_1}} \right)+O\left(\lambda^{-3}\right)},

\end{array}
\end{equation}
\begin{equation}\label{Part-1-127}
\begin{array}{lll}

\displaystyle{C_{12}(z)=2\sin\left(\frac{z}{2}\sqrt{\frac{k_1}{k_2}}   \right)\sin\left(z\lambda\sqrt{\frac{\rho_1}{k_1}} \right)-
\frac{k_1\sqrt{k_1} z}{4k_2\sqrt{\rho_1}\lambda}
 \sin\left(\frac{z}{2} \sqrt{\frac{k_1}{k_2}}  \right)\cos\left(z \lambda\sqrt{\frac{\rho_1}{k_1}} \right)}

\\ \noalign{\medskip}\hspace{1.8cm}

\displaystyle{
-\frac{k_1\sqrt{k_1} z}{64\rho_1k_2\sqrt{k_2}\lambda^2}\left(- 8\cos\left(\frac{z}{2}\sqrt{\frac{k_1}{k_2}}   \right) +z\sqrt{\frac{k_1}{\rho_2}}\sin\left(\frac{z }{2}\sqrt{\frac{k_1}{k_2}}  \right)\right) \sin\left(z\lambda\sqrt{\frac{\rho_1}{k_1}} \right)+O\left(\lambda^{-3}\right)},

\end{array}
\end{equation}
\begin{equation}\label{Part-1-128}
\begin{array}{lll}
\displaystyle{S_{11}(z)=\frac{2\sqrt{{k_1}}}{\sqrt{{\rho_1}} \lambda}\cos\left(\frac{z}{2}\sqrt{\frac{k_1}{k_2}}   \right)\sin\left(z\lambda\sqrt{\frac{\rho_1}{k_1}} \right)}

\\ \noalign{\medskip}\hspace{1cm}

\displaystyle{-\frac{k_1\sqrt{k_1}}{4\rho_1\sqrt{k_2}\lambda^2}\left( 4\sin\left(\frac{z}{2}\sqrt{\frac{k_1}{k_2}}   \right)+z\sqrt{\frac{k_1}{k_2}} \cos\left(\frac{z}{2} \sqrt{\frac{k_1}{k_2}}  \right) \right) \cos\left(z\lambda\sqrt{\frac{\rho_1}{k_1}} \right)+O\left(\lambda^{-4}\right)}

\\ \noalign{\medskip}

\displaystyle{-\frac{k_1^2\sqrt{k_1}}{64 k_2 \rho_1\sqrt{{\rho_1}}\lambda^3} \left(16z\sqrt{\frac{k_1}{k_2}} \sin\left(\frac{z}{2}\sqrt{\frac{k_1}{k_2}}   \right) +\left(\frac{k_1 z^2}{k_2} -48\right)\cos\left(\frac{z}{2}\sqrt{\frac{k_1}{k_2}}   \right) \right)\sin\left(z\lambda\sqrt{\frac{\rho_1}{k_1}} \right)},
\end{array}
\end{equation}
and
\begin{equation}\label{Part-1-129}
\begin{array}{lll}
\displaystyle{S_{12}(z)=-\frac{2\sqrt{{k_1}}}{\sqrt{{\rho_1}} \lambda}\sin\left(\frac{z}{2}\sqrt{\frac{k_1}{k_2}}   \right)\cos\left(z\lambda\sqrt{\frac{\rho_1}{k_1}} \right)}

\\ \noalign{\medskip}\hspace{1cm}

\displaystyle{-\frac{k_1\sqrt{k_1}}{4\rho_1\sqrt{k_2}\lambda^2}\left( -4\cos\left(\frac{z}{2}\sqrt{\frac{k_1}{k_2}}   \right)+z\sqrt{\frac{k_1}{k_2}} \sin\left(\frac{z}{2} \sqrt{\frac{k_1}{k_2}}  \right) \right) \sin\left(z\lambda\sqrt{\frac{\rho_1}{k_1}} \right)+O\left(\lambda^{-4}\right)}

\\ \noalign{\medskip}

\displaystyle{-\frac{k_1^2\sqrt{k_1}}{64 k_2 \rho_1\sqrt{{\rho_1}}\lambda^3}  \left(16z\sqrt{\frac{k_1}{k_2}} \cos\left(\frac{z}{2}\sqrt{\frac{k_1}{k_2}}   \right) -\left(\frac{k_1 z^2}{k_2} -48\right)\sin\left(\frac{z}{2}\sqrt{\frac{k_1}{k_2}}   \right) \right)\cos\left(z\lambda\sqrt{\frac{\rho_1}{k_1}} \right)}.
\end{array}
\end{equation}
Moreover, we have
\begin{equation*}
\int_0^1 S_{11}(z)h_2(z)dz=-\frac{\rho_1}{\lambda^\ell}\int_0^1\left(\frac{\sin(\mu_1 z)}{\mu_1}+\frac{\sin(\mu_2 z)}{\mu_2} \right)f_2(z)dz-\frac{i\rho_1}{\lambda^{\ell-1}}\int_0^1\left(\frac{\sin(\mu_1 z)}{\mu_1}+\frac{\sin(\mu_2 z)}{\mu_2} \right)f_1(z)dz.
\end{equation*}
Using by parts integration, we get 
\begin{equation}\label{Part-1-130}
\begin{array}{ll}

\displaystyle{
\int_0^1 S_{11}(z)h_2(z)dz=-\frac{\rho_1}{\lambda^\ell}\int_0^1\left(\frac{\sin(\mu_1 z)}{\mu_1}+\frac{\sin(\mu_2 z)}{\mu_2} \right)f_2(z)dz+\frac{i\rho_1}{\lambda^{\ell-1}}\left(\frac{\cos(\mu_1 )}{\mu_1^2}+\frac{\cos(\mu_2 )}{\mu_2^2} \right)f_1(1)}

 \\ \noalign{\medskip}

\displaystyle{- \frac{i\rho_1}{\lambda^{\ell-1}}\left(\frac{1}{\mu_1^2}+\frac{1}{\mu_2^2} \right)f_1(0) -\frac{i\rho_1}{\lambda^{\ell-1}}\int_0^1\left(\frac{\cos(\mu_1 z)}{\mu_1^2}+\frac{\cos(\mu_2 z)}{\mu_2^2} \right)(f_1(z))_zdz.}
\end{array}
\end{equation}
From \eqref{Part-1-95} and \eqref{Part-1-125}, we get
\begin{equation}\label{Part-1-131}
\left\{
\begin{array}{ll}

\displaystyle{|f_1(1)|\leq \left\|f_1\right\|_{L^\infty(0,1)}\leq \left\|f_1\right\|_{H^1_L(0,1)}=o(1),\quad |f_1(0)|\leq \left\|f_1\right\|_{L^\infty(0,1)}\leq \left\|f_1\right\|_{H^1_L(0,1)}=o(1),}

 \\ \\

\displaystyle{\frac{1}{\mu_1^2}+\frac{1}{\mu_2^2}=O\left(\lambda^{-2}\right),\ \frac{\cos(\mu_1 )}{\mu_1^2}+\frac{\cos(\mu_2 )}{\mu_2^2}=O\left(\lambda^{-2}\right) },\ \displaystyle{ \frac{\cos(\mu_1 z)}{\mu_1^2}+\frac{\cos(\mu_2 z)}{\mu_2^2}=O\left(\lambda^{-2}\right) }.

\end{array}
\right.
\end{equation}
Substituting  \eqref{Part-1-95}, \eqref{Part-1-128} and \eqref{Part-1-131}   in \eqref{Part-1-130}, we get 
\begin{equation}\label{Part-1-132}
\int_0^1 S_{11}(z)h_2(z)dz=o\left(\lambda^{-\ell-1}\right).
\end{equation}
In the same way, we can check that
 \begin{equation}\label{Part-1-133}
 \left\{
\begin{array}{ll}
\displaystyle{\int_0^1 S_{12}(z)h_2(z)dz=o\left(\lambda^{-\ell-1}\right),\ \int_0^1 S_{11}(z)h_4(z)dz=o\left(\lambda^{-\ell-1}\right),
}

 \\ \noalign{\medskip}

\displaystyle{ \int_0^1 S_{12}(z)h_4(z)dz=o\left(\lambda^{-\ell-1}\right),}

 \\ \noalign{\medskip}

\displaystyle{  \int_0^1 C_{11}(z)h_2(z)dz=o\left(\lambda^{-\ell}\right),\ \int_0^1 C_{11}(z)h_4(z)dz=o\left(\lambda^{-\ell}\right),}
 
  \\ \noalign{\medskip}
 
 \displaystyle{  \int_0^1 C_{12}(z)h_2(z)dz=o\left(\lambda^{-\ell}\right),\ \int_0^1 C_{12}(z)h_4(z)dz=o\left(\lambda^{-\ell}\right).}
\end{array}
\right.
\end{equation} 
Inserting \eqref{Part-1-126}-\eqref{Part-1-129} and \eqref{Part-1-132}-\eqref{Part-1-133} in \eqref{Part-1-108}-\eqref{Part-1-109}, then using the fact that $\frac{\rho_1}{k_1}=\frac{\rho_2}{k_2}$,  we get 

\begin{eqnarray}
y(1)\hspace{0.5mm} \mathcal{J}_1 =o\left(\lambda^{-\frac{\ell+\alpha+1}{2}} \right),\label{Part-1-134}
\\ \noalign{\medskip}
y(1)\hspace{0.5mm}\mathcal{J}_4 =o\left(\lambda^{-\frac{\ell+\alpha+1}{2}} \right),\label{Part-1-135}
\end{eqnarray}
where
\begin{equation*}
\left\{
\begin{array}{ll}

\displaystyle{\mathcal{J}_1 =\left(1-\frac{k_1^2\left(\frac{k_1}{k_2}+16\right)}{128\rho_1 k_2\lambda^2} \right) \sin\left(\frac{1}{2}\sqrt{\frac{k_1}{k_2}}   \right)\cos\left(\sqrt{\frac{\rho_1}{k_1}} \lambda\right)}
 \\ \noalign{\medskip}
\displaystyle{\hspace{2cm}+\frac{k_1}{8\sqrt{\rho_1 k_2}\lambda}\left( 4\cos\left(\frac{1}{2}\sqrt{\frac{k_1}{k_2}}   \right) +\sqrt{\frac{k_1}{k_2}}\sin\left(\frac{1}{2}\sqrt{\frac{k_1}{k_2}}   \right)\right) \sin\left(\sqrt{\frac{\rho_1}{k_1}} \lambda\right) +O\left(\lambda^{-3}\right) },

 \\ \\

\displaystyle{\mathcal{J}_4 =\left(1-\frac{k_1^2\left(\frac{k_1}{k_2}+16\right)}{128\rho_1 k_2\lambda^2}\right) \cos\left(\frac{1}{2}\sqrt{\frac{k_1}{k_2}}   \right)\sin\left(\sqrt{\frac{\rho_1}{k_1}} \lambda\right)}
 \\ \noalign{\medskip}
\displaystyle{\hspace{2cm}+\frac{k_1}{8\sqrt{\rho_1 k_2}\lambda}\left( 4\sin\left(\frac{1}{2} \sqrt{\frac{k_1}{k_2}}  \right) -\sqrt{\frac{k_1}{k_2}}\cos\left(\frac{1}{2}\sqrt{\frac{k_1}{k_2}}   \right)\right) \cos\left(\sqrt{\frac{\rho_1}{k_1}} \lambda\right) +O\left(\lambda^{-3}\right)}.

\end{array}
\right.
\end{equation*}
We distinguish two cases:\\[0.1in]
\textbf{Case 1}. If there exist no integers $k\in\mathbb{N}$ such that  $\sqrt{\frac{k_1}{k_2}}= k\pi$, then 
$$\left|\sin\left(\frac{1}{2}\sqrt{\frac{k_1}{k_2}}\right)\right|\geq c>0\ \ \ \text{and}\ \ \ \left|\cos\left(\frac{1}{2}\sqrt{\frac{k_1}{k_2}}\right)\right|\geq c'> 0,$$
therefore from \eqref{Part-1-134} and \eqref{Part-1-135}, we get
\begin{equation}\label{Part-1-136}
\left\{
\begin{array}{ll}

\displaystyle{\left(\cos\left(\sqrt{\frac{\rho_1}{k_1}} \lambda\right)+O\left(\lambda^{-1}\right) \right) y(1) =o\left(\lambda^{-\frac{\ell+\alpha+1}{2}} \right)},
 \\ \\

\displaystyle{
\left(\sin\left(\sqrt{\frac{\rho_1}{k_1}} \lambda\right)+O\left(\lambda^{-1}\right) \right) y(1) =o\left(\lambda^{-\frac{\ell+\alpha+1}{2}} \right).}
\end{array}
\right.
\end{equation}
Hence, from \eqref{Part-1-136} and using the fact that $\min\left(\frac{\ell+\alpha+1}{2} ,\ell+1\right)=\frac{\ell+\alpha+1}{2}$, we get \eqref{Part-1-122}. \\[0.1in]
\textbf{Case 2}. Assume that  $\sqrt{\frac{k_1}{k_2}}= k_0\pi$, we divide the proof into two cases: Case 2.1, if $\sqrt{\frac{k_1}{k_2}}= 2k_0\pi$ and Case 2.2 if $\sqrt{\frac{k_1}{k_2}}= (2k_0+1)\pi$. Since the argument of two cases is entirely similar, we will only provide one of them.\\[0.1in]
Assume that $\sqrt{\frac{k_1}{k_2}}= 2k_0\pi$, then 
$$\left|\sin\left(\frac{1}{2}\sqrt{\frac{k_1}{k_2}}\right)\right|=0\ \ \ \text{and}\ \ \ \left|\cos\left(\frac{1}{2}\sqrt{\frac{k_1}{k_2}}\right)\right|=1,$$
consequently, from \eqref{Part-1-134} and \eqref{Part-1-135}, we get
\begin{eqnarray}
\left( - \sin\left(\sqrt{\frac{\rho_1}{k_1}} \lambda\right) +O\left(\lambda^{-2}\right)\right) y(1)=o\left(\lambda^{-\frac{\ell+\alpha-1}{2}} \right),\label{Part-1-137}
\\ \noalign{\medskip}
\left(\sin\left(\sqrt{\frac{\rho_1}{k_1}} \lambda\right)-\frac{k_1\sqrt{k_1}}{8k_2\sqrt{\rho_1 }\lambda}\cos\left(\sqrt{\frac{\rho_1}{k_1}} \lambda\right) +O\left(\lambda^{-2}\right)\right) y(1)=o\left(\lambda^{-\frac{\ell+\alpha+1}{2}} \right).\label{Part-1-138}
\end{eqnarray}
Adding \eqref{Part-1-137} and \eqref{Part-1-138}, we get
\begin{equation}\label{Part-1-139}
\left( \cos\left(\sqrt{\frac{\rho_1}{k_1}} \lambda\right) +O\left(\lambda^{-1}\right)\right) y(1)=o\left(\lambda^{-\frac{\ell+\alpha-3}{2}} \right).
\end{equation}
Hence, from \eqref{Part-1-138} and \eqref{Part-1-139},  we get \eqref{Part-1-123}
\end{proof}
\begin{lem}\label{Theorem-1.17}
\rm{Assume that $\frac{\rho_2}{k_2}\neq \frac{\rho_1}{k_1}$, let $\ell=5-\alpha$, for almost all real number $\xi:=\sqrt{\frac{k_1\rho_2}{k_2 \rho_1}}\neq 1$,   we have
\begin{equation}\label{Part-1-140}
\left|\lambda y(1)\right|=o\left(1\right).
\end{equation}
}
\end{lem}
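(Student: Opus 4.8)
The plan is to start from the two scalar identities for $y(1)$ established in Lemma~\ref{Theorem-1.15}, namely \eqref{Part-1-108}--\eqref{Part-1-109}, and to exploit them in the regime $\frac{\rho_1}{k_1}\neq\frac{\rho_2}{k_2}$ with $\ell=5-\alpha$. The decisive structural difference with the equal-speed computation of Lemma~\ref{Theorem-1.16} is the behaviour of the quantities in \eqref{Part-1-111}: since $\frac{\rho_1}{k_1}\neq\frac{\rho_2}{k_2}$ one has $\Delta=\left|\frac{\rho_1}{k_1}-\frac{\rho_2}{k_2}\right|\lambda+O(\lambda^{-1})$, so that the two frequencies separate, $\mu_1=a\lambda+O(\lambda^{-1})$ and $\mu_2=b\lambda+O(\lambda^{-1})$, with \emph{distinct} leading coefficients $a:=\sqrt{\rho_1/k_1}$ and $b:=\sqrt{\rho_2/k_2}$. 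First I would insert these expansions into \eqref{Part-1-110} to obtain the asymptotics of $S_{11}(1),S_{12}(1),C_{11}(1),C_{12}(1)$, noting in particular that $S_{1j}(1)=O(\lambda^{-1})$ while $C_{1j}(1)=O(1)$.

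Next I would estimate the right-hand sides $\mathcal{K}_1,\mathcal{K}_4$. The integral contributions $\int_0^1 S_{1j}(z)h_2(z)\,dz$ and $\int_0^1 C_{1j}(z)h_{2},h_4(z)\,dz$ are handled exactly as in \eqref{Part-1-130}: one integrates by parts to move the factor $\lambda$ hidden in $h_2,h_4$ onto the bounded kernels, then uses $f_j\to 0$ together with Lemma~\ref{Theorem-1.13}, which renders every integral term negligible. Combined with the boundary estimates \eqref{Part-1-103}--\eqref{Part-1-104} of Lemma~\ref{Theorem-1.14} (which for $\ell=5-\alpha$ give $u(1)=o(\lambda^{-3})$ and $u_x(1)+y(1)=o(\lambda^{-(5-\alpha)/2})$), this yields $\mathcal{K}_1=o(\lambda^{-3})$ and $\mathcal{K}_4=o(\lambda^{-3})$. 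On the left-hand sides, the same expansions show that the coefficients of $y(1)$ in \eqref{Part-1-108} and \eqref{Part-1-109} behave like
\[
\frac{b\sin\mu_2-a\sin\mu_1}{(b^2-a^2)\lambda}\bigl(1+o(1)\bigr)\qquad\text{and}\qquad \frac{\rho_2\lambda}{k_2 b}\,\sin\mu_2\,\bigl(1+o(1)\bigr),
\]
respectively. Eliminating between the two identities then produces the two working estimates $|\sin\mu_1|\,|y(1)|=o(\lambda^{-2})$ and $|\sin\mu_2|\,|y(1)|=o(\lambda^{-4})$; the asymmetry between the exponents, inherited from the factor $\lambda$ in the second coefficient, is what ultimately makes the argument work.

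The final and hardest step is to upgrade these to $|\lambda y(1)|=o(1)$. I would argue by contradiction: if $\lambda|y(1)|\not\to 0$ along a subsequence, the two estimates force $\sin\mu_1$ and $\sin\mu_2$ to be simultaneously small at the same value of $\lambda$, hence $\mu_1$ and $\mu_2$ both close to $\pi\mathbb{Z}$, say $\mu_1=p\pi+o(\lambda^{-1})$ and $\mu_2=q\pi+o(\lambda^{-1})$. Eliminating $\lambda$ through the combination $b\mu_1-a\mu_2$ (and carrying the $O(\lambda^{-1})$ corrections of $\mu_1,\mu_2$) constrains the integers $p,q$ to approximate $\xi=\sqrt{\frac{k_1\rho_2}{k_2\rho_1}}=b/a$ anomalously well, far beyond the rate permitted for a generic real number. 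The main obstacle is precisely this quantitative non-resonance: I must show that such simultaneous near-vanishing occurs for at most finitely many $\lambda$ for Lebesgue-almost every $\xi$. This is furnished by the metric theory of Diophantine approximation — the set of $\xi$ admitting infinitely many rationals $q/p$ with $|\xi-q/p|<p^{-2-\varepsilon}$ is Lebesgue-null by the Borel--Cantelli lemma (equivalently, almost every real has irrationality measure $2$). Consequently, for almost all $\xi$ the simultaneous smallness is excluded for $\lambda$ large, one of the two coefficients stays bounded below by the order needed, and $|\lambda y(1)|=o(1)$ follows, which is the claim. I expect the genuinely delicate point to be the bookkeeping of the $O(\lambda^{-1})$ corrections to $\mu_1,\mu_2$, since these fix the exact exponent of the forced approximation and therefore decide whether the exceptional set of $\xi$ is indeed null.
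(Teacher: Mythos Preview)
Your outline follows the paper's proof almost step for step: expand $\mu_1,\mu_2,\Delta$ when $\frac{\rho_1}{k_1}\neq\frac{\rho_2}{k_2}$, insert into \eqref{Part-1-108}--\eqref{Part-1-109}, bound $\mathcal{K}_1,\mathcal{K}_4$, and close by a Diophantine contradiction. The paper's \eqref{Part-1-147}--\eqref{Part-1-149} are exactly your two ``working estimates'' (after harmless rescaling), and from $|\lambda y(1)|\ge c_1>0$ the paper likewise deduces $\sin\mu_1=o(\lambda^{-1})$, $\sin\mu_2=o(\lambda^{-1})$, hence $\mu_1=m\pi+o(\lambda^{-1})$, $\mu_2=n\pi+o(\lambda^{-1})$ for integers $m,n$.

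The gap is in your Diophantine conclusion, and you have rightly flagged it as the delicate point while misjudging it. When you eliminate $\lambda$ carrying the $c_j/\lambda$ corrections of $\mu_j$, those corrections survive and contribute a \emph{specific nonzero constant}: one obtains (paper's \eqref{Part-1-155}--\eqref{Part-1-157})
\[
\frac{n}{m}-\xi=\frac{C}{m^2}+\frac{o(1)}{m^2},\qquad C=C(\xi)\neq 0,
\]
an approximation of order \emph{exactly} $1/m^2$. It is \emph{not} $m^{-2-\varepsilon}$. Your appeal to ``almost every real has irrationality measure $2$'' therefore does not bite: Dirichlet already gives every irrational infinitely many rationals within $1/m^2$, so the bare condition $|n/m-\xi|\sim C/m^2$ excludes no $\xi$ at all. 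The bookkeeping you worried about does fix the exponent --- and it fixes it at the threshold value $2$, where Borel--Cantelli with $\psi(m)=m^{-2-\varepsilon}$ is useless.

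The paper does not invoke the irrationality measure. It instead appeals to Khintchine's theorem (Theorem~1.10 in \cite{Bugeaud01}) at the logarithmic scale $1/(m^2\ln|m|)$: for almost every $\xi$ there are infinitely many $(n,m)$ with $|\xi-n/m|<1/(m^2\ln|m|)$, and the paper combines this with the forced equality $m^2|n/m-\xi|=|C|+o(1)$ to reach $|C|+o(1)<1/\ln|m|\to 0$, contradicting $C\neq 0$. So the mechanism is that the \emph{constant} $C$ is pinned down (not the exponent), and the contradiction comes from playing this fixed constant against the logarithmic window supplied by Khintchine. If you want your plan to go through, replace the $p^{-2-\varepsilon}$ argument by this Khintchine-type input at the critical scale.
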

\begin{proof} Assume that $\frac{\rho_2}{k_2}\neq \frac{\rho_1}{k_1}$, then from \eqref{Part-1-111}, we get
\begin{equation}\label{Part-1-141}
\left\{
\begin{array}{lll}

\displaystyle{\mu_1= \sqrt{\dfrac{\left(\frac{\rho_2}{k_2}+\frac{\rho_1}{k_1}\right)\lambda^2-\left(\frac{\rho_2}{k_2}-\frac{\rho_1}{k_1}\right)\lambda^2\sqrt{1+\frac{4\rho_1}{k_2\left(\frac{\rho_2}{k_2}-\frac{\rho_1}{k_1}\right)^2 \lambda^2}} }{2}},}

\\ \noalign{\medskip}

\displaystyle{\mu_2= \sqrt{\dfrac{\left(\frac{\rho_2}{k_2}+\frac{\rho_1}{k_1}\right)\lambda^2+\left(\frac{\rho_2}{k_2}-\frac{\rho_1}{k_1}\right)\lambda^2\sqrt{1+\frac{4{\rho_1}}{{ k_2}\left(\frac{\rho_2}{k_2}-\frac{\rho_1}{k_1}\right)^2 \lambda^2}} }{2}},}

\\ \noalign{\medskip}

\displaystyle{ \Delta=\left(\frac{\rho_2}{k_2}-\frac{\rho_1}{k_1}\right) \lambda\sqrt{1+\frac{4{\rho_1}}{{ k_2}\left(\frac{\rho_2}{k_2}-\frac{\rho_1}{k_1}\right)^2 \lambda^2}}}.

\end{array}
\right.
\end{equation}
Using the asymptotic expansion in \eqref{Part-1-141}, we get
\begin{equation}\label{Part-1-142}
\left\{
\begin{array}{lll}
\displaystyle{\mu_1=\sqrt{\frac{\rho_1}{k_1}} \lambda-\frac{\frac{k_1}{k_2}\sqrt{\frac{\rho_1}{k_1}}}{2\left(\frac{\rho_2}{k_2}-\frac{\rho_1}{k_1}\right)\lambda}+O\left(\lambda^{-3}\right)},\quad

\displaystyle{\mu_2=\sqrt{\frac{\rho_2}{k_2}} \lambda+\frac{\frac{\rho_1}{\rho_2}\sqrt{\frac{\rho_2}{k_2} }}{2\left(\frac{\rho_2}{k_2}-\frac{\rho_1}{k_1}\right)\lambda}
+O\left(\lambda^{-3}\right)},

\\ \noalign{\medskip}

\displaystyle{\Delta=\left(\frac{\rho_2}{k_2}-\frac{\rho_1}{k_1}\right) \lambda+\frac{{2 \frac{\rho_1}{k_2}}}{\left(\frac{\rho_2}{k_2}-\frac{\rho_1}{k_1}\right)\lambda}+O\left(\lambda^{-3}\right)}.
\end{array}
\right.
\end{equation}
Inserting \eqref{Part-1-142} in \eqref{Part-1-110}, then for all $z\in\mathbb{R}$ using the asymptotic expansion, we obtain
\begin{equation}\label{Part-1-143}
C_{11}(z)=\cos\left(\left(\sqrt{\frac{\rho_1}{k_1}} \lambda-\frac{\frac{k_1}{k_2}\sqrt{\frac{\rho_1}{k_1}}}{2\left(\frac{\rho_2}{k_2}-\frac{\rho_1}{k_1}\right)\lambda}\right) z\right)+\cos\left(\left(\sqrt{\frac{\rho_2}{k_2}} \lambda+\frac{\frac{\rho_1}{\rho_2}\sqrt{\frac{\rho_2}{k_2} }}{2\left(\frac{\rho_2}{k_2}-\frac{\rho_1}{k_1}\right)\lambda}
\right) z\right)+O\left(\lambda^{-3}\right),
\end{equation}
\begin{equation}\label{Part-1-144}
C_{12}(z)=\cos\left(\left(\sqrt{\frac{\rho_1}{k_1}} \lambda-\frac{\frac{k_1}{k_2}\sqrt{\frac{\rho_1}{k_1}}}{2\left(\frac{\rho_2}{k_2}-\frac{\rho_1}{k_1}\right)\lambda}\right) z\right)-\cos\left(\left(\sqrt{\frac{\rho_2}{k_2}} \lambda+\frac{\frac{\rho_1}{\rho_2}\sqrt{\frac{\rho_2}{k_2} }}{2\left(\frac{\rho_2}{k_2}-\frac{\rho_1}{k_1}\right)\lambda}
\right) z\right)+O\left(\lambda^{-3}\right),
\end{equation}
\begin{equation}\label{Part-1-145}
\begin{array}{lll}
\displaystyle{S_{11}(z)=\frac{1}{\lambda}{\sqrt{\frac{k_1}{\rho_1}}}\sin\left(\left(\sqrt{\frac{\rho_1}{k_1}} \lambda-\frac{\frac{k_1}{k_2}\sqrt{\frac{\rho_1}{k_1}}}{2\left(\frac{\rho_2}{k_2}-\frac{\rho_1}{k_1}\right)\lambda}\right) z\right)}
\\ \noalign{\medskip}\hspace{3cm}
\displaystyle{+\frac{1}{\lambda}\sqrt{\frac{k_2}{\rho_2}}\sin\left(\left(\sqrt{\frac{\rho_2}{k_2}} \lambda+\frac{\frac{\rho_1}{\rho_2}\sqrt{\frac{\rho_2}{k_2} }}{2\left(\frac{\rho_2}{k_2}-\frac{\rho_1}{k_1}\right)\lambda}
\right) z\right)+O\left(\lambda^{-3}\right),}
\end{array}
\end{equation}
and
\begin{equation}\label{Part-1-146}
\begin{array}{lll}
\displaystyle{S_{12}(z)=\frac{1}{\lambda}\sqrt{\frac{k_1}{\rho_1}}\sin\left(\left(\sqrt{\frac{\rho_1}{k_1}} \lambda-\frac{\frac{k_1}{k_2}\sqrt{\frac{\rho_1}{k_1}}}{2\left(\frac{\rho_2}{k_2}-\frac{\rho_1}{k_1}\right)\lambda}\right) z\right)}
\\ \noalign{\medskip}\hspace{3cm}
\displaystyle{-\frac{1}{\lambda}\sqrt{\frac{k_2}{\rho_2}}\sin\left(\left(\sqrt{\frac{\rho_2}{k_2}} \lambda+\frac{\frac{\rho_1}{\rho_2}\sqrt{\frac{\rho_2}{k_2} }}{2\left(\frac{\rho_2}{k_2}-\frac{\rho_1}{k_1}\right)\lambda}
\right) z\right)+O\left(\lambda^{-3}\right).}
\end{array}
\end{equation}
Substituting \eqref{Part-1-142}-\eqref{Part-1-146} in \eqref{Part-1-108}-\eqref{Part-1-109}, then using the fact that  $\left\|f_2\right\|=o\left(1\right)$ and $\left\|f_4\right\|=o\left(1\right)$, we get 
\begin{equation}\label{Part-1-147}
\begin{array}{ll}
	\displaystyle{		
\left(\sqrt{\frac{\rho_1}{k_1}}\sin\left(\sqrt{\frac{\rho_1}{k_1}} \lambda-\frac{\frac{k_1}{k_2}\sqrt{\frac{\rho_1}{k_1}}}{2\left(\frac{\rho_2}{k_2}-\frac{\rho_1}{k_1}\right)\lambda}\right) -\sqrt{\frac{\rho_2}{k_2}}\sin\left(\sqrt{\frac{\rho_2}{k_2}} \lambda+\frac{\frac{\rho_1}{\rho_2}\sqrt{\frac{\rho_2}{k_2} }}{2\left(\frac{\rho_2}{k_2}-\frac{\rho_1}{k_1}\right)\lambda}
\right)  \right)\lambda y(1) }

 \\ \noalign{\medskip}

\hspace{5cm}	\displaystyle{+O\left(\lambda^{-2 }\right)\lambda y(1)=o\left(\lambda^{-\frac{\ell+\alpha-3}{2}} \right)+o\left(\lambda^{-\ell +2}\right)}
	\end{array}
\end{equation}
and
\begin{equation}\label{Part-1-148}
\left( \sqrt{\frac{\rho_2}{k_2}}\sin\left(\sqrt{\frac{\rho_2}{k_2}} \lambda+\frac{\frac{\rho_1}{\rho_2}\sqrt{\frac{\rho_2}{k_2} }}{2\left(\frac{\rho_2}{k_2}-\frac{\rho_1}{k_1}\right)\lambda}
\right) +O\left({\lambda^{-2} }\right)\right)\lambda y(1) =o\left({\lambda^{-\frac{\ell+\alpha+1}{2}} }\right)+o\left({\lambda^{-\ell}}\right).
\end{equation}
Adding \eqref{Part-1-147} and \eqref{Part-1-148}, we get
\begin{equation}\label{Part-1-149}
\left(\sqrt{\frac{\rho_1}{k_1}}\sin\left( \sqrt{\frac{\rho_1}{k_1}} \lambda-\frac{\frac{k_1}{k_2}\sqrt{\frac{\rho_1}{k_1}}}{2\left(\frac{\rho_2}{k_2}-\frac{\rho_1}{k_1}\right)\lambda}\right)+O\left({\lambda^{-2} }\right)\right) \lambda y(1)=o\left({\lambda^{-\frac{\ell+\alpha-3}{2}} }\right)+o\left({\lambda^{-\ell +2}}\right).
\end{equation}
Let $\ell=5-\alpha$,  from \eqref{Part-1-94}, \eqref{Part-1-103} and \eqref{Part-1-160}, we get $|\lambda y(1)| = O(1).$   Our aim is to show that $|\lambda y(1)| = o(1),$ suppose that there exist two positive constant numbers $c_2\geq c_1>0$ such that  $c_1\leq\left|\lambda y(1)\right|\leq c_2$, then from \eqref{Part-1-148} and \eqref{Part-1-149}, we get
\begin{equation}\label{Part-1-152}
\sin\left(\sqrt{\frac{\rho_2}{k_2}} \lambda+\frac{\frac{\rho_1}{\rho_2}\sqrt{\frac{\rho_2}{k_2} }}{2\left(\frac{\rho_2}{k_2}-\frac{\rho_1}{k_1}\right)\lambda}
\right) =o\left({\lambda^{-1} }\right)\ \ \ \text{and}\ \ \
 \sin\left(\sqrt{\frac{\rho_1}{k_1}} \lambda-\frac{\frac{k_1}{k_2}\sqrt{\frac{\rho_1}{k_1}}}{2\left(\frac{\rho_2}{k_2}-\frac{\rho_1}{k_1}\right)\lambda}\right)=o\left({\lambda^{-1} }\right).
\end{equation}
It follows from Equation \eqref{Part-1-152}, there exists $n,m\in \mathbb{Z}$ such that 
    	\begin{eqnarray}
  \lambda=n\pi \sqrt{\frac{k_2}{\rho_2}} -\frac{\frac{\rho_1}{\rho_2}}{2\left(\frac{\rho_2}{k_2}-\frac{\rho_1}{k_1}\right)\lambda}  +o\left({\lambda^{-1} }\right),\label{Part-1-153}
     \\ \noalign{\medskip}
 \lambda    =m\pi\sqrt{\frac{k_1}{\rho_1}}+\frac{\frac{k_1}{k_2}}{2\left(\frac{\rho_2}{k_2}-\frac{\rho_1}{k_1}\right)\lambda}+o\left({\lambda^{-1} }\right).\label{Part-1-154}
    	\end{eqnarray}
 Subtracting    \eqref{Part-1-153} from \eqref{Part-1-154}, we get 
    \begin{equation*}
\pi m \sqrt{\frac{k_2}{\rho_2}} \left(\frac{n}{m} -\sqrt{\frac{k_1\rho_2}{\rho_1 k_2}}\right)= \frac{\frac{\rho_1}{\rho_2}+\frac{k_1}{k_2}}{2\left(\frac{\rho_2}{k_2}-\frac{\rho_1}{k_1}\right)\lambda}   +o\left({\lambda^{-1} }\right). 
    \end{equation*}
  Equivalently, we have  
    \begin{equation}\label{Part-1-155}
\frac{n}{m} -\sqrt{\frac{k_1\rho_2}{\rho_1 k_2}}= \frac{ \frac{k_1}{\sqrt{\rho_2 k_2}}\left(\frac{k_1 \rho_2}{k_2\rho_1}+1 \right)}{2\pi\left(\frac{k_1\rho_2 }{k_2 \rho_1}-1\right) m \lambda}   +\frac{o(1)}{m\lambda} . 
    \end{equation}	
  From     \eqref{Part-1-154}, we get
 \begin{equation}\label{Part-1-156}
 \frac{1}{\lambda}=\frac{\sqrt{\frac{\rho_1}{k_1}}}{m\pi}+\frac{o\left(1\right)}{m^2 }.
 \end{equation}   	
 Inserting    \eqref{Part-1-156} in \eqref{Part-1-155}, we get	    \begin{equation}\label{Part-1-157}
\frac{n}{m} -\sqrt{\frac{k_1\rho_2}{\rho_1 k_2}}= \frac{ \sqrt{ \frac{k_1 \rho_1}{k_2 \rho_2}}\left(\frac{k_1 \rho_2}{k_2\rho_1}+1 \right)}{2\left(\frac{k_1\rho_2 }{k_2 \rho_1}-1\right) \pi^2 m^2 }   +\frac{o(1)}{m^2} . 
    \end{equation}
From Theorem 1.10 in  \cite{Bugeaud01}, we have for almost all real numbers $\xi$  there exists infinitely many integers $n,\ m$ such that
    \begin{equation}\label{Part-1-158}
\left| \xi-\frac{n}{m}    \right|<\frac{1}{m^2 \ln|m|}. 
    \end{equation}
Let 	$\xi=\sqrt{\frac{k_1\rho_2}{\rho_1 k_2}}$, then from \eqref{Part-1-157} and \eqref{Part-1-158} there exist infinitely many integers $n,\ m$ such that
\begin{equation*}
\left|\frac{ \frac{\rho_1}{\rho_2}\xi\left(\xi^2+1 \right)}{2\left(\xi^2-1\right) \pi^2 m^2 }   +\frac{o(1)}{m^2} \right|=\left|\frac{n}{m} -\xi\right|<\frac{1}{m^2 \ln|m|}.
\end{equation*}
  Equivalently, we have 
 \begin{equation}\label{Part-1-159}
\left|\frac{ \frac{\rho_1}{\rho_2}\xi\left(\xi^2+1 \right)}{2\left(\xi^2-1\right) \pi^2  }   +o(1) \right|<\frac{1}{ \ln|m|}.
\end{equation}   	
 Since $m\sim C_0\vert \lambda\vert$ for a positive constant, then the estimate   \eqref{Part-1-159} can be written as 	\begin{equation*}
\left|\frac{ \frac{\rho_1}{\rho_2}\xi\left(\xi^2+1 \right)}{2\left(\xi^2-1\right) \pi^2  }   +o(1) \right|=o(1).
\end{equation*}    	
Consequently, we have 
  \begin{equation*}
\left|\frac{ \xi\left(\xi^2+1 \right)}{\left(\xi^2-1\right) }  \right|=o(1),
\end{equation*}      	
  which is impossible. Therefore, we get $\lambda y(1)=o(1)$, which concludes the proof of the lemma.
 \end{proof}$\\[0.1in]$
We now turn to the proof of Theorem \ref{Theorem-1.11}.\\[0.1in]
\textbf{Proof of Theorem \ref{Theorem-1.11}}. We divide the proof in two steps: \\[0.1in] \noindent {\bf Step1. The energy decay estimation.} We distinguish two cases:\\[0.05in]
\textbf{Case 1}. If there exist no integers $k\in\mathbb{N}$ such that  $\sqrt{\frac{k_1}{k_2}}= k\pi$, let $\ell=1-\alpha$,  then from \eqref{Part-1-103}, \eqref{Part-1-104} and \eqref{Part-1-122}, we get
\begin{equation}\label{Part-1-169}
\left|u_x(1)\right|=o\left(1\right),\ \left|\lambda u(1)\right|=o\left(1\right),\ \left|\lambda y(1)\right|=o\left(1\right).
\end{equation}
Inserting \eqref{Part-1-169} in \eqref{Part-1-160}, we get
\begin{equation*}
\int_0^{1}\left(\rho_1\left|\lambda u\right|^2+\rho_2\left|\lambda u\right|^2+k_1\left| u_x\right|^2 +k_2\left| y_x\right|^2\right)dx=o(1),
\end{equation*}
then $\|U\|_{\mathcal{H}_1}=o\left(1\right)$ which contradicts \eqref{Part-1-94}. This implies that $$\displaystyle{\sup_{\lambda\in\mathbb{R}}\left\|\left(i\lambda Id-\mathcal{A}_1\right)^{-1}\right\|_{\mathcal{L}\left(\mathcal{H}_1\right)}=O\left(\lambda^{1-\alpha}\right)}.$$
\textbf{Case 2}. If there exists  $k_0\in\mathbb{N}$ such that  $\sqrt{\frac{k_1}{k_2}}= k_0\pi$, let $\ell=5-\alpha$,  then from \eqref{Part-1-103}, \eqref{Part-1-104} and \eqref{Part-1-123}, we get
\begin{equation}\label{Part-1-170}
\left|u_x(1)\right|=o\left(1\right),\ \left|\lambda u(1)\right|=o\left(1\right),\ \left|\lambda y(1)\right|=o\left(1\right).
\end{equation}
Inserting \eqref{Part-1-170} in \eqref{Part-1-160}, we get
\begin{equation*}
\int_0^{1}\left(\rho_1\left|\lambda u\right|^2+\rho_2\left|\lambda u\right|^2+k_1\left| u_x\right|^2 +k_2\left| y_x\right|^2\right)dx=o(1),
\end{equation*}
then $\|U\|_{\mathcal{H}_1}=o\left(1\right)$ which contradicts \eqref{Part-1-94}. This implies that $$\displaystyle{\sup_{\lambda\in\mathbb{R}}\left\|\left(i\lambda Id-\mathcal{A}_1\right)^{-1}\right\|_{\mathcal{L}\left(\mathcal{H}_1\right)}=O\left(\lambda^{5-\alpha}\right)}.$$ 
\noindent {\bf Step 2. The optimality.} For the optimality of \eqref{ee}, let $\epsilon>0$ and set 
\begin{equation*}
S=\left\{\begin{array}{ll}
\displaystyle{1-\alpha-\epsilon,\quad \text{if }\sqrt{\frac{k_1}{k_2}}\not\in  \pi\mathbb{N}},
\\ \\
\displaystyle{5-\alpha-\epsilon,\quad \text{if }\sqrt{\frac{k_1}{k_2}}\in  2\pi\mathbb{N}},
\\ \\
\displaystyle{5-\alpha-\epsilon,\quad \text{if }\sqrt{\frac{k_1}{k_2}}\in  (2\mathbb{N}+1)\pi}.
\end{array}\right.
\end{equation*}
 For $|n|\geq n_0$, let 
\begin{equation*}
\lambda_n=\left\{\begin{array}{ll}
\displaystyle{\lambda_{1,n},\quad \text{if }\sqrt{\frac{k_1}{k_2}}\not\in  \pi\mathbb{N}},
\\ \\
\displaystyle{\lambda_{1,n},\quad \text{if }\sqrt{\frac{k_1}{k_2}}\in  2\pi\mathbb{N}},
\\ \\
\displaystyle{\lambda_{2,n},\quad \text{if }\sqrt{\frac{k_1}{k_2}}\in  (2\mathbb{N}+1)\pi},
\end{array}\right.
\end{equation*}
where $\left(\lambda_{1,n}\right)_{ |n|\geq n_0} $ and $\left(\lambda_{2,n}\right)_{ |n|\geq n_0} $ are the  simple eigenvalues of $\mathcal{A}_1$. Moreover, let $U_{n}\in D(\mathcal{A}_1)$ be the    normalized eigenfunction corresponding to $\lambda_n$. We introduce the following sequence
$$
\beta_{n}=-\text{Im} (\lambda_{n}),\quad |n|\geq n_0.
$$
Therefore, we have
$$ (iI\beta_{n}+\mathcal{A}_1)U_{n}=(iI\beta_{n}+\lambda_{n})U_{n}=\text{Re}\left(\lambda_n\right)U_n,\quad\forall |n|\geq n_0. $$
From Proposition \ref{Theorem-1.10new},  we get
\begin{equation*}
(iI\beta_{n}+\mathcal{A}_1)U_{n}=\left\{\begin{array}{ll}
\displaystyle{\frac{C_1}{n^{1-\alpha}}+o\left(\frac{1}{n^{1-\alpha}}\right)},\quad& \displaystyle{\text{if }\sqrt{\frac{k_1}{k_2}}\not\in  \pi\mathbb{N}},
\\ \\
\displaystyle{\frac{C_2}{n^{5-\alpha}}+o\left(\frac{1}{n^{5-\alpha}}\right),}\quad & \displaystyle{ \text{if }\sqrt{\frac{k_1}{k_2}}\in  2\pi\mathbb{N}},
\\ \\
\displaystyle{\frac{C_3}{n^{5-\alpha}}+o\left(\frac{1}{n^{5-\alpha}}\right),}\quad& \displaystyle{ \text{if }\sqrt{\frac{k_1}{k_2}}\in  (2\mathbb{N}+1)\pi},
\end{array}\right.
\end{equation*}
where $C_1,\ C_2,\ C_3$ are non zero real numbers. Hence 
$$ 
\beta_{n}^{S}\| (i\beta_{n}I+\mathcal{A}_1U_{n} \|_{\mathcal{H}_1}\sim  \dfrac{C}{n^{\epsilon}},\quad \forall |n|\geq n_{0},
$$
where $C>0$. Thus, we deduce 
$$ 
\lim_{|n|\to +\infty}\beta_{n}^{S}\| (i\beta_{k}I+\mathcal{A}_1)U_{n} \|_{\mathcal{H}_1}=0. 
$$
Finally, thanks to  Theorem \ref{chapter pr-44},  we cannot expect the energy decay rate $t^{-\frac{2}{S}}$. Therefore,  estimate  \eqref{ee} is optimal. 
\xqed{$\square$}
\\[0.1in]
\textbf{Proof of Theorem \ref{Theorem-1.12}}. For almost all real numbers $\sqrt{\frac{k_1\rho_2}{k_2 \rho_1}}$,   from \eqref{Part-1-103}, \eqref{Part-1-104} and \eqref{Part-1-140}, we have
\begin{equation}\label{Part-1-171}
\left|u_x(1)\right|=o\left(1\right),\ \left|\lambda u(1)\right|=o\left(1\right),\ \left|\lambda y(1)\right|=o\left(1\right).
\end{equation}
Inserting \eqref{Part-1-171} in \eqref{Part-1-160}, we get
\begin{equation*}
\int_0^{1}\left(\rho_1\left|\lambda u\right|^2+\rho_2\left|\lambda u\right|^2+k_1\left| u_x\right|^2 +k_2\left| y_x\right|^2\right)dx=o(1),
\end{equation*}
then  $\|U\|_{\mathcal{H}_1}=o\left(1\right)$ which contradicts \eqref{Part-1-94}. This implies that $$\displaystyle{\sup_{\lambda\in\mathbb{R}}\left\|\left(i\lambda Id-\mathcal{A}_1\right)^{-1}\right\|_{\mathcal{L}\left(\mathcal{H}_1\right)}=O\left(\lambda^{5-\alpha}\right)}.$$ The result follows from  Theorem \ref{chapter pr-44}. 
\xqed{$\square$}
\section{Exact controllability of the Timoshenko system}\label{Section-2-Exact-controllability}
\noindent In this section, we study the indirect boundary exact controllability of the Timoshenko System \eqref{Part-0-01} with the  boundary conditions \eqref{Part-0-03}. This system  defined in $\left(0,1\right)\times\left(0,+\infty\right)$ takes the following  

\begin{equation}\label{Equation1.1-work4}  
\left\{
\begin{array}{lll}

\displaystyle{\rho_1 u_{tt}-k_1 \left(u_x+y\right)_x=0,}
&\displaystyle{(x,t)\in\left(0,1\right)\times \mathbb{R}_+,}

         \nline
         
\displaystyle{ \rho_2y_{tt}-k_2y_{xx}+k_1\left(u_x+y\right)=0,}  &\displaystyle{(x,t)\in\left(0,1\right)\times \mathbb{R}_+,}

          \nline
          
\displaystyle{u\left(1,t\right)=v\left(t\right)}, &\displaystyle{t\in\mathbb{R}_+,}

           \nline
           
\displaystyle{y_x\left(0,t\right)=y_x\left(1,t\right)=u\left(0,t\right)=0}, &\displaystyle{t\in\mathbb{R}_+,}

\end{array}
\right.
\end{equation}
in addition to  the following initial conditions
\begin{eqnarray*}
u(x,0)=u_0(x),& u_t(x,0)=u_1(x),&x\in   (0,1),\nline
y(x,0)=y_0(x),& y_t(x,0)=y_1(x),&x\in(0,1).
\end{eqnarray*}
The  control  $v$ is applied only on the right boundary of the first equation. The second equation is indirectly controlled by means of the coupling between the equations.\\[0.1in]
\noindent  For a given $T > 0$ and initial data $\left(u_0,u_1,y_0,y_1\right)$ belonging to a suitable space,  the aim of this section is  to find a suitable   control $v$ such  that the solution of the System \eqref{Equation1.1-work4},  given by  $\left(u,u_t, y, y_t\right)$, is driven to zero in time $T$; i.e.,
$$u\left(x,T\right)=u_t\left(x,T\right)= y\left(x,T\right)=y_t\left(x,T\right)=0,\quad\forall x\in\left(0,1\right).$$
\subsection{Spectral compensation for homogeneous  Timoshenko system }\label{Section2-work4}
\noindent The aim of this section is to compute the eigenvalues and the eigenvectors associated to the  homogeneous Timoshenko system. For this aim, we  consider the  homogeneous Timoshenko system
\begin{equation}\label{Equation2.1-work4}  
\left\{
\begin{array}{lll}

\displaystyle{\rho_1\varphi_{tt}- k_1\left(\varphi_x+\psi\right)_x=0,}
\quad\quad &\displaystyle{(x,t)\in\left(0,1\right)\times \mathbb{R}_+,}

            \nline

\displaystyle{ \rho_2\psi_{tt}-k_2\psi_{xx}+k_1\left(\varphi_x+\psi\right)=0,} \quad\quad &\displaystyle{(x,t)\in\left(0,1\right)\times \mathbb{R}_+,}

             \nline
 
\displaystyle{\varphi\left(0,t\right)=\varphi\left(1,t\right)=\psi_x\left(0,t\right)=\psi_x\left(1,t\right)=0},\quad\quad &\displaystyle{t\in\mathbb{R}_+,}

\end{array}
\right.
\end{equation}
with the  following initial conditions
\begin{equation*}
\varphi\left(x,0\right)=\varphi_0\left(x\right),\ \psi\left(x,0\right)=\psi_0\left(x\right),\ 
\varphi_t\left(x,0\right)=\varphi_1\left(x\right),\
\psi_t\left(x,0\right)=\psi_1\left(x\right),\qquad x\in\left(0,1\right),
\end{equation*}
where $k_1,\ k_2,\ \rho_1$ and $\rho_2$ are strictly positive constants.\\[0.1in]
The energy of System \eqref{Equation2.1-work4}   is given by
\begin{equation*}
	E_2\left(t\right)= \rho_1\int_0^{1}\left|\varphi_t\right|^{2}dx+\rho_2\int_0^{1}\left|\psi_t\right|^2dx+k_1\int_0^{1}\left|\varphi_x+\psi\right|^{2}dx+k_2\int_0^{1}\left|\psi_x\right|^{2}dx,
	\end{equation*}
	
a direct computation gives
\begin{equation*}
\frac{dE_2}{dt}\left(t\right)= 0.
\end{equation*}
Thus, the energy of the solution is conserved. Let us define the energy space ${\mathcal{H}_2}$ by
\begin{equation*}
{\mathcal{H}_2}=H_0^1\left(0,1\right)\times L^2\left(0,1\right)\times H^1_{*}\left(0,1\right)\times L^2\left(0,1\right)
\end{equation*}

with the inner product defined by
\begin{equation}\label{inner product-2}
\begin{array}{lll}

\displaystyle{
\left<\Phi,\Phi_1\right>_{\mathcal{H}_2}=\rho_1 \int_0^1\varphi_1\overline{\tilde{\varphi}_1}dx+\rho_2 \int_0^1\psi_1\overline{\tilde{\psi}_1}dx+k_1\int_0^1\left(\varphi_x+\psi\right)\overline{\left(\tilde{\varphi}_x+\tilde{\psi}\right)}dx}\displaystyle{+k_2\int_0^1 \psi_x \overline{\tilde{\psi}_x}dx,}
\end{array}
\end{equation}
for  all $\Phi=\left(\varphi,\varphi_1,\psi,\psi_1\right),\ \Phi_1=\left(\tilde{\varphi},\tilde{\varphi}_1,\tilde{\psi},\tilde{\psi}_1\right)\in\mathcal{H}_2.$ We use 
$\|U\|_{\mathcal{H}_2}$ to denote the corresponding norm.\\
We define the linear unbounded operator ${\mathcal{A}_2}$ in ${\mathcal{H}_2}$ by
\begin{equation*}
{\mathcal{A}_2}(\varphi,\varphi_1,\psi,\psi_1)=
\left(\varphi_1,\frac{k_1}{\rho_1}\left(\varphi_x+\psi\right)_x,
\psi_1,\frac{k_2}{\rho_2}\psi_{xx}-\frac{k_1}{\rho_2}\left(\varphi_x+\psi\right)\right)
\end{equation*}
and
\begin{equation*}
\begin{array}{lll}

\displaystyle{D\left({\mathcal{A}_2}\right)=\bigg\{\Phi=(\varphi,\varphi_1,\psi,\psi_1)\ |\ \varphi\in H^2\left(0,1\right)\cap H^1_0\left(0,1\right),}

             \\

\hspace{3cm}\displaystyle{\psi \in H^2\left(0,1\right)\cap H^1_{*}\left(0,1\right) ,\ \varphi_1,\ \psi_x\in H^1_0\left(0,1\right),\  \psi_1\in H^1_{*}\left(0,1\right)\bigg\}}.
\end{array}
\end{equation*}
Therefore, we can write the System \eqref{Equation2.1-work4} as an evolution equation on the Hilbert space ${\mathcal{H}_2}$:
\begin{equation}\label{Equation2.2-work4}
\left\{
\begin{array}{c}

\displaystyle{ \Phi_t(x,t)={\mathcal{A}_2}\Phi(x,t),}

            \nline
 
\displaystyle{\Phi\left(x,0\right)=\Phi_0(x),}

\end{array}
\right.
\end{equation}
where $\Phi_0\left(x\right)=\left(\varphi_0\left(x\right),\varphi_1\left(x\right),\psi_0\left(x\right),\psi_1\left(x\right)\right)\in{\mathcal{H}_2}.$\\[0.1in]
\noindent One clearly  that ${\mathcal{A}_2}$ is a maximal dissipative operator on ${\mathcal{H}_2}$,  then  by Lumer Philips's Theorem (see Theorem 4.3 in \cite{Pazy01}),  ${\mathcal{A}_2}$ is the infinitesimal generator of a $C_0$-semigroup of contractions $e^{t{\mathcal{A}_2}}$ on ${\mathcal{H}_2}.$ Therefore,  the problem \eqref{Equation2.1-work4} is well-posed and we have the following result.
\begin{thm}\label{Theorem2.1-work4}
\rm{For any $\Phi_0\in D\left({\mathcal{A}_2}\right),$ the problem \eqref{Equation2.2-work4}  admits a unique strong solution 
$$\Phi\in C\left(\mathbb{R}_{+};D\left({\mathcal{A}_2}\right)\right) \cap C^1\left(\mathbb{R}_{+};{\mathcal{H}_2}\right).$$
Moreover, if $\Phi_0\in{\mathcal{H}_2}$,  then the  System \eqref{Equation2.2-work4}  admits a unique weak solution 
$$\Phi\in C\left(\mathbb{R}_{+};{\mathcal{H}_2}\right).$$ 
In addition, we have
$$\left\|\Phi\left(t\right)\right\|_{{\mathcal{H}_2}}=\left\|\Phi_0\right\|_{{\mathcal{H}_2}},\quad \forall\ t\in \mathbb{R}_+.$$}\xqed{$\square$}
\end{thm}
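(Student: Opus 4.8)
The plan is to show that $\mathcal{A}_2$ generates not merely a $C_0$-semigroup but a $C_0$-\emph{group} of isometries on $\mathcal{H}_2$, which simultaneously delivers the existence and regularity statements and the conservation of the norm. The strategy follows the template of Proposition~\ref{Theorem-1.2}, with the extra observation that, since the homogeneous system carries no dissipation, \emph{both} $\mathcal{A}_2$ and $-\mathcal{A}_2$ are m-dissipative. First I would compute, for $\Phi=(\varphi,\varphi_1,\psi,\psi_1)\in D(\mathcal{A}_2)$, the real part of $\left<\mathcal{A}_2\Phi,\Phi\right>_{\mathcal{H}_2}$ using the inner product of $\mathcal{H}_2$. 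Integrating by parts in the terms $\frac{k_1}{\rho_1}(\varphi_x+\psi)_x$ and $\frac{k_2}{\rho_2}\psi_{xx}$ and invoking the boundary conditions $\varphi_1(0)=\varphi_1(1)=0$ (from $\varphi_1\in H^1_0$) together with $\psi_x(0)=\psi_x(1)=0$, every boundary contribution vanishes and the remaining volume terms group into conjugate pairs of the form $z-\overline{z}$, so that $\text{Re}\left<\mathcal{A}_2\Phi,\Phi\right>_{\mathcal{H}_2}=0$. The identical computation applied to $-\mathcal{A}_2$ gives $\text{Re}\left<-\mathcal{A}_2\Phi,\Phi\right>_{\mathcal{H}_2}=0$, so both operators are dissipative.

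Next I would prove maximality, namely that $I-\mathcal{A}_2$ and $I+\mathcal{A}_2$ are surjective. Fixing $F=(f_1,f_2,f_3,f_4)\in\mathcal{H}_2$ and writing $(I\mp\mathcal{A}_2)\Phi=F$ componentwise as in \eqref{Part-1-15}--\eqref{Part-1-18}, one eliminates $\varphi_1$ and $\psi_1$ and reduces the problem to a second-order elliptic system for $(\varphi,\psi)$. Testing against $(\overline{\chi},\overline{\zeta})\in H^1_0(0,1)\times H^1_*(0,1)$ yields a variational identity $a\big((\varphi,\psi),(\chi,\zeta)\big)=L(\chi,\zeta)$ whose bilinear form contains the quadratic quantity $\rho_1\|\varphi\|^2+\rho_2\|\psi\|^2+k_1\|\varphi_x+\psi\|^2+k_2\|\psi_x\|^2$. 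Continuity of $a$ and of $L$ is immediate, and the zeroth-order terms produced by the identity operator secure coercivity on $H^1_0(0,1)\times H^1_*(0,1)$; Lax--Milgram then gives a unique weak solution, and classical elliptic regularity places $(\varphi,\psi)$ in the appropriate $H^2$ spaces, so that the reconstructed $\Phi$ belongs to $D(\mathcal{A}_2)$.

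Having both $\mathcal{A}_2$ and $-\mathcal{A}_2$ m-dissipative, the Lumer--Phillips theorem (see \cite{Pazy01}) shows that each generates a $C_0$-semigroup of contractions; hence $\mathcal{A}_2$ generates a $C_0$-group $\left(e^{t\mathcal{A}_2}\right)_{t\in\mathbb{R}}$ of isometries on $\mathcal{H}_2$ (equivalently, $\mathcal{A}_2$ is skew-adjoint). Standard semigroup theory then furnishes, for $\Phi_0\in D(\mathcal{A}_2)$, the unique strong solution $\Phi\in C\left(\mathbb{R}_{+};D(\mathcal{A}_2)\right)\cap C^1\left(\mathbb{R}_{+};\mathcal{H}_2\right)$ of \eqref{Equation2.2-work4}, and for $\Phi_0\in\mathcal{H}_2$ the unique weak solution $\Phi\in C\left(\mathbb{R}_{+};\mathcal{H}_2\right)$. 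Finally, since the semigroup is isometric, $\|\Phi(t)\|_{\mathcal{H}_2}=\|e^{t\mathcal{A}_2}\Phi_0\|_{\mathcal{H}_2}=\|\Phi_0\|_{\mathcal{H}_2}$ for all $t\in\mathbb{R}_+$; alternatively, this follows by differentiating $\|\Phi(t)\|^2_{\mathcal{H}_2}$ and using $\text{Re}\left<\mathcal{A}_2\Phi,\Phi\right>_{\mathcal{H}_2}=0$, which is precisely the already-noted energy identity $\frac{dE_2}{dt}=0$.

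The only step demanding genuine care is the coercivity estimate inside the Lax--Milgram argument: one must verify that $k_1\|\varphi_x+\psi\|^2+k_2\|\psi_x\|^2$, together with the $L^2$ terms coming from the identity, controls the full $H^1_0(0,1)\times H^1_*(0,1)$ norm. This is where the mean-zero constraint defining $H^1_*$ and the associated Poincaré inequality enter, allowing one to dominate $\|\varphi_x\|^2$ via $\|\varphi_x+\psi\|^2+\|\psi\|^2$ and to bound $\|\psi\|^2$ by $\|\psi_x\|^2$; everything else in the proof is routine.
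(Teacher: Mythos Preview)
Your proposal is correct and follows essentially the same route as the paper, which simply asserts (in the sentence preceding the theorem) that $\mathcal{A}_2$ is maximal dissipative and invokes Lumer--Phillips, deducing norm conservation from the earlier energy identity $\frac{dE_2}{dt}=0$. Your version is more detailed and, by showing that $-\mathcal{A}_2$ is also m-dissipative, upgrades the conclusion to a $C_0$-group of isometries, which is the cleanest way to obtain the equality $\|\Phi(t)\|_{\mathcal{H}_2}=\|\Phi_0\|_{\mathcal{H}_2}$ directly from semigroup theory rather than from the formal energy computation.
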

\noindent  Since ${\mathcal{A}_2}$ is a closed operator with a compact resolvent, its spectrum $\sigma({\mathcal{A}_2})$ consists entirely of isolated eigenvalues with finite multiplicities (see Theorem 6.29 in  \cite{Kato01}). Moreover, it is easy to check that $0\in\rho({\mathcal{A}_2})$.\\[0.1in]
We will now  study the spectrum of the system study the spectrum of the System \eqref{Equation2.1-work4}. Let $\lambda\in\mathbb{C^*}$ be an eigenvalue of the operator  ${\mathcal{A}_2}$ and  $E=\left(\varphi,\lambda\varphi,\psi,\lambda\psi\right)\in D({\mathcal{A}_2})$  a corresponding eigenvector. Using the fact that 
$\text{Re}\left\{\lambda\right\}\left\|E\right\|_{{\mathcal{H}_2}}^2= \text{Re}\left<{\mathcal{A}_2} E,E\right>_{{\mathcal{H}_2}}=0,$
we get that $\lambda=i\mu\text{ with } \mu\in\mathbb{R}^*.$  Then, the corresponding eigenvalue problem is given by
\begin{equation}\label{Equation2.3-work4}
\left\{
\begin{array}{ll}

\displaystyle{\frac{k_1}{\rho_1} \varphi_{xx}+\mu^2 \varphi+\frac{k_1}{\rho_1}\psi_x=0,}

 \nline
 
  \displaystyle{-\frac{k_1}{\rho_2}\varphi_x+\frac{k_2}{\rho_2} \psi_{xx}+\mu^2 \psi-\frac{k_1}{\rho_2}\psi=0,}
  
   \nline
  
    \displaystyle{\varphi\left(0\right)=\varphi\left(1\right)=\psi_x\left(0\right)=\psi_x\left(1\right)=0.}

  \end{array}
  \right.
\end{equation}
For some constants $C,\ D\in\mathbb{C}^*$, let
\begin{equation}\label{Equation2.5-work4}
\varphi(x)=C\sin(n\pi x), \hskip1cm \psi(x)=D\cos(n\pi x)
\end{equation}
be a solution of \eqref{Equation2.3-work4}.
It follows that 
\begin{equation}\label{Equation2.3new-work4}
\left\{
\begin{array}{ll}
\displaystyle{\left(\mu^{2}-\frac{k_1}{\rho_1}(n\pi)^2\right)C-\frac{k_1}{\rho_1}(n\pi) D =0} ,\nline

\displaystyle{-\frac{k_1}{\rho_2}(n\pi) C+\left(\mu^{2}-\frac{k_2}{\rho_2}(n\pi)^2-\frac{k_1}{\rho_2}\right)D=0,}
\end{array}
\right.
\end{equation}
which has a non-trivial solution if and only if
\begin{equation}\label{Equation2.6-work4}
\mu^{4}-\left(\left(\frac{k_1}{\rho_1}+\frac{k_2}{\rho_2}\right) (n\pi)^2+\frac{k_1}{\rho_2} \right)\mu^2+\frac{k_1k_2}{\rho_1\rho_2}  (n\pi)^4=0.
\end{equation}
\begin{rk}\rm{
\noindent The solution  $\left(\varphi,\psi\right)$  of \eqref{Equation2.3-work4} is given by $e^{r x}\left(\alpha_1,\alpha_2\right)$, such that
\begin{equation}\label{Timoshenko Equation 2.15}
\begin{pmatrix}
\frac{k_1}{\rho_1}r^2+\mu^2& \frac{k_1}{\rho_1}r\nline -\frac{k_1}{\rho_2}r& \frac{k_2}{\rho_2}r^2+\mu^2-\frac{k_1}{\rho_2}
\end{pmatrix} \begin{pmatrix}
\alpha_1\nline \alpha_2
\end{pmatrix}= \begin{pmatrix}
0\nline 0
\end{pmatrix}.
\end{equation}
System \eqref{Timoshenko Equation 2.15} admits a non zero solution if and only if
\begin{equation}\label{Timoshenko Equation 2.16}
r^4+\left(\frac{\rho_1}{k_1} +\frac{\rho_2}{k_2}  \right)\mu^2r^2+\frac{\rho_1\rho_2}{k_1k_2} \mu^2\left(\mu^2-\frac{k_1}{\rho_2} \right)=0.
\end{equation}
Solving Equation \eqref{Timoshenko Equation 2.16}, we get
\begin{equation*}
\displaystyle{r_1^2=-\dfrac{\mu^2\left( \frac{\rho_2}{{k}_2} +\frac{\rho_1}{k_1}\right)+\sqrt{\left( \frac{\rho_2}{{k}_2} -\frac{\rho_1}{k_1}\right)^2\mu^4+\frac{4\mu^2\rho_1}{{k}_2}}}{2},}\quad  \displaystyle{r_2^2=\dfrac{-\mu^2\left( \frac{\rho_2}{{k}_2} +\frac{\rho_1}{k_1}\right)+\sqrt{\left( \frac{\rho_2}{{k}_2} -\frac{\rho_1}{k_1}\right)^2\mu^4+\frac{4\mu^2\rho_1}{{k}_2}}}{2}.}
	\end{equation*}
From the boundary conditions \eqref{Equation2.3-work4}, the System \eqref{Equation2.3-work4} has a non-trivial solution if and only if $\sinh(r_1)=0$ and/or $\sinh(r_2)=0$. Taking 
\begin{equation}\tag{$A_1$}
\frac{k_1}{\rho_2}\neq
\frac{\left(\frac{k_2 m_1^2}{\rho_2}-\frac{k_1 m_2^2}{\rho_1}\right)\left(\frac{k_1 m_1^2}{\rho_1}-\frac{k_2 m_2^2}{\rho_2}\right)\pi^2}{\left(\frac{k_1}{\rho_1}+\frac{k_2}{\rho_2}\right)\left(m_1^2+m_2^2\right)},\quad\ \forall\ m_1,\ m_2\in \mathbb{Z},
\end{equation}
we get $$\sinh(r_1)=0 \text{ and }\sinh(r_2)\neq 0\quad \text{or}\quad \sinh(r_1)\neq0 \text{ and }\sinh(r_2)= 0.$$
In this case, the solution of \eqref{Equation2.3-work4},  $\left(\varphi,\psi\right)$, is uniquely written  as defined in \eqref{Equation2.5-work4}. }\xqed{$\square$}
\end{rk}
\subsection{Observability and exact controllability under equal speeds wave propagation condition} \label{Section3-work4}
\noindent In this part, assume that the waves propagate with the same speeds; i.e., $\frac{k_1}{\rho_1}=\frac{k_2}{\rho_2}$. In this case, we study exact controllability of a one dimensional Timoshenko System \eqref{Equation1.1-work4}. For this aim, first, we  prove the following Observability theorem.
\begin{thm}\label{Theorem3.2-work4}
\rm{Assume that $\frac{k_1}{\rho_1}=\frac{k_2}{\rho_2}$, $T>4\sqrt{ \frac{\rho_1}{k_1} }$ and condition $(\rm A_1)$ holds. Then, for all solution $\Phi=\left(\varphi,\varphi_t,\psi,\psi_t\right)$ that solve the  Cauchy problem \eqref{Equation2.1-work4} there exists a positive  constant $\ell_0$ depending only on $k_1,\ k_2,\ \rho_1,\ \rho_2$ such that the following direct inequality holds:
\begin{equation}\label{Equation3.2-work4}
\int_{0}^T\left|\varphi_x\left(1,t\right)+\psi(1,t)\right|^2dt\leq \ell_0 \left\|\left(\varphi_0,\varphi_1,\psi_0,\psi_1\right)\right\|^2_{{\mathcal{H}_2}} .
\end{equation}
Moreover, there exists a  positive constant $\ell_1< \ell_0$ depending only on $k_1,\ k_2,\ \rho_1,\ \rho_2$ such that the following inverse observability inequalities hold:\\[0.1in]
 \noindent\textbf{Case 1.}   If  there exist  no integers $k\in \mathbb{N}$ such that $\sqrt{\frac{k_1}{k_2}}=k\pi$, then 
\begin{equation}\label{Equation3.3-work4}
\ell_1 \left\|\left(\varphi_0,\varphi_1,\psi_0,\psi_1\right)\right\|^2_{{\mathcal{H}_2}}\leq\int_{0}^T\left|\varphi_x\left(1,t\right)+\psi(1,t)\right|^2dt.
\end{equation}
 \noindent\textbf{Case 2.} If there exists  $k_0 \in \mathbb{N}^*$ such that $\sqrt{\frac{k_1}{k_2}}=k_0\pi$, then  there exists Hilbert space $\mathcal{D}$, defined by
\begin{equation*}
\mathcal{D}=\left\{ \Phi_0=\sum_{n\neq 0}(\alpha_{1,n}E_{1,n}+\alpha_{2,n} E_{2,n})n^{2}\ |\  \alpha_{1,n},\ \alpha_{2,n}\in\mathbb{C},\ \sum_{n\neq 0}\left(\left|\alpha_{1,n}\right|^2+\left|\alpha_{2,n}\right|^2\right)<\infty \right\}
\end{equation*}
 equipped with the following  norm
$$\sum_{n\neq 0}\left(\left|\alpha_{1,n}\right|^2+\left|\alpha_{2,n}\right|^2\right),$$ 
such that the inverse inverse observability holds:
\begin{equation}\label{Equation3.4-work4}
\ell_1\left\|\left(\varphi_0,\varphi_1,\psi_0,\psi_1\right)\right\|^2_{{\mathcal{D}}}\leq  \int_{0}^T\left|\varphi_x\left(1,t\right)+\psi(1,t)\right|^2dt,
\end{equation}
where $E_{1,n},\ E_{2,n}$ are the eigenfunctions of the operator ${\mathcal{A}_2}$. }
\end{thm}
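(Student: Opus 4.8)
The plan is to prove all three inequalities by reducing the boundary term $\varphi_x(1,t)+\psi(1,t)$ to a nonharmonic Fourier series and invoking Ingham's theorem, the whole distinction between the two cases being governed by the gap structure of the spectrum of $\mathcal A_2$. By the analysis of Subsection \ref{Section2-work4}, under $(A_1)$ every eigenvalue of $\mathcal A_2$ is simple and purely imaginary, $\lambda=i\mu$, with eigenfunction of the explicit form \eqref{Equation2.5-work4}, $\mu$ being a root of \eqref{Equation2.6-work4}. Under $\frac{k_1}{\rho_1}=\frac{k_2}{\rho_2}=:a$, equation \eqref{Equation2.6-work4} is a quadratic in $\mu^2$ whose two positive roots satisfy $\mu_{1,n}\mu_{2,n}=a(n\pi)^2$ and $\mu_{1,n}^2+\mu_{2,n}^2=2a(n\pi)^2+\frac{k_1}{\rho_2}$, so that $\mu_{j,n}=\sqrt a\,(n\pi\mp\tfrac12\sqrt{k_1/k_2})+O(n^{-1})$. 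Writing a solution of \eqref{Equation2.1-work4} as $\Phi(t)=\sum_{j,n}a_{j,n}e^{i\mu_{j,n}t}E_{j,n}$ together with the negative-frequency terms, the observation becomes $\varphi_x(1,t)+\psi(1,t)=\sum_{j,n}a_{j,n}b_{j,n}e^{i\mu_{j,n}t}+\text{c.c.}$, where $b_{j,n}:=\varphi_{j,n,x}(1)+\psi_{j,n}(1)$.

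The decisive computation is to evaluate $b_{j,n}$ and show it is nondegenerate. From \eqref{Equation2.5-work4}–\eqref{Equation2.3new-work4} one gets $b_{j,n}=(-1)^nC\mu_{j,n}^2/(a\,n\pi)$, while a direct evaluation of \eqref{inner product-2} on the eigenfunction gives $\|E_{j,n}\|_{\mathcal H_2}^2=\tfrac12\big(\rho_1\mu^2|C|^2+\rho_2\mu^2|D|^2+k_1|C|^2\mu^4/(a^2(n\pi)^2)+k_2|D|^2(n\pi)^2\big)$ with $D=(\mu^2/a-(n\pi)^2)C/(n\pi)$. Inserting the asymptotics (so that $\mu^2\sim a(n\pi)^2$ and $|D|\sim\sqrt{k_1/k_2}\,|C|$) shows that every term of $\|E_{j,n}\|^2$ is comparable to $|b_{j,n}|^2$, i.e. $c\,\|E_{j,n}\|_{\mathcal H_2}^2\le|b_{j,n}|^2\le C\,\|E_{j,n}\|_{\mathcal H_2}^2$ uniformly in $(j,n)$, the finitely many low modes being nonzero thanks to $\mu_{j,n}\ne0$ and $(A_1)$. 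The upper bound, combined with the fact that the real spectrum has bounded density (at most two frequencies in each $O(1)$ window), yields the direct inequality \eqref{Equation3.2-work4} from the \emph{easy} half of Ingham's theorem, valid for every $T>0$.

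For the inverse inequalities everything rests on the separation of the $\mu_{j,n}$. Counting shows that $\{\pm\mu_{1,n},\pm\mu_{2,n}\}$ has upper density $2/(\sqrt a\pi)$, so the critical Beurling–Ingham time is $2\pi\cdot 2/(\sqrt a\pi)=4\sqrt{\rho_1/k_1}$, exactly matching the hypothesis $T>4\sqrt{\rho_1/k_1}$. In Case 1, where $\sqrt{k_1/k_2}\notin\pi\mathbb Z$, the two alternating gaps $\sqrt a\sqrt{k_1/k_2}$ and $\sqrt a(\pi-\sqrt{k_1/k_2}\ \mathrm{mod}\ \pi)$ are both bounded below, the family $\{e^{i\mu_{j,n}t}\}$ is a Riesz sequence for $T>4\sqrt{\rho_1/k_1}$, and Ingham's lower bound together with $\|E_{j,n}\|^2\le c^{-1}|b_{j,n}|^2$ gives \eqref{Equation3.3-work4}. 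In Case 2, where $\sqrt{k_1/k_2}=k_0\pi$, the refined expansion forces $\mu_{2,n}-\mu_{1,n+k_0}\to0$, so the gap condition fails; I would group each resonant pair and apply the variant of Ingham's theorem that survives a finite asymptotic clustering, which recovers the clustered data only after division by the square of the vanishing gap. Since that gap is of order $n^{-1}$, the recoverable norm loses a factor $n^2$, which is precisely the weight defining $\mathcal D$, and gives \eqref{Equation3.4-work4}.

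I expect the main obstacle to be Case 2: one must determine the exact order at which $\mu_{2,n}-\mu_{1,n+k_0}$ tends to zero, which requires pushing the expansion of the roots of \eqref{Equation2.6-work4} beyond leading order in the spirit of Proposition \ref{Theorem-1.10new}, and then convert that rate, through the clustered form of Ingham's inequality, into the precise $n^2$ weight in the definition of $\mathcal D$. Keeping the comparison constants uniform across the two families $j=1,2$ and across the finitely many small indices, while simultaneously verifying that $(A_1)$ rules out any exact coincidence of frequencies, is the delicate book-keeping underlying the argument.
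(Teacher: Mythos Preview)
Your overall strategy coincides with the paper's: write the boundary trace as a nonharmonic Fourier series in the eigenbasis of $\mathcal A_2$, verify that $b_{j,n}=(\varphi_{j,n})_x(1)+\psi_{j,n}(1)$ is comparable to $\|E_{j,n}\|_{\mathcal H_2}$, use the upper-density Ingham inequality for \eqref{Equation3.2-work4} and \eqref{Equation3.3-work4}, and the divided-difference (clustered) Ingham inequality for \eqref{Equation3.4-work4}. That part is fine.

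The genuine gap is your computation of the eigenvalue spacing in Case~2. You write that the resonant difference is of order $n^{-1}$ and that this produces the $n^{2}$ weight in $\mathcal D$. In fact the $O(n^{-1})$ correction in the expansion of the roots of \eqref{Equation2.6-work4} is the \emph{same} for both branches: with $a=k_1/\rho_1$ one finds
\[
\mu_{1,n}=\sqrt a\Big(n\pi+\tfrac12\sqrt{k_1/k_2}\Big)+\frac{\sqrt{k_1\rho_1}}{8\rho_2\,n\pi}+O(n^{-3}),\qquad
\mu_{2,n}=\sqrt a\Big(n\pi-\tfrac12\sqrt{k_1/k_2}\Big)+\frac{\sqrt{k_1\rho_1}}{8\rho_2\,n\pi}+O(n^{-3}),
\]
so when $\sqrt{k_1/k_2}=k_0\pi$ the close pair satisfies $\mu_{1,m}-\mu_{2,n}=\frac{\sqrt{k_1\rho_1}}{8\rho_2\pi}\big(\tfrac1m-\tfrac1n\big)+O(n^{-3})$, which is of order $n^{-2}$, not $n^{-1}$ (this is exactly Proposition~\ref{Proposition3.5-work4}). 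In the clustered Ingham estimate the lower bound contains $|\lambda_{n+1}-\lambda_n|^2|a_{n+1}|^2$, so a gap of size $n^{-2}$ costs a factor $n^{-4}$ on the squared coefficients; this is what matches the weight $n^{2}$ in the definition of $\mathcal D$ (since $\|\Phi_0\|_{\mathcal D}^2\sim\sum n^{-4}|\beta_{j,n}|^2$ when $\Phi_0=\sum\beta_{j,n}E_{j,n}$). With your claimed $n^{-1}$ gap the same bookkeeping would yield a weight $n$, not $n^{2}$, and \eqref{Equation3.4-work4} would not follow. You correctly flagged this computation as the crux; the point is that you must push the expansion one order further than you did and observe the cancellation of the $1/n$ terms between the two branches.
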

\noindent For the proof of Theorem \ref{Theorem3.2-work4}, we use the spectrum method. For this aim, we need to study the asymptotic behaviour of the spectrum of ${\mathcal{A}_2}$. We prove the following proposition.
\begin{prop}\label{Proposition3.3-work4}
\rm{Assume that $\frac{k_1}{\rho_1}=\frac{k_2}{\rho_2}$ and condition $(\rm A_1)$ holds. Then, the  eigenvalues of ${\mathcal{A}_2}$ has the following asymptotic behavior
\begin{equation}
\label{Equation3.5-work4}
\lambda_{1,n}= in\pi\sqrt{\frac{k_1}{\rho_1}}+\frac{i}{2}\sqrt{\frac{k_1}{\rho_2}}+i\frac{\sqrt{{k_1\rho_1}}}{8{\rho_2} n\pi}+O\left(n^{-3}\right),
\end{equation}
\begin{equation}
\label{Equation3.6-work4}
\lambda_{2,n}= in\pi\sqrt{\frac{k_1}{\rho_1}}-\frac{i}{2}\sqrt{\frac{k_1}{\rho_2}}+i\frac{\sqrt{{k_1\rho_1}}}{8{\rho_2} n\pi}+O\left(n^{-3}\right),
\end{equation}
with the corresponding eigenfunctions
\begin{equation}
\label{Equation3.7-work4}
\varphi_{1,n}(x)=\frac{\sin(n\pi x)}{n\pi}, \quad \quad \psi_{1,n}(x)=\sqrt{\frac{k_1}{k_2}} \frac{\cos(n\pi x)}{n \pi}+O\left(n^{-2}\right)\cos(n\pi x),
\end{equation}
\begin{equation}
\label{Equation3.8-work4}
\varphi_{2,n}(x)= -\sqrt{\frac{k_2}{k_1}} \frac{\sin(n\pi x)}{n \pi}+O\left(n^{-2}\right)\sin(n\pi x), 
\quad\psi_{2,n}(x)=\frac{\cos(n\pi x)}{n\pi}.
\end{equation}}
\end{prop}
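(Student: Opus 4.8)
The goal is to obtain the asymptotic expansions \eqref{Equation3.5-work4}--\eqref{Equation3.6-work4} for the two branches of eigenvalues of $\mathcal{A}_2$ together with the eigenfunctions \eqref{Equation3.7-work4}--\eqref{Equation3.8-work4}, under the equal-speed assumption $\frac{k_1}{\rho_1}=\frac{k_2}{\rho_2}$. Since the spectral analysis preceding the proposition already reduced the eigenvalue problem to the separated form \eqref{Equation2.5-work4}--\eqref{Equation2.6-work4}, I would start directly from the dispersion relation \eqref{Equation2.6-work4}. Writing $\lambda = i\mu$ with $\mu\in\mathbb{R}^*$, the eigenvalues are indexed by $n\in\mathbb{Z}^*$ through the quadratic (in $\mu^2$) equation
\begin{equation*}
\mu^{4}-\left(\left(\tfrac{k_1}{\rho_1}+\tfrac{k_2}{\rho_2}\right)(n\pi)^2+\tfrac{k_1}{\rho_2}\right)\mu^2+\tfrac{k_1 k_2}{\rho_1\rho_2}(n\pi)^4=0.
\end{equation*}
Imposing $\frac{k_1}{\rho_1}=\frac{k_2}{\rho_2}$ simplifies the coefficients substantially: the two constants coincide, and $\frac{k_1 k_2}{\rho_1\rho_2}=\left(\frac{k_1}{\rho_1}\right)^2$. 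I would solve this quadratic explicitly for $\mu^2$ and obtain the two positive roots $\mu_{1,n}^2,\mu_{2,n}^2$ via the quadratic formula.

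\textbf{Extracting the expansions.} The main computational step is to perform the large-$n$ asymptotic expansion of the square-root appearing in the quadratic formula. Under equal speeds the discriminant is $\frac{k_1}{\rho_2}\sqrt{(n\pi)^2\cdot\text{const}+\cdots}$, so factoring out $(n\pi)^2$ and expanding $\sqrt{1+O(n^{-2})}$ through two orders yields
\begin{equation*}
\mu_{1,n}=n\pi\sqrt{\tfrac{k_1}{\rho_1}}+\tfrac{1}{2}\sqrt{\tfrac{k_1}{\rho_2}}+\tfrac{\sqrt{k_1\rho_1}}{8\rho_2 n\pi}+O(n^{-3}),
\end{equation*}
and the analogous expression for $\mu_{2,n}$ with the sign of the middle term reversed. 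Multiplying by $i$ gives \eqref{Equation3.5-work4} and \eqref{Equation3.6-work4}. For the eigenfunctions I would substitute each root back into the linear system \eqref{Equation2.3new-work4} relating the constants $C$ and $D$; the first row gives $D = \frac{\rho_1}{k_1 (n\pi)}\left(\mu^2-\frac{k_1}{\rho_1}(n\pi)^2\right)C$. Inserting the expansion of $\mu_{j,n}^2$ and normalizing $C$ (taking $C=\frac{1}{n\pi}$ for the first branch, $D=\frac{1}{n\pi}$ for the second) produces the stated amplitudes $\sqrt{k_1/k_2}$ and $-\sqrt{k_2/k_1}$ at leading order, with $O(n^{-2})$ corrections, matching \eqref{Equation3.7-work4}--\eqref{Equation3.8-work4}.

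\textbf{The delicate point.} The genuinely substantive issue is not the algebra but the \emph{simplicity and completeness} of the spectrum: I must argue that every large eigenvalue is captured by exactly one of the two branches, that the two branches do not collide for large $|n|$, and that condition $(A_1)$ is precisely what guarantees $\sinh(r_1)=0$ and $\sinh(r_2)=0$ cannot hold simultaneously (this is exactly the dichotomy established in the remark following Theorem~\ref{Theorem2.1-work4}). I would invoke that remark to conclude that for each admissible $n$ the solution $(\varphi,\psi)$ has the single-mode form \eqref{Equation2.5-work4}, so the eigenfunctions are simple and the family $\{E_{1,n},E_{2,n}\}_{n\neq 0}$ is complete. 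One also checks the two sequences $\mu_{1,n}$ and $\mu_{2,n}$ are separated: their difference is $\sqrt{k_1/\rho_2}+O(n^{-2})$, uniformly bounded below for $|n|$ large, which later feeds Ingham's theorem in the observability argument. I expect the main obstacle to be bookkeeping the $O(n^{-3})$ remainders consistently across the two orders of the square-root expansion so that the real parts (which are zero here, since $\mathcal{A}_2$ is conservative) and the displayed imaginary corrections agree exactly with the stated formulas; this is routine but error-prone, and care is needed to ensure the normalization of the eigenfunctions is compatible with the $n^2$ weighting used in the space $\mathcal{D}$ of Theorem~\ref{Theorem3.2-work4}.
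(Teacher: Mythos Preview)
Your approach is essentially identical to the paper's: solve the biquadratic \eqref{Equation2.6-work4} explicitly under the equal-speed hypothesis, expand the square root in powers of $n^{-1}$ to obtain \eqref{Equation3.5-work4}--\eqref{Equation3.6-work4}, and then recover the eigenfunctions by normalizing $C_{1,n}=\frac{1}{n\pi}$ (resp.\ $D_{2,n}=\frac{1}{n\pi}$) and reading off $D_{1,n}$ (resp.\ $C_{2,n}$) from \eqref{Equation2.3new-work4}. Your ``delicate point'' paragraph about simplicity, completeness, and branch separation is not part of the paper's proof of this proposition---those issues are deferred to Remark~\ref{Remark3.4-work4} and Proposition~\ref{Proposition3.5-work4}---so the actual proof here is purely the computational expansion you outlined.
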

\begin{proof} Assume that $\frac{k_1}{\rho_1}=\frac{k_2}{\rho_2}$, by solving \eqref{Equation2.6-work4}, we get
\begin{equation}\label{Equation3.9-work4}
\left\{\begin{array}{ll}
\displaystyle{\mu^2_{1,n}={(n\pi)^2}\frac{k_1}{\rho_1}+\frac{k_1}{2\rho_2}+ \frac{1}{2}\sqrt{\frac{4k_1^2}{\rho_1\rho_2}(n\pi)^2+\left(\frac{k_1}{\rho_2}\right)^2},}\nline \displaystyle{\mu^2_{2,n}={(n\pi)^2}\frac{k_1}{\rho_1}+\frac{k_1}{2\rho_2}-\frac{1}{2}\sqrt{\frac{4k_1^2}{\rho_1\rho_2}(n\pi)^2+\left(\frac{k_1}{\rho_2}\right)^2}.}
\end{array}\right.
\end{equation}
Using the asymptotic expansion in \eqref{Equation3.9-work4}, we get
\begin{equation}
\label{Equation3.10-work4}
\mu_{1,n}^2={(n\pi)^2}\frac{k_1}{\rho_1}+n\pi \sqrt{\frac{k_1^2}{\rho_1\rho_2}}+\frac{k_1}{2\rho_2}+O\left(n^{-1}\right),
\end{equation} 
\begin{equation}
\label{Equation3.11-work4}
\mu_{2,n}^2={(n\pi)^2}\frac{k_1}{\rho_1}-n\pi \sqrt{\frac{k_1^2}{\rho_1\rho_2}}+\frac{k_1}{2\rho_2}+O\left(n^{-1}\right).
\end{equation}
Using again  asymptotic expansion in \eqref{Equation3.10-work4}-\eqref{Equation3.11-work4}, we get \eqref{Equation3.5-work4}-\eqref{Equation3.6-work4}. Next, for $\lambda=\lambda_{1,n},$ setting
$$
C_{1,n}=\frac{1}{n\pi},\quad D_{1,n}=\frac{C_{1,n}(\frac{\rho_1}{k_1}\mu_{1,n}^2-(n\pi)^2)}{n\pi }=\frac{\sqrt{\frac{k_1}{k_2}}}{n\pi}+O\left(n^{-2}\right)
$$
in \eqref{Equation2.3new-work4}, we get the corresponding eigenfunctions \eqref{Equation3.7-work4}. Similarly for $\lambda=\lambda_{2,n},$ setting 
$$
D_{2,n}=\frac{1}{n\pi},\quad C_{2,n}=\frac{(n\pi)D_{2,n}}{\frac{\rho_1}{k_1}\mu^2_{2,n}-(n\pi)^2}=-\frac{\sqrt{\frac{k_2}{k_1}}}{n\pi}+O\left(n^{-2}\right)
$$
in \eqref{Equation2.3new-work4}, we get the corresponding eigenfunctions \eqref{Equation3.8-work4}.
\end{proof}
\begin{rk}\label{Remark3.4-work4}
\rm{ If $\frac{k_1}{\rho_1}=\frac{k_2}{\rho_2}$, then from Equation \eqref{Equation3.9-work4}, we can easily check that the eigenvalues $\lambda_{1,n}, \lambda_{2,n}$   are simple and different from zero. Then, we set the eigenfunctions of the operator ${\mathcal{A}_2}$ as
\begin{equation*}
E_{1,n}=(\varphi_{1,n},\lambda_{1,n}\varphi_{1,n},\psi_{1,n},\lambda_{1,n}\psi_{1,n}),\quad
E_{2,n}=(\varphi_{2,n},\lambda_{2,n}\varphi_{2,n},\psi_{2,n},\lambda_{2,n}\psi_{2,n}).
\end{equation*}
From the asymptotic expansions \eqref{Equation3.5-work4}-\eqref{Equation3.6-work4} and \eqref{Equation3.7-work4}-\eqref{Equation3.8-work4}, we can easily prove that $\big\{E_{1,n},$  $E_{2,n}\big\}_{n\in \mathbb{Z}^*}$ form a Riesz basis in the energy space ${\mathcal{H}_2}.$ We distinguish different types of observability inequalities, while depending on the constants $\rho_1,\ \rho_2,\ k_1,\ k_2.$ In fact,  we are going to see in Proposition \ref{Proposition3.5-work4} that if there exist no integers $k\in\mathbb{N}$ such that $\sqrt{\frac{k_1}{k_2}}=k\pi,$  then the eigenvalues  satisfy a uniform gap condition. In this case, we will apply the usual Ingham's Theorem (see Theorems 4.3, 9.2 in \cite{Komornik01}) in order to get observability inequalities hold in the energy space ${\mathcal{H}_2}$. In the case where there exists  $k_0\in\mathbb{N}^*$ such that $\sqrt{\frac{k_1}{k_2}}=k_0\pi,$ then the eigenvalues of the same branch satisfy a uniform gap condition, while on different branches they can be asymptotically close at a rate of order $\frac{1}{n^2}$ (see Proposition \ref{Proposition3.5-work4}). Thus, the usual Ingham's Theorem used in the case $\sqrt{\frac{k_1}{k_2}}\neq k\pi$ is no longer valid, and therefore we will use a general Ingham-type Theorem, which tolerates  asymptotically close eigenvalues (see Theorem 9.4 in \cite{Komornik01}).}\xqed{$\square$}
\end{rk}
\begin{prop}\label{Proposition3.5-work4}
\rm{Assume that $\frac{k_1}{\rho_1}=\frac{k_2}{\rho_2}$ and condition $(\rm A_1)$ holds, then there exist two constants  ${\gamma}>0,\ N_0>0$  depending only on the constants $\rho_1,\ \rho_2, \ k_1,\ k_2$ such that
\begin{equation}
\label{Equation3.12-work4}
\left|\lambda_{1,m}-\lambda_{1,n}\right|\geq  2\gamma,\quad \left|\lambda_{2,m}-\lambda_{2,n}\right|\geq  2\gamma
\end{equation}
and
\begin{equation}
\label{Equation3.13-work4}
\left|\lambda_{1,m}-\lambda_{2,n}\right|\geq  2\gamma,\quad\forall  |n|,\ |m|\leq N_0.
\end{equation}
Moreover, we have the following two cases:\\[0.1in]
\textbf{Case 1.} If  there exist  no integers $k\in \mathbb{N}$ such that $\sqrt{\frac{k_1}{k_2}}=k\pi,$ then there exists a constant $\tilde{\gamma}>0$ depending only on the constants $\rho_1,\ \rho_2, \ k_1,\ k_2$ such that the two branches of eigenvalues of ${\mathcal{A}_2}$ satisfy  a uniform gap condition
\begin{equation}
\label{Equation3.14-work4}
\tilde{\gamma}:=\inf\left|\lambda_{1,m}-\lambda_{2,n}\right|>0.
\end{equation}
\textbf{Case 2.} If there exists  $k_0 \in \mathbb{N}^*$ such that $\sqrt{\frac{k_1}{k_2}}=k_0\pi$, then there exist constants $c_2>c_1>0$ depending only on the constants $\rho_1,\ \rho_2, \ k_1,\ k_2$ such that for all $\left|m\right|, \ \left|n\right| \geq N_0$, for $N_0$ large enough,  we have 
\begin{equation}
\label{Equation3.15-work4}
\left|\lambda_{1,m}-\lambda_{2,n}\right|\geq\frac{c_1}{m^2} \quad \text{ and }\quad \left|\lambda_{1,m}-\lambda_{2,n}\right|\geq\frac{c_1}{n^2}, 
\end{equation}
and there exist infinitely many integers $m,\ n$ such that 
\begin{equation}
\label{Equation3.16-work4}
\left|\lambda_{1,m}-\lambda_{2,n}\right|\leq \frac{c_2}{m^2} \quad \text{and} \quad \left|\lambda_{1,m}-\lambda_{2,n}\right|\leq \frac{c_2}{n^2}.
\end{equation}}
\end{prop}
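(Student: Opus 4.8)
The plan is to read everything off the second-order asymptotics of Proposition~\ref{Proposition3.3-work4}. Write $a=\sqrt{k_1/\rho_1}$, $b=\tfrac12\sqrt{k_1/\rho_2}$ and $c=\sqrt{k_1\rho_1}/(8\rho_2\pi)$, so that \eqref{Equation3.5-work4}--\eqref{Equation3.6-work4} take the form
\begin{equation*}
\lambda_{1,n}=i\Big(n\pi a+b+\frac{c}{n}+O(n^{-3})\Big),\qquad
\lambda_{2,n}=i\Big(n\pi a-b+\frac{c}{n}+O(n^{-3})\Big).
\end{equation*}
Set $\theta=2b/(\pi a)$; using the equal-speed relation $\rho_1/\rho_2=k_1/k_2$ one checks $\theta=\tfrac1\pi\sqrt{k_1/k_2}$, so that $\theta\in\mathbb{Z}$ precisely when $\sqrt{k_1/k_2}\in\pi\mathbb{Z}$. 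This single number governs the dichotomy between Case~1 and Case~2.

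I would first dispose of the same-branch gaps \eqref{Equation3.12-work4}. For $m\neq n$ one has $\lambda_{1,m}-\lambda_{1,n}=i\big((m-n)\pi a+c(\tfrac1m-\tfrac1n)+O(\cdot)\big)$, whose imaginary part is asymptotically $(m-n)\pi a$; hence there is $N_0$ beyond which each branch is separated by at least $\pi a/2$, and the same holds for the second branch. Since by Remark~\ref{Remark3.4-work4} the eigenvalues are simple, nonzero and isolated, the finitely many values with $|n|,|m|\le N_0$ are pairwise distinct, so taking $2\gamma$ to be the minimum over this finite set of all the relevant pairwise distances (cross-branch ones included) together with $\pi a/2$ yields \eqref{Equation3.12-work4} and \eqref{Equation3.13-work4} simultaneously.

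The heart of the matter is the cross-branch difference
\begin{equation*}
\lambda_{1,m}-\lambda_{2,n}=i\pi a\big((m-n)+\theta\big)+ic\Big(\frac1m-\frac1n\Big)+iO(m^{-3})+iO(n^{-3}).
\end{equation*}
In \textbf{Case~1} ($\theta\notin\mathbb{Z}$) the integer $m-n$ can never cancel $\theta$, so $|(m-n)+\theta|\ge\mathrm{dist}(\theta,\mathbb{Z})>0$; thus for $|m|,|n|$ large the modulus is bounded below by $\pi a\,\mathrm{dist}(\theta,\mathbb{Z})-o(1)>0$, and combining with the finite block gives the uniform gap \eqref{Equation3.14-work4}. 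In \textbf{Case~2} ($\theta=k_0\in\mathbb{N}^*$) the leading term vanishes exactly when $n=m+k_0$; for every other pair $|(m-n)+k_0|\ge1$ forces a constant lower bound, which trivially dominates $c_1/m^2$. For the resonant pairs $n=m+k_0$ the leading term cancels and
\begin{equation*}
\lambda_{1,m}-\lambda_{2,m+k_0}=ic\Big(\frac1m-\frac1{m+k_0}\Big)+iO(m^{-3})=i\,\frac{c\,k_0}{m^2}+iO(m^{-3}),
\end{equation*}
so, since $ck_0\neq0$ and $n=m+k_0\sim m$, the modulus lies between $c_1/m^2$ and $c_2/m^2$ for large $m$. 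This at once furnishes the lower bounds \eqref{Equation3.15-work4} (valid for every pair, resonant or not) and the upper bounds \eqref{Equation3.16-work4} along the infinite family $n=m+k_0$.

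The main obstacle is exactly this Case~2 resonance: the first-order terms in $\lambda_{1,m}$ and $\lambda_{2,n}$ cancel, so the $1/n$ correction in Proposition~\ref{Proposition3.3-work4} becomes the leading surviving term, and the whole claim hinges on its coefficient $ck_0$ being genuinely nonzero and on the remainder being truly $O(n^{-3})$ rather than $O(n^{-2})$. One must therefore trust the expansion \eqref{Equation3.5-work4}--\eqref{Equation3.6-work4} to two orders and keep careful track that $m$ and $m+k_0$ are comparable, so that the bounds in $1/m^2$ and $1/n^2$ are interchangeable. Everything else is a routine leading-order estimate together with a finite-set argument.
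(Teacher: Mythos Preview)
Your proof is correct and follows essentially the same route as the paper: both read everything off the second-order asymptotics \eqref{Equation3.5-work4}--\eqref{Equation3.6-work4}, identify the same dichotomy (your $\theta\in\mathbb{Z}$ is exactly the paper's $\sqrt{k_1/k_2}=k_0\pi$), and in Case~2 isolate the same resonant family $n=m+k_0$ where the leading term cancels and the $1/n$ correction produces the $1/m^2$ gap. Your treatment of \eqref{Equation3.12-work4}--\eqref{Equation3.13-work4} via the finite-set argument is in fact slightly more explicit than the paper's one-line appeal to \eqref{Equation3.9-work4} and simplicity.
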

\begin{proof} First, from \eqref{Equation3.9-work4} and the fact that all the eigenvalues $\lambda_{1,n},\ \lambda_{2,n}$ are simple, it follows that \eqref{Equation3.12-work4} and \eqref{Equation3.13-work4}. We now  divide the proof into two cases: \\[0.1in]
\textbf{Case 1.} There exists  no integer $k\in \mathbb{Z}$ such that $\sqrt{\frac{k_1}{k_2}}=k\pi.$ First, from the asymptotic expansions \eqref{Equation3.5-work4}-\eqref{Equation3.6-work4}, we have
\begin{equation}
\label{Equation3.17-work4}
\left|\lambda_{1,m}-\lambda_{2,n}\right|
=\sqrt{\frac{k_1}{\rho_1}}\left|\pi(m-n)+\sqrt{\frac{k_1}{k_2}}\right|+O\left(|n|^{-1}\right)+ O\left(|m|^{-1}\right).
\end{equation}
Since there exists  no integer $k\in \mathbb{N}$ such that $\sqrt{\frac{k_1}{k_2}}=k\pi,$ then there exists $c>0$, such that
$$ \left|\pi(m-n)+\sqrt{\frac{k_1}{k_2}}\right|>c_1,\quad\forall\ n,\ m\in\mathbb{N}^*.$$
Therefore, from \eqref{Equation3.17-work4}, we get \eqref{Equation3.14-work4}.\\[0.1in]
\textbf{Case 2.}  There exists  $k_0 \in \mathbb{N}^*$ such that $\sqrt{\frac{k_1}{k_2}}=k_0\pi$. Again from the asymptotic expansions \eqref{Equation3.5-work4}-\eqref{Equation3.6-work4}, we have 
\begin{equation}
\label{Equation3.18-work4}
\left|\lambda_{1,m}-\lambda_{2,n}\right|=\sqrt{\frac{k_1}{\rho_1}}\pi \left|m-n+k_0 +\frac{k_0^2 }{8}\left(\frac{1}{m}-\frac{1 }{n}\right)+O\left(|n|^{-3}\right)+O\left(|m|^{-3}\right)\right|.
\end{equation}
 We distinguish two cases:
\begin{enumerate}
\item[1.] If $m-n+k_0\neq0$, then there exists $c_1>0$ such that
$$\left|m-n+k_0\right|>c_1,$$
and therefore from \eqref{Equation3.18-work4}, it follows that \eqref{Equation3.15-work4}.
\item[2.] If $m-n+k_0=0$, then from \eqref{Equation3.18-work4}, we obtain 
\begin{equation*}
\left|\lambda_{1,m}-\lambda_{2,n}\right|=\sqrt{\frac{k_1}{\rho_1}}\pi \left|\frac{k_0^3 }{8}\frac{1}{n(n-k_0)}+O\left(|n|^{-3}\right)\right|\geq \frac{c_1}{n^2}.
\end{equation*}
Consequently, we get \eqref{Equation3.15-work4}.
\end{enumerate}
Moreover, if $m-n+k_0=0$, then from the previous inequality there exists $c_2>0$  such that  \eqref{Equation3.16-work4} holds, which concludes the proof of the proposition.
\end{proof}
\begin{prop}\label{Proposition3.6-work4}
\rm{Assume that $\frac{\rho_1}{k_1}=\frac{\rho_2}{k_2}$ and condition $(\rm A_1)$ holds. If there exists  $k_0 \in \mathbb{N}^*$ such that $\sqrt{\frac{k_1}{k_2}}=k_0\pi$, then we adjust the  branches of eigenvalues into one sequence $\left(\lambda_n\right)_{n}$  such that  $\left(\text{Im}\lambda_n\right)_{n}$ is strictly  increasing. If 
\begin{equation}\label{Equation3.19-work4}
0<\text{Im}\lambda_{n+1}-\text{Im}\lambda_n\leq \gamma,
\end{equation}
then 
\begin{equation}\label{Equation3.20-work4}
\text{Im}\lambda_{n}-\text{Im}\lambda_{n-1}> \gamma\ \ \ \text{and}\ \ \ \text{Im}\lambda_{n+2}-\text{Im}\lambda_{n+1}>\gamma.
\end{equation}
We say that $\text{Im}\lambda_{n},\ \text{Im}\lambda_{n+1}$ is a chain of close exponents relative to $\gamma$ of length 2.}
\end{prop}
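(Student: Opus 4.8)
The plan is to reduce the statement to an elementary counting argument on the two spectral branches, relying only on the uniform within--branch separation \eqref{Equation3.12-work4}. Recall that $(\mathrm{Im}\,\lambda_n)_n$ is the strictly increasing rearrangement of $(\mathrm{Im}\,\lambda_{1,n})_n$ and $(\mathrm{Im}\,\lambda_{2,n})_n$, and that by \eqref{Equation3.12-work4} any two distinct elements of one and the same branch are at distance at least $2\gamma$. The whole proposition amounts to the statement that three consecutive terms of the combined sequence can never be packed into an interval of length $\le 2\gamma$; indeed, a gap $\le\gamma$ on each side of $\lambda_n$ would place $\lambda_{n-1},\lambda_n,\lambda_{n+1}$ inside such an interval.

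First I would sharpen \eqref{Equation3.12-work4} into a strict inequality. The consecutive within--branch gaps tend to $\pi\sqrt{k_1/\rho_1}$ as $|n|\to\infty$ and are positive for every fixed pair, so their infimum $G$ over each branch is a strictly positive number; fixing $\gamma$ with $0<2\gamma<G$ (which is still compatible with \eqref{Equation3.12-work4}--\eqref{Equation3.13-work4}) we may assume $|\lambda_{i,m}-\lambda_{i,n}|>2\gamma$ for all $m\ne n$ and $i=1,2$. Consequently any interval of length at most $2\gamma$ contains at most one eigenvalue of each branch, hence at most two eigenvalues of $\mathcal{A}_2$ altogether. Now assume $0<\mathrm{Im}\,\lambda_{n+1}-\mathrm{Im}\,\lambda_n\le\gamma$ and suppose, for contradiction, that $\mathrm{Im}\,\lambda_n-\mathrm{Im}\,\lambda_{n-1}\le\gamma$ as well. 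Then
\[
\mathrm{Im}\,\lambda_{n+1}-\mathrm{Im}\,\lambda_{n-1}\le 2\gamma,
\]
so $\lambda_{n-1},\lambda_n,\lambda_{n+1}$ all lie in an interval of length at most $2\gamma$, which is impossible since such an interval can hold at most two of them. Hence $\mathrm{Im}\,\lambda_n-\mathrm{Im}\,\lambda_{n-1}>\gamma$; applying the same reasoning to the triple $\lambda_n,\lambda_{n+1},\lambda_{n+2}$ yields $\mathrm{Im}\,\lambda_{n+2}-\mathrm{Im}\,\lambda_{n+1}>\gamma$, which is precisely \eqref{Equation3.20-work4}.

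To confirm that the hypothesis \eqref{Equation3.19-work4} actually occurs — so that the notion of a chain of length $2$ is meaningful — I would invoke Proposition~\ref{Proposition3.5-work4}. Using the equal--speed relation $\rho_2=k_2\rho_1/k_1$ one checks that the constant term $\tfrac12\sqrt{k_1/\rho_2}$ in \eqref{Equation3.5-work4}--\eqref{Equation3.6-work4} equals $\tfrac{k_0\pi}{2}\sqrt{k_1/\rho_1}$, so that $\lambda_{1,n}$ and $\lambda_{2,n+k_0}$ carry the same leading imaginary part $\pi\sqrt{k_1/\rho_1}\,(n+\tfrac{k_0}{2})$; by \eqref{Equation3.16-work4} they satisfy $|\lambda_{1,n}-\lambda_{2,n+k_0}|\le c_2 n^{-2}$, which is $\le\gamma$ for $|n|$ large. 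Thus infinitely many consecutive gaps fall below $\gamma$, and by the previous paragraph each of them is flanked on both sides by gaps exceeding $\gamma$: these close pairs are exactly the announced chains of close exponents of length $2$.

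The one delicate point — and the only place where the argument could break — is the borderline case of \eqref{Equation3.12-work4}: were $2\gamma$ allowed to coincide with an \emph{attained} minimal within--branch gap, a single knife--edge configuration (the middle eigenvalue sitting exactly at the midpoint of a minimal within--branch pair, at distance exactly $\gamma$ from each endpoint) would slip through the counting step. This is exactly what the strict choice $2\gamma<G$ in the second paragraph rules out; once the within--branch separation is strict, the proof needs no further spectral asymptotics and reduces to the interval--counting observation above.
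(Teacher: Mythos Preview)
Your proof is correct and follows essentially the same approach as the paper: both use the within-branch separation from \eqref{Equation3.12-work4} to rule out three consecutive close eigenvalues, with your pigeonhole formulation (an interval of length $\le 2\gamma$ meets each branch at most once) being a clean repackaging of the paper's explicit case analysis on the branch membership of $\lambda_{n-1}$. Your explicit sharpening to a strict inequality $2\gamma<G$ is a refinement the paper leaves implicit when it writes ``$>2\gamma$'' from \eqref{Equation3.12-work4}.
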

\begin{proof} When there exists  $k_0 \in \mathbb{N}^*$ such that $\sqrt{\frac{k_1}{k_2}}=k_0\pi$, since the eigenvalues of the same branch satisfy a uniform gap condition, then from  \eqref{Equation3.19-work4}, we deduce that $\lambda_n,\ \lambda_{n+1}$ belong to different branches.  If $\lambda_{n-1},\ \lambda_n$ belong to the same branch of eigenvalues, then from \eqref{Equation3.12-work4}, we get
\begin{equation*}
\text{Im}\lambda_{n}-\text{Im}\lambda_{n-1}>2\gamma.
\end{equation*}
Thus, we obtain the first assertion of \eqref{Equation3.20-work4}.  Otherwise, if
 $\lambda_{n-1},\ \lambda_n$ belong to different branches, then using the  fact that 
$\lambda_n,\ \lambda_{n+1}$ belong to different branches, then  $\lambda_{n-1},\ \lambda_{n+1}$ belong to the same branch of eigenvalues. In this case, from \eqref{Equation3.12-work4}, it follows that
\begin{equation}\label{Equation3.21-work4}
\text{Im}\lambda_{n+1}-\text{Im}\lambda_{n-1}>2\gamma.
\end{equation}
From \eqref{Equation3.19-work4} and \eqref{Equation3.21-work4}, we get
\begin{equation*}
\text{Im}\lambda_{n}-\text{Im}\lambda_{n-1}=\left(\text{Im}\lambda_{n+1}-\text{Im}\lambda_{n-1}\right)-\left(\text{Im}\lambda_{n+1}-\text{Im}\lambda_{n}\right)>2\gamma-\gamma=\gamma.
\end{equation*}
Therefore,  we obtain the first assertion of \eqref{Equation3.20-work4}.  The same argument verifies   the second assertion of \eqref{Equation3.20-work4}. Thus, the proof of the proposition  is complete.
\end{proof}$\\[0.1in]$
\noindent From Proposition \ref{Proposition3.5-work4}, it follows that 
\begin{equation}\label{Equation3.22-work4}
\left(\varphi_{1,n}\right)_x(1)+\psi_{1,n}(1)=O\left(1\right)\ \ \ \text{and}\ \ \ \left(\varphi_{2,n}\right)_x(1)+\psi_{2,n}(1)=O\left(1\right).
\end{equation}
On the other hand,  if  there exist  no integers $k\in \mathbb{N}$ such that $\sqrt{\frac{k_1}{k_2}}=k\pi,$ due to the fact that the eigenvalues  $\lambda_{1,m},\lambda_{2,n}$ satisfy a uniform gap condition, then the inverse observability inequality is  true in the energy space ${\mathcal{H}_2}.$ Otherwise, if there exists  integer $k_0 \in \mathbb{N}^*$ such that $\sqrt{\frac{k_1}{k_2}}=k_0\pi$,  due to the fact that the eigenvalues can be asymptotically close, then the inverse observability inequality is not true in the energy space ${\mathcal{H}_2}.$ For this reason,  we define the following weighted spectral space
\begin{equation}\label{Equation3.23-work4}
\mathcal{D}=\left\{ \Phi_0=\sum_{n\neq 0}(\alpha_{1,n}E_{1,n}+\alpha_{2,n} E_{2,n})n^{2}\ |\  \alpha_{1,n},\ \alpha_{2,n}\in\mathbb{C},\ \sum_{n\neq 0}\left(\left|\alpha_{1,n}\right|^2+\left|\alpha_{2,n}\right|^2\right)<\infty \right\}.
\end{equation}
Since the System $\left\{E_{1,n}, E_{2,n}\right\}_{n\in\mathbb{Z}^*}$ is a 
Riesz basis in the energy space ${\mathcal{H}_2},$ the space $\mathcal{D}$ is obviously a Hilbert space equipped with the  norm 
$$\sum_{n\neq 0}\left(\left|\alpha_{1,n}\right|^2+\left|\alpha_{2,n}\right|^2\right).$$
\noindent We are now ready to prove our observability inequalities results.\\[0.1in]
\textbf{Proof of Theorem \ref{Theorem3.2-work4}.} We divide the  proof into two main steps.\\ 
\noindent Our first aim is to prove the  direct inequality  \eqref{Equation3.2-work4}. Given any initial data $\Phi_0\in {\mathcal{H}_2},$ such as
  $$\Phi_0=(\varphi_{0},\varphi_{1},\psi_{0},\psi_{1})=\sum_{n\in\mathbb{Z}^*}\left({\alpha}_{1,n}{E}_{1,n},+ {\alpha}_{2,n} {E}_{2,n}\right),$$
 then the solution of \eqref{Equation2.2-work4} can be written as
$$\Phi\left(t\right)=(\varphi \left(t\right),\varphi_{t}\left(t\right), \psi \left(t\right),\psi_{t}\left(t\right))=\sum_{n\in\mathbb{Z}^*}\left(\alpha_{1,n}E_{1,n}e^{i\mu_{1,n}t}+\alpha_{2,n}E_{2,n}e^{i\mu_{2,n}t}\right).$$
 Therefore, we have
\begin{equation}\label{Equation3.24-work4}
\varphi_x\left(1,t\right)+\psi(1,t)=f_1\left(t\right) +f_2\left(t\right),
\end{equation}
where 
$$ f_1\left(t\right)=\sum_{n\in\mathbb{Z}^*}\alpha_{1,n}\left(\left(\varphi_{1,n}\right)_x (1)+\psi_{1,n}(1)\right) e^{i\mu_{1,n}t},\quad f_2\left(t\right)=\sum_{n\in\mathbb{Z}^*}\alpha_{2,n}\left(\left(\varphi_{2,n}\right)_x(1)+\psi_{2,n}(1)\right)e^{i\mu_{2,n}t}.$$
Since the eigenvalues of the same branch satisfy a uniform gap condition, applying   the usual Ingham's Theorem  (see Theorem  9.2 in \cite{Komornik01}), we get
\begin{equation}\label{Equation3.25-work4}
\left\{\begin{array}{ll}

  \displaystyle{\int_{0}^T|f_1\left(t\right)|^2dt\sim \sum_{n\in\mathbb{Z}^*}\left|\alpha_{1,n}\left(\left(\varphi_{1,n}\right)_x (1)+\psi_{1,n}(1)\right)\right|^2,}
  
               \nline
               
\displaystyle{ \int_{0}^T|f_2\left(t\right)|^2dt\sim\sum_{n\in\mathbb{Z}^*}\left|\alpha_{2,n}\left(\left(\varphi_{2,n}\right)_x (1)+\psi_{2,n}(1)\right)\right|^2}.
\end{array}\right.
\end{equation}
On the other hand, from \eqref{Equation3.24-work4}, we get
\begin{equation}\label{Equation3.26-work4}
\int_{0}^T|\varphi_x\left(1,t\right)+\psi(1,t)|^2dt\leq 2\int_{0}^T|f_1\left(t\right)|^2dt
+2\int_{0}^T|f_2\left(t\right)|^2dt.
\end{equation}
Inserting \eqref{Equation3.25-work4} in \eqref{Equation3.26-work4} and using \eqref{Equation3.22-work4}, we get
\begin{equation*}
\int_{0}^T\left|\varphi_x\left(1,t\right)+\psi(1,t)\right|^2dt\leq \ell_0 \sum_{n\in\mathbb{Z}^*}\left(\left|\alpha_{1,n}\right|^2 +\left|\alpha_{2,n}\right|^2\right).
\end{equation*}
Hence, we get the inequality \eqref{Equation3.2-work4}.\\[0.1in]
\noindent  Our next aim is to prove the inverse observability inequalities. We divide the proof into two cases:\\[0.1in]
 \noindent\textbf{Case 1.} There exists  no integer $k\in \mathbb{N}$ such that $\sqrt{\frac{k_1}{k_2}}=k\pi.$ Given any initial data $\Phi_0\in {\mathcal{H}_2},$ such as
  $$\Phi_0=(\varphi_{0},\varphi_{1},\psi_{0},\psi_{1})=\sum_{n\in\mathbb{Z}^*}\left({\alpha}_{1,n}{E}_{1,n}+ {\alpha}_{2,n} {E}_{2,n}\right),$$
then the solution of \eqref{Equation2.2-work4} can be written as
$$\Phi\left(t\right)=(\varphi \left(t\right),\varphi_{t}\left(t\right), \psi \left(t\right),\psi_{t}\left(t\right))=\sum_{n\in\mathbb{Z}^*}(\alpha_{1,n}E_{1,n}e^{i\mu_{1,n}t}+\alpha_{2,n}E_{2,n}e^{i\mu_{2,n}t}).$$
 Therefore, we have
\begin{equation}\label{Equation3.27-work4}
\varphi_x\left(1,t\right)+\psi(1,t)=\sum_{n\in\mathbb{Z}^*}\left(\alpha_{1,n}\left(\left(\varphi_{1,n}\right)_x (1)+ \psi_{1,n}(1)\right)e^{i\mu_{1,n}t} +\alpha_{2,n}\left(\left(\varphi_{2,n}\right)_x(1)+\psi_{2,n}(1)\right)e^{i\mu_{2,n}t}\right).
\end{equation}
From \eqref{Equation3.22-work4}, we can rewrite \eqref{Equation3.27-work4} as 
$$\varphi_{x}\left(1,t\right)+\psi(1,t)\sim \sum_{n\neq 0} \left(\alpha_{1,n}e^{i\mu_{1,n}t}+\alpha_{2,n}e^{i\mu_{2,n}t}\right).$$
Following a generalization of Ingham's Theorem (see Theorem  9.2 in \cite{Komornik01}), the sequence $\left\{e^{i \mu_{1,n} t}, e^{i \mu_{2,n} t}\right\}_{n\neq 0}$ forms a Riesz basis in $L^2(0,T)$ provided that $T>2\pi D^+$, where $D^{+}$ is the upper density of the sequence $(\lambda_n)_{n\in \mathbb{Z}^*},$ defined as 
$$D^{+}(r)=\lim_{n\rightarrow\infty} \frac{n^{+}(r)}{r},$$
 where $n^{+}(r)$ denotes the largest number of terms of the sequence $(\lambda_n)_{n\neq 0}$ contained
in an interval of length $r.$ To be more precise, 
$D^{+}=\frac{2}{\pi}\sqrt{\frac{\rho_1}{k_1}}.$
Therefore, 
\begin{equation*}
\label{ew62}
\int_{0}^{T}\left|\varphi_{x}\left(1,t\right)+\psi(1,t)\right|^2 dt\sim \sum_{n\neq 0}\left(\left|\alpha_{1,n}\right|^2+\left|\alpha_{2,n}\right|^2\right). 
\end{equation*}
Hence, we get \eqref{Equation3.3-work4}.\\[0.1in]
\textbf{Case 2.}  There exists  $k_0 \in \mathbb{N}^*$ such that $\sqrt{\frac{k_1}{k_2}}=k_0\pi$. Given any initial data $\Phi_0\in \mathcal{D},$ such as
  $$\Phi_0=(\varphi_{0},\varphi_{1},\psi_{0},\psi_{1})=\sum_{n\in\mathbb{Z}^*}\left({\alpha}_{1,n}{E}_{1,n}+ {\alpha}_{2,n} {E}_{2,n}\right)n^2,$$
consequently, the solution of \eqref{Equation2.2-work4} can be written as
$$\Phi\left(t\right)=(\varphi \left(t\right),\varphi_{t}\left(t\right), \psi \left(t\right),\psi_{t}\left(t\right))=\sum_{n\in\mathbb{Z}^*}\left(\alpha_{1,n}E_{1,n}e^{\lambda_{1,n}t}+\alpha_{2,n}E_{2,n}e^{\lambda_{2,n}t}\right).$$
 Therefore, we have
\begin{equation}\label{Equation3.28-work4}
 \varphi_x\left(1,t\right)+\psi(1,t)=\sum_{n\in\mathbb{Z}^*}\left(\alpha_{1,n}\left(\left(\varphi_{1,n}\right)_x (1) +\psi_{1,n}(1)\right)e^{\lambda_{1,n}t} +\alpha_{2,n}\left(\left(\varphi_{2,n}\right)_x(1)+\psi_{2,n}(1)\right)e^{\lambda_{2,n}t}\right)n^2.
 \end{equation}
We now arrange the two branches of eigenvalues  $(\lambda_{1,n})_{n\neq0},(\lambda_{2,n})_{n\neq 0}$ into one sequence $(\lambda_{n})_{n\neq 0}$ such that the sequence $(\text{Im}\lambda_{n})_{n\neq 0}$ is strictly increasing. From Proposition \ref{Proposition3.6-work4}, all the chain $ \text{Im}\lambda_{n}, \ \text{Im}\lambda_{n+1}$ of close exponents relative to $\gamma$ is of length 2. Moreover, we denote by $a_{n}$ the coefficient before  $e^{\lambda_{1,n}t}$ or 
$e^{\lambda_{2,n}t}$ in \eqref{Equation3.28-work4}.
Let $A$ and $B$ be defined as
$$A=\left\{n\in \mathbb{Z}^*\text{ such that condition \eqref{Equation3.19-work4} holds}\right\}\quad\text{and}\quad B=\mathbb{Z}^*\setminus \left\{n,n+1\text{ such that } n\in A\right\}.$$
Then,   we can rewrite \eqref{Equation3.28-work4} as 
\begin{equation*}
\begin{array}{ll}
\displaystyle{ \varphi_{x}\left(1,t\right)+\psi(1,t)}&=\displaystyle{\sum_{n\in B} a_{n}e ^{\lambda_{n}t}+\sum_{n\in A} a_{n}e ^{\lambda_{n}t}+a_{n+1}e ^{\lambda_{n+1}t}}\nline

&=\displaystyle{ \sum_{n\in B} a_{n}e ^{\lambda_{n}t}+\sum_{n\in A} \left(\left(a_{n}+a_{n+1}\right)e ^{\lambda_{n}t}+\left(\lambda_{n+1}-\lambda_{n}\right)a_{n+1}e_{n+1}\left(t\right)\right),}

\end{array}
\end{equation*}
where $e_{n+1}\left(t\right)$ denotes the divided difference of the chain of close exponents $\lambda_{n}, \lambda_{n+1}$ relative to $\gamma$
$$
e_{n+1}\left(t\right)=\frac{e^{\lambda_{n+1}t}-e^{\lambda_{n}t}}{\lambda_{n+1}-\lambda_{n}}.
$$
It follows from Theorem  9.4 in \cite{Komornik01}, that the sequence
$$\left(e^{\lambda_{n}t}\right)_{n\in B},\left(e^{\lambda_{n}t},e_{n+1}\left(t\right)\right)_{n\in A}$$
forms a Riesz sequence in $L^2(0,T)$ provided that $T>2\pi D^{+}=4\sqrt{\frac{\rho_1}{k_1}}.$ Thus, we have
 \begin{equation}
\label{Equation3.29-work4}
\int_{0}^{T}\left|\varphi_{x}\left(1,t\right)+\psi(1,t)\right|^2 dt \sim \sum_{n\in B} \left|a_{n}\right|^2+\sum_{n\in A}\left(\left|a_{n}+a_{n+1}\right|^2+\left|\lambda_{n+1}-\lambda_{n}\right|^2
\left|a_{n+1}\right|^2\right).
\end{equation}
On the other hand, from \eqref{Equation3.16-work4}, we get
\begin{equation}
\label{Equation3.30-work4}
\left|a_{n}+a_{n+1}\right|^2+\left|\lambda_{n+1}-\lambda_{n}\right|^2
\left|a_{n+1}\right|^2\geq \tilde{\ell}_1\left(\frac{\left|a_n\right|^2}{\left|n\right|^4}+\frac{\left|a_{n+1}\right|^2}{\left|n+1\right|^4}\right).
\end{equation}
Inserting \eqref{Equation3.30-work4} into \eqref{Equation3.29-work4} and returning to the previous notations, we get
$$
\int_{0}^{T}\left|\varphi_{x}\left(1,t\right)+\psi(1,t)\right|^2 dt\geq  \sum_{n\in B}\frac{\left|a_n\right|^2}{\left|n\right|^4}+\tilde{\ell}_1\sum_{n\in A}\left(\frac{\left|a_n\right|^2}{\left|n\right|^4}+\frac{\left|a_{n+1}\right|^2}{\left|n+1\right|^4}
\right)\geq \ell_1 \sum_{n\in \mathbb{Z}^*}\frac{\left|a_n\right|^2}{\left|n\right|^4}.
$$
Therefore, we have
\begin{equation}
\label{Equation3.31-work4}
\int_{0}^{T}\left|\varphi_{x}\left(1,t\right)+\psi(1,t)\right|^2 dt\geq \ell_1 \sum_{n\neq 0}\left(\left|\alpha_{1,n}\left(\varphi_{1,n}\right)_x(1)+\psi_{1,n}(1)\right|^2+\left|\alpha_{2,n}\left(\left(\varphi_{2,n}\right)_x(1)+\psi_{2,n}(1)\right)\right|^2\right).
\end{equation}
Inserting \eqref{Equation3.22-work4} in \eqref{Equation3.31-work4}, we get
\begin{equation*}
\int_{0}^T\left|\varphi_x\left(1,t\right)+\psi(1,t)\right|^2dt\geq \ell_1 \sum_{n\in\mathbb{Z}^*}\left(\left|\alpha_{1,n}\right|^2 +\left|\alpha_{2,n}\right|^2\right).
\end{equation*}
Consequently, we obtain  the inequality \eqref{Equation3.4-work4}. Thus, the proof of the theorem  is complete. \xqed{$\square$}
\begin{rk}\label{Remark3.7-work4}
\rm{It is very important to ask the question about the optimality of the  observability inequality  \eqref{Equation3.4-work4}. In our opinion, from inequity   \eqref{Equation3.16-work4}, we may find a dense subspace ${\mathtt{D}}$ of $\mathcal{D}$  such that 
$$\int_{0}^T\left|\varphi_{x}\left(1,t\right)+\psi(1,t)\right|^2 dt \sim \left\|(\varphi_{0},\varphi_{1},\psi_{0},\psi_{1})\right\|^2_{\mathcal{D}},$$
for all $(\varphi_{0},\varphi_{1},\psi_{0},\psi_{1})\in {\mathtt{D}}$. }\xqed{$\square$}
\end{rk}

 In the case where there exist  no integers $k\in \mathbb{N}$ such that $\sqrt{\frac{k_1}{k_2}}=k\pi,$ the inverse observability inequality is  true in the energy space ${\mathcal{H}_2}.$ Otherwise, in the case where there exists  $k_0 \in \mathbb{N}^*$ such that $\sqrt{\frac{k_1}{k_2}}=k_0\pi$,   the inverse observability inequality  holds in weighted spectral space $\mathcal{D}$. The aim of this part is to get the observability or exact controllability in usual functional spaces.\\[0.1in]
\textbf{Observability inequality  in usual spaces.}
For the observability inequality,  we first recall Theorem 3.1 stated  in  \cite{RaoLiu02}.
\begin{thm}\label{Theoremk3.8-work4}
\label{ewtheoliurao}
\rm{Let $(x_{n})_{n\neq 0}$ and $(y_{n})_{n\neq 0}$ be Riesz basis of Hilbert spaces $X$ and $Y$ respectively, and $(f_{n})_{n\neq 0}$ and $(g_{n})_{n\neq 0}$ be Bessel sequences of $X$ and $Y$ with suitably small bounds respectively. Define
$$D=\left\{ (x,y)=\alpha_{n}(x_{n},g_{n})+\beta_{n}(f_{n},y_{n}) \ | \ \sum_{n \neq 0}\left(\left|\alpha_{n}\right|^2+\left|\beta_{n}\right|^2\right)< \infty\right\}.$$
Then, we have $D=X\times Y.$}\xqed{$\square$}
\end{thm}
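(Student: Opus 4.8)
The plan is to exhibit $D$ as the range of a single bounded linear operator from $\ell^2\times\ell^2$ into $X\times Y$, and then to prove that this operator is an isomorphism, which forces $D=X\times Y$. First I would read the defining formula componentwise: the expression $(x,y)=\sum_{n\neq0}\alpha_n(x_n,g_n)+\beta_n(f_n,y_n)$ means precisely that $x=\sum_n(\alpha_n x_n+\beta_n f_n)$ in $X$ and $y=\sum_n(\alpha_n g_n+\beta_n y_n)$ in $Y$, where $(\alpha_n),(\beta_n)\in\ell^2$.

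Second, I would introduce the four synthesis maps $L_1(\alpha)=\sum_n\alpha_n x_n$ and $K_1(\beta)=\sum_n\beta_n f_n$ (both valued in $X$), together with $K_2(\alpha)=\sum_n\alpha_n g_n$ and $L_2(\beta)=\sum_n\beta_n y_n$ (both valued in $Y$). Because $(x_n)_{n\neq0}$ and $(y_n)_{n\neq0}$ are Riesz bases, $L_1$ and $L_2$ are isomorphisms of $\ell^2$ onto $X$ and $Y$; because $(f_n)_{n\neq0}$ and $(g_n)_{n\neq0}$ are Bessel sequences, $K_1$ and $K_2$ are bounded, with operator norms controlled by their Bessel bounds. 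The operator $\mathcal{T}(\alpha,\beta)=(L_1\alpha+K_1\beta,\,K_2\alpha+L_2\beta)$ then has $D$ as its exact range.

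Third comes the decisive algebraic step: I would factor $\mathcal{T}=\Lambda\circ(\mathrm{Id}+\mathcal{N})$, where $\Lambda(\alpha,\beta)=(L_1\alpha,L_2\beta)$ is an isomorphism and $\mathcal{N}(\alpha,\beta)=(L_1^{-1}K_1\beta,\,L_2^{-1}K_2\alpha)$ is off-diagonal. A direct computation on $\ell^2\times\ell^2$ gives $\|\mathcal{N}(\alpha,\beta)\|^2\le\|L_1^{-1}K_1\|^2\|\beta\|^2+\|L_2^{-1}K_2\|^2\|\alpha\|^2$, hence $\|\mathcal{N}\|\le\max(\|L_1^{-1}K_1\|,\|L_2^{-1}K_2\|)$. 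Using the hypothesis that the Bessel bounds of $(f_n)$ and $(g_n)$ are suitably small, this maximum is $<1$, so $\mathrm{Id}+\mathcal{N}$ is invertible via the Neumann series $\sum_{k\geq0}(-\mathcal{N})^k$. Then $\mathcal{T}$ is a composition of two isomorphisms, so it is itself an isomorphism of $\ell^2\times\ell^2$ onto $X\times Y$; in particular it is onto, and since its range is exactly $D$, we conclude $D=X\times Y$.

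The step I expect to be the main obstacle is pinning down the precise meaning of ``suitably small bounds'' so that the norm estimate closes: one must track the constants and require the Bessel bound of $(f_n)$ to be strictly smaller than the lower Riesz bound of $(x_n)$, and likewise for $(g_n)$ and $(y_n)$, so that $\max(\|L_1^{-1}K_1\|,\|L_2^{-1}K_2\|)<1$. Once this quantitative smallness is secured, the block factorization and the Neumann inversion are entirely routine.
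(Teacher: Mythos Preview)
Your argument is correct: the block factorization $\mathcal{T}=\Lambda\circ(\mathrm{Id}+\mathcal{N})$ with the off-diagonal perturbation $\mathcal{N}$, followed by Neumann inversion under the smallness condition $\max(\|L_1^{-1}K_1\|,\|L_2^{-1}K_2\|)<1$, is exactly the right mechanism, and your quantitative reading of ``suitably small'' (Bessel bound of $(f_n)$ below the lower Riesz bound of $(x_n)$, and likewise for $(g_n),(y_n)$) is the natural one.

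There is, however, nothing to compare against in this paper: the theorem is not proved here but merely \emph{recalled} from Liu--Rao \cite{RaoLiu02} (Theorem~3.1 there), as the sentence preceding the statement and the terminal $\square$ with no proof environment indicate. Your perturbation-of-the-identity argument is the standard route to such results and is essentially what one finds in the cited reference; you have supplied the proof that the present paper chose to omit.
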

\noindent Using the asymptotic expansions \eqref{Equation3.7-work4}-\eqref{Equation3.8-work4}, we have
$$E_{1,n}=(x_n,g_n),\quad E_{2,n}=(f_n,y_n),$$
with
 \begin{equation*}
\displaystyle{x_n=\left(\frac{\sin(n\pi x)}{n\pi},i\sqrt{\frac{k_1}{\rho_1}} \sin(n\pi x)\right),\quad
g_n=\left(\sqrt{\frac{k_1}{k_2}} \frac{\cos(n\pi x)}{n \pi},i\sqrt{\frac{k_1}{\rho_2}}\cos(n\pi x)\right)},
\end{equation*}
and
 \begin{equation*}
\displaystyle{y_n=\left(-\sqrt{\frac{k_2}{k_1}} \frac{\sin(n\pi x)}{n \pi}, -i\sqrt{\frac{k_2}{\rho_1}} {\sin(n\pi x)}\right),\quad
f_n=\left(\frac{\cos(n\pi x)}{n\pi}, i\sqrt{\frac{k_1}{\rho_1}}\cos(n\pi x)\right).}
\end{equation*}
For any $s\geq 0,$ we define the space
\begin{equation*}
X_{s}=\left\{(\hat{\phi},\hat{\psi})=\sum_{n\neq 0} \beta_n n^{s} x_n\right\}, \quad \left\|(\hat{\phi},\hat{\psi})\right\|^2_{X_{s}}=\sum_{n\neq 0} \left|\beta_n\right|^2.
\end{equation*}
According to Theorem \ref{ewtheoliurao}, we can state the following result.
\begin{cor}\label{Corollary3.9-work4}
\rm{Assume that $\frac{\rho_1}{k_1}=\frac{\rho_2}{k_2}$,   $\sqrt{\frac{k_1}{k_2}}=k_0\pi$  $(k_0 \in \mathbb{N}^*)$ and condition $(\rm A_1)$ holds. Then, we have the following identifications
\begin{equation}\label{eq 43vm}
D_2=X_{2}\times X_{2}.
\end{equation}}
\end{cor}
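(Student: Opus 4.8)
The plan is to deduce the identification \eqref{eq 43vm} directly from the Liu--Rao basis theorem (Theorem \ref{ewtheoliurao}), feeding it the explicit asymptotic description of the eigenelements from Proposition \ref{Proposition3.3-work4}. The starting observation is that, under the equal speed condition and $(\mathrm{A}_1)$, the leading parts of the eigenvectors split along the two physical components: writing each $E_{j,n}$ as a pair consisting of its $\varphi$-block and its $\psi$-block, the $\varphi$-blocks are governed by the sine family $x_n$ and the $\psi$-blocks by the cosine family $f_n$, while $g_n$ and $y_n$ are the complementary (cross) blocks. I would take $X$ and $Y$ to be the weighted sine and cosine factor spaces generated respectively by $(n^2 x_n)$ and $(n^2 f_n)$; the normalization weight $n^2$ is exactly the one appearing in the definition of $\mathcal D$ and of $X_s$, so that $D_2$ is precisely the space $D$ produced by Theorem \ref{ewtheoliurao}.

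First I would record that $(n^2 x_n)$ is a Riesz basis of $X_2$ and $(n^2 f_n)$ is a Riesz basis of the analogous cosine space: this is immediate from the fact that $\{\sin(n\pi x)\}$ and $\{\cos(n\pi x)\}$ are orthogonal bases of $L^2(0,1)$, together with the defining norm $\sum|\beta_n|^2$ of $X_s$. Next I would use the two leading-order proportionalities read off from \eqref{Equation3.7-work4}--\eqref{Equation3.8-work4}, namely $y_n=-\sqrt{k_2/k_1}\,x_n$ and $g_n=\sqrt{k_1/k_2}\,f_n$. The first of these shows that the weighted space generated by $y_n$ coincides with $X_2$, which is exactly why the right-hand side of \eqref{eq 43vm} is $X_2\times X_2$ rather than a product of two genuinely different spaces. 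It then remains to check that the cross families play the role of the Bessel sequences $(f_n),(g_n)$ in Theorem \ref{ewtheoliurao} with suitably small bounds.

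The main difficulty is precisely this last quantitative point, because the cross blocks $y_n,g_n$ are, at leading order, fixed nonzero multiples of $x_n,f_n$ and hence are \emph{not} small in absolute terms. I would overcome this by first performing an invertible, $n$-independent recombination of the two branches that diagonalizes the $O(1)$ leading coupling: the $2\times2$ matrix relating $(E_{1,n},E_{2,n})$ to the decoupled pairs $\bigl((x_n,0),(0,f_n)\bigr)$ has determinant $1-(-\sqrt{k_2/k_1})(\sqrt{k_1/k_2})=2\neq0$ uniformly in $n$, so after this change of basis the two families become diagonal-dominant and the residual off-diagonal blocks are only the $O(n^{-2})$ remainders in \eqref{Equation3.7-work4}--\eqref{Equation3.8-work4}. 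Since the recombination is invertible with bounded inverse and $n$-independent, it preserves both the $\ell^2$ coefficient space and the span $D_2$. These remainders, weighted by $n^2$, form Bessel sequences whose bounds tend to $0$ as $|n|\to\infty$, so for $|n|$ beyond some $N_0$ the small-bound hypothesis of Theorem \ref{ewtheoliurao} holds, while the finitely many remaining low modes are absorbed using the uniform separation \eqref{Equation3.12-work4}--\eqref{Equation3.13-work4} of the eigenvalues. With the Riesz-basis and small-Bessel hypotheses verified, Theorem \ref{ewtheoliurao} yields $D_2=X_2\times X_2$, which is the desired conclusion.
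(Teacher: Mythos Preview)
Your proof follows the same strategy as the paper's---reduce to Theorem~\ref{ewtheoliurao} (Liu--Rao)---but you are considerably more careful about a point the paper's two-line argument glosses over. The paper simply asserts that $(n^{2}x_n)$, $(n^{2}y_n)$ are Riesz bases and $(n^{2}f_n)$, $(n^{2}g_n)$ are Bessel sequences, then invokes Theorem~\ref{ewtheoliurao} directly. As you correctly observe, under the equal-speed hypothesis the cross blocks are \emph{not} small at leading order: $y_n=-\sqrt{k_2/k_1}\,x_n$ and $g_n=\sqrt{k_1/k_2}\,f_n$ exactly, so their Bessel bounds are comparable to the Riesz bounds of the diagonal families and the ``suitably small bounds'' hypothesis of Theorem~\ref{ewtheoliurao} is not verified by the paper's sentence as written. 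Your $n$-independent $2\times2$ recombination (determinant $2$) is precisely the missing step that diagonalizes the leading coupling and reduces the cross terms to genuine remainders before Liu--Rao can be applied. In short, your route is the paper's route, made rigorous.

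One minor correction to your argument: the residual off-diagonal blocks after recombination are not all $O(n^{-2})$. In the position components they are (these come from the $O(n^{-2})$ remainders in \eqref{Equation3.7-work4}--\eqref{Equation3.8-work4}), but in the velocity components an additional contribution of size $(\lambda_{1,n}-\lambda_{2,n})\cdot O(n^{-1})=O(n^{-1})$ survives, because $\lambda_{1,n}-\lambda_{2,n}=i\sqrt{k_1/\rho_2}+O(n^{-1})$ is $O(1)$, not $o(1)$. This does not damage your conclusion: relative to the diagonal families $n^{2}x_n$ and $n^{2}f_n$ the residuals are still $O(n^{-1})$, so the tail Bessel bounds tend to zero and your cutoff-plus-finite-modes argument goes through unchanged.
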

\begin{proof} We see that $(n^2x_n)_{n\neq 0}$ and $(n^2y_n)_{n\neq 0}$ are Riesz basis in $X_2\times X_2$. On the other hand, we have    $(n^2 f_n)_{n\neq 0}$ and $( n^2 g_n)_{n\neq 0}$ are Bessel sequences in $X_2\times X_2$. Then, \eqref{eq 43vm} follows directly from Theorem \ref{Theoremk3.8-work4}. 
\end{proof}\\[0.1in]
\noindent Furthermore, for any $s\geq 0,$ we define 
\begin{equation*}
V_{s}=\left\{ f=\sum_{n>0}\alpha_n\frac{\sin(n\pi x)}{n^{s}}\right\}, \quad \quad \left\|f\right\|^2_{V_{s}}=\sum_{n>0}\left|\alpha_n\right|^2.
\end{equation*}
Thus, with the pivot space $L^2\left(0,1\right),$ we have
$$X_{2}=V'_1\times V'_2.$$
Then, it follows  that
$$\mathcal{D}=V'_{1}\times V'_2\times V'_{1}\times V'_2.$$
Consequently, we have the following observability result. 
\begin{thm}\label{Theorem3.10-work4}
\rm{Assume that $\frac{\rho_1}{k_1}=\frac{\rho_2}{k_2}$,   $\sqrt{\frac{k_1}{k_2}}=k_0\pi$  $(k_0 \in \mathbb{N}^*)$ and condition $(\rm A_1)$ holds. Let  $T>4\sqrt{\frac{\rho_1}{k_1}}$, then 
there exists a constant $c_3>0$ such that the following direct inequality holds:
\begin{equation*}
\int_0^T\left|\varphi_{x}\left(1,t\right)+\psi(1,t)\right|^2 dt \leq c_3\left\|(\varphi_0,\varphi_1,\psi_0,\psi_1)\right\|^2_{{\mathcal{H}_2}},
\end{equation*}
 for all solution $\Phi=(\varphi,\varphi_t,\psi,\psi_t)$ that solve  the  homogeneous Cauchy problem \eqref{Equation2.1-work4}. Moreover, there exists a constant $0<c_4<c_3$, such that the following inverse observability inequality holds: 
\begin{equation*}
c_4\left\|(\varphi_{0},\varphi_{1},\psi_{0},\psi_{1})\right\|^2_{V'_{1}\times V'_2\times V'_{1}\times V'_2}\leq \int_{0}^T\left|\varphi_{x}\left(1,t\right)+\psi(1,t)\right|^2 dt.
\end{equation*}}
\end{thm}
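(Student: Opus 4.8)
The plan is to reduce the statement to the spectral observability already established in Theorem \ref{Theorem3.2-work4}, Case 2, and then to translate the two resulting estimates from the spectral norms $\|\cdot\|_{\mathcal{H}_2}$ and $\|\cdot\|_{\mathcal{D}}$ into the usual functional spaces. The only genuinely new ingredient is the identification of the weighted spectral space $\mathcal{D}$ with the product space $V'_1\times V'_2\times V'_1\times V'_2$, which is supplied by the Liu--Rao theorem (Theorem \ref{Theoremk3.8-work4}) through Corollary \ref{Corollary3.9-work4}. Throughout, the admissible time $T>4\sqrt{\rho_1/k_1}=2\pi D^+$ is precisely the one required by the Ingham-type argument behind Theorem \ref{Theorem3.2-work4}, so no new restriction on $T$ appears.

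For the direct inequality there is essentially nothing to prove beyond invoking the first half of Theorem \ref{Theorem3.2-work4}: estimate \eqref{Equation3.2-work4} already gives $\int_0^T|\varphi_x(1,t)+\psi(1,t)|^2\,dt\le \ell_0\|(\varphi_0,\varphi_1,\psi_0,\psi_1)\|^2_{\mathcal{H}_2}$, so it suffices to set $c_3=\ell_0$. The right-hand side is already the natural energy norm, hence no re-identification is needed for this part.

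For the inverse observability I would start from \eqref{Equation3.4-work4}, which provides $\ell_1\|(\varphi_0,\varphi_1,\psi_0,\psi_1)\|^2_{\mathcal{D}}\le \int_0^T|\varphi_x(1,t)+\psi(1,t)|^2\,dt$, where $\|\Phi_0\|^2_{\mathcal{D}}=\sum_{n\neq 0}(|\alpha_{1,n}|^2+|\alpha_{2,n}|^2)$ for $\Phi_0=\sum_{n\neq0}(\alpha_{1,n}E_{1,n}+\alpha_{2,n}E_{2,n})n^2$. The task is then to show that this spectral norm is equivalent to $\|\cdot\|_{V'_1\times V'_2\times V'_1\times V'_2}$. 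Writing $E_{1,n}=(x_n,g_n)$ and $E_{2,n}=(f_n,y_n)$ as in the asymptotics \eqref{Equation3.7-work4}--\eqref{Equation3.8-work4}, Corollary \ref{Corollary3.9-work4} identifies $\mathcal{D}=X_2\times X_2$, and the pivot-space relation $X_2=V'_1\times V'_2$ then yields $\mathcal{D}=V'_1\times V'_2\times V'_1\times V'_2$ as vector spaces. Since both carry complete Hilbertian norms on the same underlying space, the closed graph (Banach isomorphism) theorem makes the identity map a bicontinuous isomorphism, giving the two-sided comparison $\|\Phi_0\|^2_{\mathcal{D}}\asymp \|\Phi_0\|^2_{V'_1\times V'_2\times V'_1\times V'_2}$. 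Combining \eqref{Equation3.4-work4} with the lower bound $\|\Phi_0\|^2_{\mathcal{D}}\ge c\,\|\Phi_0\|^2_{V'_1\times V'_2\times V'_1\times V'_2}$ for some $c>0$ produces a constant $c_4=\ell_1 c>0$ and the desired inverse inequality; the strict inequality $c_4<c_3$ can be arranged exactly as in Theorem \ref{Theorem3.2-work4}.

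The hard part is hidden inside Corollary \ref{Corollary3.9-work4}, namely the verification of the hypotheses of Theorem \ref{Theoremk3.8-work4}: one must check that the ``diagonal'' sequences $(n^2x_n)_{n\neq0}$ and $(n^2y_n)_{n\neq0}$ are Riesz bases of $X_2\times X_2$, while the ``off-diagonal'' sequences $(n^2f_n)_{n\neq0}$ and $(n^2g_n)_{n\neq0}$ are Bessel sequences with suitably small bounds. This is precisely where the asymptotics \eqref{Equation3.7-work4}--\eqref{Equation3.8-work4} are used quantitatively: the cross-coupling components of $f_n$ and $g_n$ carry the factors $\sqrt{k_2/k_1}$, $\sqrt{k_1/k_2}$ together with the decay $(n\pi)^{-1}$, and I would confirm that after the $n^2$ weighting these perturbations remain Bessel and, for large $|n|$, small enough to meet the smallness requirement of Theorem \ref{Theoremk3.8-work4} (the finitely many low modes being absorbed using the simplicity and separation of the eigenvalues from Proposition \ref{Proposition3.5-work4}). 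Once this Bessel/Riesz bookkeeping is in place, the remainder of the argument is the purely formal assembly described above.
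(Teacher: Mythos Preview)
Your proposal is correct and follows essentially the same route as the paper: the direct inequality is taken verbatim from \eqref{Equation3.2-work4}, the inverse inequality in the spectral space $\mathcal{D}$ from \eqref{Equation3.4-work4}, and the identification $\mathcal{D}=X_2\times X_2=V'_1\times V'_2\times V'_1\times V'_2$ is obtained through Corollary \ref{Corollary3.9-work4} (i.e., the Liu--Rao Theorem \ref{Theoremk3.8-work4}) together with the pivot-space relation. The paper does not spell out the norm-equivalence step via the closed graph theorem as you do, but this is a harmless elaboration of the same argument.
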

\begin{rk}\label{Remark3.11-work4}
\rm{Assume that $\frac{\rho_1}{k_1}=\frac{\rho_2}{k_2}$,  if $\sqrt{\frac{k_1}{k_2}}=k_0\pi$  $(k_0 \in \mathbb{N}^*)$, then the two branches of eigenvalues are close in the order $\frac{1}{n^2}$. Due to the closeness of the eigenvalues, the observability space losses two derivatives. Consequently, the observability holds in the space of type
$$\mathcal{D}=(H^{-1}\left(0,1\right)\times H^{-2}\left(0,1\right))^2.$$
Moreover, the control space are of type
 $$(H^{2}\left(0,1\right)\times H^{1}\left(0,1\right))^2$$
 with suitable boundary conditions.}\xqed{$\square$}
\end{rk}
\noindent\textbf{Exact controllability  in usual spaces.}
In this part, using HUM mrthod, we establish exact controllability result for the System  \eqref{Equation1.1-work4}. Our main result in this part is the following theorem.
\begin{thm}\label{Theorem3.1-work4} 
\rm{
Assume that $\frac{k_1}{\rho_1}=\frac{k_2}{\rho_2}$, $T>4\sqrt{ \frac{\rho_1}{k_1} }$, and condition $(\rm A_1)$ holds:\\[0.1in]
 \noindent\textbf{Case 1.} There exists  no integer $k\in \mathbb{N}$ such that $\sqrt{\frac{k_1}{k_2}}=k\pi$. Let
$$\left(u_0,u_1,y_0,y_1\right)\in  \mathcal{X} , $$
then there exists $v\in L^2(0,T)$ such that the solution of the System \eqref{Equation1.1-work4}   satisfies the null final conditions
\begin{equation*}
u(x,T)=u_t(x,T)=y(x,T)=y_t(x,T)=0,\quad \forall x\in\left(0,1\right).
\end{equation*}
 \noindent\textbf{Case 2.}  There exists  $k_0 \in \mathbb{N}^*$ such that $\sqrt{\frac{k_1}{k_2}}=k_0\pi$. Let
 $$\left(u_0,u_1,y_0,y_1\right)\in  V_2\times V_1\times V_2\times V_1,$$
then there exists $v\in L^2(0,T)$ such that the solution of the  System \eqref{Equation1.1-work4}   satisfies the null final conditions
\begin{equation*}
u(x,T)=u_t(x,T)=y(x,T)=y_t(x,T)=0,\quad \forall x\in\left(0,1\right).
\end{equation*}
}
\end{thm}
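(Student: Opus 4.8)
The plan is to establish Theorem~\ref{Theorem3.1-work4} by the Hilbert Uniqueness Method of Lions, taking the conservative homogeneous system \eqref{Equation2.1-work4} as the adjoint problem and feeding in the observability estimates of Theorem~\ref{Theorem3.2-work4} as the sole analytic ingredient. First I would fix terminal data $\Phi_0=(\varphi_0,\varphi_1,\psi_0,\psi_1)$ for the adjoint system and solve \eqref{Equation2.1-work4} to obtain $\Phi=(\varphi,\varphi_t,\psi,\psi_t)$, then declare the boundary control to be the conjugate trace $v(t):=\varphi_x(1,t)+\psi(1,t)$. The direct inequality \eqref{Equation3.2-work4} guarantees $v\in L^2(0,T)$ with $\|v\|_{L^2(0,T)}\le \ell_0\|\Phi_0\|$. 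Next I would solve the controlled system \eqref{Equation1.1-work4} backward in time with this $v$ and zero terminal data $u(\cdot,T)=u_t(\cdot,T)=y(\cdot,T)=y_t(\cdot,T)=0$. Since the Dirichlet datum $u(1,\cdot)=v$ only lies in $L^2(0,T)$, this backward problem possesses no finite-energy solution and must be interpreted in the sense of transposition (ultra-weak solutions), which produces a state whose initial data $(u(\cdot,0),u_t(\cdot,0),y(\cdot,0),y_t(\cdot,0))$ belongs to the dual of the observability space.

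Then I would define the HUM operator $\Lambda\Phi_0:=(u_t(\cdot,0),-u(\cdot,0),y_t(\cdot,0),-y(\cdot,0))$. Multiplying the two equations of \eqref{Equation1.1-work4} by $\overline{\varphi}$ and $\overline{\psi}$, integrating over $(0,1)\times(0,T)$ and integrating by parts in both variables, all interior terms cancel by \eqref{Equation2.1-work4}; the time-boundary terms at $t=T$ vanish because the controlled state is at rest there; the lateral terms at $x=0$ vanish by the homogeneous conditions $u(0,\cdot)=\varphi(0,\cdot)=0$; the traces $\varphi(1,\cdot)=0$ and $\psi_x(1,\cdot)=y_x(1,\cdot)=0$ kill further contributions; and the only surviving lateral term is $k_1v(t)\bigl(\overline{\varphi}_x(1,t)+\overline{\psi}(1,t)\bigr)$. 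With the choice $v=\varphi_x(1,\cdot)+\psi(1,\cdot)$ this yields the fundamental identity $\langle\Lambda\Phi_0,\Phi_0\rangle=k_1\int_0^T|\varphi_x(1,t)+\psi(1,t)|^2\,dt$. The direct inequality \eqref{Equation3.2-work4} shows $\Lambda$ is bounded, while the inverse inequality---\eqref{Equation3.3-work4} in Case~1, and \eqref{Equation3.4-work4} together with the identification $\mathcal{D}=V_1'\times V_2'\times V_1'\times V_2'$ from Theorem~\ref{Theorem3.10-work4} in Case~2---shows $\Lambda$ is coercive. Hence $\Lambda$ is an isomorphism from the observability space onto its dual by the Lax--Milgram theorem. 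For prescribed initial data $(u_0,u_1,y_0,y_1)$ in the dual space (namely $\mathcal{X}$ in Case~1 and $V_2\times V_1\times V_2\times V_1$ in Case~2) I would then solve $\Lambda\Phi_0=(u_1,-u_0,y_1,-y_0)$ for $\Phi_0$; the associated control $v$ drives \eqref{Equation1.1-work4} to rest at time $T$. The threshold $T>4\sqrt{\rho_1/k_1}=2\pi D^+$ is exactly the one under which the Ingham-type estimates behind Theorem~\ref{Theorem3.2-work4} hold.

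The main obstacle will be the rigorous treatment of the transposition solution of the controlled backward problem and the exact identification of the functional framework, especially in Case~2. When $\sqrt{k_1/k_2}=k_0\pi$ the two spectral branches approach one another at rate $n^{-2}$, so the observability space $\mathcal{D}$ is two derivatives weaker than $\mathcal{H}_2$; tracking this loss through the duality and confirming that the controlled initial data indeed lands in $V_2\times V_1\times V_2\times V_1$ rather than in $\mathcal{H}_2$ requires care. One must also legitimize the formal integration by parts leading to the HUM identity for the low-regularity solutions involved---standard but technical---and verify that the traces $\varphi_x(1,\cdot)+\psi(1,\cdot)$ are well defined in $L^2(0,T)$ for data merely in $\mathcal{D}$, which is again secured by the direct estimate \eqref{Equation3.2-work4} extended to $\mathcal{D}$.
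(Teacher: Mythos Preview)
Your proposal is correct and follows essentially the same route as the paper: solve the adjoint problem \eqref{Equation2.1-work4}, take $v=\varphi_x(1,\cdot)+\psi(1,\cdot)$, solve the controlled system backward from rest by transposition, define the HUM operator $\Lambda$, derive the fundamental identity by integration by parts, and conclude via Lax--Milgram using \eqref{Equation3.2-work4} for continuity and \eqref{Equation3.3-work4}/\eqref{Equation3.4-work4} for coercivity. The paper packages the density/extension step you flag as an obstacle by introducing the completion $\mathcal{F}$ of $\mathcal{H}_2$ under the seminorm $\|\Phi_0\|_{\mathcal{F}}^2=\int_0^T|\varphi_x(1,t)+\psi(1,t)|^2\,dt$, so that $\mathcal{H}_2\subset\mathcal{F}\subset\mathcal{D}$ and $\Lambda:\mathcal{F}\to\mathcal{F}'$ is the isomorphism---the same idea you describe, just made explicit.
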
 
{\noindent For the proof of Theorem  \ref{Theorem3.1-work4}, first,  we will prove that the System \eqref{Equation1.1-work4}  admits a  unique solution. For this aim, let $(\varphi,\varphi_t,\psi,\psi_t)$ be a solution of \eqref{Equation2.1-work4}  and let $v\in L^2(0,T)$. After 
 multiplying first and second equation of \eqref{Equation1.1-work4}  by 
$\overline{\varphi}$ and  $\overline{\psi}$, respectively, and integrating their sum over $\left(0,1\right)\times(0,t)$ (where $t\in[0,T]$), we get 
\begin{equation}\label{s123.6}
\begin{array}{l}
\displaystyle{
\rho_1\int_{0}^1u_t(x,t)\overline{\varphi}(x,t)dx+\rho_2\int_{0}^1y_t(x,t)\overline{\psi}(x,t) dx-
\rho_1\int_{0}^1u(x,t)\overline{\varphi_t}(x,t) dx}

\\ \\

\displaystyle{- \rho_2 \int_{0}^1y(x,t)\overline{\psi_t}(x,t) dx=
\rho_1\int_{0}^1u_t(x,0)\overline{\varphi}(x,0)dx+\rho_2\int_{0}^1y_t(x,0)\overline{\psi}(x,0)dx}

\\ \\

\displaystyle{ -\rho_1\int_{0}^1u(x,0)\overline{\varphi_t}(x,0)dx- \rho_2\int_{0}^1y(x,0)\overline{\psi_t}(x,0)dx- k_1 \int_0^t\overline{\left( \varphi_{x}(1,t)+\psi(1,t)\right)}v(t)  dt.}
\end{array}
\end{equation}
We introduce  the linear form $L_t$ by
\begin{equation*}
L_t(\Phi_0)=\left< (\rho_1u_1,-\rho_1 u_0,\rho_2 y_1,-\rho_2 y_0) , \Phi_0 \right>_{{\mathcal{H}_2}',{\mathcal{H}_2}}
 -k_1 \int_0^t \overline{\left( \varphi_{x}(1,t)+\psi(1,t)\right)}v(t)  dt.
\end{equation*}
From \eqref{s123.6}, we obtain a weak formulation of the System \eqref{Equation1.1-work4}  
\begin{equation}\label{0ecl2.6}
\left<\left(\rho_1  u_t(x,t),-\rho_1 u(x,t),\rho_2 y_t(x,t),-\rho_2 y(x,t)\right), \Phi(t) \right>_{{\mathcal{H}_2}',{\mathcal{H}_2}} =L_t(\Phi_0),\quad\forall\ \Phi_0 \in{\mathcal{H}_2}.
\end{equation}
\begin{thm}\label{Th56}
\rm{Assume that $\frac{\rho_1}{k_1}=\frac{\rho_2}{k_2}$,   $\sqrt{\frac{k_1}{k_2}}=k_0\pi$  $(k_0 \in \mathbb{N}^*)$, $T>4\sqrt{\frac{\rho_1}{k_1}}$ and condition $(\rm A_1)$ holds.    Then, for all initial data 
$U_0=(u_0,u_1,y_0,y_1)\,  \in \mathcal{X}:=L^2\left(0,1\right)\times H^{-1}\left(0,1\right)\times L^2\left(0,1\right)\times \left(H^{1}_*\left(0,1\right)\right)'$ and for all  $v\in L^2(0,T)$, the System \eqref{Equation1.1-work4}  admits a unique weak solution
$$U(x,t) =(u,u_t,y,y_t)\in
 C^0 \left( [ 0 , T ];\mathcal{X}\right)$$
 in the sense that the variational equation \eqref{0ecl2.6} is satisfied for all  $\Phi_0\in{\mathcal{H}_2}$ on the interval $[0, T].$
Moreover, the linear mapping $(  U_0 , v ) \longrightarrow  U$ is continuous  from
$$\mathcal{X}\times L^2(0,T)\text{ into } \mathcal{X}.$$}
\end{thm}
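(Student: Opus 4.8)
The plan is to construct the solution by transposition (the method of J.-L. Lions), using the solutions of the homogeneous adjoint problem \eqref{Equation2.1-work4} as test functions and \emph{defining} $U$ through the variational identity \eqref{0ecl2.6}. The whole argument rests on the observation that, thanks to the hidden-regularity direct inequality of Theorem \ref{Theorem3.2-work4}, the right-hand side of \eqref{0ecl2.6} is a bounded linear functional on $\mathcal{H}_2$, uniformly with respect to $t\in[0,T]$. Fix $U_0=(u_0,u_1,y_0,y_1)\in\mathcal{X}$ and $v\in L^2(0,T)$. For $t\in[0,T]$ and $\Phi_0\in\mathcal{H}_2$, denote by $\Phi=(\varphi,\varphi_t,\psi,\psi_t)$ the solution of \eqref{Equation2.1-work4} provided by Theorem \ref{Theorem2.1-work4}, and consider the form $L_t$ defined just before \eqref{0ecl2.6}.

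The first step is to show that $L_t\in\mathcal{H}_2'$ with a bound independent of $t$. The initial-data term is controlled by duality: since $\mathcal{H}_2=H^1_0(0,1)\times L^2(0,1)\times H^1_*(0,1)\times L^2(0,1)$ has dual (with $L^2(0,1)$ as pivot) equal to $H^{-1}(0,1)\times L^2(0,1)\times (H^1_*(0,1))'\times L^2(0,1)$, the vector $(\rho_1u_1,-\rho_1u_0,\rho_2y_1,-\rho_2y_0)$ belongs precisely to this dual as soon as $U_0\in\mathcal{X}$, so that
$$\left|\left\langle (\rho_1u_1,-\rho_1u_0,\rho_2y_1,-\rho_2y_0),\Phi_0\right\rangle_{\mathcal{H}_2',\mathcal{H}_2}\right|\leq C\|U_0\|_{\mathcal{X}}\|\Phi_0\|_{\mathcal{H}_2}.$$
For the boundary term, Cauchy--Schwarz in time together with the direct estimate \eqref{Equation3.2-work4} yields
$$\left|k_1\int_0^t\overline{\left(\varphi_x(1,s)+\psi(1,s)\right)}v(s)\,ds\right|\leq k_1\|v\|_{L^2(0,T)}\left(\int_0^T\left|\varphi_x(1,s)+\psi(1,s)\right|^2ds\right)^{1/2}\leq k_1\sqrt{\ell_0}\,\|v\|_{L^2(0,T)}\|\Phi_0\|_{\mathcal{H}_2}.$$
Adding these gives $\|L_t\|_{\mathcal{H}_2'}\leq C\left(\|U_0\|_{\mathcal{X}}+\|v\|_{L^2(0,T)}\right)$, uniformly in $t$.

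By the Riesz representation theorem, for each $t$ there is a unique $W(t)\in\mathcal{H}_2'\cong\mathcal{X}$ with $\langle W(t),\Phi_0\rangle=L_t(\Phi_0)$ for all $\Phi_0\in\mathcal{H}_2$; reading off its components and using the isometry $\Phi_0\mapsto\Phi(t)$ (energy conservation, Theorem \ref{Theorem2.1-work4}) defines $U(t)=(u,u_t,y,y_t)(t)\in\mathcal{X}$ satisfying \eqref{0ecl2.6}, with the a priori bound $\|U(t)\|_{\mathcal{X}}\leq C(\|U_0\|_{\mathcal{X}}+\|v\|_{L^2(0,T)})$; this already gives continuity of $(U_0,v)\mapsto U$ into $L^\infty(0,T;\mathcal{X})$. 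To upgrade $L^\infty$ to $C^0([0,T];\mathcal{X})$, I would first treat smooth data: for $U_0\in\mathcal{H}_2$ and $v\in C^1([0,T])$, after lifting the boundary datum $v$ and applying Theorem \ref{Theorem2.1-work4}, the system \eqref{Equation1.1-work4} has a classical solution in $C^0([0,T];\mathcal{X})$, and the uniform bound above makes the solution map continuous from $\mathcal{X}\times L^2(0,T)$ into $C^0([0,T];\mathcal{X})$ on this dense subspace; a density argument then extends both the solution and its time-continuity to all $(U_0,v)\in\mathcal{X}\times L^2(0,T)$.

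Uniqueness is immediate: if $U$ solves \eqref{0ecl2.6} with $U_0=0$ and $v=0$, then $\langle W(t),\Phi_0\rangle=0$ for every $\Phi_0$ in the dense set $\mathcal{H}_2$, hence $W(t)=0$ and $U\equiv0$. The delicate point is not the functional-analytic bookkeeping but the uniform-in-$t$ boundedness of the trace term, i.e. the hidden-regularity direct inequality \eqref{Equation3.2-work4}; the second, more technical difficulty is justifying the passage from $L^\infty$ to genuine $C^0$ continuity in time, which is exactly where the regularization/density step is indispensable.
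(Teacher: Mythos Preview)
Your proof is correct and follows essentially the same transposition strategy as the paper: bound the linear form $L_t$ on $\mathcal{H}_2$ via the direct inequality \eqref{Equation3.2-work4}, compose with the energy-conserving isomorphism $\Phi_0\leftrightarrow\Phi(t)$ of Theorem~\ref{Theorem2.1-work4}, and apply the Riesz representation theorem to produce the solution and the continuity estimate. The paper's proof is in fact a terse version of yours; you add explicit treatment of two points the paper leaves implicit---the upgrade from $L^\infty(0,T;\mathcal{X})$ to $C^0([0,T];\mathcal{X})$ via density of smooth data, and the uniqueness argument---both of which are standard but worth spelling out.
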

\begin{proof}
 For  every fixed $T>4\sqrt{\frac{\rho_1}{k_1}}$, using the direct observability inequality \eqref {Equation3.2-work4}, we deduce that 
\begin{equation*}
\displaystyle \left\| L_t\right\|_{ \mathcal{L}({\mathcal{H}_2},\mathbb{R})}\leq c\left( \left\| v\right\|_{L^2(0,T)}+\left\|U_0\right\|_{\mathcal{X}}\right),\quad \forall\ t\in [0,T],
\end{equation*}
where $c>0$. Hence,  the linear form $L_t$ is bounded on ${\mathcal{H}_2}$. Furthermore, it follows from Theorem \ref{Theorem2.1-work4}  that the linear map
$$\Phi\left(t\right) \longrightarrow \Phi_0$$
is an isomorphism from ${\mathcal{H}_2}$ onto itself. Therefore, the linear form 
$$ \Phi\left(t\right) \longrightarrow L_t\left(\Phi_0\right)$$
is also bounded on ${\mathcal{H}_2}.$ Using Riesz representation Theorem,  for each $0\leq t\leq T,$ there exists a unique element 
$\left(\rho_1 u_t(t),-\rho_1 u(t),\rho_2 y_t(t),-\rho_2 y(t)\right)\in   {\mathcal{H}_2}'$, with is a solution of 
\begin{equation}\label{bnn}
L_t(\Phi_0)= \left<\left(\rho_1 u_t(t),-\rho_1 u(t),\rho_2 y_t(t),-\rho_2 y(t)\right),\Phi\left(t\right)\right>_{{\mathcal{H}_2}',
{\mathcal{H}_2}}.
\end{equation}
putting $$U(t):=\left(u(t),u_t(t),y(t),y_t(t)\right).$$ 
From \eqref{bnn}, we deduce that $U (t)$ satisfy the problem \eqref{0ecl2.6}  for all $0\leq t\leq T$.  In addition, for all $0\leq t\leq T$, we have
\begin{equation*}
\begin{array}{ll}
\displaystyle{\left\|U(t)\right\|_{\mathcal{X}}}\simeq
\displaystyle{\left\|\left(\rho_1u_t(t),-\rho_1 u(t),\rho_2 y_t(t),-\rho_2y(t)\right)\right\|_{{\mathcal{H}_2}'}}\\ \\
 \hspace{4.2cm}=\displaystyle{\left\| L_T\right\|_{ \mathcal{L}({\mathcal{H}_2},\mathbb{R})}\leq c\left( \left\| v\right\|_{L^2(0,T)}+\left\|U_0\right\|_{\mathcal{X}}\right),}
\end{array}
\end{equation*}
which implies the continuity of the linear mapping, which concludes the  proof of the theorem.
\end{proof}$\\[0.1in]$
We now turn to the proof of Theorem \ref{Theorem3.1-work4}.\\[0.1in]
\noindent \textbf{Proof of Theorem \ref{Theorem3.1-work4}.}
 Assume that $\frac{k_1}{\rho_1}=\frac{k_2}{\rho_2}$, $T>4\sqrt{ \frac{\rho_1}{k_1} }$, and condition $(\rm A_1)$ holds. We divide the proof into two cases: Case 1, if $\sqrt{\frac{k_1}{k_2}}\neq k\pi$ (for all $k\in\mathbb{N}$), Case 2  if $\sqrt{\frac{k_1}{k_2}}=k_0\pi$ $(k_0\in\mathbb{N}^*)$.  Since the argument of two cases is entirely similar,  we will only provide one of them.\\[0.1in]
Suppose that $\sqrt{\frac{k_1}{k_2}}=k_0\pi$ $(k_0\in\mathbb{N}^*)$. Let $\Phi_0= (\varphi_0,\varphi_1,\psi_0,\psi_1)\in {\mathcal{H}_2}$ and $\Phi= (\varphi,\varphi_t,\psi,\psi_t)\in {\mathcal{H}_2}$ be the  solution of the problem \eqref{Equation2.1-work4}. Thanks to the inverse inequality \eqref{Equation3.4-work4}, we can define a norm on ${\mathcal{H}_2}$ by
\begin{equation}
\Vert \Phi_0 \Vert_\mathcal{F}=\left(\int_0^T\vert \varphi_{x}(1,t)+\psi(1,t)\vert^2dt\right)^{1/2}.\label{1e355'2}
\end{equation}
We denote by $\mathcal{F}$ the completion of ${\mathcal{H}_2}$ by this norm. It is clear that $\mathcal{F}$ is a Hilbert space. Thanks to the direct and the inverse observability inequalities, we have the following continuous and dense embeddings
$${\mathcal{H}_2}\subset \mathcal{F}\subset \mathcal{D}=V'_{1}\times V'_2\times V'_{1}\times V'_2 .$$
By choosing the control $v(t)=\varphi_x(1,t)+\psi(1,t)$, we will solve the backward problem \eqref{s12.6}
\begin{equation}\left\lbrace
\begin{array}{ll}
\rho_1\chi_{tt}-k_1(\chi_{x} + \zeta)_x =  0, &\text{ in } (0,1)\times \mathbb{R}_+,\\\rho_2\zeta_{tt}-k_2 \zeta_{xx} +k_1 (\chi_{x} + \zeta)  =  0, &\text{ in } (0,1)\times \mathbb{R}_+,\\ 
\chi(0,t)= \zeta_x(0,t)  = \zeta_x(1,t)=0, & \text{ in } \mathbb{R}_+,\\ 
\chi(1,t)=-\varphi_{x}(1,t)-\psi(1,t), & \text{ in } \mathbb{R}_+,\\ 
\chi(x,T)=\chi_t(x,T)=\zeta(x,T)=\zeta_t(x,T)=0, & \text{ in }(0,1). 
\label{s12.6}
\end{array}
\right.
\end{equation}
Using Theorem \ref{Th56}, the backward problem \eqref{s12.6} admits a unique weak solution $$(\chi,\chi_t,\zeta,\zeta_t)\in C^0\left([0,T];(L^{2}(0,1)\times H^{-1}(0,1))^2\right).$$
We define the operator: $\Lambda : {\mathcal{H}_2}\xrightarrow{\hspace*{1cm}} {\mathcal{H}_2}' $ 
$$\ \ \ \ \ \Lambda (\varphi_0,\varphi_1,\psi_0,\psi_1)=(\rho_1\chi_t(0),-\rho_1\chi(0),\rho_2\zeta_t(0),-\rho_2\zeta(0)).$$
From \eqref{0ecl2.6} and \eqref{s12.6}, it follows that 
\begin{equation}
\left<\Lambda \Phi_0,\tilde{\Phi}_0 \right>_{{\mathcal{H}_2}',{\mathcal{H}_2}}=\int_0^T \left(\varphi_{x}(1,t)+\psi(1,t)\right) \left(\tilde{\varphi}_{x}(1,t)+\tilde{\psi}(1,t)\right) dt=\left<\Phi_0,\tilde{\Phi}_0 \right>_\mathcal{F}, \label{1e3594}
\end{equation}
$\forall\ \Phi_0,\ \tilde{\Phi}_0\in{\mathcal{H}_2},$ where $\left(\cdot,\cdot\right)_\mathcal{F}$ denotes the scalar product associated with the norm $\left\|\cdot\right\|_\mathcal{F}$. Therefore, we have
\begin{equation}
\left|\left<\Lambda\Phi_0,\tilde{\Phi}_0\right>_{{\mathcal{H}_2}',{\mathcal{H}_2}}\right|\leq\Vert \Phi_0\Vert_\mathcal{F}\Vert\tilde{\Phi}_0\Vert_\mathcal{F}\quad\forall \ \Phi_0,\  \tilde{\Phi}_0\in {\mathcal{H}_2}.
\label{1e3595}
\end{equation}
Since ${\mathcal{H}_2}$ is dense in $\mathcal{F}$, the mapping $\Lambda$ can be extended to a continuous mapping from $\mathcal{F}$ into $\mathcal{F}'$. In particular, we have
\begin{equation}
\left|\left<\Lambda\Phi_0,\tilde{\Phi}_0\right>_{\mathcal{F}',\mathcal{F}}\right|\leq\Vert \Phi_0\Vert_\mathcal{F}\Vert\tilde{\Phi}_0\Vert_\mathcal{F}\quad\forall \ \Phi_0,\  \tilde{\Phi}_0\in \mathcal{F}
\label{new101}
\end{equation}
and
\begin{equation}
\langle\Lambda \Phi_0,{\Phi}_0\rangle_{\mathcal{F}', \mathcal{F}}=\left\|\Phi_0\right\|^2_\mathcal{F}\quad\forall\  \Phi_0\in\mathcal{F}.
\label{1e3596}
\end{equation}
Therefore, the bilinear form
$$\left(\Phi_0,\tilde{\Phi}_0\right) \rightarrow\left<\Lambda\Phi_0,\tilde{\Phi}_0\right>_{\mathcal{F}',\mathcal{F}}$$
is continuous and coercive on $\mathcal{F}\times \mathcal{F}$. Thanks to the Lax-Milgram Theorem, we deduce that $\Lambda$ is an isomorphism from $\mathcal{F}$ onto $\mathcal{F}'$. In particular, for all  $(u_1,u_0,y_1,y_0)\in V_{1}\times V_2\times V_{1}\times V_2\subset \mathcal{F}'$, there exists a unique element $(\varphi_0,\varphi_1,\psi_0,\psi_1)\in \mathcal{F}$ such that
$$\Lambda(\varphi_0,\varphi_1,\psi_0,\psi_1):=(\rho_1\chi_t(0),-\rho_1\chi(0),\rho_2\zeta_t(0),-\rho_2\zeta(0))=(u_1,u_0,y_1,y_0).$$
According to the uniqueness  of the solution of the problem \eqref{s12.6}, we get
$$u(x,T)=u_{t}(x,T)=y(x,T)=y_{t}(x,T)=0.$$
\noindent {  Thus, the proof of the  theorem is complete.}
\xqed{$\square$}}
\subsection{Observability and exact controllability when the speeds of  propagation are different}\label{Section4-work4} 
\noindent In this part, we study the exact controllability of a one dimensional    Timoshenko System \eqref{Equation1.1-work4}   in the case when the speeds of  propagation are different; i.e, $\frac{k_1}{\rho_1}\neq \frac{k_2}{\rho_2}$. 
\noindent Similar to subsection \ref{Section3-work4}, we use the spectrum method. For this aim, we need to study the asymptotic behavior of the spectrum of ${\mathcal{A}_2}$. We prove the following proposition.
\begin{prop}\label{Proposition4.3-work4}
\rm{Assume that $\frac{k_1}{\rho_1}\neq \frac{k_2}{\rho_2}$ and condition $(\rm A_1)$ holds. Then, the  eigenvalues of ${\mathcal{A}_2}$ asymptotic behavior
\begin{eqnarray}
\lambda_{1,n}= in\pi\sqrt{\frac{k_1}{\rho_1}}+\frac{i\frac{k_1}{\rho_2}\sqrt{\frac{k_1}{\rho_1}}}{2\left(\frac{k_1}{\rho_1}-\frac{k_2}{\rho_2}\right)\pi n}+O\left(n^{-3}\right),\label{Equation4.3-work4}\nline
\lambda_{2,n}=in\pi\sqrt{\frac{k_2}{\rho_2}}-\frac{i\frac{k_1}{\rho_2}\sqrt{\frac{k_2}{\rho_2}}}{2\left(\frac{k_1}{\rho_1}-\frac{k_2}{\rho_2}\right)\pi n}+O\left(n^{-3}\right),\label{Equation4.4-work4}
\end{eqnarray}
with the corresponding eigenfunctions
\begin{eqnarray}
\varphi_{1,n}(x)=\frac{\sin(n\pi x)}{n\pi}, \quad \quad \psi_{1,n}(x)=\frac{\frac{k_1}{\rho_2}}{\left(\frac{k_1}{\rho_1}-\frac{k_2}{\rho_2}\right)\pi^2 n^2}\cos(n\pi x)+O\left(n^{-4}\right)\cos(n\pi x),\label{Equation4.5-work4}\\
\varphi_{2,n}(x)=-\frac{\frac{k_1}{\rho_1}}{\left(\frac{k_1}{\rho_1}-\frac{k_2}{\rho_2}\right)\pi^2 n^2}\sin(n\pi x)+O\left(n^{-4}\right)\sin(n\pi x), 
\quad\psi_{2,n}(x)=\frac{\cos(n\pi x)}{n\pi}.\label{Equation4.6-work4}
\end{eqnarray}}
\end{prop}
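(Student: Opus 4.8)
The plan is to reproduce the method already used for Proposition~\ref{Proposition3.3-work4}, but now carry out the asymptotic analysis of the characteristic equation \eqref{Equation2.6-work4} in the regime $\frac{k_1}{\rho_1}\neq\frac{k_2}{\rho_2}$. The starting point is the biquadratic equation \eqref{Equation2.6-work4} in $\mu^2$, whose two roots $\mu_{1,n}^2$ and $\mu_{2,n}^2$ can be written explicitly via the quadratic formula as
\[
\mu_{1,n}^2,\ \mu_{2,n}^2=\frac{1}{2}\left(\left(\tfrac{k_1}{\rho_1}+\tfrac{k_2}{\rho_2}\right)(n\pi)^2+\tfrac{k_1}{\rho_2}\right)\mp\frac{1}{2}\sqrt{\left(\left(\tfrac{k_1}{\rho_1}+\tfrac{k_2}{\rho_2}\right)(n\pi)^2+\tfrac{k_1}{\rho_2}\right)^2-4\tfrac{k_1 k_2}{\rho_1\rho_2}(n\pi)^4}.
\]
First I would expand the discriminant. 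The key difference from the equal-speed case is that here the leading term inside the square root is $\left(\frac{k_1}{\rho_1}-\frac{k_2}{\rho_2}\right)^2(n\pi)^4$, which is of order $n^4$ and has a nonzero coefficient; hence the two branches separate at leading order, giving $\mu_{1,n}^2\sim\frac{k_1}{\rho_1}(n\pi)^2$ and $\mu_{2,n}^2\sim\frac{k_2}{\rho_2}(n\pi)^2$. This is structurally simpler than the equal-speed case where the discriminant was only of order $n^2$.

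Next I would push the expansion of each $\mu_{i,n}^2$ to the order needed to see the $O(n^{-1})$ correction to $\mu_{i,n}$ itself. Concretely, factoring the dominant $\left(\frac{k_1}{\rho_1}-\frac{k_2}{\rho_2}\right)^2(n\pi)^4$ out of the discriminant and applying $\sqrt{1+t}=1+\frac t2+O(t^2)$ yields $\mu_{1,n}^2=\frac{k_1}{\rho_1}(n\pi)^2+\frac{k_1}{\rho_2}\cdot\frac{k_1/\rho_1}{k_1/\rho_1-k_2/\rho_2}+O(n^{-2})\cdot(\text{const})$ and a symmetric expression for $\mu_{2,n}^2$ with the roles of the two speeds interchanged. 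Taking square roots via $\sqrt{a^2(n\pi)^2+b+\cdots}=|a|n\pi+\frac{b}{2|a|n\pi}+O(n^{-3})$ and recalling $\lambda_{i,n}=i\mu_{i,n}$ produces precisely the two asymptotics \eqref{Equation4.3-work4} and \eqref{Equation4.4-work4}; I would verify that the coefficient $\frac{k_1/\rho_2}{2(k_1/\rho_1-k_2/\rho_2)}$ multiplying $\frac{1}{n\pi}$ comes out correctly, including its sign, which differs between the two branches.

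For the eigenfunctions, I would feed $\mu=\mu_{i,n}$ back into the algebraic system \eqref{Equation2.3new-work4} and use the form \eqref{Equation2.5-work4}, $\varphi=C\sin(n\pi x)$, $\psi=D\cos(n\pi x)$. Normalizing $C_{1,n}=\frac{1}{n\pi}$ and solving the first equation of \eqref{Equation2.3new-work4} for $D_{1,n}=\frac{C_{1,n}(\frac{\rho_1}{k_1}\mu_{1,n}^2-(n\pi)^2)}{n\pi}$, the bracket $\frac{\rho_1}{k_1}\mu_{1,n}^2-(n\pi)^2$ is now $O(1)$ (not $O(n)$ as in the equal-speed case), so $D_{1,n}=O(n^{-2})$, giving $\psi_{1,n}$ of order $n^{-2}$ as in \eqref{Equation4.5-work4}; the symmetric normalization $D_{2,n}=\frac{1}{n\pi}$ yields $\varphi_{2,n}$ of order $n^{-2}$ as in \eqref{Equation4.6-work4}. \textbf{The main obstacle} is bookkeeping: one must retain enough terms in the expansion of the discriminant and of $\frac{\rho_1}{k_1}\mu_{i,n}^2-(n\pi)^2$ so that the explicit $\frac{1}{n}$-coefficients in the eigenvalue expansions and the leading constants in the eigenfunction amplitudes emerge with correct constants and signs, while confirming that condition $(\mathrm A_1)$ guarantees the two branches stay separated (so $r_1,r_2$ remain distinct and the roots $\mu_{1,n},\mu_{2,n}$ are simple) and that no resonance between $\frac{k_1}{\rho_1}$ and $\frac{k_2}{\rho_2}$ corrupts the asymptotics. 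Since $(\mathrm A_1)$ and $\frac{k_1}{\rho_1}\neq\frac{k_2}{\rho_2}$ are in force, this is a routine but careful computation rather than a conceptual difficulty.
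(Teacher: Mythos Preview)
Your proposal is correct and follows essentially the same approach as the paper: solve \eqref{Equation2.6-work4} explicitly for $\mu_{1,n}^2,\mu_{2,n}^2$, expand the discriminant using that its leading term is $\left(\tfrac{k_1}{\rho_1}-\tfrac{k_2}{\rho_2}\right)^2(n\pi)^4$, take square roots, and then recover the eigenfunctions by normalizing $C_{1,n}=\tfrac{1}{n\pi}$ (resp.\ $D_{2,n}=\tfrac{1}{n\pi}$) in \eqref{Equation2.3new-work4}. One cosmetic point: your $\mp$ assigns the minus sign to $\mu_{1,n}^2$, which would make $\mu_{1,n}^2\sim\tfrac{k_2}{\rho_2}(n\pi)^2$ rather than $\tfrac{k_1}{\rho_1}(n\pi)^2$ (at least when $\tfrac{k_1}{\rho_1}>\tfrac{k_2}{\rho_2}$); the paper uses the opposite convention, but this is only a labeling issue.
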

\begin{proof} First, by solving  \eqref{Equation2.6-work4} and using the fact that  $\frac{k_1}{\rho_1}\neq \frac{k_2}{\rho_2}$, we get
\begin{equation}\label{Equation4.7-work4}
\left\{
\begin{array}{ll}
\displaystyle{\mu^2_{1,n}=\frac{(n\pi)^2}{2}\left(\frac{k_1}{\rho_1}+\frac{k_2}{\rho_2}\right)+\frac{k_1}{2\rho_2}+ \frac{\left(\frac{k_1}{\rho_1}-\frac{k_2}{\rho_2}\right)}{2}\sqrt{(n\pi)^4+\frac{2\left(\frac{k_1}{\rho_1}+\frac{k_2}{\rho_2}\right)\frac{k_1}{\rho_2}(n\pi)^2+\left(\frac{k_1}{\rho_2}\right)^2}{\left(\frac{k_1}{\rho_1}-\frac{k_2}{\rho_2}\right)^2}}},\\ \\

\displaystyle{\mu^2_{2,n}=\frac{(n\pi)^2}{2}\left(\frac{k_1}{\rho_1}+\frac{k_2}{\rho_2}\right)+\frac{k_1}{2\rho_2}- \frac{\left(\frac{k_1}{\rho_1}-\frac{k_2}{\rho_2}\right)}{2}\sqrt{(n\pi)^4+\frac{2\left(\frac{k_1}{\rho_1}+\frac{k_2}{\rho_2}\right)\frac{k_1}{\rho_2}(n\pi)^2+\left(\frac{k_1}{\rho_2}\right)^2}{\left(\frac{k_1}{\rho_1}-\frac{k_2}{\rho_2}\right)^2}}}.
\end{array}\right.
\end{equation}
Using the asymptotic expansion in \eqref{Equation4.7-work4}, we get
\begin{eqnarray}
\mu_{1,n}^2=\frac{k_1}{\rho_1}(n\pi)^2+\frac{\frac{k_1}{\rho_1\rho_2}}{\frac{k_1}{\rho_1}-\frac{k_2}{\rho_2}}
+O\left(n^{-2}\right),\label{Equation4.8-work4}
\nline
\mu_{2,n}^2=\frac{k_2}{\rho_2}(n\pi)^2-\frac{\frac{k_1 k_2}{\rho_2^2}}{\frac{k_1}{\rho_1}-\frac{k_2}{\rho_2}}
+O\left(n^{-2}\right).\label{Equation4.9-work4}
\end{eqnarray}
Using again  asymptotic expansion in \eqref{Equation4.8-work4}-\eqref{Equation4.9-work4}, we get \eqref{Equation4.3-work4}-\eqref{Equation4.4-work4}. Next, for $\lambda=\lambda_{1,n},$ setting
$$
C_{1,n}=\frac{1}{n\pi},\quad D_{1,n}=\frac{C_{1,n}(\frac{\rho_1}{k_1}\mu_{1,n}^2-(n\pi)^2)}{n\pi }=\frac{\frac{k_1}{\rho_2}}{\left(\frac{k_1}{\rho_1}-\frac{k_2}{\rho_2}\right)n^2\pi^2}+O\left(n^{-4}\right)
$$
in \eqref{Equation2.3new-work4}, we get the corresponding eigenfunctions \eqref{Equation4.5-work4}. Similarly for $\lambda=\lambda_{2,n},$ setting 
$$
D_{2,n}=\frac{1}{n\pi},\quad C_{2,n}=\frac{(n\pi)D_{2,n}}{\frac{\rho_1}{k_1}\mu^2_{2,n}-(n\pi)^2}=\frac{\frac{k_1}{\rho_1}}{\left(\frac{k_1}{\rho_1}-\frac{k_2}{\rho_2}\right)n^2\pi^2}+O\left(n^{-4}\right)
$$
 in \eqref{Equation2.3new-work4}, we get the corresponding eigenfunctions \eqref{Equation4.6-work4}.
\end{proof}
\begin{rk}\label{Remark4.4-work4}
 \rm{If  $\frac{k_1}{\rho_1}\neq \frac{k_2}{\rho_2}$, then from Equation \eqref{Equation4.7-work4}, we can easily check that the eigenvalues $\lambda_{1,n}, \lambda_{2,n}$   are simple and different from zero. Then, we set the eigenfunctions of the operator ${\mathcal{A}_2}$ as
\begin{equation*}
E_{1,n}=(\varphi_{1,n},\lambda_{1,n}\varphi_{1,n},\psi_{1,n},\lambda_{1,n}\psi_{1,n}),\quad
E_{2,n}=(\varphi_{2,n},\lambda_{2,n}\varphi_{2,n},\psi_{2,n},\lambda_{2,n}\psi_{2,n}).
\end{equation*}
In fact, using the asymptotic expansions \eqref{Equation4.3-work4}-\eqref{Equation4.4-work4} and \eqref{Equation4.5-work4}-\eqref{Equation4.6-work4}, we can easily prove that $\left\{E_{1,n}, E_{2,n}\right\}_{n\in \mathbb{Z}^*}$ form a Riesz basis in the energy space ${\mathcal{H}_2}.$}\xqed{$\square$}
\end{rk}
\begin{rk}\label{Remark4.5-work4}
\rm{ Similar to subsection \ref{Section3-work4},  the eigenvalues of the same branch satisfy a uniform gap condition, but the eigenvalues of different branches can be asymptotically close at rate depends on the parameters $\rho_1,\ \rho_2,\ k_1,\ k_2$ (see Proposition \ref{Proposition4.6-work4}).  Again in this subsection we will use a general Ingham-sort Theorem.}\xqed{$\square$}
\end{rk}
\begin{prop}\label{Proposition4.6-work4}
\rm{Assume that $\frac{k_1}{\rho_1}\neq \frac{k_2}{\rho_2}$ and condition $(\rm A_1)$ holds. Then, there exists a constant $\gamma>0$ depending only on $\rho_1,\ \rho_2,\ k_1,\ k_2$ such that
\begin{equation}
\label{Equation4.10-work4}
\left|\lambda_{j,m}-\lambda_{l,n}\right|\le 2\gamma \Longrightarrow j\neq l.
\end{equation}
Moreover, there exist constants $c_2>c_1>0$ depending only on $\rho_1,\rho_2, \ k_1$ and $k_2$ such that\\[0.1in]
\textbf{1.} If $ \frac{k_1\rho_2}{k_2\rho_1}$ is a rational number different from $p^2/q^2$ for all integers $p,\ q,$ then for all $\left|m\right|,\ \left|n\right|\geq N_0$, for $N_0$ large enough, we have
\begin{equation}
\label{Equation4.11-work4}
\left|\lambda_{1,m}-\lambda_{2,n}\right|\geq \frac{c_1}{\left|m\right|} \quad \text{and} \quad \left|\lambda_{1,m}-\lambda_{2,n}\right|\geq \frac{c_1}{\left|n\right|},
\end{equation} 
and there exist infinitely many integers $m$, $n$ such that 
\begin{equation}
\label{Equation4.12-work4}
\left|\lambda_{1,m}-\lambda_{2,n}\right|\leq \frac{c_2}{\left|m\right|} \quad \text{and} \quad \left|\lambda_{1,m}-\lambda_{2,n}\right|\leq \frac{c_2}{\left|n\right|}.
\end{equation}
\textbf{2.} If $ \frac{k_1\rho_2}{k_2\rho_1}=p_0^2/q_0^2 \neq 1$ for some integers $p_0$, $q_0,$ then for all $\left|n\right|,\  \left|m\right|\geq N_0$, for $N_0$ large enough, we have
\begin{equation}
\label{Equation4.13-work4}
\left|\lambda_{1,m}-\lambda_{2,n}\right|\geq \frac{c_1}{\left|m\right|} \quad \text{and} \quad \left|\lambda_{1,m}-\lambda_{2,n}\right|\geq \frac{c_1}{\left|n\right|},
\end{equation} 
and there exist infinitely many integers $m$, $n$ such that 
\begin{equation}
\label{Equation4.14-work4}
\left|\lambda_{1,m}-\lambda_{2,n}\right|\leq \frac{c_2}{\left|m\right|} \quad \text{and} \quad \left|\lambda_{1,m}-\lambda_{2,n}\right|\leq \frac{c_2}{\left|n\right|}.
\end{equation}
\textbf{3.} For almost all positive irrational number $ \frac{k_1\rho_2}{k_2\rho_1}$ and all $\left|n\right|,\  \left|m\right|\geq N_0$, for $N_0$ large enough, we have
\begin{equation}
\label{Equation4.15-work4}
\left|\lambda_{1,m}-\lambda_{2,n}\right|\geq \frac{c_1}{\left|m\right|\ln^2|m|} \quad \text{and} \quad \left|\lambda_{1,m}-\lambda_{2,n}\right|\geq \frac{c_1}{\left|n\right|\ln^2|n|},
\end{equation} 
and there exist infinitely many integers $m$, $n$ such that 
\begin{equation}
\label{Equation4.16-work4}
\left|\lambda_{1,m}-\lambda_{2,n}\right|\leq \frac{c_2}{\left|m\right|\ln|m|} \quad \text{and} \quad \left|\lambda_{1,m}-\lambda_{2,n}\right|\leq \frac{c_2}{\left|n\right|\ln|n|}.
\end{equation}}
\end{prop}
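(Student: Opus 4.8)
The plan is to reduce every assertion to a single Diophantine question about $\xi=\sqrt{\frac{k_1\rho_2}{k_2\rho_1}}$. Since the eigenvalues of ${\mathcal{A}_2}$ are purely imaginary (as established in the spectral computation of this section), writing $\lambda_{1,m}=i\mu_{1,m}$ and $\lambda_{2,n}=i\mu_{2,n}$ with $\mu_{1,m},\mu_{2,n}\in\mathbb{R}$ gives $|\lambda_{1,m}-\lambda_{2,n}|=|\mu_{1,m}-\mu_{2,n}|$. From \eqref{Equation4.3-work4}--\eqref{Equation4.4-work4} I would record $\mu_{1,m}=m\pi\sqrt{\frac{k_1}{\rho_1}}+\frac{A}{m}+O(m^{-3})$ and $\mu_{2,n}=n\pi\sqrt{\frac{k_2}{\rho_2}}-\frac{B}{n}+O(n^{-3})$, where $A$ and $B$ are explicit nonzero reals sharing the sign of $(\frac{k_1}{\rho_1}-\frac{k_2}{\rho_2})^{-1}$. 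The within-branch gap \eqref{Equation4.10-work4} is then immediate: for $j=l$ the difference of leading terms is a nonzero integer multiple of $\pi\sqrt{k_1/\rho_1}$ (or of $\pi\sqrt{k_2/\rho_2}$), hence bounded below by a fixed $2\gamma$ for large indices, while the simplicity of the finitely many remaining eigenvalues (Remark \ref{Remark4.4-work4}) handles the small indices.

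For the cross-branch differences I would write $\mu_{1,m}-\mu_{2,n}=\pi\sqrt{\frac{k_2}{\rho_2}}\,(m\xi-n)+\frac{A}{m}+\frac{B}{n}+O(m^{-3})$, so the gap is governed by the approximation quantity $|m\xi-n|$ together with the order-$m^{-1}$ correction $\frac{A}{m}+\frac{B}{n}$, whose two summands have the same sign and hence do not cancel. The three regimes match the arithmetic nature of $\xi^2=\frac{k_1\rho_2}{k_2\rho_1}$. If $\xi^2$ is rational but not of the form $p^2/q^2$, then $\xi$ is a quadratic irrational; writing $\xi^2=P/Q$ in lowest terms and using the identity $(m\xi-n)(m\xi+n)=\frac{m^2P-n^2Q}{Q}$, where the integer $m^2P-n^2Q$ never vanishes, I obtain $|m\xi-n|\ge \frac{c}{m}$, which combined with the convergents of $\xi$ (giving $|m\xi-n|\le\frac{c}{m}$ infinitely often) yields \eqref{Equation4.11-work4}--\eqref{Equation4.12-work4}. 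If $\xi^2=p_0^2/q_0^2\neq1$, then $\xi=p_0/q_0$ is rational: off the resonance $mp_0=nq_0$ one has $|m\xi-n|\ge 1/q_0$ and the leading term dominates, whereas on the resonance the leading term vanishes exactly and the same-sign corrections add to a nonzero multiple of $1/m$, producing both bounds \eqref{Equation4.13-work4}--\eqref{Equation4.14-work4}. Finally, for almost every irrational $\xi^2$ I would invoke metric Diophantine approximation (Khintchine's theorem, in the form of Theorem 1.10 in \cite{Bugeaud01} already used in Lemma \ref{Theorem-1.17}): the convergence half with $\psi(m)=\frac{1}{m\ln^2 m}$ forbids $|\xi-\frac{n}{m}|<\frac{c}{m^2\ln^2 m}$ for all but finitely many pairs, giving \eqref{Equation4.15-work4}, while the divergence half with $\psi(m)=\frac{1}{m\ln m}$ supplies infinitely many close pairs \eqref{Equation4.16-work4}.

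The delicate point, and the step I expect to be the main obstacle, is the lower bounds \eqref{Equation4.11-work4}, \eqref{Equation4.13-work4} and \eqref{Equation4.15-work4} in the irrational cases, where the resonance term $\pi\sqrt{\frac{k_2}{\rho_2}}\,(m\xi-n)$ and the correction $\frac{A}{m}+\frac{B}{n}$ are both of order $m^{-1}$ and may have opposite signs, so a bare lower bound on $|m\xi-n|$ does not by itself bound the gap below. I would resolve this by splitting according to whether the leading term is smaller or larger than $\frac{|A|}{2m}$: in the former case the fixed-sign correction alone forces a gap $\gtrsim m^{-1}$, while in the latter the gap can collapse only if $\xi$ is approximated by the shifted rational $\frac{n}{m}-\frac{A+B/\xi}{\pi\sqrt{k_2/\rho_2}\,m^{2}}$ to within $o(m^{-2})$ (respectively $o(m^{-2}\ln^{-2}m)$ in the metric case). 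This is again a Diophantine condition on $\xi$, excluded for quadratic irrationals by the eventual periodicity of their continued fraction and for almost every $\xi$ by the same convergence Borel--Cantelli argument, precisely as in the proof of Lemma \ref{Theorem-1.17}. Once these gap estimates are established, the proposition follows.
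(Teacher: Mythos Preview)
Your approach matches the paper's: reduce via \eqref{Equation4.3-work4}--\eqref{Equation4.4-work4} to the rational approximation of $\xi=\sqrt{k_1\rho_2/(k_2\rho_1)}$ by $n/m$, then apply Liouville's theorem for quadratic irrationals (Case~1, your factorization $(m\xi-n)(m\xi+n)=(m^2P-n^2Q)/Q$ being precisely its proof), a direct resonance argument on $mp_0=nq_0$ (Case~2), and both halves of Khintchine's theorem (Case~3). The one difference is that the paper does not engage with the possible cancellation between $\pi\sqrt{k_2/\rho_2}(m\xi-n)$ and the $O(m^{-1})$ correction that you flag in your final paragraph: after restricting to $m\sim n$ it records \eqref{Equation4.18-work4} and simply asserts that ``it is sufficient to consider the leading term in \eqref{Equation4.18-work4},'' so your splitting-plus-shifted-Diophantine step is a refinement beyond what the paper actually carries out.
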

\begin{proof} The assertion \eqref{Equation4.10-work4} follows directly from the asymptotic expansions \eqref{Equation4.7-work4} and the fact that all the eigenvalues are geometrically simple. Using the asymptotic expansions \eqref{Equation4.3-work4}-\eqref{Equation4.4-work4}, we have
\begin{equation}
\label{Equation4.17-work4}
\left|\frac{\lambda_{1,m}-\lambda_{2,n}}{m}\right|=\pi\sqrt{\frac{k_2}{\rho_2}}\left|\sqrt{\frac{k_1\rho_2}{k_2\rho_1}}-\frac{n}{m}\right|+\frac{O\left(k_1,k_2,\rho_1,\rho_2\right)}{m^2}+\frac{O\left(k_1,k_2,\rho_1,\rho_2\right)}{\left|mn\right|}.
\end{equation}
If $\left|\sqrt{\frac{k_1\rho_2}{k_2\rho_1}}-\frac{n}{m}\right|\geq \frac{1}{2} \sqrt{\frac{k_1\rho_2}{k_2\rho_1}},$ then the estimates \eqref{Equation4.11-work4}, \eqref{Equation4.13-work4} and \eqref{Equation4.15-work4} are trivial. Otherwise, if 
$$\left|\sqrt{\frac{k_1\rho_2}{k_2\rho_1}}-\frac{n}{m}\right|< \frac{1}{2} \sqrt{\frac{k_1\rho_2}{k_2\rho_1}},$$ 
then $m \sim n$ and \eqref{Equation4.17-work4} becomes
\begin{equation}
\label{Equation4.18-work4}
\left|\frac{\lambda_{1,m}-\lambda_{2,n}}{m}\right|=\pi\sqrt{\frac{k_2}{\rho_2}}\left|\sqrt{\frac{k_1\rho_2}{k_2\rho_1}}-\frac{n}{m}\right|+O\left(m^{-2}\right).
\end{equation}
Therefore, it is  sufficient to consider the leading term in \eqref{Equation4.18-work4}. \\[0.1in]
\textbf{1.} Let $ \frac{k_1\rho_2}{k_2\rho_1} = p_0 /q _0$ be a reduced rational number. Then, $\sqrt{\frac{k_1\rho_2}{k_2\rho_1}}$ is a root of the integer polynomial $q_0 x^2-p_0$ of second degree. Since $ \frac{k_1\rho_2}{k_2\rho_1} \neq p^2/ q^2$ for all integers $p, q,$ then the integer polynomial $q_0 x^2-p_0$ is irreducible. This means that $\sqrt{\frac{k_1\rho_2}{k_2\rho_1}}$ is
a quadratic algebraic number. Thanks to the Liouville’s Theorem on the approximation of algebraic numbers (see Theorem  1.2 in  \cite{Bugeaud01}), there exists a constant $c_1>0$, 
depending only on $ \frac{k_1\rho_2}{k_2\rho_1}$, such that for all $\left|n\right|,\ \left|m\right| \geq N_0$, we have
$$ \left|\sqrt{\frac{k_1\rho_2}{k_2\rho_1}}-\frac{n}{m}\right|\geq\frac{c_1}{m^2}.$$
On the other hand, since $\sqrt{\frac{k_1\rho_2}{k_2\rho_1}}$ is an irrational number, using the Dirichlet’s classic Theorem on number theory (see Theorem  1.1 in  \cite{Bugeaud01}), there exist infinitely many
integers $m, n$ such that
$$\left|\sqrt{\frac{k_1\rho_2}{k_2\rho_1}}-\frac{n}{m}\right|<\frac{1}{m^2}.$$
Therefore, we get the estimates \eqref{Equation4.11-work4}-\eqref{Equation4.12-work4}.\\[0.1in]
\textbf{2.} Let $\sqrt{\frac{k_1\rho_2}{k_2\rho_1}}=\frac{p_0}{q_0},$ be a reduced rational number. We return to \eqref{Equation4.17-work4}, we get 
\begin{equation}\label{Equation4.19-work4}
\left|\lambda_{1,m}-\lambda_{2,n}\right|=\frac{\pi}{q_0}\sqrt{\frac{k_2}{\rho_2}}\left|p_0m-nq_0\right|+O\left(|m|^{-1}\right)+O\left(|n|^{-1}\right).
\end{equation}
If $n\neq k q_0$ or $m\neq k p_0$ for all $k\in\mathbb{Z}^*$, then from \eqref{Equation4.19-work4}, we get
$$\left|\lambda_{1,m}-\lambda_{2,n}\right|\geq c_1.$$
Otherwise, if $n= k q_0$ and $m= k p_0$, then  from \eqref{Equation4.3-work4}-\eqref{Equation4.4-work4}, we deduce that 
$$\left|\lambda_{1,m}-\lambda_{2,n}\right|\geq \frac{c_1}{\left|m\right|}.$$
On the other hand, by taking $m=q_0k$ and $n=p_0k,$ $k\in\mathbb{Z}^*,$ and using the asymptotic expansions \eqref{Equation4.3-work4}-\eqref{Equation4.4-work4}, we easily get that 
$$\left|\lambda_{1,q_0k}-\lambda_{2,p_0k}\right|\leq \frac{c_2}{\left|m\right|}.$$
Therefore, we get the estimates \eqref{Equation4.13-work4}-\eqref{Equation4.14-work4}.\\[0.1in]
\textbf{3.} Let $ \frac{k_1\rho_2}{k_2\rho_1}\not\in \mathbb{Q}$. Firstly, from Khintchine's Theorem on Diophantine approximation
(see Theorem 1.10 in  \cite{Bugeaud01}),  for almost all irrational number $\sqrt{\frac{k_1\rho_2}{k_2\rho_1}}$,  there exist only finitely many integers $m, n$ such that
$$\left|\sqrt{\frac{k_1\rho_2}{k_2\rho_1}}-\frac{n}{m}\right|<\frac{1}{m^2 (\ln \left|m\right|)^2}.$$
It follows from \eqref{Equation4.18-work4}, that for almost all irrational number $\sqrt{\frac{k_1\rho_2}{k_2\rho_1}}$, there exists a constant $c_1>0$ and $N_0\in \mathbb{N}$, $N_0$ large enough, such that, for all  $\left|m\right|$, $\left|n\right|\geq N_0$, we have  
 $$\left|{\frac{\lambda_{1,m}-\lambda_{2,n}}{m}}\right|\geq \frac{c_1}{m^2\ln^2|m|}.$$
This gives the estimate \eqref{Equation4.15-work4}. Secondly, from Khintchine's Theorem on Diophantine approximation (see Theorem 1.10 in  \cite{Bugeaud01}) for almost all irrational real number $\sqrt{\frac{k_1\rho_2}{k_2\rho_1}}$, there exist infinitely many integers $m$, $n>0$ such that
$$\left|\sqrt{\frac{k_1\rho_2}{k_2\rho_1}}-\frac{n}{m}\right|\leq \frac{1}{m^2\ln|m|}.$$
Therefore, we get the estimate \eqref{Equation4.16-work4}, which concludes the  proof of the proposition.
\end{proof}\\[0.1in]
\noindent Similar to Proposition \ref{Proposition3.6-work4}, we can prove the following proposition.
\begin{prop}\label{Proposition4.7-work4}
\rm{Assume that $\frac{k_1}{\rho_1}\neq\frac{k_2}{\rho_2}$ and condition $(\rm A_1)$ holds. We rearrange the two branches of eigenvalues into one sequence $(\lambda_{n})_{n\neq 0}$ such that $(\text{Im}\lambda_{n})_{n\neq 0}$ is strictly increasing. If
\begin{equation}\label{Equation4.20-work4}
0<\text{Im}\lambda_{n+1}-\text{Im}\lambda_{n}\leq \gamma,
\end{equation}
then 
\begin{equation}\label{Equation4.21-work4}
\text{Im}\lambda_{n}-\text{Im}\lambda_{n-1}>\gamma, \quad \text{Im}\lambda_{n+2}-\text{Im}\lambda_{n+1}>\gamma.
\end{equation}
Note that $\text{Im}\lambda_{n},\text{Im}\lambda_{n+1}$ is called a chain of close exponents relative to $\gamma$ of length 2.}\xqed{$\square$}
\end{prop}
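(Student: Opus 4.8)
The statement is the exact analogue of Proposition 3.6 for the different-speed case, and the proof strategy is identical; indeed the excerpt explicitly says "Similar to Proposition 3.6." The plan is to reproduce the argument of Proposition 3.6 almost verbatim, replacing the equal-speed gap estimate by its different-speed counterpart. The only structural ingredient needed is the uniform gap \emph{within} each branch, which here follows from the asymptotic expansions \eqref{Equation4.3-work4}--\eqref{Equation4.4-work4}: since $\mathrm{Im}\,\lambda_{1,n}\sim n\pi\sqrt{k_1/\rho_1}$ and $\mathrm{Im}\,\lambda_{2,n}\sim n\pi\sqrt{k_2/\rho_2}$, consecutive eigenvalues of the same branch are separated by approximately $\pi\sqrt{k_j/\rho_j}$, so after fixing $\gamma$ small enough (smaller than, say, half of $\min(\pi\sqrt{k_1/\rho_1},\pi\sqrt{k_2/\rho_2})$) one obtains a constant $c>0$ with $|\lambda_{1,m}-\lambda_{1,n}|\ge 2\gamma$ and $|\lambda_{2,m}-\lambda_{2,n}|\ge 2\gamma$ for $m\neq n$. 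This plays the role that \eqref{Equation3.12-work4} played in Proposition 3.6.

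Granting that intra-branch gap, the combinatorial core of the argument is purely formal. First I would observe that if \eqref{Equation4.20-work4} holds, i.e.\ $0<\mathrm{Im}\,\lambda_{n+1}-\mathrm{Im}\,\lambda_n\le\gamma$, then $\lambda_n$ and $\lambda_{n+1}$ must lie on \emph{different} branches: if they were on the same branch their imaginary parts would differ by at least $2\gamma$, contradicting the hypothesis. To establish the first assertion $\mathrm{Im}\,\lambda_n-\mathrm{Im}\,\lambda_{n-1}>\gamma$ in \eqref{Equation4.21-work4}, I split into two cases according to the branch of $\lambda_{n-1}$. If $\lambda_{n-1}$ and $\lambda_n$ are on the same branch, the intra-branch estimate gives directly $\mathrm{Im}\,\lambda_n-\mathrm{Im}\,\lambda_{n-1}>2\gamma>\gamma$. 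If instead $\lambda_{n-1}$ and $\lambda_n$ are on different branches, then, combined with the fact that $\lambda_n$ and $\lambda_{n+1}$ are on different branches (there being exactly two branches), $\lambda_{n-1}$ and $\lambda_{n+1}$ must share a branch, whence
\begin{equation*}
\mathrm{Im}\,\lambda_{n+1}-\mathrm{Im}\,\lambda_{n-1}>2\gamma.
\end{equation*}
Subtracting the hypothesis $\mathrm{Im}\,\lambda_{n+1}-\mathrm{Im}\,\lambda_n\le\gamma$ yields
\begin{equation*}
\mathrm{Im}\,\lambda_n-\mathrm{Im}\,\lambda_{n-1}=\left(\mathrm{Im}\,\lambda_{n+1}-\mathrm{Im}\,\lambda_{n-1}\right)-\left(\mathrm{Im}\,\lambda_{n+1}-\mathrm{Im}\,\lambda_n\right)>2\gamma-\gamma=\gamma,
\end{equation*}
as required. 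The second assertion $\mathrm{Im}\,\lambda_{n+2}-\mathrm{Im}\,\lambda_{n+1}>\gamma$ is obtained by the symmetric argument, examining the branch of $\lambda_{n+2}$ relative to $\lambda_{n+1}$ and using that $\lambda_n,\lambda_{n+2}$ then lie on the same branch.

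I expect essentially no obstacle here: the proof is a two-branch pigeonhole argument on the imaginary parts, word-for-word parallel to Proposition 3.6, and the statement that chains have length at most $2$ is an immediate consequence (two consecutive close gaps would force three consecutive eigenvalues to occupy only two branches with two small gaps, contradicting that the two outer ones share a branch and hence are $2\gamma$-separated). The only point deserving care is the verification that the intra-branch gap constant $\gamma$ can be chosen uniformly and simultaneously for both branches from \eqref{Equation4.3-work4}--\eqref{Equation4.4-work4}; this is routine since the leading terms are purely imaginary and linear in $n$, and the $O(n^{-1})$ corrections are negligible for $|n|$ large, with the finitely many small-index eigenvalues handled by simplicity and distinctness. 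I would therefore state the proof compactly, referring the reader to Proposition 3.6 for the identical details.
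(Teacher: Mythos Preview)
Your proposal is correct and follows exactly the paper's approach: the paper gives no separate proof of Proposition~\ref{Proposition4.7-work4} but simply refers to Proposition~\ref{Proposition3.6-work4}, and your argument reproduces that proof verbatim with the intra-branch gap supplied by \eqref{Equation4.10-work4} (equivalently, by the asymptotics \eqref{Equation4.3-work4}--\eqref{Equation4.4-work4}) in place of \eqref{Equation3.12-work4}.
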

\noindent From Proposition \ref{Proposition4.3-work4}, it follows that 
\begin{equation}\label{Equation4.22-work4}
\left(\varphi_{1,n}\right)_x(1)+\psi_{1,n}(1)=O\left(1\right)\ \ \ \text{and}\ \ \ \left(\varphi_{2,n}\right)_x(1)+\psi_{2,n}(1)=O\left(n^{-1}\right).
\end{equation}
Similar to \ref{Section3-work4}, we define the following weighted spectral spaces
\begin{equation}\label{Equation4.23-work4}
\normalsize
\mathcal{D}_1=\left\{ \Phi_0=\sum_{n\neq 0}(\alpha_{1,n}E_{1,n}+\alpha_{2,n}n E_{2,n})n\ ;\  \alpha_{1,n},\ \alpha_{2,n}\in\mathbb{C},\ \sum_{n\neq 0}\left(\left|\alpha_{1,n}\right|^2+\left|\alpha_{2,n}\right|^2\right)<\infty \right\} 
\end{equation}
and
\begin{equation}\label{Equation4.23-work4tilde}
\normalsize
\tilde{\mathcal{D}}_1=\left\{ \Phi_0=\sum_{n\neq 0}(\alpha_{1,n}E_{1,n}+\alpha_{2,n}n E_{2,n})n\ln^2|n|\ ;\  \alpha_{1,n},\ \alpha_{2,n}\in\mathbb{C},\ \sum_{n\neq 0}\left(\left|\alpha_{1,n}\right|^2+\left|\alpha_{2,n}\right|^2\right)<\infty \right\}. 
\end{equation}
The factor $\ln^2|n|$ in \eqref{Equation4.23-work4tilde} will be omitted for $|n|=1.$ Since the System $\left\{E_{1,n}, E_{2,n}\right\}_{n\in\mathbb{Z}^*}$ is a 
Riesz basis in the energy space ${\mathcal{H}_2},$ we get that the spaces $\mathcal{D}_1$ and $\tilde{\mathcal{D}}_1$ are obviously a Hilbert space equipped respectively  with the norm 
$$\sum_{n\neq 0}\left(\left|\alpha_{1,n}\right|^2+\left|\alpha_{2,n}\right|^2\right).$$
\noindent In fact,  to get the observability we need to use a weaker norm for the second equation in order that $\left(\varphi_{2,n}\right)_x\left(1\right)+\psi_{2,n}(1)$  has the same order as $\left(\varphi_{1,n}\right)_x\left(1\right)+\psi_{1,n}(1)$. For this reason   we  multiplied  the eigenvector $E_{2,n}$ by $n$   in the  spaces  ${\mathcal{D}}_1$ and $\tilde{\mathcal{D}}_1$. \\[0.1in]
\noindent We are now ready to prove our observability inequalities results.
\begin{thm}\label{Theorem4.2-work4}
\rm{Assume that $\frac{k_1}{\rho_1}\neq \frac{k_2}{\rho_2}$ and condition $(\rm A_1)$ holds, let 
$$T>2\left(\sqrt{\frac{\rho_1}{k_1}}+\sqrt{\frac{\rho_2}{k_2}}\right).$$
Then, for all solution $\Phi=(\varphi,\varphi_t,\psi,\psi_t)$ that solve the  problem \eqref{Equation2.1-work4} there exists a constant $\ell_0>0$ such that the following direct inequality holds:
\begin{equation} 
\label{Equation4.1-work4}
\int_0^T\left|\varphi_{x}\left(1,t\right)+\psi(1,t)\right|^2 dt \leq \ell_0\left\|(\varphi_0,\varphi_1,\psi_0,\psi_1)\right\|^2_{{\mathcal{H}_2}}.
\end{equation}
Moreover, there exists a constant $0<\ell_1<\ell_0$ depending only on $\rho_1,\ \rho_2,\ k_1$ and $k_2$ such that the following inverse observability inequalities   hold: \\[0.1in]
\noindent\textbf{Case 1.}   If $\frac{k_1\rho_2}{k_2\rho_1}$  is a rational, then 
\begin{equation}
\label{Equation4.2-work4} 
\ell_1\left\|(\varphi_0,\varphi_1,\psi_0,\psi_1)\right\|^2_{\mathcal{D}_1}\leq \int_0^T\left|\varphi_{x}\left(1,t\right)+\psi(1,t)\right|^2 dt.
\end{equation}
 \noindent\textbf{Case 2.} For almost all irrational number $\frac{k_1\rho_2}{k_2\rho_1}$,  we have
\begin{equation}
\label{Equation4.2-work4new} 
\ell_1\left\|(\varphi_0,\varphi_1,\psi_0,\psi_1)\right\|^2_{\tilde{\mathcal{D}}_1}\leq \int_0^T\left|\varphi_{x}\left(1,t\right)+\psi(1,t)\right|^2 dt.
\end{equation}
}
\end{thm}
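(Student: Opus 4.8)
The plan is to transcribe the spectral strategy of the equal-speed case (Theorem \ref{Theorem3.2-work4}) to the present setting, feeding it the asymptotics of Proposition \ref{Proposition4.3-work4}, the Riesz-basis property of Remark \ref{Remark4.4-work4}, and the gap estimates of Proposition \ref{Proposition4.6-work4}. Given $\Phi_0\in{\mathcal{H}_2}$ expanded as $\Phi_0=\sum_{n\neq 0}(\alpha_{1,n}E_{1,n}+\alpha_{2,n}E_{2,n})$, the solution of \eqref{Equation2.2-work4} is $\Phi(t)=\sum_{n\neq 0}(\alpha_{1,n}E_{1,n}e^{i\mu_{1,n}t}+\alpha_{2,n}E_{2,n}e^{i\mu_{2,n}t})$, so that the observed quantity takes the form
\begin{equation*}
\varphi_x(1,t)+\psi(1,t)=\sum_{n\neq 0}\Big(\alpha_{1,n}c_{1,n}e^{i\mu_{1,n}t}+\alpha_{2,n}c_{2,n}e^{i\mu_{2,n}t}\Big),
\end{equation*}
where $c_{j,n}:=(\varphi_{j,n})_x(1)+\psi_{j,n}(1)$ obey $|c_{1,n}|=O(1)$ and $|c_{2,n}|=O(n^{-1})$ by \eqref{Equation4.22-work4}. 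The one-order asymmetry between $c_{1,n}$ and $c_{2,n}$ is precisely what motivates the extra factor $n$ multiplying $E_{2,n}$ in the definitions \eqref{Equation4.23-work4} and \eqref{Equation4.23-work4tilde}.

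For the direct inequality \eqref{Equation4.1-work4}, I would split the series into its two branches $f_1(t),f_2(t)$. Since the eigenvalues within each branch satisfy a uniform gap (Proposition \ref{Proposition4.6-work4}), the usual Ingham Theorem (Theorem 9.2 in \cite{Komornik01}) yields $\int_0^T|f_j(t)|^2\,dt\sim\sum_{n\neq 0}|\alpha_{j,n}c_{j,n}|^2$ for each $j$; the hypothesis $T>2(\sqrt{\rho_1/k_1}+\sqrt{\rho_2/k_2})=2\pi D^+$, with $D^+=\tfrac{1}{\pi}(\sqrt{\rho_1/k_1}+\sqrt{\rho_2/k_2})$ the upper density of the combined spectrum, guarantees the Ingham estimates apply. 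Inserting $|c_{1,n}|=O(1)$, $|c_{2,n}|=O(n^{-1})$ and the triangle inequality then bounds the left-hand side by $\ell_0\sum_{n\neq 0}(|\alpha_{1,n}|^2+|\alpha_{2,n}|^2)=\ell_0\|\Phi_0\|^2_{\mathcal{H}_2}$.

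The inverse inequalities force the general Ingham-type Theorem 9.4 in \cite{Komornik01}, because branches may be asymptotically close. As in the equal-speed proof, I would rearrange both branches into one sequence $(\lambda_n)$ with $(\mathrm{Im}\,\lambda_n)$ strictly increasing; by Proposition \ref{Proposition4.7-work4} every chain of close exponents relative to $\gamma$ has length $2$. Writing $A$ for the indices heading such a chain, $B$ for the rest, and $a_n$ for the coefficient of $e^{\lambda_n t}$, Theorem 9.4 gives
\begin{equation*}
\int_0^T|\varphi_x(1,t)+\psi(1,t)|^2\,dt\sim\sum_{n\in B}|a_n|^2+\sum_{n\in A}\Big(|a_n+a_{n+1}|^2+|\lambda_{n+1}-\lambda_n|^2\,|a_{n+1}|^2\Big).
\end{equation*}
The decisive step is to convert the divided-difference term into a weight via the lower bounds of Proposition \ref{Proposition4.6-work4}: the branch-to-branch gap is at least $c_1/|n|$ in the rational case \eqref{Equation4.13-work4} and at least $c_1/(|n|\ln^2|n|)$ for almost all irrationals \eqref{Equation4.15-work4}. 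Since the compensating factor $n$ on $E_{2,n}$ renders the two observed branch coefficients comparable, $|a_n|\sim|\alpha_{j,n}|\,|n|$ (respectively $|\alpha_{j,n}|\,|n|\ln^2|n|$), and the gap weight $|\lambda_{n+1}-\lambda_n|^2$ then cancels the overall factor $n$ of $\mathcal{D}_1$ (respectively $n\ln^2|n|$ of $\tilde{\mathcal{D}}_1$), so that the right-hand side is bounded below by a constant times $\sum_{n\neq 0}(|\alpha_{1,n}|^2+|\alpha_{2,n}|^2)=\|\Phi_0\|^2_{\mathcal{D}_1}$, respectively $\|\Phi_0\|^2_{\tilde{\mathcal{D}}_1}$, yielding \eqref{Equation4.2-work4} and \eqref{Equation4.2-work4new}. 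The main obstacle is the simultaneous bookkeeping of the two independent sources of weight — the order-one asymmetry between $c_{1,n}$ and $c_{2,n}$, and the Diophantine closeness of the two branches — which must be balanced so that the observed energy reproduces exactly the norms of $\mathcal{D}_1$ and $\tilde{\mathcal{D}}_1$.
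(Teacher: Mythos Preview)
Your proposal is correct and follows essentially the same approach as the paper: the direct inequality by splitting into the two branches and applying the classical Ingham theorem to each, and the inverse inequalities by rearranging both branches into a single increasing sequence, invoking Proposition \ref{Proposition4.7-work4} to ensure chains of close exponents have length $2$, applying the divided-difference Ingham Theorem 9.4 of \cite{Komornik01}, and then feeding in the gap lower bounds \eqref{Equation4.11-work4}, \eqref{Equation4.13-work4}, \eqref{Equation4.15-work4} from Proposition \ref{Proposition4.6-work4} to recover the $\mathcal{D}_1$- (resp.\ $\tilde{\mathcal{D}}_1$-) norm. Your identification of the two independent sources of weight --- the $O(n^{-1})$ asymmetry in $c_{2,n}$ and the Diophantine branch closeness --- and the way the definitions \eqref{Equation4.23-work4}, \eqref{Equation4.23-work4tilde} are tailored to absorb both is exactly the point.
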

\begin{proof}
  Similar to subsection \ref{Section3-work4}, we can prove the direct inequality \eqref{Equation4.1-work4}.\\[0.1in] 
\noindent Our next aim is to prove the inverse observability inequalities: \\[0.1in]
\noindent\textbf{Case 1.} Let    $\frac{k_1\rho_2}{k_2\rho_1}$  be a rational. Given any initial data such as
$$
(\varphi_{0},\varphi_{1},\psi_{0},\psi_{1})=\sum_{n\neq 0}\left(\alpha_{1,n}E_{1,n}+\alpha_{2,n}n E_{2,n}\right)n\in \mathcal{D}_1,
$$
using the Riesz property the solution of \eqref{Equation2.1-work4} can be written as
$$
(\varphi (x,t),\varphi_{t}(x,t), \psi (x,t),\psi_{t}(x,t))=\sum_{n\neq 0}\left(\alpha_{1,n}E_{1,n}e^{\lambda_{1,n}t}+
\alpha_{2,n}n E_{2,n}e^{\lambda_{2,n}t}\right)n.$$
Hence, we have
\begin{equation}
\label{Equation4.24-work4}
\varphi_{x}\left(1,t\right)+\psi(1,t)=\sum_{n\neq 0}\left(\alpha_{1,n}\left((\varphi_{1,n})_{x}(1)+\psi_{1,n}(1)\right)e^{\lambda_{1,n}t} +\alpha_{2,n}n\left((\varphi_{2,n})_{x}(1)+\psi_{2,n}(1)\right)e^{\lambda_{2,n}t}\right)n.
\end{equation}
We now rearrange the two branches of eigenvalues $(\lambda_{1,n})_{n\neq 0},(\lambda_{2,n})_{n\neq 0}$ into one sequence $(\lambda_{n})_{n\neq 0}$ such that the sequence $(\text{Im} \lambda_{n})_{n \neq 0}$ is strictly increasing. From Proposition \ref{Proposition4.7-work4}, it follows that all chain $\text{Im} \lambda_{n}, \text{Im} \lambda_{n+1}$ of close exponents relative to $\gamma$ is of length 2. Then, let $A$ denotes the set of integers $n\in \mathbb{Z}^*$ such that the condition \eqref{Equation4.20-work4} holds true and let
$$
B=\mathbb{Z}^*\setminus \left\{n,n+1:\quad n\in A\right\}.
$$
We denote by $a_{n}$ the coefficient before $e^{\lambda_{1,n}t}$ or 
$e^{\lambda_{2,n}t}$ in \eqref{Equation4.24-work4}. We can rewrite it into
\begin{equation*}
\begin{array}{ll}
\displaystyle{\varphi_{x}\left(1,t\right)+\psi(1,t)}& =\displaystyle{\sum_{n\in B} a_{n}e ^{\lambda_{n}t}+\sum_{n\in A}\left( a_{n}e ^{\lambda_{n}t}+a_{n+1}e ^{\lambda_{n+1}t}\right)}\\ \\
&=\displaystyle{\sum_{n\in B} a_{n}e ^{\lambda_{n}t}+\sum_{n\in A} \left((a_{n}+a_{n+1})e ^{\lambda_{n}t}+(\lambda_{n+1}-\lambda_{n})a_{n+1}e_{n+1}\left(t\right)\right),}
\end{array}
\end{equation*}
where $e_{n+1}\left(t\right)$ denotes the divided difference of the chain of exponents $\lambda_{n}, \lambda_{n+1}$ relative to $\gamma$
$$
e_{n+1}\left(t\right)=\frac{e^{\lambda_{n+1}t}-e^{\lambda_{n}t}}{\lambda_{n+1}-\lambda_{n}}.
$$
From Theorem 9.4 in \cite{Komornik01}, the sequence 
$(e^{\lambda_{n}t})_{n\in B},(e^{\lambda_{n}t},e_{n+1}\left(t\right))_{n\in A}$ 
forms a Riesz sequence in $L^2(0,T)$ provided that $T>2\pi D^{+}=2\left(\sqrt{\frac{\rho_1}{k_1}}+\sqrt{\frac{\rho_2}{k_2}}\right).$ Thus, it follows that
\begin{equation}
\label{Equation4.25-work4}
\int_{0}^{T}\left|\varphi_{x}\left(1,t\right)+\psi(1,t)\right|^2 dt \sim \sum_{n\in B} \left|a_{n}\right|^2+\sum_{n\in A}\left(\left|a_{n}+a_{n+1}\right|^2+\left|\lambda_{n+1}-\lambda_{n}\right|^2
\left|a_{n+1}\right|^2\right).
\end{equation}
The assertions \eqref{Equation4.11-work4} and \eqref{Equation4.13-work4}   of Proposition \ref{Proposition4.6-work4}, imply that
\begin{equation}
\label{Equation4.26-work4}
\left|a_{n}+a_{n+1}\right|^2+\left|\lambda_{n+1}-\lambda_{n}\right|^2
\left|a_{n+1}\right|^2\geq \ell_1\left(\frac{\left|a_n\right|^{2}}{\left|n\right|^{2}}+\frac{\left|a_{n+1}\right|^2}{\left|n+1\right|^{2}}\right).
\end{equation}
Inserting \eqref{Equation4.26-work4} into \eqref{Equation4.25-work4} and returning to the previous notations, we get
$$
\int_{0}^{T}\left|\varphi_{x}\left(1,t\right)+\psi(1,t)\right|^2 dt\geq \ell_1 \sum_{n\in B}\frac{\left|a_n\right|^2}{\left|n\right|^{2}}+\ell_1\sum_{n\in A}\left(\frac{\left|a_n\right|^2}{\left|n\right|^{2}}+\frac{\left|a_{n+1}\right|^2}{\left|n+1\right|^{2}}
\right)\geq \ell_1 \sum_{n\in \mathbb{Z}^*}\frac{\left|a_n\right|^2}{\left|n\right|^{2}}.
$$
Hence, we get
\begin{equation}
\label{Equation4.27-work4}
\int_{0}^{T}\left|\varphi_{x}\left(1,t\right)+\psi(1,t)\right|^2 dt\geq \ell_1 \sum_{n\neq 0}\left(\left|\alpha_{1,n}\left((\varphi_{1,n})_{x}(1)+\psi_{1,n}(1)\right)\right|^2+\left|\alpha_{2,n} n\left((\varphi_{2,n})_{x}(1)+\psi_{2,n}(1)\right) \right|^2\right).
\end{equation}
Then, by inserting \eqref{Equation4.22-work4} into \eqref{Equation4.27-work4}, we get
\begin{equation*}
\int_{0}^{T}\left|\varphi_{x}\left(1,t\right)+\psi(1,t)\right|^2 dt\geq \ell_1 \sum_{n \neq 0}\left(\left|\alpha_{1,n}\right|^2+\left|{\alpha_{2,n}}\right|^2\right).
\end{equation*}
Therefore, we get the inequality \eqref{Equation4.2-work4}.\\[0.1in]
\noindent\textbf{Case 2.} For almost all irrational number $\frac{k_1\rho_2}{k_2\rho_1}$. 
Given any initial data such as
$$
(\varphi_{0},\varphi_{1},\psi_{0},\psi_{1})=\sum_{n\neq 0}\left(\alpha_{1,n}E_{1,n}+\alpha_{2,n}n E_{2,n}\right)n\ln^2|n|\in \tilde{\mathcal{D}}_1,
$$
then  the solution of \eqref{Equation2.1-work4} can be written as
$$
(\varphi (x,t),\varphi_{t}(x,t), \psi (x,t),\psi_{t}(x,t))=\sum_{n\neq 0}\left(\alpha_{1,n}E_{1,n}e^{\lambda_{1,n}t}+
\alpha_{2,n}n E_{2,n}e^{\lambda_{2,n}t}\right)n \ln^2|n|.$$
Therefore, we have
\begin{equation}
\label{Equation4.24-work4new}
\varphi_{x}\left(1,t\right)+\psi(1,t)=\sum_{n\neq 0}\left(\alpha_{1,n}\left((\varphi_{1,n})_{x}(1)+\psi_{1,n}(1)\right) e^{\lambda_{1,n}t} +\alpha_{2,n}n\left((\varphi_{2,n})_{x}(1)+\psi_{2,n}(1)\right)e^{\lambda_{2,n}t}\right)n \ln^2|n|.
\end{equation}
Similar to case 1, we get 
\begin{equation}
\label{Equation4.25-work4new}
\int_{0}^{T}\left|\varphi_{x}\left(1,t\right)+\psi(1,t)\right|^2 dt \sim \sum_{n\in B} \left|a_{n}\right|^2+\sum_{n\in A}\left(\left|a_{n}+a_{n+1}\right|^2+\left|\lambda_{n+1}-\lambda_{n}\right|^2
\left|a_{n+1}\right|^2\right),
\end{equation}
where $a_n$ denoted the coefficient before $e^{\lambda_{1,n}t}$ or 
$e^{\lambda_{2,n}t}$ in \eqref{Equation4.24-work4}. Using \eqref{Equation4.15-work4}   of Proposition \ref{Proposition4.6-work4}, we get
\begin{equation}
\label{Equation4.26-work4new}
\left|a_{n}+a_{n+1}\right|^2+\left|\lambda_{n+1}-\lambda_{n}\right|^2
\left|a_{n+1}\right|^2\geq \ell_1\left(\frac{\left|a_n\right|^{2}}{\left|n\right|^{2}\ln^4|n|}+\frac{\left|a_{n+1}\right|^2}{\left|n+1\right|^{2} \ln^4|n+1|}\right).
\end{equation}
Inserting \eqref{Equation4.26-work4new} in \eqref{Equation4.25-work4new}, we get
$$
\displaystyle{\int_{0}^{T}\left|\varphi_{x}\left(1,t\right)+\psi(1,t)\right|^2 dt}\geq \ell_1 \displaystyle{\sum_{n\in \mathbb{Z}^*}\frac{\left|a_n\right|^2}{\left|n\right|^{2}\ln^4|n|}}.
$$
Therefore, we have
\begin{equation}
\label{Equation4.27-work4new}
\int_{0}^{T}\left|\varphi_{x}\left(1,t\right)+\psi(1,t)\right|^2 dt\geq \ell_1 \sum_{n\neq 0}\left(\left|\alpha_{1,n}\left((\varphi_{1,n})_{x}(1)+\psi_{1,n}(1)\right)\right|^2+\left|\alpha_{2,n} n\left((\varphi_{2,n})_{x}(1)+\psi_{2,n}(1)\right) \right|^2\right).
\end{equation}
Then, by inserting \eqref{Equation4.22-work4} into \eqref{Equation4.27-work4new}, we get
\begin{equation*}
\int_{0}^{T}\left|\varphi_{x}\left(1,t\right)+\psi(1,t)\right|^2 dt\geq \ell_1 \sum_{n \neq 0}\left(\left|\alpha_{1,n}\right|^2+\left|{\alpha_{2,n}}\right|^2\right).
\end{equation*}
Hence, we get the inequality \eqref{Equation4.2-work4}. {  Thus, the proof of the theorem  is complete.}
\end{proof}
\begin{rk}\label{Remark4.8-work4}
\rm{It is very important to ask the question about the optimality of the  observability inequality  \eqref{Equation4.2-work4}. In our opinion, from inequalities    \eqref{Equation4.12-work4} and \eqref{Equation4.14-work4}, we may find a dense subspace ${\mathtt{D}}_1$ of $\mathcal{D}_1$  such that 
$$\int_{0}^T\left|\varphi_{x}\left(1,t\right)+\psi(1,t)\right|^2 dt \sim \left\|(\varphi_{0},\varphi_{1},\psi_{0},\psi_{1})\right\|^2_{\mathcal{D}_1},$$
for all $(\varphi_{0},\varphi_{1},\psi_{0},\psi_{1})\in {\mathtt{D}_1}$. }\xqed{$\square$}
\end{rk}
\noindent The weighted spectral spaces $\mathcal{D}_1$ and $\tilde{\mathcal{D}}_1$  are defined by means of the eigenvectors $(E_{1,n})_{n\neq 0}$ and $(E_{2,n})_{n\neq 0}$ with weights. 
Our aim is to get the observability or exact controllability in usual functional spaces. For this aim, let
$$E_{1,n}=(x_n,g_n),\quad E_{2,n}=(f_n,y_n),$$
with
 \begin{equation*}
\left\{
\begin{array}{ll}
\displaystyle{x_n=\left(\frac{\sin(n\pi x)}{n\pi},i\sqrt{\frac{k_1}{\rho_1}} \sin(n\pi x)\right),\quad
g_n=\left(\frac{O\left(k_1,k_2,\rho_1,\rho_2\right)}{n^2}\cos(n\pi x),\frac{O\left(k_1,k_2,\rho_1,\rho_2\right)}{n}\cos(n\pi x)\right)},\nline
\displaystyle{y_n=\left(\frac{O\left(k_1,k_2,\rho_1,\rho_2\right)}{n^2} \sin(n\pi x), \frac{O\left(k_1,k_2,\rho_1,\rho_2\right)}{n}\sin(n\pi x)\right),\quad
f_n=\left(\frac{\cos(n\pi x)}{n\pi}, i\sqrt{\frac{k_2}{\rho_2}}\cos(n\pi x)\right).}
\end{array}
\right.
\end{equation*}
For any $s\geq 0,$ we define the spaces
\begin{equation*}
X_{s}=\left\{(\hat{\phi},\hat{\psi})=\sum_{n\neq 0} \beta_n n^{s} x_n\right\}, \quad \left\|(\hat{\phi},\hat{\psi})\right\|^2_{X_{s}}=\sum_{n\neq 0} \left|\beta_n\right|^2
\end{equation*}
and
\begin{equation*}
\tilde{X}_{s}=\left\{(\hat{\phi},\hat{\psi})=\sum_{n\neq 0} \beta_n n^{s}\ln^2|n| x_n\right\}, \quad \left\|(\hat{\phi},\hat{\psi})\right\|^2_{X_{s}}=\sum_{n\neq 0} \left|\beta_n\right|^2.
\end{equation*}
\begin{cor}\label{Corollary4.9-work4}
\rm{Assume that $\frac{k_1}{\rho_1}\neq \frac{k_2}{\rho_2}$ and condition $(\rm A_1)$ holds. Then, we have 
\begin{equation}\label{Equation4.28-work4}
\mathcal{D}_1= X_1\times X_2
\end{equation}
and
\begin{equation}\label{Equation4.28-work4tilde}
\tilde{\mathcal{D}}_1= \tilde{X}_1\times \tilde{X}_2.
\end{equation}
}
\end{cor}
\begin{proof} We see that $(nx_n)_{n\neq 0}$ and $(n^2y_n)_{n\neq 0}$ are Riesz basis in $X_1\times X_2$, respectively $(n^2 f_n)_{n\neq 0}$ and $( n g_n)_{n\neq 0}$ are Bessel sequences in $X_1\times X_2$. Then, \eqref{Equation4.28-work4} follows directly from Theorem \ref{Theoremk3.8-work4}.  The assertion \eqref{Equation4.28-work4tilde} can be obtained in the same way.
\end{proof}\\[0.1in]
\noindent Furthermore, for any $s\geq 0,$ we define the spaces
\begin{equation*}
V_{s}=\left\{ f=\sum_{n>0}\alpha_n\frac{\sin(n\pi x)}{n^{s}}\right\}, \quad \quad \left\|f\right\|^2_{V_{s}}=\sum_{n>0}\left|\alpha_n\right|^2
\end{equation*}
and 
\begin{equation*}
\tilde{V}_{s}=\left\{ f=\sum_{n>0}\alpha_n\frac{\sin(n\pi x)}{n^{s}\ln^2|n|}\right\}, \quad \quad \left\|f\right\|^2_{V_{s}}=\sum_{n>0}\left|\alpha_n\right|^2.
\end{equation*}
Thus, with the pivot space $L^2\left(0,1\right),$ we have
$$X_{s}=V'_{s-1}\times V'_s,\quad \tilde{X}_{2}=\tilde{V}'_{s-1}\times \tilde{V}'_s.$$
Then, it follows  that
$$\mathcal{D}_1=V'_{0}\times V'_1\times V'_{1}\times V'_2$$
and
$$\tilde{\mathcal{D}}_1=\tilde{V}'_{0}\times \tilde{V}'_1\times \tilde{V}'_{1}\times \tilde{V}'_2.$$
\begin{rk}\label{Remark4.10-work4}
\rm{Assume that $\frac{k_1}{\rho_1}\neq \frac{k_2}{\rho_2}$ and condition $(\rm A_1)$ holds.  In the case 1, since the two branches of eigenvalues are close in the order of $1/n,$  then the observability space of the first equation losses one derivative because of the closeness of eigenvalues, while that of the second equation losses two derivatives due to the closeness of eigenvalues and the transmission of the modes between the two equations. Therefore, the observability holds in the space of type
$$\mathcal{D}_1=L^2\left(0,1\right)\times H^{-1}\left(0,1\right)\times H^{-1}\left(0,1\right)\times H^{-2}\left(0,1\right).$$
Moreover, the control space are of type
 $$H^{1}\left(0,1\right)\times L^2\left(0,1\right)\times H^{2}\left(0,1\right)\times H^1\left(0,1\right)$$
 with suitable boundary conditions.}
\xqed{$\square$}
\end{rk}
\noindent It is interesting to notice that the observability of the System  \eqref{Equation2.1-work4}  suggests the exact controllability of the System \eqref{Equation1.1-work4}   (see Theorems \ref{Th56}, \ref{Theorem3.1-work4}  and in
\cite{Lions01,Lions02,Lions03,Lions04,Komornik01,Komornik02}). Then, from Theorem \ref{Theorem4.2-work4},  we get the following result.
\begin{thm}\label{Theorem4.1-work4}
\rm{Assume that  $\frac{k_1}{\rho_1}\neq \frac{k_2}{\rho_2}$ and condition $(\rm A_1)$ holds. Let $$T>2\left(\sqrt{\frac{\rho_1}{k_1}}+\sqrt{\frac{\rho_2}{k_2}}\right).$$ 
 \noindent\textbf{Case 1.}    If $\frac{k_1\rho_2}{k_2\rho_1}$  be a rational, let
$$\left(u_0,u_1,y_0,y_1\right)\in  V_1\times V_0\times V_2\times V_1, $$
then there exists $v\in L^2(0,T)$ such that the solution of System \eqref{Equation1.1-work4}   satisfies the null final conditions
\begin{equation*}
u(x,T)=u_t(x,T)=y(x,T)=y_t(x,T)=0,\quad \forall x\in\left(0,1\right).
\end{equation*}
 \noindent\textbf{Case 2.}   For almost all irrational number $\frac{k_1\rho_2}{k_2\rho_1}$.  Let
 $$\left(u_0,u_1,y_0,y_1\right)\in  \tilde{V}_1\times \tilde{V}_0\times \tilde{V}_2\times \tilde{V}_1,$$
then there exists $v\in L^2(0,T)$ such that the solution of the  System \eqref{Equation1.1-work4}   satisfies the null final conditions
\begin{equation*}
u(x,T)=u_t(x,T)=y(x,T)=y_t(x,T)=0,\quad \forall x\in\left(0,1\right).
\end{equation*}
 }\xqed{$\square$}
\end{thm}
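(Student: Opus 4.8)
The plan is to repeat the Hilbert Uniqueness Method exactly as carried out for the equal-speed regime in the proofs of Theorem~\ref{Th56} and Theorem~\ref{Theorem3.1-work4}, simply substituting the observability estimates of Theorem~\ref{Theorem4.2-work4} for those of the equal-speed case. Since the two cases (rational and almost-every irrational $\frac{k_1\rho_2}{k_2\rho_1}$) are handled identically once the correct observability space is fixed, I would treat them in parallel, writing $\mathcal{G}$ for $\mathcal{D}_1$ in Case~1 and for $\tilde{\mathcal{D}}_1$ in Case~2. First I would establish, by transposition, that for every $v\in L^2(0,T)$ and every datum in the relevant weighted space the non-homogeneous system \eqref{Equation1.1-work4} admits a unique solution depending continuously on the data; this is the analogue of Theorem~\ref{Th56}, and its proof uses only the direct inequality \eqref{Equation4.1-work4} together with the Riesz representation theorem.

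Next I would endow $\mathcal{H}_2$ with the seminorm $\|\Phi_0\|_{\mathcal{F}}=\left(\int_0^T|\varphi_x(1,t)+\psi(1,t)|^2\,dt\right)^{1/2}$ and let $\mathcal{F}$ be the completion of $\mathcal{H}_2$ for this norm. The direct inequality \eqref{Equation4.1-work4} and the inverse inequality (namely \eqref{Equation4.2-work4} in Case~1 and \eqref{Equation4.2-work4new} in Case~2) then give the continuous and dense embeddings $\mathcal{H}_2\subset\mathcal{F}\subset\mathcal{G}$, so that $\mathcal{G}'\subset\mathcal{F}'$. I would then introduce the HUM operator $\Lambda\Phi_0=(\rho_1\chi_t(0),-\rho_1\chi(0),\rho_2\zeta_t(0),-\rho_2\zeta(0))$, where $(\chi,\zeta)$ solves the backward problem \eqref{s12.6} driven by the control $v(t)=\varphi_x(1,t)+\psi(1,t)$.

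The integration-by-parts identity used in \eqref{1e3594} yields $\langle\Lambda\Phi_0,\tilde\Phi_0\rangle=\langle\Phi_0,\tilde\Phi_0\rangle_{\mathcal{F}}$ for all $\Phi_0,\tilde\Phi_0\in\mathcal{H}_2$, whence $\Lambda$ extends to a continuous and coercive bilinear form on $\mathcal{F}\times\mathcal{F}$ and, by Lax--Milgram, to an isomorphism $\Lambda:\mathcal{F}\to\mathcal{F}'$. Consequently, for any target lying in $\mathcal{G}'\subset\mathcal{F}'$ there is a unique $\Phi_0\in\mathcal{F}$ with $\Lambda\Phi_0$ equal to that target, and the associated control $v$ steers \eqref{Equation1.1-work4} to rest at time $T$. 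To recover the concrete spaces stated in the theorem I would invoke the identifications $\mathcal{D}_1=V_0'\times V_1'\times V_1'\times V_2'$ and $\tilde{\mathcal{D}}_1=\tilde V_0'\times\tilde V_1'\times\tilde V_1'\times\tilde V_2'$ established after Corollary~\ref{Corollary4.9-work4}, so that $\mathcal{G}'$ equals $V_0\times V_1\times V_1\times V_2$ (Case~1) or $\tilde V_0\times\tilde V_1\times\tilde V_1\times\tilde V_2$ (Case~2); reordering the components within each pair, $(u_1,u_0,y_1,y_0)\mapsto(u_0,u_1,y_0,y_1)$, produces the spaces $V_1\times V_0\times V_2\times V_1$ and $\tilde V_1\times\tilde V_0\times\tilde V_2\times\tilde V_1$ of the statement.

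The genuinely delicate point — and the part I expect to be the main obstacle — is not the HUM machinery itself (which is verbatim the equal-speed argument) but keeping the asymmetric loss of regularity straight. Because the modes are transmitted between the two equations, the inverse observability forces the first equation to lose one derivative and the second to lose two (cf.\ Remark~\ref{Remark4.10-work4}), so the duality bookkeeping that matches $\mathcal{G}'$ with the weighted spaces $V_s,\tilde V_s$ must be done component by component; in the irrational case the logarithmic weight $\ln^2|n|$ entering $\tilde{\mathcal{D}}_1$ has to be carried through the completion $\mathcal{F}$ so that the target lands in $\tilde V_s$ rather than $V_s$, which is the only place where the two cases genuinely diverge.
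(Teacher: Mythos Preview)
Your proposal is correct and follows exactly the route the paper indicates: Theorem~\ref{Theorem4.1-work4} is stated without a separate detailed proof, the paper simply noting that it follows from the observability inequalities of Theorem~\ref{Theorem4.2-work4} by the same HUM construction used in Theorems~\ref{Th56} and~\ref{Theorem3.1-work4}. Your handling of the asymmetric regularity loss and the dual-space identifications via Corollary~\ref{Corollary4.9-work4} is precisely the bookkeeping the paper relies on (cf.\ Remark~\ref{Remark4.10-work4}).
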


\end{document}